\newtheorem{definition}{Definition}[section]
\newtheorem{lemma}[definition]{Lemma}
\newtheorem{theorem}[definition]{Theorem}
\newtheorem{corollary}[definition]{Corollary}
\newtheorem{proposition}[definition]{Proposition}
\newtheorem{example}[definition]{Example}
\newtheorem{remark}[definition]{Remark}
\DeclareMathOperator{\Tree}{\mathsf{T}}
\DeclareMathOperator{\Words}{\mathsf{W}}
\DeclareMathOperator{\Leaves}{\mathsf{L}}
\DeclareMathOperator{\Level}{\mathsf{L}}
\DeclareMathOperator{\child}{\mathsf{c}}
\DeclareMathOperator{\Parents}{\mathsf{P}}
\DeclareMathOperator{\mate}{\mathsf{m}}
\DeclareMathOperator{\blambda}{\boldsymbol{\lambda}}
\DeclareMathOperator{\bsigma}{\boldsymbol{\sigma}}
\DeclareMathOperator{\bF}{\boldsymbol{F}}
\DeclareMathOperator{\bk}{\boldsymbol{k}}
\DeclareMathOperator{\br}{\boldsymbol{r}}
\DeclareMathOperator{\bx}{\mathbf{x}}
\DeclareMathOperator{\by}{\mathbf{y}}
\DeclareMathOperator{\bz}{\mathbf{z}}
\DeclareMathOperator{\bY}{\mathbf{Y}}
\numberwithin{equation}{section}
\title[Ergodicity of the Fisher infinitesimal model with quadratic selection]{Ergodicity of the Fisher infinitesimal model with quadratic selection}
\author[Vincent Calvez]{Vincent Calvez}
\address{Institut Camille Jordan (ICJ), UMR 5208 CNRS \& Universit\'e Claude Bernard Lyon 1, 69100 Villeurbanne, France}
\email{vincent.calvez@math.cnrs.fr}
\author[Thomas Lepoutre]{Thomas Lepoutre}
\address{Univ Lyon, Inria, Universit\'e Claude Bernard Lyon 1, CNRS UMR5208, Institut Camille Jordan, F-69603 Villeurbanne, France}
\email{thomas.lepoutre@inria.fr}
\author[David Poyato]{David Poyato}
\address{Departamento de Matem\'atica Aplicada and Research Unit ``Modeling Nature'' (MNat), Facultad de Ciencias, Universidad de Granada, 18071 Granada, Spain}
\email{davidpoyato@ugr.es}
\begin{document}

\date{\today}

\subjclass[2020]{35B40; 35P30; 35Q92; 47G20; 92D15} 
\keywords{Integro-differential equations, asymptotic behavior, nonlinear spectral theory, quantitative genetics}

\thanks{\textbf{Acknowledgment.}  VC and DP have received funding from the European Research Council (ERC) under the European Union’s Horizon 2020 research and innovation program (grant agreement No 865711). DP has also received founding from the European Union’s Horizon Europe research and innovation program under the Marie Sk\l odowska-Curie grant agreement No 101064402, and partially from the State Research Agency (SRA) of the Spanish Ministry of Science and Innovation and European Regional Development Fund (ERDF), project PID2022-137228OB-I00, and by Modeling Nature Research Unit, project QUAL21-011.}

\begin{abstract}
We study the convergence towards a unique equilibrium distribution of the solutions to a time-discrete model with non-overlapping generations arising in quantitative genetics. The model describes the dynamics of a phenotypic distribution with respect to a multi-dimensional trait, which is shaped by selection and Fisher's infinitesimal model of sexual reproduction. We extend some previous works devoted to the time-continuous analogues, that followed a  perturbative approach in the regime of weak selection, by exploiting the contractivity of the infinitesimal model operator in the Wasserstein metric. Here, we tackle the case of quadratic selection by a global approach. We establish uniqueness of the equilibrium distribution and exponential convergence of the renormalized profile. Our technique relies on an accurate control of the propagation of information across the large binary trees of ancestors (the pedigree chart), and reveals an ergodicity property, meaning that the shape of the initial datum is quickly forgotten across generations. We combine this information with appropriate estimates for the emergence of Gaussian tails and propagation of quadratic and exponential moments to derive quantitative convergence rates. Our result can be interpreted as a generalization of the Krein-Rutman theorem in a genuinely non-linear, and non-monotone setting. 
\end{abstract}

\maketitle

\section{Introduction}

{\textit{Fisher's infinitesimal model} (also known as the \textit{polygenic model}) is a widely used statistical model in quantitative genetics initially proposed by {\sc R. Fisher} \cite{F-18}. It assumes that the genetic component of a quantitative phenotypical trait is affected by an infinite number of loci with infinitesimal and additive allelic effects and claims that the genetic component of descendants' traits is normally distributed around the mean value of parents' traits, with a constant (genetic) variance across generations. This model allowed reconcilling Mendelian inheritance and the continuous trait variations documented by {\sc F. Galton} via a Central Limit Theorem. More specifically, by taking limits when the number of underlying loci tends to infinity on a model with Mendelian inheritance, {\sc N. Barton}, {\sc A. Etheridge} and {\sc A. V\'eber} \cite{BEV-17} recently proved rigorously the validity of Fisher's infinitesimal model under various evolutionary processes ({\em e.g.}, natural selection). 

In this paper we study a time-discrete evolution problem for the distribution of a phenotypical trait $x\in \mathbb{R}^d$ in a population undergoing sexual reproduction and the effect of natural selection. Specifically, starting at any initial configuration $F_0\in \mathcal{M}_+(\mathbb{R}^d)$ of trait distribution, we analyze the long term dynamics of trait distributions $\{F_n\}_{n\in \mathbb{N}}$ across successive generations $n\in \mathbb{N}$, which solves the following recursion
\begin{equation}\label{E-sexual-reproduction-time-discrete}
F_n=\mathcal{T}[F_{n-1}],
\end{equation}
for any $n\in \mathbb{N}$. The operator $\mathcal{T}$ encodes the balanced effect of sexual reproduction in the population (under the infinitesimal model) and natural selection. Specifically, $\mathcal{T}$ is defined by
\begin{equation}\label{E-sexual-reproduction-T-operator}
\mathcal{T}[F]:=e^{-m}\mathcal{B}[F],
\end{equation}
for any trait distribution $F\in \mathcal{M}_+(\mathbb{R}^d)$. On the one hand, $m=m(x)\geq 0$ is called the {\it selection function} and represents the mortality effect of a trait-dependent natural selection on the population, so that $e^{-m}$ stands for the survival probability in the next generation. On the other hand, the operator $\mathcal{B}$ is chosen to be {\it Fisher's infinitesimal operator} and it takes the form
\begin{equation}\label{E-sexual-reproduction-operator}
\mathcal{B}[F](x):=\int_{\mathbb{R}^{2d}}G\left(x-\frac{x_1+x_2}{2}\right)\frac{F(x_1)F(x_2)}{\int_{\mathbb{R}^d}F(x')\,dx'}\,dx_1\,dx_2,\quad x\in \mathbb{R}^d,
\end{equation}
for any trait distribution $F\in \mathcal{M}_+(\mathbb{R}^d)$. Here $G=G(x)$ is a probability density (the {\it mixing kernel}) and the factor $G(x-\frac{x_1+x_2}{2})$ represents the transition probability that two given individuals with trait values $x_1,x_2\in \mathbb{R}^d$ will mate and yield a descendant with trait value $x\in \mathbb{R}^d$. In other words the resulting trait distributes around the mean value $\frac{x_1+x_2}{2}$ of parents' trait with law $G$. By definition, $\mathcal{B}[F]$ quantifies the number of births after all possible matches of any couple of individuals according to the trait distribution $F$. Altogether, $\mathcal{T}[F]$ quantifies the amount of offspring of a population distributed according to $F$ having resisted the effect of selection.

The above sexual reproduction operator $\mathcal{B}$ has recently pulled the attention of both the applied and more theoretical communities, {\em cf.} \cite{BEV-17,CGP-19, MR-13, P-20-arxiv, R-17-arxiv, R-21-arxiv}. In this paper we shall restrict to Gaussian mixing kernel and quadratic selection function, {\em i.e.},
\begin{align}
G(x)&:=\frac{1}{(2\pi)^{d/2}}e^{-\frac{\vert x\vert^2}{2}},\quad x\in \mathbb{R}^d,\label{E-mixing-Gaussian}\\
m(x)&:=\frac{\alpha}{2}\vert x\vert^2, \hspace{45pt} x\in \mathbb{R}^d,\label{E-selection-quadratic}
\end{align}
where $\alpha\in \mathbb{R}_+$ is a fixed parameter. Thereby, the trait of offsprings is normally distributed around the mean value of the trait of parents by assumption \eqref{E-mixing-Gaussian}, thus reducing to the standard infinitesimal model when the assumptions of the Central Limit Theorem are met \cite{BEV-17}. For simplicity, we set a Gaussian $G$ with unit variance, but any value of the genetic variance could also be considered (see nondimensionalization in Appendix \ref{Appendix-nondimensionalization}). 

Before introducing our results, we shall relate the previous time-discrete problem \eqref{E-sexual-reproduction-time-discrete} to analogous time-continuous quantitative genetics models of evolutionary dynamics that have been studied in the literature. Meanwhile, we will anticipate the major difficulties that can be faced when analyzing the long-time dynamics of \eqref{E-sexual-reproduction-time-discrete}. To this end, we consider, the following type of integro-differential equations for the evolutionary dynamics of a trait distribution $f(t,x)$:
\begin{equation}\label{E-general-reproduction}
\left\{\begin{array}{ll}
\displaystyle\partial_t f=-m(x)f+\mathcal{R}[f], & t\geq 0,\,x\in \mathbb{R}^d,\\
\displaystyle f(0,x)=f_0(x), & x\in \mathbb{R}^d,
\end{array}\right.
\end{equation}
where, $m=m(x)$ is again the mortality rate and $\mathcal{R}=\mathcal{R}[f]$ is the reproduction operator. Hence, $\mathcal{R}[f](x)$ determines the amount of births with trait value $x\in \mathbb{R}^d$ per unit of time. As for \eqref{E-sexual-reproduction-time-discrete}, the resulting dynamics of the population becomes a consequence of a balance between selection encoded by the trait-dependent mortality and the diversity generated by the growth term $\mathcal{R}$ across generations.

Many studies consider a {\it linear} reproduction operator $\mathcal{R}$, associated with a probability density $K=K(x)$ characterizing the mutational effects at birth, of the form
\begin{equation}\label{E-asexual-reproduction-operator}
\mathcal{R}[f](x):=\int_{\mathbb{R}^d}K(x-y)f(t,y)\,dy,\quad x\in \mathbb{R}^d.
\end{equation}
The factor $K(x-y)$ determines the probability that an individual with trait value $y\in \mathbb{R}^d$ produces a descendant with trait value $x\in \mathbb{R}^d$ (possibly deviating from $y$). This class of linear reproduction operators is well-suited for an asexual mode of reproduction. This includes parabolic equations in the limit of small variance of $K$. In particular, we refer to a series of works about the long-time asymptotics in the regime of small variance, initiated in \cite{BM-07, BMP-09, DJMP-05}, including an additional density-dependent competition term that makes the analysis non standard (see {\em e.g.} \cite{CL-20} and references therein for the well-posedness of the constrained Hamilton-Jacobi equation derived in the limit).

Recently, inspired by the infinitesimal model, the case of {\it sexual reproduction} has been addressed by invoking the preceding nonlinear version $\mathcal{B}$ in \eqref{E-sexual-reproduction-operator} as reproduction operator $\mathcal{R}=\mathcal{B}$. Several asymptotic regimes have been addressed: large reproduction rate \cite{MR-13, R-17-arxiv, R-21-arxiv}, small variance asymptotics \cite{CGP-19, P-20-arxiv}. In the latter case, the limiting problem keeps the non-local nature of the problem, being of a finite-difference type, rather than a Hamilton-Jacobi PDE. Before we continue the discussion about the state-of-the-art, let us emphasize that there is no restriction on the parameter $\alpha$ in the present work.

Since both $f\mapsto m f$ and $f\mapsto \mathcal{B}[f]$ are $1$-homogeneous operators, we can seek for special steady solutions of \eqref{E-sexual-reproduction-operator} through the following ansatz:
\begin{equation}\label{E-ansatz}
f(t,x)=e^{\lambda t}F(x),\quad (t,x)\in\mathbb{R}_+\times \mathbb{R}^d,
\end{equation}
The parameter $\lambda\in \mathbb{R}$ represents the rate at which the number of individuals grows (if $\lambda\geq 0$) or decreases (if $\lambda\leq 0$), and $F=F(x)\geq 0$ is an unknown probability density. By imposing such an ansatz on \eqref{E-general-reproduction}-\eqref{E-sexual-reproduction-operator}, the following generalized spectral problem arises for the couple $(\lambda,F)$:
\begin{equation}\label{E-eigenproblem-time-continuous}
\left\{\begin{array}{l}
\lambda F(x)+m(x)F(x)=\mathcal{B}[F](x),\quad x\in \mathbb{R}^d,\\
\int_{\mathbb{R}^d}F(x')\,dx'=1.
\end{array}\right.
\end{equation}
Note that the operator $\mathcal{B}$ is genuinely non-linear so that methods based on the Krein-Rutman theory or maximum principles cannot be applied straightforwardly, see \cite{BCV-16, CLW-17} and the references therein for the linear case. Further, usual extensions of the Krein-Rutman theory to $1$-homogeneous operators \cite{M-07} cannot be applied neither because $\mathcal{B}$ is not monotone. To date, the main strategy behind the existence of solutions of \eqref{E-eigenproblem-time-continuous} relies on a suitable application of Schauder fixed-point theorem to the operator $F\mapsto (\lambda+m)^{-1}\mathcal{B}[F]$ over an appropriate cone of $L^1(\mathbb{R}^d)$ that is conserved by the nonlinear operator, see \cite{BCGL-17-arxiv}. However, uniqueness cannot be achieved by this method. Moreover, it has been proven in \cite[Corollary 1.5]{CGP-19} that several equilibrium states $(\lambda,F)$ can co-exist in the presence of multiple local minima of $m$ (provided the variance of $G$ is small enough). That is, the generalized eigenproblem \eqref{E-eigenproblem-time-continuous} does not admit a unique positive eigenfunction, in contrast with general conclusions of the Krein-Rutman theory. 

Recently, {\sc G. Raoul} addressed the long-time dynamics of \eqref{E-general-reproduction} in 1D, with $\mathcal{R}=\mathcal{B}$ and a trait dependent fecundity rate \cite{R-21-arxiv}. He obtained local uniqueness and exponential relaxation under the assumption of weak and localized (compactly supported) selection effects. To this end, he controlled locally in space the Wasserstein distance between the solution and the stationary state, using the uniform contraction property of $\mathcal{B}$ in the space of probability measures sharing the same center of mass. Unfortunately, the Wasserstein metric is not fully compatible with multiplicative operators, such as trait-dependent fecundity (see the discussion in \cite[Section 3.4]{R-21-arxiv} and Section \ref{S-observations} below). This can be circumvented under the additional assumption that the trait density is locally uniformly bounded below, following \cite{BGG-12}. Obviously, this cannot hold globally for integrable densities, hence {\sc G. Raoul} developed estimates of the distribution's tail to complete the contraction estimates. Also, a lot of attention has to be paid to the dynamics of the center of the distribution which is essentially driven by selection. Indeed, in the case of flat selection ($m\equiv0$), the problem is invariant by translation, so that local uniqueness cannot hold. To conclude this discussion, let us emphasize that global uniqueness and the  asymptotic behavior of generic solutions to the evolution problem \eqref{E-general-reproduction}-\eqref{E-sexual-reproduction-operator} is still open. Below, we provide a first result in this direction, for the time-discrete problem \eqref{E-sexual-reproduction-time-discrete}, though.

We remark that the time-discrete version \eqref{E-sexual-reproduction-time-discrete} that we propose in this paper can be partially regarded as a discretization in time of the above time-continuous problem \eqref{E-general-reproduction}-\eqref{E-sexual-reproduction-operator}, with non-overlapping generations (see Appendix \ref{Appendix-nondimensionalization} for further details). As for the time-continuous problem, we could seek special solutions to the time-discrete problem \eqref{E-sexual-reproduction-time-discrete} in the following form
\begin{equation}\label{E-ansatz-discrete}
F_n(x)=\lambda^n F(x),\quad (n,x)\in \mathbb{N}\times\mathbb{R}^d.
\end{equation}
Again, $\lambda\in \mathbb{R}_+^*$ is the rate of growth (if $\lambda\geq 1$) or decrease (if $\lambda\leq 1$) of individuals, and $F=F(x)$ is an unknown probability density. This yields the following generalized eigenproblem for the couple $(\lambda,F)$:
\begin{equation}\label{E-eigenproblem-time-discrete}
\left\{\begin{array}{l}
\lambda F(x)=\mathcal{T}[F](x),\quad x\in \mathbb{R}^d,\\
\int_{\mathbb{R}^d}F(x')\,dx'=1.
\end{array}\right.
\end{equation}
In this paper we aim to address the following questions:
\begin{enumerate}[label={\bf(Q\arabic*)}]
\item Does the eigenproblem \eqref{E-eigenproblem-time-discrete} have a unique solution $(\blambda_\alpha,\bF_\alpha)$, with $\blambda_\alpha\in\mathbb{R}$ and $\bF_\alpha$ being a probability measure, for each $\alpha\in \mathbb{R}_+^*$?
\item Consider any generic initial datum $F_0\in \mathcal{M}_+(\mathbb{R}^d)$ and its associated solution $\{F_n\}_{n\in \mathbb{N}}$ of the time-discrete problem \eqref{E-sexual-reproduction-time-discrete}. Do the renormalized profiles $F_n/\Vert F_n\Vert_{L^1(\mathbb{R}^d)}$ converge to the unique steady profile $\bF_\alpha$ solving \eqref{E-eigenproblem-time-discrete} when $n\rightarrow \infty$?
\end{enumerate}

We shall prove that the answer to both questions is affirmative. It stands to reason that similar existence and local uniqueness results like in \cite{BCGL-17-arxiv,CGP-19} could be extended to the new eigenproblem \eqref{E-eigenproblem-time-discrete} by applying Schauder and Banach fixed point theorems. Nevertheless, in this paper we introduce a novel method that unravels an ergodicity property of the operator $\mathcal{T}$, leading to quantitative estimates for the relaxation of profiles $\{F_n\}_{n\in \mathbb{N}}$ towards $\bF_\alpha$. First, we prove that an explicit (Gaussian) solution to the eigenproblem \eqref{E-eigenproblem-time-discrete} exists. Second, by computing $n$ iterations of the operator $\mathcal{T}$, that is $F_n=\mathcal{T}^n[F_0]$, we notice that information of $F_n$ at the trait value $x$ is propagated from the initial datum $F_0$ across $2^n$ ancestors over a binary tree with height $n$ and rooted at $x$ (the pedigree chart). Interestingly, an appropriate reformulation of $\mathcal{T}^n$ in the case of Gaussian mixing \eqref{E-mixing-Gaussian} and quadratic selection \eqref{E-selection-quadratic} shows that the dependence of the solution $\{F_n\}_{n\in \mathbb{N}}$ on the initial datum $F_0$ is rapidly lost across the different levels of the tree. More specifically, a strong convergence of generic solutions $\{F_n\}$ of the time-discrete problem \eqref{E-sexual-reproduction-time-discrete} towards the steady Gaussian profile $\bF_\alpha$ solving \eqref{E-eigenproblem-time-discrete} is achieved locally with respect to $x$. Third, we prove an appropriate propagation of quadratic and exponential moments, leading to uniform tightness of the family $\{F_n\}_{n\in \mathbb{N}}$. Finally, we glue all the information together and conclude the final global convergence result in question {\bf (Q2)} in relative entropy. We refer to Section \ref{S-observations} for a more detailed sketch of our strategy of proof. Specifically, we obtain our main result:

\begin{theorem}\label{T-main}
Assume that $\alpha\in \mathbb{R}_+^*$ and set any initial non-negative measure $F_0\in \mathcal{M}_+(\mathbb{R}^d)$. The solution $\{F_n\}_{n\in \mathbb{N}}$ to the time-discrete problem \eqref{E-sexual-reproduction-time-discrete} verifies that the growth rates $\Vert F_n\Vert_{L^1(\mathbb{R}^d)}/\Vert F_{n-1}\Vert_{L^1(\mathbb{R}^d)}$ relax towards $\blambda_\alpha$ and the normalized profiles $F_n/\Vert F_n\Vert_{L^1(\mathbb{R}^d)}$ relax towards $\bF_\alpha$ with
\begin{equation}\label{E-sexual-reproduction-Gaussian-solution}
\blambda_\alpha:=\left(1+\alpha\left(1+\frac{\bsigma_\alpha^2}{2}\right)\right)^{-\frac{d}{2}},\quad \bF_\alpha:=G_{0,\bsigma_\alpha^2},
\end{equation}
and the variance $\bsigma_\alpha^2\in \mathbb{R}_+^*$ is the unique positive root of the equation
\begin{equation}\label{E-sexual-reproduction-Gaussian-solution-variance-equation}
\frac{1}{\bsigma_\alpha^2}=\alpha+\frac{1}{1+\frac{\bsigma_\alpha^2}{2}},\quad \mbox{i.e.},\quad \bsigma_\alpha^2=\frac{\sqrt{(1+2\alpha)^2+8\alpha}-(1+2\alpha)}{2\alpha}.
\end{equation}
Specifically, for any $\varepsilon\in \mathbb{R}_+^*$ there exists a sufficiently large $C_\varepsilon\in \mathbb{R}_+^*$ such that
\begin{align*}
\mathcal{D}_{\rm KL}\left(\left.\frac{F_n}{\Vert F_n\Vert_{L^1(\mathbb{R}^d)}}\right\Vert  \bF_\alpha\right)&\leq C_\varepsilon((2\bk_\alpha)^2+\varepsilon)^n,\\
\left\vert\frac{\Vert F_n\Vert_{L^1(\mathbb{R}^d)}}{\Vert F_{n-1}\Vert_{L^1(\mathbb{R}^d)}}-\blambda_\alpha\right\vert&\leq C_\varepsilon(2\bk_\alpha+\varepsilon)^n,
\end{align*}
for any $n\in \mathbb{N}$, where $\mathcal{D}_{\rm KL}$ is the Kullback-Leibler divergence (or relative entropy), {\it i.e.},
\begin{equation}\label{E-KL-divergence}
\mathcal{D}_{\rm KL}(P\Vert Q):=\int_{\mathbb{R}^d}P(x)\log\left(\frac{P(x)}{Q(x)}\right)\,dx,
\end{equation}
for any $P,Q\in L^1_+(\mathbb{R}^d)\cap \mathcal{P}(\mathbb{R}^d)$,
and the coefficient $\bk_\alpha\in (0,\frac{1}{2})$ reads
\begin{equation}\label{E-sexual-reproduction-coefficients-limit}
\bk_\alpha=\frac{\bsigma_\alpha^2}{2+\bsigma_\alpha^2},\quad \mbox{i.e.},\quad \frac{(3+2\alpha)-\sqrt{(1+2\alpha)^2+8\alpha}}{4}.
\end{equation}
\end{theorem}

Note that any solution $(\lambda,F)$ to the eigenproblem \eqref{E-eigenproblem-time-discrete} yields a solution to the time-discrete problem \eqref{E-sexual-reproduction-time-discrete} through the ansatz \eqref{E-ansatz-discrete}. Then, question {\bf (Q1)} will readily follow from question {\bf (Q2)} and, in particular, the unique solution $(\blambda_\alpha,\bF_\alpha)$ to \eqref{E-eigenproblem-time-discrete} is Gaussian.

\begin{corollary}\label{C-main}
Assume that $\alpha\in \mathbb{R}_+^*$, then there exists a unique solution $(\blambda_\alpha,\bF_\alpha)$ to the eigenproblem \eqref{E-eigenproblem-time-discrete}, with $\bF_\alpha$ being a  probability measure, namely given by \eqref{E-sexual-reproduction-Gaussian-solution}.
\end{corollary}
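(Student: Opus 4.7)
The proof is essentially an immediate consequence of Theorem~\ref{T-main}, split into two parts: existence and uniqueness.

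For existence, the plan is to verify by direct computation that $(\blambda_\alpha,\bF_\alpha)$ defined by \eqref{E-sexual-reproduction-Gaussian-solution}–\eqref{E-sexual-reproduction-Gaussian-solution-variance-equation} is indeed a solution to the eigenproblem \eqref{E-eigenproblem-time-discrete}. Since the mixing kernel $G$ is the standard Gaussian, a short computation using the convolution property $G_{0,a^2}*G_{0,b^2} = G_{0,a^2+b^2}$ shows that for any centered Gaussian $F = G_{0,\sigma^2}$, one has $\mathcal{B}[F] = G_{0,1+\sigma^2/2}$. Multiplying by $e^{-m(x)} = e^{-\frac{\alpha}{2}|x|^2}$ and completing the square yields another centered Gaussian whose variance must satisfy the fixed point relation $\frac{1}{\sigma^2} = \alpha + \frac{1}{1+\sigma^2/2}$, which is precisely \eqref{E-sexual-reproduction-Gaussian-solution-variance-equation}. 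The eigenvalue $\blambda_\alpha$ then emerges from the normalization constant, matching the formula in \eqref{E-sexual-reproduction-Gaussian-solution}. The uniqueness of the positive root $\bsigma_\alpha^2$ follows from the monotonicity of the two sides of the variance equation.

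For uniqueness, the plan is to feed any candidate eigenpair into Theorem~\ref{T-main}. Let $(\lambda,F)$ be any solution of \eqref{E-eigenproblem-time-discrete} with $F$ a probability measure. Setting $F_n := \lambda^n F$ yields a solution to the time-discrete problem \eqref{E-sexual-reproduction-time-discrete} with initial datum $F_0 = F \in \mathcal{M}_+(\mathbb{R}^d)$. Since $F$ is a probability measure, one has $\Vert F_n\Vert_{L^1(\mathbb{R}^d)} = \lambda^n$ and the renormalized profile $F_n/\Vert F_n\Vert_{L^1(\mathbb{R}^d)} = F$ is independent of $n$. Applying Theorem~\ref{T-main} gives that the (constant) renormalized profiles $F$ relax towards $\bF_\alpha$ in relative entropy, forcing $\mathcal{D}_{\text{KL}}(F\Vert \bF_\alpha) = 0$ and hence $F = \bF_\alpha$. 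Similarly, the ratio $\Vert F_n\Vert_{L^1(\mathbb{R}^d)}/\Vert F_{n-1}\Vert_{L^1(\mathbb{R}^d)} = \lambda$ is constant in $n$ and converges to $\blambda_\alpha$, so $\lambda = \blambda_\alpha$.

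There is no real obstacle here: the existence part is a Gaussian ansatz calculation, and the uniqueness part is a one-line application of the main theorem to the stationary solution generated by any candidate eigenpair. The only mild care required is to ensure that the candidate $F$ is allowed as an initial datum in Theorem~\ref{T-main}, which is automatic since $F \in \mathcal{M}_+(\mathbb{R}^d)$ by assumption.
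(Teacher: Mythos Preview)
Your proposal is correct and follows the same strategy as the paper: existence via the explicit Gaussian computation (this is Proposition~\ref{P-sexual-reproduction-Gaussian-solution}), and uniqueness by feeding an arbitrary eigenpair $(\lambda,F)$ into the convergence result via the ansatz $F_n=\lambda^n F$. The only minor difference is that the paper's actual proof invokes the weaker local convergence result (Corollary~\ref{C-sexual-reproduction-local-convergence}) rather than the full Theorem~\ref{T-main}, since in the paper's logical order the local result is established first; but the introduction itself states that Corollary~\ref{C-main} follows from Theorem~\ref{T-main}, so your route is exactly the intended one.
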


\begin{remark}[About the assumption of quadratic selection]
By the assumption of a quadratic selection function, we can henceforth push extensively explicit computations of the iterated operator. The latter consists in recursive multiplication and convolution by Gaussian functions. This enables capturing the essence of the relaxation phenomenon, and having a precise description of the behaviour at infinity, which crucially helps to localize the convergence argument. As a by-product, we are able to consider very general initial condition $F_0$. Two of the authors, together with F. Santambrogio, have obtained similar results when the selection function $m$ is more generally assumed to be strongly convex \cite{CPS-23-arxiv}. However, they imposed stringent conditions on the initial datum $F_0$, that is, it should behave at infinity as the equilibrium profile $\mathbf{F}_\alpha$ in a very strong sense. It would be of interest to merge the two works, having a general (strongly convex) selection function, and a general initial datum. This is left for future work.
\end{remark}

\begin{remark}[About the choice of metric]\label{R-choice-metric} The convergence of the profiles has been quantified in Kullback-Leibler divergence \eqref{E-KL-divergence} in Theorem \ref{T-main}, in contrast with the results in \cite{R-17-arxiv,R-21-arxiv}, where the quadratic Wasserstein distance was used for perturbative regimes of the case without selection. We anticipate that there are several compelling reasons for such a change of metric in our non-perturbative setting:

\begin{enumerate}[label=(\roman*)]
\item {\bf (Quadratic Wasserstein distance)} When $\alpha=0$, the operator $\mathcal{T}$ reduces to $\mathcal{B}$, which is non-expansive with respect to the quadratic Wasserstein distance, and indeed contractive over distributions with common center of mass, {\it cf.} Section \ref{SS-properties-B}. However, when $\alpha>0$, the multiplicative operator leads to an operator $\mathcal{T}$ which is not even Lipschitz continuous with respect to the quadratic Wasserstein distance, {\it cf} Section \ref{SS-incompatibility-wasserstein-multiplicative}. Therefore, the quadratic Wasserstein distance seems to be unadapted to scenarios where reproduction and selection operate together.

\item {\bf (Log-Lipschitz norm)} As we show in Section \ref{SS-log-lipschitz-contraction}, the operator $\mathcal{T}$ is non-expansive in the log-Lipschits norm $\Vert \nabla\log\frac{F}{\bF_\alpha}\Vert_{L^\infty(\mathbb{R}^d)}$ for all $\alpha\geq 0$, and indeed it is contractive if $\alpha>0$. However, this contraction only gives actual information when the initial datum $F_0$ has identical tails to the Gaussian density $\bF_\alpha$ so that initially the log-Lipschitz norm is finite.

\item {\bf (Relative entropy distance)} Note that the above log-Lipschit norm amounts to the natural $L^\infty$ version of the relative Fisher information $\Vert \nabla\log\frac{F}{\bF_\alpha}\Vert_{L^2(\mathbb{R}^d,F)}$. By the log-Soboled inequality, contraction of the log-Lipschits norm readily implies decay of the Kullback-Leibler divergence, which is a more standard metric in relative entropy arguments. However, we emphasize that our use of the Kullback-Leibler divergence is not only aesthetic, but we actually need it in order to go beyond the above structural constraint on the tails of the initial data. Specifically, for generic initial data we need to prove an appropriate shaping of tails over time, which cannot be expressed using uniform norms, but only in an averaged sense (compatible with the relative entropy).
\end{enumerate}

Altogether justifies that the whilst the quadratic Wasserstein distance is useful in perturbative regimes, the use of alternative norms is necessary to quantify contraction in purely non-perturbative settings.

\end{remark}

\begin{remark}[About the positivity of $\alpha$]\label{R-positivity-alpha}
Our form of ergodicity, as measured in Theorem \ref{T-main}, it breaks down when $\alpha=0$, simply because the operator is invariant by translation in that case, and it admits a one-parameter family $\bF_{\alpha=0}(\cdot-\mu)$ with $\mu\in \mathbb{R}^d$ of fixed Gaussian probability densities. Nevertheless, as
mentioned in item $(i)$ above, when $\alpha=0$ and we restrict to centered initial data, there is convergence to the right centered Gaussian probability density $\bF_{\alpha=0}$ with respect to the Wasserstein distance. It is an open problem to make both approaches meet for $\alpha=0$, and prove contraction in norms stronger than the Wasserstein distance, but comparable to the log-Lipschitz norm, in this subclass of initial data.
\end{remark}

The rest of the paper is organized as follows. In Section \ref{S-observations} we discuss about the generic incompatibility of the Wasserstein distance with a multiplicative operator and we provide a brief outlook of the strategy of our proof. In Section \ref{S-preliminaries} we provide some necessary notation and we introduce the special class of Gaussian solutions of both problems \eqref{E-sexual-reproduction-time-discrete} and \eqref{E-eigenproblem-time-discrete}, which will inspire some parts of the paper. Section \ref{S-properties-operator-T} is devoted to introduce some main properties of $\mathcal{T}$ regarding the emergence of Gaussian behavior (in the large) from generic initial data, and a suitable propagation of quadratic and exponential moments across generations. In Section \ref{S-restating-iterations} we reformulate \eqref{E-sexual-reproduction-time-discrete} via a high-dimensional integral operator propagating ancestors' information across the different levels of the pedigree chart, which will be the cornerstone to study the long-term dynamics. In Section \ref{S-ergodicity} we prove our main results, namely, Theorem \ref{T-main} and Corollary \ref{C-main}. Section \ref{S-numerical-experiments} contains some numerical experiments that illustrate the results in this paper. In Section \ref{S-conclusions} we provide some conclusions and perspectives. Finally, Appendix \ref{Appendix-nondimensionalization} contains the adimensionalization of the problem, and the relationship between \eqref{E-sexual-reproduction-time-discrete} and the previous time-continuous analogues in the literature is discussed. A full list of the main notations in the paper is presented in Table \ref{tab:list-notations}.

\begin{table}[t]
\centering
\begin{tabular}{||l|l|l||}
\hline
{\bf Notation} & {\bf Meaning} & {\bf Page}\\
\hline
$\mathcal{M}(\mathbb{R}^d)$, $\mathcal{M}_+(\mathbb{R}^d)$ & Signed and non-negative finite Radon measures & \pageref{T-main}\\
\hline
$\mathcal{P}(\mathbb{R}^d)$, $\mathcal{P}_2(\mathbb{R}^d)$ & Probability measures (with finite $2$nd order moment) & \pageref{lem:contraction W2}\\
\hline
$P\otimes Q\in \mathcal{M}(\mathbb{R}^{2d})$ & Tensor product of the finite measures $P,Q\in\mathcal{M}(\mathbb{R}^d)$ & \pageref{E-rate-growth-restatement-lambda}\\
\hline
$\mathcal{D}_{\rm KL}(P\Vert Q)$ & Kullback-Leibler divergence between $P,Q\in \mathcal{P}(\mathbb{R}^d)$ & \pageref{E-KL-divergence}\\
\hline
$\mathcal{W}_2(P,Q)$ & Quadratic Wasserstein distance between $P,Q\in \mathcal{P}_2(\mathbb{R}^d)$ & \pageref{E-W2-characterizations}\\
\hline
$G_{\mu,\sigma^2}$ & Gaussian with mean $\mu\in \mathbb{R}^d$ and covariance $\sigma^2I_d\in \mathbb{R}^{d\times d}$ & \pageref{SS-Gausian-solutions}\\
\hline
$G_{\mu,\Sigma}$ & Gaussian with mean $\mu\in \mathbb{R}^d$ and covariance $\Sigma\in \mathbb{R}^{d\times d}$ & \pageref{D-sexual-reproduction-multivariate-normal-Gn}\\
\hline
$\mathcal{N}(\mu,\Sigma)$ & Normal with mean $\mu\in \mathbb{R}^d$ and covariance $\Sigma\in \mathbb{R}^{d\times d}$ & \pageref{D-sexual-reproduction-multivariate-normal-Gn}\\
\hline
$\mathbb{E}[X]$ & Expectation of a random variable $X$ & \pageref{E-sexual-Gaussian-quadratic-emF}\\
\hline
$G(x)$ & Gaussian mixing kernel $G_{0,I_d}$ & \pageref{E-mixing-Gaussian}\\
\hline
$m(x)$ & Quadratic selection function $\frac{\alpha}{2}|x|^2$ & \pageref{E-selection-quadratic}\\
\hline
$\mathcal{B}[F]$ & Fisher's infinitesimal operator & \pageref{E-sexual-reproduction-operator}\\
\hline
$\mathcal{T}[F]$ & Selection-reproduction operator & \pageref{E-sexual-reproduction-T-operator}\\
\hline
$\mathcal{M}[F]$ & Normalized multiplicative operator by $e^{-m}$ & \pageref{D-operator-M}\\
\hline
$\mathcal{S}[F]$ & Scaled selection-reproduction operator & \pageref{D-normalized-operator-T}\\
\hline
$\mathcal{E}_n[F]$ & High-dimensional integral operators & \pageref{D-expectation-operator}\\
\hline
$\{F_n\}_{n\in \mathbb{N}}$ & Solution to the time-evolution equation \eqref{E-sexual-reproduction-time-discrete} & \pageref{E-sexual-reproduction-time-discrete}\\
\hline
$(\blambda_\alpha,\bF_\alpha)$ & Gaussian solution to the non-linear eigenproblem \eqref{E-eigenproblem-time-discrete} & \pageref{E-sexual-reproduction-Gaussian-solution}\\
\hline
$\bsigma_\alpha^2$ & Variance of Gaussian eigenfunction $\bF_\alpha$ & \pageref{E-sexual-reproduction-Gaussian-solution-variance-equation}\\
\hline
$\bar F=\frac{e^m F}{\bF_{\alpha=0}}$ & Normalized profile associated to $F\in \mathcal{M}_+(\mathbb{R}^d)$ & \pageref{D-rescaled-F}\\
\hline
$\Tree^n$, $\Tree^n_*$, $\widehat{\Tree}^n$ & Perfect binary tree, rootless tree and leafless tree & \pageref{SS-trees}\\
\hline
$\Leaves^n_m$, $\Leaves^n$ & Level $m$ and leaves (level $n$) of the tree & \pageref{SS-trees}\\
\hline
$i1,i2$ & Parents of a node $i\in \widehat{\Tree}^n$ of the tree & \pageref{SS-trees}\\
\hline
$\hspace{-1ex}\begin{array}{l}
\bx_n=(x_i)_{i\in \Tree^n_*}\\
\by_n=(y_i)_{i\in \Tree^n_*}
\end{array}$ & Variables indexed by the rootless tree & \pageref{R-tree-indexed-variables}\\
\hline
$\bz_n=(z_j)_{j\in \Leaves^n}$ & Variables indexed by leafs & \pageref{R-tree-indexed-variables}\\
\hline
$\Vert \bx\Vert=\left(\sum_{i=1}^n|x_i|^2\right)^{1/2}$ & $\ell_2$ sum of Euclidean norms of $\bx=(x_1,\ldots,x_n)\in \mathbb{R}^{dn}$ & \pageref{E-sexual-reproduction-barX-sum}\\
\hline
$\Phi_n^j(x;\by_n)$ & Lineage map from leaf $j\in \Leaves^n$ to root value $x\in \mathbb{R}^d$ & \pageref{D-sexual-reproduction-Gaussian-quadratic-lineage-maps}\\
\hline
$u\otimes v\in \mathbb{R}^{d\times d}$ & Kronecker product $(u_iv_j)_{1\leq i,j\leq N}$ of vectors $u,v\in \mathbb{R}^d$ & \pageref{R-correlations}\\
\hline
$k_n$, $\kappa_n$ & Sequences of coefficients in the change of variables & \pageref{D-sexual-reproduction-coefficients}\\
\hline
$(2\bk_\alpha)^2$ & Convergence rate of $\mathcal{D}_{\rm KL}$ & \pageref{E-sexual-reproduction-coefficients-limit}\\
\hline
$2\bk_\alpha$ & Convergence rate of the log-Lipschitz norm & \pageref{L-log-Lipschitz-estimate}\\
\hline
$\br_\alpha$ & Relaxation rates of variances recursion & \pageref{L-sexual-reproduction-Gaussian-solution-time-discrete-variance-relaxation}\\
\hline
\end{tabular}
\caption{List of notations}
\label{tab:list-notations}
\end{table}

\section{Motivation and strategy of the proof of Theorem \ref{T-main}}\label{S-observations}

In this section, we discuss the incompatibility of the quadratic Wasserstein distance to quantify directly contractivity under the joint effect of the reproduction operator $\mathcal{B}$ in \eqref{E-sexual-reproduction-operator} and a generic multiplicative selection $e^{-m}$. Although a small perturbation of the case of flat selection ($m\equiv 0$) could still be considered via a perturbative argument (see discussion above, \cite{R-21-arxiv} and also \cite{R-17-arxiv}), our novel approach is able to tackle a purely non-perturbative setting. We end the section by briefly discussing the strategy of our proof.

\subsection{Some properties of the sexual reproduction operator}\label{SS-properties-B}
We start by recalling some of the main properties of the sexual reproduction operator $\mathcal B$. Since the fecundity rate has been normalized to 1 (see Appendix \ref{Appendix-nondimensionalization}), then $\mathcal{B}$ preserves the mass and center of mass, namely,
\begin{equation}\label{eq:conserved quantities}
\Vert \mathcal{B}[F]\Vert_{L^1(\mathbb{R}^d)} = \Vert F\Vert_{\mathcal{M}(\mathbb{R}^d)},\quad \int_{\mathbb{R}^d}x\,  \mathcal{B}[F](x)\, dx = \int_{\mathbb{R}^d} x \,F(dx),
\end{equation}
for any $F\in \mathcal{M}_+(\mathbb{R}^d)$. Furthermore, it is contractive in the space of probability measures with a common center of mass, endowed with the quadratic Wasserstein metric. As discussed above, this property has been used fruitfully by {\sc G. Raoul} ({\em cf.} \cite{R-21-arxiv}) to analyse the long term behavior of the time-continuous problem in the regime of weak (and compactly supported) selection acting on fecundity. For the sake of clarity, we recall this fact and its proof below: 
\begin{lemma}\label{lem:contraction W2} Assume that $F_1,F_2\in \mathcal{P}_2(\mathbb{R}^d)$ and they have the same center of mass. Then, 
\begin{equation*}
\mathcal W_2^2(\mathcal B[F_1],\mathcal B[F_2]) \leq \frac{1}{2}\,\mathcal W_2^2(F_1,F_2) \,.
\end{equation*}
\end{lemma}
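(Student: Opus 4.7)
The plan is to argue by a probabilistic coupling. First I would rewrite $\mathcal{B}[F]$ in its natural stochastic form: if $X_1,X_2$ are i.i.d. with law $F\in\mathcal{P}(\mathbb{R}^d)$ and $Z\sim G$ is independent of them, then
\begin{equation*}
\mathcal{B}[F] = \mathrm{Law}\!\left(\frac{X_1+X_2}{2} + Z\right),
\end{equation*}
which one reads off directly from \eqref{E-sexual-reproduction-operator} once $F$ is normalized to a probability measure. This representation will do all the work.

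Next I would build a candidate coupling between $\mathcal{B}[F_1]$ and $\mathcal{B}[F_2]$. Let $\pi$ be an optimal quadratic coupling of $F_1$ and $F_2$, so that $\mathbb{E}\,|X-Y|^2=\mathcal{W}_2^2(F_1,F_2)$ for $(X,Y)\sim\pi$. Take two independent copies $(X_1,Y_1)$ and $(X_2,Y_2)$ of $\pi$, together with a single independent $Z\sim G$, and set
\begin{equation*}
U:=\frac{X_1+X_2}{2}+Z,\qquad V:=\frac{Y_1+Y_2}{2}+Z.
\end{equation*}
Then $U\sim\mathcal{B}[F_1]$ and $V\sim\mathcal{B}[F_2]$, so $(U,V)$ is an admissible coupling. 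The shared noise $Z$ cancels, giving
\begin{equation*}
\mathcal{W}_2^2(\mathcal{B}[F_1],\mathcal{B}[F_2]) \leq \mathbb{E}\,|U-V|^2 = \tfrac{1}{4}\,\mathbb{E}\,\bigl|(X_1-Y_1)+(X_2-Y_2)\bigr|^2.
\end{equation*}

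Finally I would expand the square. The two diagonal terms each contribute $\mathbb{E}|X_i-Y_i|^2=\mathcal{W}_2^2(F_1,F_2)$. The cross term is
\begin{equation*}
2\,\mathbb{E}\bigl[(X_1-Y_1)\cdot(X_2-Y_2)\bigr] = 2\,\mathbb{E}[X_1-Y_1]\cdot\mathbb{E}[X_2-Y_2],
\end{equation*}
by independence of the two draws from $\pi$, and this vanishes precisely because $F_1$ and $F_2$ have the same center of mass (so each factor is zero). Combining, the right-hand side equals $\tfrac{1}{4}\cdot 2\,\mathcal{W}_2^2(F_1,F_2)=\tfrac{1}{2}\mathcal{W}_2^2(F_1,F_2)$, which is the claim.

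There is no real obstacle here; the only subtle point is recognizing that the gain factor $\tfrac12$ comes entirely from the ``averaging of two parents,'' which kills the cross term thanks to the equal-mean hypothesis, while the Gaussian mixing kernel $G$ plays no role (it merely survives as a shared additive noise that cancels in the coupling). If one dropped the equal centers assumption, the cross term would contribute $\tfrac12\bigl|\int x\,dF_1-\int x\,dF_2\bigr|^2$, giving the natural translation-invariant defect.
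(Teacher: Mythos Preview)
Your proof is correct. The coupling argument is clean and the key cancellation of the cross term under the equal-mean hypothesis is identified correctly.

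The paper's proof reaches the same conclusion by a slightly different route: it works through the Kantorovich dual characterization of $\mathcal{W}_2^2$, taking an admissible pair $(\phi_1,\phi_2)$ with $|\phi_1(x)+\phi_2(y)|\leq |x-y|^2$, expanding $\int\phi_1\,\mathcal{B}[F_1]+\int\phi_2\,\mathcal{B}[F_2]$ via the definition of $\mathcal{B}$, then reintroducing a transference plan $\gamma$ to bound the resulting expression and finally optimizing on both sides. In substance the two arguments are dual to one another and hinge on the same cancellation (the cross term $\int (x_1-y_1)\cdot(x_2-y_2)\,d\gamma\otimes\gamma$ vanishes when the marginals share their center of mass). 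Your purely primal coupling is arguably more transparent: the probabilistic representation $\mathcal{B}[F]=\mathrm{Law}\big(\tfrac{X_1+X_2}{2}+Z\big)$ makes the role of the shared noise $Z$ and the averaging of parents immediately visible, and it avoids the back-and-forth between primal and dual. The paper's version, on the other hand, stays closer to analytic language and does not invoke any random variables, which may be preferable if one wants to avoid setting up a probability space.
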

\begin{proof}
Recall the following dual characterizations of the quadratic Wasserstein distance ({\em cf.} \cite{AGS-08, V-09}): 
\begin{align}\label{E-W2-characterizations}
\begin{split}
\mathcal W_2^2(F_1,F_2)  & = \inf\left\{\int_{\mathbb{R}^{2d}} |x-y|^2 \, \,\gamma(dx,dy):\,\gamma\in \mathcal{P}(\mathbb{R}^d\times \mathbb{R}^d),\ \pi_{1\#}\gamma=F_1,\ \pi_{2\#}\gamma=F_2\right\}\\
&= \sup\left\{ \int_{\mathbb{R}^d} \phi_1\,dF_1 + \int_{\mathbb{R}^d} \phi_2 \,dF_2 :\,\left| \phi_1(x) + \phi_2(y)\right| \leq |x-y|^2\quad  \forall\,x,y\in \mathbb{R}^d \right\},
\end{split}
\end{align}
where $\pi_i:\mathbb{R}^d\times \mathbb{R}^d\rightarrow \mathbb{R}^d$ is the projection onto the $i$-th component for $i=1,2$. Taking any couple $(\phi_1,\phi_2)$ as above and using the specific form of the operator $\mathcal B$ we obtain:
\begin{align*}
&\int_{\mathbb{R}^d} \mathcal{B}[F_1](x) \,\phi_1(x)\, dx +\int_{\mathbb{R}^d} \mathcal{B}[F_2] (y)\,\phi_2(y)\, dy\\
& = \int_{\mathbb{R}^{3d}} G\left(x-\frac{x_1+x_2}{2}\right) \,\phi_1(x)\, F_1(dx_1)\,F_1(dx_2) \,dx + \int_{\mathbb{R}^{3d}} G\left(y-\frac{y_1+y_2}{2}\right) \,\phi_2(y)\, F_2(dy_1)\,F_2(dy_2) \,dy \\
& = \int_{\mathbb{R}^{3d}} G(z) \,\phi_1\left(z+\frac{x_1+x_2}{2}\right)\, F_1(dx_1)\,F_1(dx_2)\,dz + \int_{\mathbb{R}^{3d}} G(z) \,\phi_2\left(z+\frac{y_1+y_2}{2}\right)\, F_2(dy_1) \, F_2(dy_2)\, dz.
\end{align*}
Consider any transference plan $\gamma\in \mathcal{P}(\mathbb{R}^d\times \mathbb{R}^d)$ between $(F_1,F_2)$ as above. Specifically, we have that $\pi_{1\#}\gamma=F_1$ and $\pi_{2\#}\gamma=F_2$. Then, we can gather both integrals as follows:
\begin{multline*}
\int_{\mathbb{R}^d} \mathcal{B}[F_1](x) \,\phi_1(x)\, dx + \int_{\mathbb{R}^d} \mathcal{B}[F_2](y)\,\phi_2(y)\, dy \\ 
= \int_{\mathbb{R}^{5d}} G(z) \left( \phi_1\left(z+\frac{x_1+x_2}{2}\right) +  \phi_2\left(z+\frac{y_1+y_2}{2}\right) \right)  \,  \gamma(dx_1,dy_1) \, \gamma(dx_2,dy_2)\, dz.
\end{multline*}
Since the condition $\left| \phi_1(x) + \phi_2(y)\right| \leq |x-y|^2$ is verified for all $x,y\in \mathbb{R}^d$, then we find
\begin{align*}
\int_{\mathbb{R}^d} \mathcal{B}[F_1](x) &\,\phi_1(x)\, dx  + \int_{\mathbb{R}^d} \mathcal{B}[F_2] (y)\,\phi_2(y)\, dy\\
& \leq \frac{1}{4}  \int_{\mathbb{R}^{5d}} G(z) \left| (x_1+x_2) -  (y_1+y_2)\right|^2  \,  \gamma(dx_1,dy_1)\,\gamma(dx_2,dy_2)\,dz \\
& = \frac{1}{4} \int_{\mathbb{R}^{4d}}  \left| (x_1+x_2) -  (y_1+y_2)\right|^2  \,  \gamma(dx_1,dy_1) \, \gamma(dx_2,dy_2) \\
& = \frac{1}{4}\int_{\mathbb{R}^{2d}}  \left| x_1-y_1\right|^2 \, \gamma(dx_1,dy_1) + \frac{1}{4} \int_{\mathbb{R}^{2d}}  \left| x_2-y_2\right|^2 \, \gamma(dx_2,dy_2),
\end{align*}
where in the third line we have used that $G$ is a probability density and in the last line we have used the crucial fact that $F_1$ and $F_2$ share the same center of mass in order to cancel the cross-terms (otherwise the estimate would boil down to a non-expansiveness estimate). Taking supremum over $(\phi_1,\phi_2)$ and infimum over $\gamma$ and using the dual characterizations \eqref{E-W2-characterizations} yields the result.
\end{proof}

Note that the fact that both $F_1$ and $F_2$ have the same center of mass has been crucially used to cancel the crossed term. Otherwise, by the Cauchy--Schwarz inequality we would merely obtain non-expansiveness:
\begin{equation}\label{eq:non expansivity W2}
\mathcal{W}_2(\mathcal{B}[F_1],\mathcal{B}[F_2])\leq \mathcal{W}_2(F_1,F_2).
\end{equation}
Using the contractivity property of Lemma \ref{lem:contraction W2} along with the conservation of mass and center of mass in \eqref{eq:conserved quantities} yields the long term dynamics of \eqref{E-sexual-reproduction-time-discrete} in the special case $\alpha=0$.

\begin{corollary}\label{cor:contraction W2}
Set any initial datum $F_0\in \mathcal{M}_+(\mathbb{R}^d)$ such that $\int_{\mathbb{R}^d}\vert x\vert^2 F_0(dx)<\infty$ and consider the solution $\{F_n\}_{n\in \mathbb{N}}$ to the time-discrete problem \eqref{E-sexual-reproduction-time-discrete} with $\alpha=0$, {\em i.e.}, $F_n=\mathcal{B}[F_{n-1}]$ for all $n\in \mathbb{N}$. Then,
$$
\frac{\Vert F_n\Vert_{L^1(\mathbb{R}^d)}}{\Vert F_{n-1}\Vert_{L^1(\mathbb{R}^d)}}=1,\quad \mathcal{W}_2\left(\frac{F_n}{\Vert F_n\Vert_{L^1(\mathbb{R}^d)}},G_{\mu_0,2}\right)\lesssim \frac{1}{2^{n/2}},
$$
for every $n\in \mathbb{N}$, where $\mu_0:=\int_{\mathbb{R}^d}x\,F_0(dx)$. In particular, the set of stationary distributions under $\mathcal{B}$ is $\{\bF_{\alpha=0}(\cdot-\mu):\,\mu\in \mathbb{R}^d\}$, where $\bF_{\alpha=0}=G_{0,2}$ will denote here on the Gaussian centered at the origin with variance equals $2$ in agreement with the notation \eqref{E-sexual-reproduction-Gaussian-solution} in Theorem \ref{T-main}.
\end{corollary}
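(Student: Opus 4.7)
The strategy is to reduce to the probability-measure setting and then iterate the $\mathcal{W}_2$-contraction of Lemma~\ref{lem:contraction W2} against a fixed point having the correct center of mass.

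\emph{Step 1 (Normalization and conserved quantities).} By the mass-conservation identity in \eqref{eq:conserved quantities}, $\|F_n\|_{L^1(\mathbb{R}^d)}=\|F_0\|_{\mathcal{M}(\mathbb{R}^d)}$ for every $n$, giving the first displayed equality. Setting $\tilde F_n:=F_n/\|F_0\|_{\mathcal{M}(\mathbb{R}^d)}$ reduces matters to probability measures, and the center-of-mass conservation in \eqref{eq:conserved quantities} shows that each $\tilde F_n$ has center of mass equal to $\mu_0$. A short computation also shows that the second moment is propagated, so $\tilde F_n\in\mathcal{P}_2(\mathbb{R}^d)$ for all $n$ whenever $F_0$ does.

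\emph{Step 2 (Gaussian fixed point).} I would verify that $\mathbf{F}_0(\cdot-\mu_0)=G_{\mu_0,2}$ is a fixed point of $\mathcal{B}$ by a probabilistic representation: if $X_1,X_2\sim G_{\mu_0,2}$ are i.i.d.\ and $Z\sim G=G_{0,1}$ is independent, then $Z+(X_1+X_2)/2\sim\mathcal{N}(\mu_0,\,1+1)=G_{\mu_0,2}$, which is exactly the law encoded by $\mathcal{B}[G_{\mu_0,2}]$.

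\emph{Step 3 (Iterated contraction).} Since $\tilde F_n$ and $G_{\mu_0,2}$ share the center of mass $\mu_0$ and lie in $\mathcal{P}_2(\mathbb{R}^d)$, Lemma~\ref{lem:contraction W2} applies and gives
\begin{equation*}
\mathcal{W}_2^2(\tilde F_{n+1},G_{\mu_0,2})=\mathcal{W}_2^2(\mathcal{B}[\tilde F_n],\mathcal{B}[G_{\mu_0,2}])\leq\tfrac{1}{2}\,\mathcal{W}_2^2(\tilde F_n,G_{\mu_0,2}).
\end{equation*}
A straightforward induction then yields $\mathcal{W}_2^2(\tilde F_n,G_{\mu_0,2})\leq 2^{-n}\,\mathcal{W}_2^2(\tilde F_0,G_{\mu_0,2})$, which is the announced rate $2^{-n/2}$.

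\emph{Step 4 (Classification of stationary distributions).} For any stationary probability measure $F\in\mathcal{P}_2(\mathbb{R}^d)$ with center of mass $\mu$, the same contraction argument applied to the pair $(F,G_{\mu,2})$ yields $\mathcal{W}_2(F,G_{\mu,2})\leq 2^{-1/2}\mathcal{W}_2(F,G_{\mu,2})$, forcing $F=G_{\mu,2}=\mathbf{F}_0(\cdot-\mu)$. Since $\alpha=0$ makes $\mathcal{B}$ translation-invariant, this fully describes the family of fixed points modulo translations.

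\emph{Main obstacle.} There is essentially none: Lemma~\ref{lem:contraction W2} does the heavy lifting and the shared center of mass is guaranteed by~\eqref{eq:conserved quantities}. The only mildly technical point is to confirm a priori that $\tilde F_n\in\mathcal{P}_2(\mathbb{R}^d)$ so that Lemma~\ref{lem:contraction W2} genuinely applies at each step; this follows from the elementary bound $\mathbb{E}[|Z+(X_1+X_2)/2|^2]\leq 2\mathbb{E}[|Z|^2]+2\mathbb{E}[|(X_1+X_2)/2|^2]$ combined with independence.
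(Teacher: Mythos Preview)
Your proposal is correct and follows exactly the approach the paper intends: the paper does not give a detailed proof but simply states that the corollary follows from the contractivity in Lemma~\ref{lem:contraction W2} together with the conservation of mass and center of mass in~\eqref{eq:conserved quantities}, which is precisely what you have spelled out. Your Step~4 argument for the classification of stationary distributions (applying the contraction to the pair $(F,G_{\mu,2})$) is the natural one and matches the remark the paper makes later that these are ``all possible generic solutions'' by Corollary~\ref{cor:contraction W2}.
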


The preceding result might be regarded as the counterpart of Theorem \ref{T-main} for $\alpha=0$. Indeed, by Talagrand transportation inequality for a Gaussian measure \cite{T-96} we obtain the relation 
$$\mathcal{W}_2^2\left(\frac{F_n}{\Vert F_n\Vert_{L^1(\mathbb{R}^d)}},G_{\mu_0,2}\right)\leq 4\,\mathcal{D}_{\rm KL}\left(\left.\frac{F_n}{\Vert F_n\Vert_{L^1(\mathbb{R}^d)}}\right\Vert G_{\mu_0,2}\right).$$
However, Theorem \ref{T-main} does not hold when $\alpha=0$, as mentioned in Remark \ref{R-positivity-alpha}, due to two fundamental reasons. First, when $\alpha=0$ there is translation invariance and therefore, for generic $F_0\in \mathcal{M}_+(\mathbb{R}^d)$ one cannot expect that the normalized profiles $F_n/\Vert F_n\Vert_{L^1(\mathbb{R}^d)}$ always converge to the Gaussian $\bF_{\alpha=0}$ centered at the origin (contrarily to what happens when $\alpha>0$). Otherwise, the centers of mass must get shifted toward the origin, thus breaking the translation invariance. Indeed, as mentioned in Corollary \ref{cor:contraction W2}, the equilibria are not unique when $\alpha=0$ (contrarily to the case $\alpha>0$), and the center of mass of the resulting equilibria must stay equal to the initial one. Second, even if we set the initial center of mass at the origin so that we kill the translation invariance, our method of proof of Theorem \ref{T-main} leads to estimates that blow up as $\alpha\rightarrow 0$ since we have $2\bk_{\alpha=0}=1$.

\subsection{Incompatibility of Wasserstein metric with multiplicative operators}\label{SS-incompatibility-wasserstein-multiplicative} Note that by definition \eqref{E-sexual-reproduction-T-operator}, our operator $\mathcal{T}$ is the composition of the sexual resproduction operator $\mathcal{B}$ with the multiplicative operator by the survival probability $e^{-m}$. Then, it might be natural to study perturbations of the previous Lemma \ref{lem:contraction W2} including the following conservative multiplicative operator.

\begin{definition}[Normalization of multiplicative operator]\label{D-operator-M}
$$
\mathcal{M}[F]:=\dfrac{e^{-m} F}{\|e^{-m} F\|_{\mathcal{M}(\mathbb{R}^d)}},\quad F\in \mathcal{M}_+(\mathbb{R}^d)\setminus \{0\}\, .
$$
\end{definition}

However, the latter is not Lipschitz continuous with respect to the quadratic Wasserstein metric. Hence the composition of $\mathcal B$ and $\mathcal M$ is not expected to be contractive, even in the case of weak selection, without any additional restriction. We illustrate such a Lipschitz discontinuity of $\mathcal M$ in the following example.

\begin{example}\label{ex:incompatibility}
Suppose that the $m\in C^1_+(\mathbb{R})$ is radially symmetric around the origin and set $F_1, F_2\in \mathcal{P}_2(\mathbb{R})$ as the sum of two Dirac masses, $F_1$ being symmetric, and $F_2$ nearly symmetric. More precisely,
$$F_1 = \frac{1}{2} \delta_{-h} +  \frac{1}{2} \delta_h,\quad F_2  = \frac{1}{2} \delta_{-h+\varepsilon} +  \frac{1}{2} \delta_{h+\varepsilon},$$
where $h>0$ is fixed so that $m'(h)\neq 0$ and $\varepsilon>0$ is small. Note that 
$$\mathcal M[F_1] = F_1,\quad \mathcal M[F_2] = (1-p_\varepsilon) \delta_{-h+\varepsilon} +  p_\varepsilon\delta_{h+\varepsilon},$$
where $p_\varepsilon\in (0,1)$ is given by
$$p_\varepsilon= \dfrac{e^{-m(h+\varepsilon)}}{e^{-m(-h+\varepsilon)}+ e^{-m(h+\varepsilon)}}.$$
On the one hand, we have $\mathcal{W}_2(F_1,F_2) = \varepsilon$ because $F_2$ is simply deduced from $F_1$ by a translation of size $\varepsilon$. On the other hand, assuming $m'(h)>0$ for simplicity (a similar argument holds if $m'(h)<0$) and taking $\varepsilon>0$ sufficiently small we obtain $p_\varepsilon<\frac{1}{2}$ and
$$\mathcal{W}_2(\mathcal{M}[F_1],\mathcal{M}[F_2]) = \sqrt{\varepsilon^2+4h(h-\varepsilon)\left(\frac{1}{2}-p_\varepsilon\right)}\,\sim\, \varepsilon+h\,m'(h)^{1/2}\varepsilon^{1/2},$$
as $\varepsilon\rightarrow 0$ by symmetry and the mean value theorem. In a sense, the leading order term corresponds to the cost of moving a piece of mass $\frac{1}{2}-p_\varepsilon$ from $-h+\varepsilon$ to $h$ in order to equilibrate the Dirac masses in the transport plan. Since $m'(h)\neq 0$ this leads to the Lipschitz-discontinuity of the operator $\mathcal{M}$.
\end{example}

In \cite{R-21-arxiv}, this case is ruled out by assuming that the densities are uniformly bounded below on compact intervals. In our case, we will avoid relying on the above quantification in Lemma \ref{lem:contraction W2} for the contractivity of $\mathcal{B}$ and will propose a different strategy that we discuss in the sequel.

\subsection{Log-Lipschitz contraction estimate}\label{SS-log-lipschitz-contraction} In this paper, we explore an alternative approach to derive some suitable contraction of the operator $\mathcal{T}$ in appropriate norms in the regime of strong, but quadratic, selection. As anticipated in Remark \ref{R-choice-metric}, our computations suggest using a log-Lipschitz norm, which measures the uniform deviation of tails of any profile $F$ relative to the Gaussian tails of $\bF_\alpha$. More specifically, we obtain the following result for any $\alpha\geq 0$ (but only useful when $\alpha>0$).

\begin{lemma}[Log-Lipschitz estimate]\label{L-log-Lipschitz-estimate}
Let $m$ be the quadratic selection function in \eqref{E-selection-quadratic} and consider any value $\alpha\geq 0$. Then, the following estimate holds true
$$\left\Vert \nabla\log\frac{\mathcal{T}[F]}{\bF_\alpha}\right\Vert_{L^\infty(\mathbb{R}^d)}\leq 2\bk_\alpha \left\Vert \nabla\log\frac{F}{\bF_\alpha}\right\Vert_{L^\infty(\mathbb{R}^d)},$$
for any $F\in L^1_+(\mathbb{R}^d)\cap C^1(\mathbb{R}^d)$ such that $\Vert \nabla\log\frac{F}{\bF_\alpha}\Vert_{L^\infty(\mathbb{R}^d)}<\infty$. The coefficient $\bk_\alpha$ above is given by the numerical value \eqref{E-sexual-reproduction-coefficients-limit} in Theorem \ref{T-main}.
\end{lemma}

\begin{figure}[t]
\centering
\begin{tikzpicture}
\begin{axis}[
  axis x line=middle, axis y line=middle,
  xmin=0, xmax=15, xtick={0,5,10,15}, xlabel=$\alpha$,
  ymin=0, ymax=0.6, ytick={0,0.25,0.5}, ylabel=$\bk_\alpha$,
]
\addplot [
    domain=0:15, 
    samples=200, 
    color=black,
    line width=0.4mm,
]
{0.5*pow(2*(((3+2*x)-pow(pow(3+2*x,2)-8,1/2))/4),1)};
\addplot [mark=*, mark size=1.5pt] coordinates {(0,0.5)};
\end{axis}
\end{tikzpicture}
\caption{Parameter $\bk_\alpha$ against parameter $\alpha$.}
\label{fig:contraction-parameter}
\end{figure}

When applied to $F=F_n$ for any solution $\{F_n\}_{n\in \mathbb{N}}$ of the time-evolution problem \eqref{E-sexual-reproduction-time-discrete}, the above inequality in Lemma \ref{L-log-Lipschitz-estimate} allows propagating a control of growth/decrease of the log-Lipschitz norm under the flow of the equation.
In particular, when $\alpha>0$, we have $2\bk_\alpha<1$ ({\it cf.} Figure \ref{fig:contraction-parameter}), and therefore we obtain an actual contractivity estimate. There is one main drawback though: typically $\nabla \log \frac{F_n}{\bF_\alpha}$ are genuinely unbounded unless $F_0$ and $\bF_{\alpha}$ have the same Gaussian decay for large $x$ (which is much too restrictive). A major point of our work will precisely be to circumvent this unbounded factors.

However, when $\alpha=0$ we have $2\bk_\alpha=1$, and we simply obtain non-expansiveness. We emphasize that in the derivation of such a rough estimate, we do not use carefully the preservation of the center of mass. This results in a non-expansiveness estimate, which is conceptually no better than the previous non-expansiveness property \eqref{eq:non expansivity W2} in the quadratic Wasserstein metric, obtained as in Lemma \ref{lem:contraction W2} without assuming that the centers of mass are the same. For this reason, our method of proof will finally not yield satisfying results in the case $\alpha=0$, but we find this idea illuminating to address the case of a non-trivial effect of selection when $\alpha>0$. A refinement of Lemma \ref{L-log-Lipschitz-estimate} leading to real contractivity would require tackling more carefully the center of mass. However, as mentioned in Remark \ref{R-positivity-alpha}, we will not address this in the current paper, and we refer to Section \ref{S-conclusions} for some perspectives and future works in this line. 

\begin{proof}[Proof of Lemma \ref{L-log-Lipschitz-estimate}]
Our starting point resides in the following normalization of the operator $\mathcal{T}$:
\begin{equation}\label{E-normalized-T}
\frac{\mathcal{T}[F](x)}{ \bF_\alpha(x)}=\frac{\blambda_\alpha}{\int_{\mathbb{R}^d}\frac{F(x')}{\bF_\alpha(x')}\bF_\alpha(x')\,dx'}\iint_{\mathbb{R}^{2d}}\mathbf{P}(x;x_1,x_2)\frac{F(x_1)}{\bF_\alpha(x_1)}\frac{F(x_2)}{\bF_\alpha(x_2)}\,dx_1\,dx_2,
\end{equation}
for all $x\in \mathbb{R}^d$, where we have
\begin{equation}\label{E-normalized-T-P}
\mathbf{P}(x;x_1,x_2)=\frac{e^{\frac{1}{2}(\frac{1}{\bsigma_\alpha^2}-\alpha)|x|^2}}{\blambda_\alpha (2\pi\bsigma_\alpha)^d}\exp\left[-\frac{1}{2}\left\vert x-\frac{x_1+x_2}{2}\right\vert^2-\frac{1}{2\bsigma_\alpha^2}(|x_1|^2+|x_2|^2)\right].
\end{equation}
Above we have exploited the explicit Gaussian shape of $\bF_\alpha$ to find $\mathbf{P}$ explicitly. Note that $\mathbf{P}$ consists in a one-step Markov transition kernel representing the probability that parental traits $(x_1,x_2)$ lead to a descendant trait $x$. Indeed, $\iint_{\mathbb{R}^{2d}}\mathbf{P}(x;x_1,x_2)\,dx_1\,dx_2=1$ for all $x\in \mathbb{R}^d$ because $(\blambda_\alpha,\bF_\alpha)$ solves the non-linear eigenproblem \eqref{E-eigenproblem-time-discrete}. We remark that the quadratic form in the exponential in \eqref{E-normalized-T-P} reaches its maximum value at $(x_1,x_2) = (k x, k x)$ for $k=\bsigma_\alpha^2/(2+\bsigma_\alpha^2)$ and by definition \eqref{E-sexual-reproduction-coefficients-limit} of $\bk_\alpha$, we infer $k=\bk_\alpha$. This motivates using the change of variables
\begin{equation}\label{E-change-variables-log-lipschitz-estimate}
x_1=\bk_\alpha x+y_1,\quad x_2=\bk_\alpha x+y_2,
\end{equation}
which appropriately centers the quadratic form at the minimum. Specifically, we obtain
\begin{align*}
&\frac{1}{2}\left\vert x-\frac{x_1+x_2}{2}\right\vert^2-\frac{1}{2\bsigma_\alpha^2}(|x_1|^2+|x_2|^2)\\
&\, =-\frac{1}{2}\left\vert\frac{y_1+y_2}{2}\right\vert^2-\frac{1}{2\bsigma_\alpha^2}(|y_1|^2+|y_2|^2)-\frac{1}{2}\left((1-\bk_\alpha)^2+\frac{2\bk_\alpha^2}{\bsigma_\alpha^2}\right)|x|^2+\frac{1}{2}\left((1-\bk_\alpha)-\frac{2\bk_\alpha}{\bsigma_\alpha^2}\right)x\cdot (y_1+y_2),\\
&\, =-\frac{1}{2}\left\vert\frac{y_1+y_2}{2}\right\vert^2-\frac{1}{2\bsigma_\alpha^2}(|y_1|^2+|y_2|^2)-\frac{1}{2}\left(\frac{1}{\bsigma_\alpha^2}-\alpha\right)|x|^2,
\end{align*}
where in the second line we have noticed that the coefficient of the crossed term $x\cdot (y_1+y_2)$ vanishes due to the relation $\bk_\alpha=\frac{\bsigma_\alpha^2}{2+\bsigma_\alpha^2}$, and the coefficient of the $|x|^2$ factor can be reformulated as
$$(1-\bk_\alpha)^2+\frac{2\bk_\alpha^2}{\bsigma_\alpha^2}=\frac{2}{2+\bsigma_\alpha^2}=\frac{1}{1+\frac{\bsigma_\alpha^2}{2}}=\frac{1}{\bsigma_\alpha^2}-\alpha,$$
thanks to the implicit equation \eqref{E-sexual-reproduction-Gaussian-solution-variance-equation} satisfied by $\bsigma_\alpha^2$. Therefore, \eqref{E-normalized-T}-\eqref{E-normalized-T-P} transform into
\begin{align}
&\frac{\mathcal{T}[F](x)}{\bF_\alpha(x)}=\frac{\blambda_\alpha}{\int_{\mathbb{R}^d}\frac{F(x')}{\bF_\alpha(x')}\bF_\alpha(x')\,dx'}\iint_{\mathbb{R}^{2d}}\widetilde{\mathbf{P}}(y_1,y_2)\frac{F(\bk_\alpha x+y_1)}{\bF_\alpha(\bk_\alpha x+y_1)}\frac{F(\bk_\alpha x+y_2)}{\bF_\alpha(\bk_\alpha x+y_2)}\,dy_1\,dy_2,\label{E-normalized-T-2}\\
&\widetilde{\mathbf{P}}(y_1,y_2)=\frac{1}{\blambda_\alpha (2\pi\bsigma_\alpha)^d}\exp\left[-\frac{1}{2}\left\vert \frac{y_1+y_2}{2}\right\vert^2-\frac{1}{2\bsigma_\alpha^2}(|y_1|^2+|y_2|^2)\right].\label{E-normalized-T-P-2}
\end{align}
We remark that the new Markov transition kernel $\widetilde{\mathbf{P}}=\widetilde{\mathbf{P}}(y_1,y_2)$ does not depend on $x$ thanks to the explicit cancellation of the $|x|^2$ dependent factors. At this level, we start to observe the ergodicity phenomenon since the dependence of $x$ on the right hand side of \eqref{E-normalized-T-2} has shrunk by a factor $\bk_\alpha$. A possible strategy to see if there is a quantitative degradation of the dependence on $x$ is to take logarithmic derivatives and try to relate the log-Lipschitz norms of $\mathcal{T}[F]$ and $F$. Specifically, we have
\begin{equation}\label{E-normalized-T-log-derivative}
\nabla\log\frac{\mathcal{T}[F](x)}{\bF_\alpha(x)}=\bk_\alpha\int_{\mathbb{R}^{2d}}\left(\nabla\log\frac{F(\bk_\alpha x+y_1)}{\bF_\alpha(\bk_\alpha x+y_1)}+\nabla\log\frac{F(\bk_\alpha x+y_2)}{\bF_\alpha(\bk_\alpha x+y_2)}\right)\,\nu(x;dy_1,dy_2),
\end{equation}
where the $x$-dependent measures $\nu(x;dy_1,dy_2)$ on the variables $(y_1,y_2)$ have the following density with respect to the Lebesgue measure:
$$\frac{\nu(x;dy_1,dy_2)}{dy_1\,dy_2}=\frac{\displaystyle\widetilde{\mathbf{P}}(y_1,y_2)\frac{F(\bk_\alpha x+y_1)}{\bF_\alpha(\bk_\alpha x+y_1)}\frac{F(\bk_\alpha x+y_2)}{\bF_\alpha(\bk_\alpha x+y_2)}}{\displaystyle\iint_{\mathbb{R}^{2d}}\widetilde{\mathbf{P}}(y_1',y_2')\frac{F(\bk_\alpha x+y_1')}{\bF_\alpha(\bk_\alpha x+y_1')}\frac{F(\bk_\alpha x+y_2')}{\bF_\alpha(\bk_\alpha x+y_2')}\,dy_1'\,dy_2'}.$$
Since the integrands of \eqref{E-normalized-T-log-derivative} are uniformly bounded by our assumptions, we end the proof by taking $L^\infty$ bounds and using that $\nu$ are probability measures on the variables $(y_1,y_2)$.
\end{proof}

\subsection{Brief description of our strategy}\label{SS-strategy} As advanced before, our strategy is based on a finer understanding of the iterations of $\mathcal T$ across generations. Specifically, it relies on a suitable reformulation of the solutions $\{F_n\}_{n\in \mathbb{N}}$ solving the recursion \eqref{E-sexual-reproduction-time-discrete} for the special quadratic selection function $m$ in \eqref{E-selection-quadratic}. At this stage we intentionally keep notation simple and intuitive, since our goal is to briefly present the main strategy. However, a rigorous approach with more descriptive notation for the trees of ancestors, which arise from the recursion, is developed in detail in Section \ref{S-restating-iterations}.

The first step consists in choosing the appropriate normalization extending the previous normalization $F_n/\bF_\alpha$ in the proof of Lemma \ref{L-log-Lipschitz-estimate}. In this paper, we have opted for 
$$\bar F_n = \frac{e^m F_n}{\bF_{\alpha=0}},$$ ({\it cf.} Definition \ref{D-rescaled-F}) but there is some freedom here. In particular, the term $e^m$ is not mandatory, but it is convenient as such for an easier sorting of the various terms. Indeed, we may typically consider any normalization $F_n/G_{0,\sigma^2}$ with $\sigma^2$ larger but arbitrarily close to $\bsigma_\alpha^2$ (being $\bsigma_\alpha^2$ the variance \eqref{E-sexual-reproduction-Gaussian-solution-variance-equation} of the expected equilibrium $\bF_\alpha$). In other words, we could take any ``close-to-optimal'' normalization as compared to the ``optimal'' normalization used in the strategy in Section \ref{SS-log-lipschitz-contraction}. However, for general selection functions $m$ we do not know the expected equilibrium. To account for a more robust viewpoint, we  follow a more robust approach so that we do not ``optimize'' the normalization.

In a nutshell: we iterate the operator $\mathcal{T}$ in the recursion for $F_n$ up to the initial datum and we obtain (modulo a multiplicative constant) a formula of the following type
\begin{align*}
&F_n(x)\propto \int_{\mathbb{R}^{(2^{n+1}-2)d}} \mathbf{P}_n(x;\{x_i\})\,\prod_j  \bar F_0(x_j)\, d\{x_i\},
\end{align*}
for an explicit $n$-step transition kernel $\mathbf{P}_n$ with Gaussian shape:
$$\mathbf{P}_n(x;\{x_i\}):=\exp(-\mathbf{Q}_n(x;\{x_i\})).$$
Here, the index $i$ spans the $2^{n+1}-2$ members of the genealogical tree with $n$ generations of ancestors from the root (excluded) to the leaves, whilst $j$ only spans the $2^n$ members at the leaves. In addition, $\mathbf{Q}_n=\mathbf{Q}_n(x;\{x_i\})$ is a quadratic form taking as arguments all traits $x$ and $\{x_i\}$ ({\em i.e.}, all the $2^{n+1}-1$ ancestors in the family chart including the root $x$).  As compared to Section \ref{SS-log-lipschitz-contraction}, we have iterated $n$ times, which raises the dimension of the quadratic form to $2^{n+1}-1$. In addition, the change of normalization leads to additional quadratic contributions $\frac{\gamma}{2} |x_i|^2$ (with $\gamma=\frac{\bsigma_\alpha^2}{2+\bsigma_\alpha^2})$). Therefore, the minimizers of $\mathbf{Q}_n$ get shifted ({\em cf.} Section \ref{S-restating-iterations}). As a result of successive backwards changes of variables from leaves to the root very much in the spirit of \eqref{E-change-variables-log-lipschitz-estimate}, the following alternative expression is obtained:
\begin{equation}\label{eq:iterations alpha>0}
F_n(x)\propto \widetilde{\bF}_n(x) \int_{\mathbb{R}^{(2^{n+1}-2)d}} \widetilde{\boldsymbol{P}}_n(\{y_i\})\,\prod_j  \bar F_0(\Phi_n^j(x;\{y_i\}))\, d\{y_i\}\,.
\end{equation}
Above, for every leaf $j$ the maps $\Phi^j_n$ defined by
$$\Phi^j_n(x;\{y_i\}):=\kappa_n x+\Lambda^j(\{y_i\}),$$
where $\Lambda^j_n$ are affine transformations with respect to the variables $\{y_i\}$. We shall call them the lineage maps ({\it cf.} Definition \ref{D-sexual-reproduction-Gaussian-quadratic-lineage-maps}), since they contain precise information of the effect of the leaf $j$ of the genealogical tree on the resulting trait $x$. In addition, $\boldsymbol{P}_n$ are $n$-step transition kernel with Gaussian shape:
$$\widetilde{\boldsymbol{P}}_n(\{y_i\})=\exp(-\widetilde{\mathbf{Q}}_n(\{y_i\})),$$
for a $x$-independent quadratic form $\widetilde{\mathbf{Q}}_n$ taking as arguments all traits $\{y_i\}$ (root excluded)}. In this case, the $|x|^2$ remainders coming from the changes of variables on $\mathbf{P}_n$ do not simplify due to our different normalization, and they contribute with an explicit Gaussian factor $\widetilde{\bF}_n(x)$. Then, the dependency upon $x$ is split into two parts: the explicit term $\widetilde{\bF}_n(x)$ outside the integral (which is proven to converge towards $\bF_\alpha$ by construction) and the contribution $\kappa_n x$ of the lineage maps $\Phi^j_n(x;\{y_i\})$ inside $\bar F_0$ for an appropriate sequence $\{\kappa_n\}_{n\in \mathbb{N}}\subseteq \mathbb{R}_+$. The major observation here is that $\kappa_n\ll 2^{-n}$ when $\alpha>0$ because of the strong shift towards the origin under selection. Otherwise, for $\alpha=0$ we only have $\kappa_n=2^{-n}$.

Formula \eqref{eq:iterations alpha>0} for $\alpha>0$ then suggests a strong form of ergodicity, reminiscent of the contraction of the log-Lipschitz norm in Lemma \ref{L-log-Lipschitz-estimate}, where the dependency on $x$ fades as $n\rightarrow\infty$. Indeed, there are still $2^n$ terms in the product indexed by $j$, but the contraction parameter $\kappa_n$ appears to decay fast enough to compensate them. To make this argument quantitative, let us differentiate \eqref{eq:iterations alpha>0} again to obtain
\begin{equation}\label{eq:iterations alpha>0 log-derivative}
\nabla \log F_n(x) = \nabla \log   \widetilde{\bF}_n (x) + \kappa_n \sum_j \int_{\mathbb{R}^{(2^{n+1}-2)d}} \nabla \log \bar F_0(\Phi^j_n(x;\{y_i\})) \, \nu_n(x;d\{y_i\}) \,, 
\end{equation}
where the $x$-dependent probability measures $\nu_n(x;d\{y_i\})$ on the variables $\{y_i\}$ have the following density with respect to the Lebesgue measure:
$$
\frac{\nu_n(x;d\{y_i\})}{d\{y_i\}}=\frac{\widetilde{\boldsymbol{P}}_n(\{y_i\})\,\prod_j \bar F_0(\Phi^j_n(x;\{y_i\}))}{\int_{\mathbb{R}^{(2^{n+1}-2)d}}\widetilde{\boldsymbol{P}}_n(\{y_i'\})\,\prod_j \bar F_0(\Phi^j_n(x;\{y_i'\}))\,d\{y_i'\}}.
$$

We remark that a naive repetition of the strategy in Lemma \ref{L-log-Lipschitz-estimate} under the additional assumption that $\nabla\log \bar F_0\in L^\infty(\mathbb{R}^d)$ would immediately retrieve an exponential decay of the second factor in \eqref{eq:iterations alpha>0 log-derivative} when $\alpha>0$ (or only a uniform bound if $\alpha=0$) since the large sum over $j$ would be bounded by $2^n\kappa_n  \|\nabla \log \bar F_0 \|_{L^\infty(\mathbb{R}^d)}$. However, as mentioned before, there is a strong drawback: the factors $\nabla \log \bar F_0$ are not bounded, not even after long enough times. In the current formal argument, we have not discussed about the precise shape of the linear components $\Lambda^j_n$ of the lineage maps $\Phi^j_n$, since it was not relevant thus far. However, we anticipate that to overcome the aforementioned complication, we shall require more precise estimates for large values of $\{y_i\}$ in the high-dimensional integral, which are guaranteed by the strong enough decay of $\kappa_n$, and which will be essential in the rigorous proof in Section \ref{S-ergodicity}.

More specifically, and this explains why our approach moves from one-step contraction estimates (in the spirit of Section \ref{SS-log-lipschitz-contraction}) to ergodicity results, note that the same reformulation of the iteration as above could be done exactly $n-k$ times up to an advanced enough time step $k$. By doing so, we can grasp on some natural Gaussian shaping under selection, which we expect to lead to unbounded $\nabla \log \bar F_k$, but growing sublinearly at infinity. Hence, we require precise compensations in the high-dimensional integral \eqref{eq:iterations alpha>0 log-derivative} which we find by deriving a suitable control of moments of $\nu_n$. By doing so, one has to irremediably move from uniform estimates to averaged estimates, and being able to propagate them for large times. This requires a thorough and highly technical analysis, which becomes the main objective of this paper.

For an easier readability, and to guide the reader along the various steps of the proof, we provide an overall map of it in Figure \ref{fig:overall-map-proof}, which allows interconnecting the main fundamental results.
\begin{figure}[t]
\def\svgwidth{1.2\textwidth}
\centering
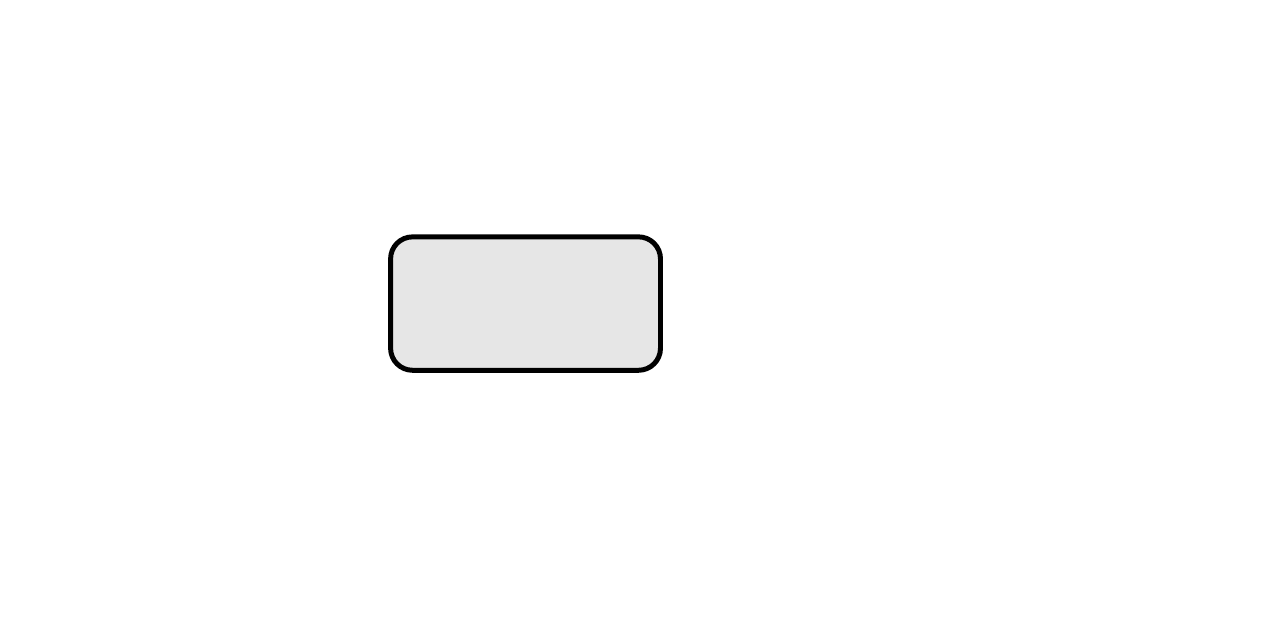
\caption{Overall map of the proof of Theorem \ref{T-main}}
\label{fig:overall-map-proof}
\end{figure}

\section{Preliminaries}\label{S-preliminaries}

In this part, we collect some necessary preliminary tools and results that will be used later on.

\subsection{Perfect binary trees}\label{SS-trees}

In this paper, we shall systematically use indices $i$ that do not range on discrete intervals $\{1,\ldots,n\}$ but rather on the vertices of a specific type of trees, which are called \textit{perfect binary trees}, see \cite{R-11,W-85}. First, we introduce the notion of \textit{binary trees}, which we present using their \textit{universal address system}, see \cite[Section 11.3.3]{R-11}. Namely, a binary tree $\Tree$ consists in a finite subset of words $\Tree\subseteq \Words_2:=\cup_{k=0}^\infty\{1,2\}^k$ with letters in the alphabet $\{1,2\}$ that verifies:
\begin{enumerate}[label=(\roman*)]
\item $\emptyset\in \Tree$.
\item If $i1\in \Tree$ or $i2\in \Tree$ for some word $i\in \Words_2$, then $i\in \Tree$.
\end{enumerate}
This implies that the root is the empty word $\empty$, and $\Tree$ is stable under chopping letters on the right of its words. Given a word $i\in \Tree$, we denote its length (number of letters) or \textit{height} in the tree by $\vert i\vert$.  In particular, $\vert \emptyset\vert=0$. A binary tree $\Tree$ is said to be \textit{perfect} if, in addition, the following properties hold:
\begin{enumerate}[label=(\roman*)]
\setcounter{enumi}{2}
\item Given a word $i\in \Words_2$, then $i1\in \Tree$ if, and only if, $i2\in \Tree$. 
\item There exists $n\in \mathbb{N}$ such that $\vert i\vert \leq n$ for every $i\in \Tree$ and
$$\#\{i\in \Tree:\,\vert i\vert=n\}=2^n.$$
\end{enumerate}
This implies that, except for the root, words in $\Tree$ appear in couples $\{i1,i2\}$ and paths in the tree arising from the root achieve the same maximal height $n$. The above four conditions determine a unique tree, which we call the \textit{perfect binary tree with height $n\in \mathbb{N}$} and we denote by $\Tree^n$. See Figure \ref{fig:trees} for a graphical representation of the perfect binary tree $\Tree^3$ of height $3$. Binary trees are often used to describe the different generations of offsprings after a given individual. In our case, we shall make a reverse use of trees. Namely, a binary tree will represent the \textit{family tree} or \textit{pedigree chart} of an individual, consisting of the different generations of \textit{ancestors} of such an individual. For this purpose, we shall establish the following terminology:
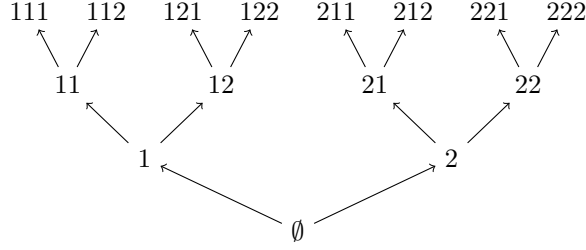
\begin{figure}[t]
\centering
\begin{forest}
for tree={grow=north,edge={->}}
[$\emptyset$ [2 [22 [222] [221]] [21 [212] [211]]] [1 [12 [122] [121]] [11 [112] [111]]]]
\end{forest}
\caption{Perfect binary tree $\Tree^3$}
\label{fig:trees}
\end{figure}

\begin{enumerate}[label=(\roman*)]
\item (\textbf{Leaves}) We say that a vertex $i\in \Tree^n$ is a leaf of the tree if $\vert i\vert=n$. The set of all leaves of $\Tree^n$ will be denoted by 
$$\Leaves^n:=\{i\in \Tree^n:\,\vert i\vert=n\}.$$
\item (\textbf{Levels}) We say that a vertex $i\in \Tree^n$ is on the $m$-th level of the tree if $\vert i\vert=m$ for some $1\leq m\leq n$. The set of all vertices on the $m$-th level will be denoted
$$\Level_m^n:=\{i\in \Tree^n:\,\vert i\vert=m\}.$$
In particular, note that $\Level_0^n=\{\emptyset\}$ is the root and $\Level_n^n=\Leaves^n$ are the leaves. For simplicity of notation, we denote the set of vertices of the root-free and leaves-free tree respectively by
$$\Tree^n_*:=\Tree^n\setminus\{\emptyset\},\quad \widehat{\Tree}^n:=\Tree^n\setminus\Leaves^n.$$
\item (\textbf{Child}) Given a vertex $i\in \Tree^n_*$, then there exists a unique word $j\in\Words_2$ such that $i=j1$ or $i=j2$. Such a vertex $j$ is called the \textit{child} of $i$ and will be denoted by $\child(i)=j.$
\item (\textbf{Parents}) Given a vertex $\in \widehat{\Tree}^n$, we define the \textit{parents} of $i$ as the subset of all vertices that have the same common child $i$, that is,
$$\Parents(i):=\{i1,i2\}.$$
\item (\textbf{Mate}) Given a vertex $i\in \Tree^n_*$, we define the \textit{mate} of $i$ and we denote it $\mate(i)$ as the only other vertex in $\Tree^n$ that has the same child as $i$, {\em i.e.},
$$\{i,\mate(i)\}=\{\child(i)1,\child(i)2\}.$$
\item (\textbf{Tree order}) Given two vertices $i,j\in \Tree^n$, we say that $i\leq j$ if the associated words are so ordered according to the lexicographical order of the set of words $\Words_2$ with two letters $\{1,2\}$.
\item (\textbf{Highest common descendant}) Given two vertices $i,j\in \widehat{\Tree}^n$, we define the \textit{highest common descendant} of $i$ and $j$, and we denote it by $i\wedge j$, as
$$i\wedge j:=\max\{l\in \Tree^n:\,l\leq i,\,l\leq j\},$$
where the maximum is considered with respect to above tree (lexicographic) order.
\end{enumerate}

\begin{remark}[Tree-indexed variables]\label{R-tree-indexed-variables}
Given $n\in \mathbb{N}$, we shall identify $\mathbb{R}^{2(2^n-1)d}\equiv (\mathbb{R}^d)^{\Tree^n_*}$ and $\mathbb{R}^{2^nd}\equiv (\mathbb{R}^d)^{\Leaves^n}$. Specifically, vectors $\bx_n,\by_n\in \mathbb{R}^{2(2^n-1)d}$ and $\bz_n\in\mathbb{R}^{2^nd}$ will be regarded as indexed families
\begin{equation}\label{E-tree-indexed-variables}
\bx_n=(x_i)_{i\in \Tree^n_*},\quad \by_n=(y_i)_{i\in \Tree^n_*},\quad \bz_n=(z_j)_{j\in \Leaves^n},
\end{equation}
where $x_i,y_i\in \mathbb{R}^d$ and $z_j\in \mathbb{R}^d$ for each $i\in \Tree^n_*$ and $j\in \Leaves^n$.
\end{remark}

\subsection{Gaussian solutions}\label{SS-Gausian-solutions}
In this part, we compute particular solutions of the time-discrete problem \eqref{E-sexual-reproduction-time-discrete} and the associated eigenproblem \eqref{E-eigenproblem-time-discrete}. As we advanced before, we shall exploit the explicit algebraic structure imposed by $G$ (given by the Gaussian mixing kernel \eqref{E-mixing-Gaussian}) and $m$ (given by the quadratic selection function \eqref{E-selection-quadratic}). Namely, explicit Gaussian solution will be obtained. We recall first the following stability property of Gaussians under convolutions.

\begin{lemma}[Stability of Gaussians]\label{L-convolution-gaussians}
The following relation holds true
\begin{equation}\label{E-convolution-gaussians}
G_{\mu_1,\sigma_1^2}*G_{\mu_2,\sigma_2^2}=G_{\mu_1+\mu_2,\sigma_1^2+\sigma_2^2},
\end{equation}
for any couple of means $\mu_1,\mu_2\in \mathbb{R}^d$ and variances $\sigma_1^2,\sigma_2^2>0$.
\end{lemma}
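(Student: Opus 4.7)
The plan is to reduce the identity to a classical computation via the Fourier transform, which diagonalizes convolution and sends isotropic Gaussians to isotropic Gaussians. Under the convention $\widehat{f}(\xi) := \int_{\mathbb{R}^d} e^{-i\xi\cdot x} f(x)\,dx$, the first step is to compute
$$\widehat{G_{\mu,\sigma^2}}(\xi) = e^{-i\mu\cdot\xi}\,e^{-\frac{\sigma^2|\xi|^2}{2}},\quad \xi\in\mathbb{R}^d,$$
for every $\mu\in\mathbb{R}^d$ and $\sigma^2>0$. This follows by a change of variable $x\mapsto x-\mu$ that produces the phase $e^{-i\mu\cdot\xi}$, followed by completing the square in the quadratic exponent of the centered Gaussian $G_{0,\sigma^2}$ and using the classical Gaussian integral (which in turn can be justified either by contour deformation in one dimension and tensorization, or by differentiation under the integral sign in the parameter $\xi$).

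The second step is to invoke the convolution theorem $\widehat{f*g}=\widehat{f}\cdot\widehat{g}$, which applies since both factors lie in $L^1(\mathbb{R}^d)$. Combining it with the formula above yields
$$\widehat{G_{\mu_1,\sigma_1^2}*G_{\mu_2,\sigma_2^2}}(\xi) = e^{-i(\mu_1+\mu_2)\cdot\xi}\,e^{-\frac{(\sigma_1^2+\sigma_2^2)|\xi|^2}{2}} = \widehat{G_{\mu_1+\mu_2,\sigma_1^2+\sigma_2^2}}(\xi).$$
The identity \eqref{E-convolution-gaussians} then follows from the injectivity of the Fourier transform on $L^1(\mathbb{R}^d)$.

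As an alternative purely real-variable route, one may prove the identity directly: write
$$(G_{\mu_1,\sigma_1^2}*G_{\mu_2,\sigma_2^2})(x) = \int_{\mathbb{R}^d} G_{\mu_1,\sigma_1^2}(x-y)\,G_{\mu_2,\sigma_2^2}(y)\,dy,$$
expand both quadratic exponents, and complete the square in $y$ around the point $\bar y(x) = (\sigma_2^2(x-\mu_1)+\sigma_1^2\mu_2)/(\sigma_1^2+\sigma_2^2)$. The inner Gaussian integral produces a normalization factor involving $\sigma_1^2\sigma_2^2/(\sigma_1^2+\sigma_2^2)$, and the residual $x$-dependent quadratic form reassembles, together with the leftover prefactors, into the density $G_{\mu_1+\mu_2,\sigma_1^2+\sigma_2^2}(x)$. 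In either route, I expect no real conceptual obstacle: the entire difficulty is clerical bookkeeping of the quadratic forms and the normalization constants, and both approaches handle the isotropic case uniformly in dimension thanks to the factorization $G_{\mu,\sigma^2}(x)=\prod_{k=1}^d G_{\mu^{(k)},\sigma^2}^{(1)}(x^{(k)})$ that reduces everything to the one-dimensional calculation.
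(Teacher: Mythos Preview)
Your proposal is correct. The paper does not actually prove this lemma: it is stated as a recalled classical fact and immediately used without argument. Both of your routes (Fourier transform plus convolution theorem, or direct completion of the square in the convolution integral) are standard and valid, so there is nothing to compare against.
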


Using the above result, we can obtain the following explicit evaluation of the operator $\mathcal{T}$ in \eqref{E-sexual-reproduction-T-operator} over the class of Gaussian functions.

\begin{lemma}[Evaluation on Gaussians]\label{L-evaluation-gaussians}
Consider any $\mu\in \mathbb{R}^d$ and $\sigma^2\in \mathbb{R}_+^*$, then
$$\mathcal{T}[G_{\mu,\sigma^2}]=m_*\,G_{\mu_*,\sigma_*^2},$$
where the parameters $m_*$, $\mu_*$ and $\sigma_*^2$ are given by:
\begin{equation}\label{E-evaluation-gaussians-parameters}
m_*=\frac{e^{-\frac{1}{2}\frac{\alpha\vert \mu\vert^2}{1+\alpha(1+\frac{\sigma^2}{2})}}}{(1+\alpha(1+\frac{\sigma^2}{2}))^{d/2}},\quad \mu_*=\frac{\mu}{1+\alpha(1+\frac{\sigma^2}{2})},\quad \sigma_*^2=\frac{1+\frac{\sigma^2}{2}}{1+\alpha(1+\frac{\sigma^2}{2})}.
\end{equation}
\end{lemma}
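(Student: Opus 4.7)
The plan is to compute $\mathcal{T}[G_{\mu,\sigma^2}]$ in two stages corresponding to the two factors in the definition $\mathcal{T}[F]=e^{-m}\mathcal{B}[F]$, using the Gaussian stability of Lemma \ref{L-convolution-gaussians} for the first stage and completion of the square for the second.

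First, I would evaluate $\mathcal{B}[G_{\mu,\sigma^2}]$. Since $G_{\mu,\sigma^2}$ is a probability density, the normalization in \eqref{E-sexual-reproduction-operator} is trivial. The key observation is that the map $(x_1,x_2)\mapsto \frac{x_1+x_2}{2}$ pushes forward the product measure $G_{\mu,\sigma^2}\otimes G_{\mu,\sigma^2}$ to $G_{\mu,\sigma^2/2}$; this is a direct consequence of Lemma \ref{L-convolution-gaussians} after the change of variables $y=\frac{x_1+x_2}{2}$, $z=\frac{x_1-x_2}{2}$. Therefore
\begin{equation*}
\mathcal{B}[G_{\mu,\sigma^2}](x)=\int_{\mathbb{R}^d}G(x-y)\,G_{\mu,\sigma^2/2}(y)\,dy=(G_{0,1}*G_{\mu,\sigma^2/2})(x)=G_{\mu,1+\sigma^2/2}(x),
\end{equation*}
again by Lemma \ref{L-convolution-gaussians}. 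This isolates the effect of Fisher's mixing operator: it shifts the variance from $\sigma^2$ to $\tau^2:=1+\sigma^2/2$ while preserving the mean.

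Next, I would multiply the resulting Gaussian by $e^{-m(x)}=e^{-\frac{\alpha}{2}|x|^2}$ and collect the exponents. The exponential factor is
\begin{equation*}
-\frac{\alpha}{2}|x|^2-\frac{1}{2\tau^2}|x-\mu|^2=-\frac{1}{2}\left(\alpha+\frac{1}{\tau^2}\right)|x|^2+\frac{x\cdot\mu}{\tau^2}-\frac{|\mu|^2}{2\tau^2}.
\end{equation*}
Setting $A:=\alpha+\tau^{-2}=(1+\alpha\tau^2)/\tau^2$ and completing the square in $x$ around $\mu_*:=\mu/(A\tau^2)$ yields the Gaussian profile $G_{\mu_*,1/A}$ multiplied by a $x$-independent constant. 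Reading off the parameters gives $\sigma_*^2=1/A=\tau^2/(1+\alpha\tau^2)$ and $\mu_*=\mu/(1+\alpha\tau^2)$, in agreement with \eqref{E-evaluation-gaussians-parameters}.

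Finally, I would track the leftover constant prefactor. The ratio of normalizing constants contributes $(\sigma_*^2/\tau^2)^{d/2}=(1+\alpha\tau^2)^{-d/2}$, and the residual exponential comes from the square-completion remainder
\begin{equation*}
-\frac{|\mu|^2}{2\tau^2}+\frac{|\mu|^2}{2A\tau^4}=-\frac{|\mu|^2}{2\tau^2}\cdot\frac{A\tau^2-1}{A\tau^2}=-\frac{\alpha|\mu|^2}{2(1+\alpha\tau^2)},
\end{equation*}
using $A\tau^2-1=\alpha\tau^2$. Multiplying these two contributions gives precisely the formula for $m_*$ in \eqref{E-evaluation-gaussians-parameters}. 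No step is genuinely difficult here; the only mild care is bookkeeping the normalization when rewriting $e^{-m}G_{\mu,\tau^2}$ in the standard form $m_*G_{\mu_*,\sigma_*^2}$, which the above computation handles transparently.
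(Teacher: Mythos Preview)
Your proof is correct and follows essentially the same approach as the paper: first reduce $\mathcal{B}[G_{\mu,\sigma^2}]$ to $G_{\mu,1+\sigma^2/2}$ via the Gaussian convolution lemma, then multiply by $e^{-\frac{\alpha}{2}|x|^2}$ and complete the square. Your version is in fact more explicit in tracking the prefactor $m_*$, whereas the paper simply states ``by completing the square inside the exponential, we conclude our result.''
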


\begin{proof}
On the one hand, note that by Lemma \ref{L-convolution-gaussians}
$$\mathcal{B}[G_{\mu,\sigma^2}](x)=\left(G\left(\frac{\cdot}{2}\right)*G_{\mu,\sigma^2}*G_{\mu,\sigma^2}\right)(2x)=G_{\mu,1+\frac{\sigma^2}{2}}(x),$$
for each $x\in \mathbb{R}^d$. Therefore, by definition of $\mathcal{T}$ in \eqref{E-sexual-reproduction-T-operator} we obtain
$$\mathcal{T}[G_{\mu,\sigma^2}](x)=e^{-\frac{\alpha}{2}\vert x\vert^2}G_{\mu,1+\frac{\sigma^2}{2}}(x)=\frac{1}{(2\pi (1+\frac{\sigma^2}{2}))^{d/2}}\exp\left(-\frac{\alpha}{2}\vert x\vert^2-\frac{1}{2}\frac{1}{1+\frac{\sigma^2}{2}}\vert x-\mu\vert^2\right),$$
for each $x\in \mathbb{R}^d$. By completing the square inside the exponential, we conclude our result.
\end{proof}

Consequently, the following explicit Gaussian solution of the eigenproblem \eqref{E-eigenproblem-time-discrete} is found.

\begin{proposition}[Gaussian solution of the eigenproblem]\label{P-sexual-reproduction-Gaussian-solution}
Assume that $\alpha\in \mathbb{R}_+^*$, then the eigenproblem \eqref{E-eigenproblem-time-discrete} has a unique Gaussian solution $(\blambda_\alpha,\bF_\alpha)$, determined by the relation \eqref{E-sexual-reproduction-Gaussian-solution} in Theorem \ref{T-main}.
\end{proposition}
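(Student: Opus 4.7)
My plan is to exploit Lemma \ref{L-evaluation-gaussians}, which explicitly evaluates $\mathcal{T}$ on isotropic Gaussians, and reduce the eigenproblem to a pair of scalar fixed-point equations on the mean and variance. Setting $F=G_{\mu,\sigma^2}$ as an ansatz, the eigenproblem $\lambda F=\mathcal{T}[F]$ combined with Lemma \ref{L-evaluation-gaussians} forces the three matching conditions $\lambda=m_*$, $\mu=\mu_*$ and $\sigma^2=\sigma_*^2$, where $m_*,\mu_*,\sigma_*^2$ are given by \eqref{E-evaluation-gaussians-parameters}.

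The first step is to handle the mean equation $\mu=\mu/(1+\alpha(1+\sigma^2/2))$. Because $\alpha\in\mathbb{R}_+^*$ and $\sigma^2>0$, the denominator is strictly greater than $1$, hence the unique solution is $\mu=0$, which kills the exponential factor in $m_*$ and also matches the centering of $\bF_\alpha$. Next, the variance equation $\sigma^2=(1+\sigma^2/2)/(1+\alpha(1+\sigma^2/2))$ rearranges to
\begin{equation*}
\frac{1}{\sigma^2}=\alpha+\frac{1}{1+\frac{\sigma^2}{2}},
\end{equation*}
which, after clearing denominators, becomes the quadratic equation $\alpha\sigma^4+(1+2\alpha)\sigma^2-2=0$ in the variable $\sigma^2$. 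The discriminant is positive and the product of roots is $-2/\alpha<0$, so there is exactly one positive root, namely $\bsigma_\alpha^2$ as given in \eqref{E-sexual-reproduction-Gaussian-solution-variance-equation}. Substituting $\mu=0$ and $\sigma^2=\bsigma_\alpha^2$ into $m_*$ yields $\lambda=(1+\alpha(1+\bsigma_\alpha^2/2))^{-d/2}=\blambda_\alpha$, completing the existence part and the uniqueness within the class of isotropic Gaussians.

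To obtain uniqueness among all (possibly anisotropic) Gaussians $G_{\mu,\Sigma}$, I would extend Lemma \ref{L-evaluation-gaussians} by exactly the same argument: Lemma \ref{L-convolution-gaussians} gives $\mathcal{B}[G_{\mu,\Sigma}]=G_{\mu,I+\Sigma/2}$, and multiplying by $e^{-\alpha|x|^2/2}$ and completing the square in the matrix sense produces a Gaussian with precision $\Sigma_*^{-1}=(I+\Sigma/2)^{-1}+\alpha I$ and mean $\mu_*=(I+\alpha(I+\Sigma/2))^{-1}\mu$. The mean condition again forces $\mu=0$ since $I+\alpha(I+\Sigma/2)-I$ is positive definite. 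The covariance fixed-point equation rewrites as the matrix quadratic $\tfrac{\alpha}{2}\Sigma^2+(\tfrac{1}{2}+\alpha)\Sigma-I=0$; since this is a polynomial relation in the single symmetric matrix $\Sigma$, every eigenvalue of $\Sigma$ must satisfy the same scalar quadratic that determines $\bsigma_\alpha^2$, and therefore $\Sigma=\bsigma_\alpha^2 I$ is the only admissible (positive definite) solution.

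There is no real obstacle here; the argument is essentially a direct computation, and the only mildly delicate point is observing that the matrix fixed-point equation has a genuinely unique positive definite solution, which follows immediately by spectral decomposition and the scalar uniqueness established above. I would place the statement and its short proof right after Lemma \ref{L-evaluation-gaussians} so that all the required algebra is at hand.
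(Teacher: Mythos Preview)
Your argument for isotropic Gaussians is essentially identical to the paper's proof: both apply Lemma \ref{L-evaluation-gaussians} to the ansatz $F=G_{\mu,\sigma^2}$, read off the three matching conditions $\lambda=m_*$, $\mu=\mu_*$, $\sigma^2=\sigma_*^2$, and solve them in the same order (mean forced to zero by $\alpha>0$, variance determined by the quadratic \eqref{E-sexual-reproduction-Gaussian-solution-variance-equation}, eigenvalue read off from $m_*$). Where you go further than the paper is in treating anisotropic Gaussians $G_{\mu,\Sigma}$: the paper's proof only runs the computation for $\Sigma=\sigma^2 I_d$ and does not address general covariance matrices, whereas you derive the matrix fixed-point equation $\tfrac{\alpha}{2}\Sigma^2+(\tfrac{1}{2}+\alpha)\Sigma-I=0$ and reduce it to the scalar case by spectral decomposition. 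This addition is correct and short, and it closes a small gap in the paper's presentation (which either tacitly restricts ``Gaussian'' to the isotropic class or leaves the anisotropic case to the reader).
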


\begin{proof}
We look for $\lambda\in\mathbb{R}_+^*$, $\mu\in \mathbb{R}^d$ and $\sigma^2\in\mathbb{R}_+^*$ such that $\lambda G_{\mu,\sigma^2}=\mathcal{T}[G_{\mu,\sigma^2}]$. By Lemma \ref{L-evaluation-gaussians} and bearing in mind parameters $m_*$, $\mu_*$ and $\sigma_*^2$ in \eqref{E-evaluation-gaussians-parameters} we obtain that $(\lambda,\mu,\sigma^2)$ must solve the equations:
$$\lambda=\frac{e^{-\frac{1}{2}\frac{\alpha\vert \mu\vert^2}{1+\alpha(1+\frac{\sigma^2}{2})}}}{(1+\alpha(1+\frac{\sigma^2}{2}))^{d/2}},\quad \mu=\frac{\mu}{1+\alpha(1+\frac{\sigma^2}{2})},\quad \sigma^2=\frac{1+\frac{\sigma^2}{2}}{1+\alpha(1+\frac{\sigma^2}{2})}.$$
Hence, the only solution is given by $\mu=0$ and $\lambda$ and $\sigma^2$ are determined by \eqref{E-sexual-reproduction-Gaussian-solution} and \eqref{E-sexual-reproduction-Gaussian-solution-variance-equation}.
\end{proof}

\begin{remark}[Dependency on selection]\label{R-sexual-reproduction-Gaussian-solution-variance}
The eigenvalue $\blambda_\alpha\in \mathbb{R}_+^*$ and the variance $\bsigma_\alpha^2\in \mathbb{R}_+^*$ determined by the relations \eqref{E-sexual-reproduction-Gaussian-solution} and \eqref{E-sexual-reproduction-Gaussian-solution-variance-equation} for the the unique Gaussian solution $(\blambda_\alpha,\bF_\alpha)$ to the eigenproblem \eqref{E-eigenproblem-time-discrete} are monotonically decreasing with the selection coefficient $\alpha$. Namely, the larger $\alpha$, the smaller $\bsigma_\alpha^2$ and $\blambda_\alpha$. Indeed, we obtain (see also Figure \ref{fig:behavior-eigenvalue-variance-Gaussian-solution})
\begin{align*}
\alpha \nearrow \infty \quad &\Longrightarrow\quad \bsigma_\alpha^2 \searrow 0,\quad \blambda_\alpha \searrow 0\\
\alpha \searrow 0 \quad &\Longrightarrow\quad \bsigma_\alpha^2 \nearrow 2,\quad \blambda_\alpha\nearrow 1.
\end{align*}
Therefore, if selection is strong, then $\bF_\alpha$ is very concentrated around the origin and, if selection ceases, then $\bF_\alpha$ is twice as spread as the mixing kernel $G$ in \eqref{E-mixing-Gaussian}, a famous result in quantitative genetics, see {\em e.g.} \cite{B-80}. Note that $\blambda_\alpha< 1$ for any $\alpha\in \mathbb{R}_+^*$ and, consequently, the special steady solutions $F_n=\blambda_\alpha^n \bF_\alpha$ coming from ansatz \eqref{E-ansatz-discrete} always get extinct for large $n$. This is an artificial consequence of our parameter reduction in Appendix \ref{Appendix-nondimensionalization}. In particular, if we maintain parameter $\beta$ in \eqref{E-sexual-reproduction-time-discrete-two-parameters} coming from the ratio between birth and mortality rates, then the above special steady solution only extincts if $\beta<(1+\alpha(1+\frac{\bsigma_\alpha^2}{2}))$.
\end{remark}

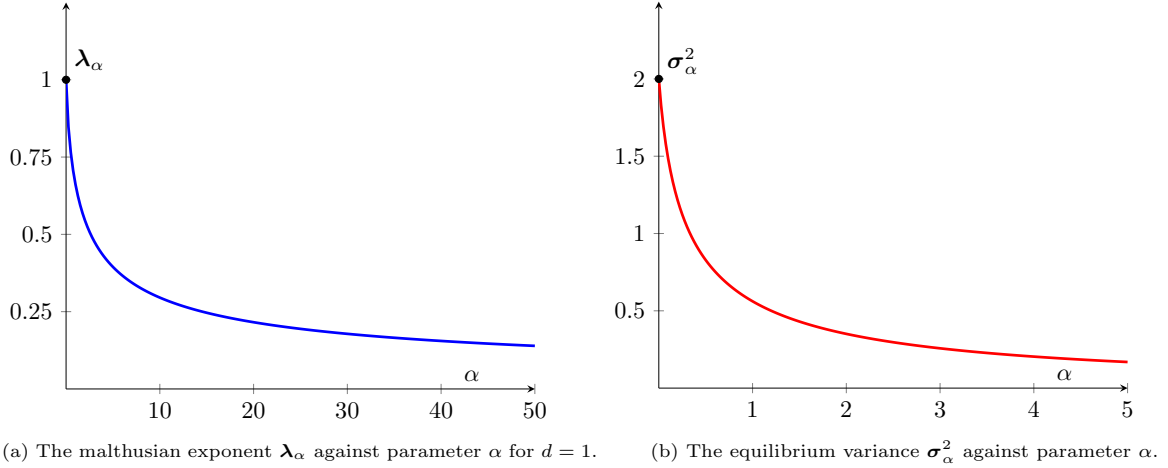
\begin{figure}
\centering
\begin{subfigure}{0.49\textwidth}
\begin{tikzpicture}[scale=0.9]
\begin{axis}[
  axis x line=middle, axis y line=middle,
  label/.style={at={(current axis.right of origin)},anchor=west},
  xmin=0, xmax=50, xtick={0,10,20,30,40,50}, xlabel=$\alpha$,
  ymin=0, ymax=1.25, ytick={0,0.25,0.5,0.75,1}, ylabel=$\blambda_\alpha$,
]
\addplot [
    domain=0:50, 
    samples=200, 
    color=blue,
    line width=0.4mm,
]
{1/pow(1+x*(1+(sqrt((2*x+1)^2+8*x)-(2*x+1))/(4*x)),1/2)};
\addplot [mark=*, mark size=1.5pt] coordinates {(0,1)};
\end{axis}
\end{tikzpicture}
\caption{The malthusian exponent $\blambda_\alpha$ against parameter $\alpha$ for $d=1$.}
\label{fig:behavior-eigenvalue-Gaussian-solution}
\end{subfigure}
\begin{subfigure}{0.49\textwidth}
\begin{tikzpicture}[scale=0.9]
\begin{axis}[
  axis x line=middle, axis y line=middle,
  xmin=0, xmax=5, xtick={0,...,5}, xlabel=$\alpha$,
  ymin=0, ymax=2.5, ytick={0,0.5,1,1.5,2}, ylabel=$\bsigma_\alpha^2$,
]
\addplot [
    domain=-0.001:5, 
    samples=200, 
    color=red,
    line width=0.4mm,
]
{(sqrt((2*x+1)^2+8*x)-(2*x+1))/(2*x)};
\addplot [mark=*, mark size=1.5pt] coordinates {(0,2)};
\end{axis}
\end{tikzpicture}
\caption{The equilibrium variance $\bsigma_\alpha^2$ against parameter $\alpha$.}
\label{fig:behavior-variance-Gaussian-solution}
\end{subfigure}
\caption{Plot of eigenvalue $\blambda_\alpha$ and variance at equilibrium $\bsigma_\alpha^2$ against $\alpha$ for $d=1$. The case $\alpha = 0$ corresponds to absence of selection, which is conservative ($\blambda_{\alpha=0} = 1$), and the variance at equilibrium $\bsigma_{\alpha=0}^2$ is twice the variance of the mixing kernel in $\mathcal B$.}
\label{fig:behavior-eigenvalue-variance-Gaussian-solution}
\end{figure}

\begin{remark}[Eigenproblem with $\alpha=0$]
The same ideas as in Proposition \ref{P-sexual-reproduction-Gaussian-solution} yield the Gaussian solutions to the eigenproblem \eqref{E-eigenproblem-time-discrete} in the absence of selection ({\em i.e.}, $\alpha=0$). However, there is no longer uniqueness due to the translation invariance of $\mathcal{B}$. Indeed, we obtain the Gaussian solutions:
$$\lambda=\blambda_{\alpha=0}=1,\quad F=\bF_{\alpha=0}(\cdot-\mu)=G_{\mu,2},$$
for any $\mu\in \mathbb{R}^d$. Indeed, it is a consequence of the contraction property stated in Section \ref{SS-properties-B} that these are all possible generic solutions (not only Gaussian). Specifically, by the conservation of mass \eqref{eq:conserved quantities} we obtain that the only possible eigenvalue is $\lambda=1$. Then, the eigenproblem \eqref{E-eigenproblem-time-discrete} reduces to a fixed point equation for $\mathcal{B}$. We can then conclude by Corollary \ref{cor:contraction W2}.
\end{remark}

We emphasize that the above presents a crucial difference of behavior between the nonlinear problem \eqref{E-eigenproblem-time-discrete} with sexual reproduction and the analogous linear version with asexual reproduction operator \eqref{E-asexual-reproduction-operator}. Namely, whilst in the former case there exists nontrivial solutions in the absence of selection, in the latter case it can be seen that such a nontrivial solution does not exists ({\em e.g.}, through Fourier arguments). This suggests that whilst in the linear problem both reproduction and selection stay balanced, in the nonlinear problem reproduction appears to dominates selection structurally.

Finally, we note that by iteration of the operator $\mathcal{T}$ and Lemma \ref{L-evaluation-gaussians} we can compute the explicit form of solutions of the time-discrete problem \eqref{E-sexual-reproduction-time-discrete} issued at Gaussian initial data. In fact, the Gaussian structure is preserved along generations, although mass, mean and variance are modified.

\begin{proposition}[Gaussian solutions of the time-discrete problem]\label{P-sexual-reproduction-Gaussian-solution-time-discrete}
Consider any $m_0\in \mathbb{R}_+^*$, $\mu_0\in \mathbb{R}^d$ and $\sigma_0^2\in \mathbb{R}_+$, and set the Gaussian initial datum $F_0=m_0\,G_{\mu_0,\sigma_0^2}$. Hence, the solution $\{F_n\}_{n\in \mathbb{N}}$ to the time-discrete problem \eqref{E-sexual-reproduction-time-discrete} takes the form
\begin{equation}\label{E-sexual-reproduction-Gaussian-solution-time-discrete}
F_n=m_n\,G_{\mu_n,\sigma_n^2},
\end{equation}
for $n\in \mathbb{N}$, where the parameters $m_n\in \mathbb{R}_+^*$, $\mu_n\in \mathbb{R}^d$ and $\sigma_n^2\in \mathbb{R}_+^*$ are governed by the recursions:
\begin{equation}\label{E-sexual-reproduction-Gaussian-solution-time-discrete-parameters}
m_n=m_{n-1}\frac{e^{-\frac{1}{2}\frac{\alpha\vert \mu_{n-1}\vert^2}{1+\alpha\big(1+\frac{\sigma_{n-1}^2}{2}\big)}}}{(1+\alpha(1+\frac{\sigma_{n-1}^2}{2}))^{d/2}},\quad \mu_n=\frac{\mu_{n-1}}{1+\alpha(1+\frac{\sigma^2_{n-1}}{2})}, \quad \frac{1}{\sigma_n^2}=\alpha+\frac{1}{1+\frac{\sigma_{n-1}^2}{2}}.
\end{equation}
\end{proposition}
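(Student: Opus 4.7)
The plan is a straightforward induction on $n\in\mathbb{N}$, leveraging the $1$-homogeneity of $\mathcal{T}$ together with the explicit evaluation on Gaussians provided by Lemma \ref{L-evaluation-gaussians}.

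First I would establish the base case $n=0$, which is immediate from the hypothesis $F_0 = m_0\,G_{\mu_0,\sigma_0^2}$. Then, for the inductive step, assume the ansatz \eqref{E-sexual-reproduction-Gaussian-solution-time-discrete} holds at generation $n-1$, that is, $F_{n-1} = m_{n-1}\,G_{\mu_{n-1},\sigma_{n-1}^2}$ with parameters as in \eqref{E-sexual-reproduction-Gaussian-solution-time-discrete-parameters}. Observe that the sexual reproduction operator $\mathcal{B}$ is $1$-homogeneous since the numerator is bilinear in $F$ and the denominator $\int F$ is linear in $F$; multiplication by $e^{-m}$ preserves $1$-homogeneity, so $\mathcal{T}[cF] = c\,\mathcal{T}[F]$ for any $c\in\mathbb{R}_+^*$. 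Consequently,
\begin{equation*}
F_n = \mathcal{T}[F_{n-1}] = m_{n-1}\,\mathcal{T}[G_{\mu_{n-1},\sigma_{n-1}^2}].
\end{equation*}

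Next I would apply Lemma \ref{L-evaluation-gaussians} with $(\mu,\sigma^2) = (\mu_{n-1},\sigma_{n-1}^2)$ to obtain $\mathcal{T}[G_{\mu_{n-1},\sigma_{n-1}^2}] = m_*\,G_{\mu_*,\sigma_*^2}$, with $m_*$, $\mu_*$, $\sigma_*^2$ given by \eqref{E-evaluation-gaussians-parameters}. Setting $m_n := m_{n-1}\,m_*$, $\mu_n := \mu_*$, $\sigma_n^2 := \sigma_*^2$ yields $F_n = m_n\,G_{\mu_n,\sigma_n^2}$ together with the first two recursions in \eqref{E-sexual-reproduction-Gaussian-solution-time-discrete-parameters}. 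The equivalent form of the variance recursion follows by inverting $\sigma_n^2 = (1+\tfrac{\sigma_{n-1}^2}{2})/(1+\alpha(1+\tfrac{\sigma_{n-1}^2}{2}))$, which gives $\tfrac{1}{\sigma_n^2} = \alpha + \tfrac{1}{1+\sigma_{n-1}^2/2}$.

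There is essentially no obstacle here: the proof is a direct consequence of Lemma \ref{L-evaluation-gaussians} and the $1$-homogeneity of $\mathcal{T}$. The only minor subtlety is to verify that the parameters remain in their admissible ranges along iterations, namely $m_n\in\mathbb{R}_+^*$, $\mu_n\in\mathbb{R}^d$, and $\sigma_n^2\in\mathbb{R}_+^*$, which all follow trivially from the recursive formulas \eqref{E-sexual-reproduction-Gaussian-solution-time-discrete-parameters} since $\alpha\geq 0$ and the starting variance $\sigma_0^2\geq 0$.
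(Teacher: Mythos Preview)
Your proposal is correct and follows exactly the approach the paper intends: the proposition is stated without proof, having been introduced with the remark that ``by iteration of the operator $\mathcal{T}$ and Lemma \ref{L-evaluation-gaussians}'' one obtains the explicit Gaussian form, and your induction via $1$-homogeneity plus Lemma \ref{L-evaluation-gaussians} is precisely that iteration spelled out.
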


\begin{lemma}[Relaxation of variance]\label{L-sexual-reproduction-Gaussian-solution-time-discrete-variance-relaxation}
Assume that $\alpha\in \mathbb{R}_+^*$, consider any $\sigma_0^2\in \mathbb{R}_+^*$ and define the sequence $\{\sigma_n^2\}_{n\in \mathbb{N}}$ by recursion according to the third recursion in \eqref{E-sexual-reproduction-Gaussian-solution-time-discrete-parameters}, {\em i.e.}
\begin{equation}\label{E-sexual-reproduction-Gaussian-solution-time-discrete-variance-recursion}
\frac{1}{\sigma_n^2}=\alpha+\frac{1}{1+\frac{\sigma_{n-1}^2}{2}},
\end{equation}
for every $n\in \mathbb{N}$. Hence, if $\sigma_0^2>\bsigma_\alpha^2$ then $\sigma_n\searrow \bsigma_\alpha^2$ as $n\rightarrow\infty$ and, if $\sigma_0^2<\bsigma_\alpha^2$ then $\sigma_n\nearrow \bsigma_\alpha^2$ as $n\rightarrow\infty$, where $\bsigma_\alpha^2$ is given by \eqref{E-sexual-reproduction-Gaussian-solution-variance-equation}. In addition, we obtain the convergence rates
\begin{equation}\label{E-sexual-reproduction-Gaussian-solution-time-discrete-variance-relaxation}
\vert \sigma_n^2-\bsigma_\alpha^2\vert\leq C_v \br_\alpha^n,
\end{equation}
for every $n\in \mathbb{N}$, where the constant $C_v\in\mathbb{R}_+$ depends on $\sigma_0^2$ and $\alpha$ (we obtain that $C_v=0$ if $\sigma_0^2=\bsigma_\alpha^2$), and the ratio $\br_\alpha\in (0,\frac{1}{2})$ satisfies $\br_\alpha=2\bk_\alpha^2$ and is given explicitly by
\begin{equation}\label{E-variance-contraction-ralpha}
\br_\alpha:=\frac{8}{\left((2\alpha+3)+\sqrt{(2\alpha+1)^2+8\alpha}\right)^2}.
\end{equation}
\end{lemma}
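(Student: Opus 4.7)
The plan is to introduce the one-step map $f(s):=\frac{1+s/2}{1+\alpha(1+s/2)}$, so that the recursion \eqref{E-sexual-reproduction-Gaussian-solution-time-discrete-variance-recursion} reads $\sigma_n^2=f(\sigma_{n-1}^2)$. An elementary computation shows that $f$ is smooth and strictly increasing on $\mathbb{R}_+$, and that the fixed point equation $f(s)=s$ reduces after clearing denominators to the quadratic $\frac{\alpha}{2}s^2+\bigl(\frac{1}{2}+\alpha\bigr)s-1=0$, whose unique positive root is precisely $\bsigma_\alpha^2$ from \eqref{E-sexual-reproduction-Gaussian-solution-variance-equation}. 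A sign analysis of the associated quadratic then yields $f(s)>s$ on $(0,\bsigma_\alpha^2)$ and $f(s)<s$ on $(\bsigma_\alpha^2,\infty)$, so that both intervals are stable under $f$. Consequently, $\{\sigma_n^2\}$ is monotone (decreasing if $\sigma_0^2>\bsigma_\alpha^2$, increasing if $\sigma_0^2<\bsigma_\alpha^2$), bounded, and converges to the unique fixed point $\bsigma_\alpha^2$.

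The quantitative estimate relies on the explicit algebraic identity
\begin{equation*}
f(s)-\bsigma_\alpha^2=\frac{\bsigma_\alpha^2}{(2+\bsigma_\alpha^2)\bigl(1+\alpha+\frac{\alpha s}{2}\bigr)}\,(s-\bsigma_\alpha^2),
\end{equation*}
obtained by expanding $f(s)-\bsigma_\alpha^2$ and factoring out $(s-\bsigma_\alpha^2)$ via the fixed-point relation. Using the rewriting $1+\alpha(1+\bsigma_\alpha^2/2)=(2+\bsigma_\alpha^2)/(2\bsigma_\alpha^2)$ of \eqref{E-sexual-reproduction-Gaussian-solution-variance-equation}, the multiplicative factor at $s=\bsigma_\alpha^2$ reduces to $2\bsigma_\alpha^4/(2+\bsigma_\alpha^2)^2$, which one readily verifies to coincide with $\br_\alpha$ from \eqref{E-variance-contraction-ralpha}. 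Since this factor is decreasing in $s$, in the branch $\sigma_0^2\geq \bsigma_\alpha^2$ the invariance of $[\bsigma_\alpha^2,\infty)$ keeps it below $\br_\alpha$ along the whole orbit, and iterating yields $|\sigma_n^2-\bsigma_\alpha^2|\leq \br_\alpha^n|\sigma_0^2-\bsigma_\alpha^2|$ directly.

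The hard part is the opposite branch $\sigma_0^2<\bsigma_\alpha^2$, where the same factor strictly exceeds $\br_\alpha$ and the direct iteration produces only a suboptimal rate. To circumvent this, I would pass to the reciprocal variable $u_n:=1/\sigma_n^2$, which obeys the recursion $u_n=\alpha+2u_{n-1}/(2u_{n-1}+1)$ and admits the dual identity
\begin{equation*}
u_n-u_\infty=\frac{2(u_{n-1}-u_\infty)}{(2u_{n-1}+1)(2u_\infty+1)},\qquad u_\infty:=\frac{1}{\bsigma_\alpha^2}.
\end{equation*}
At the fixed point this multiplicative factor equals $2/(2u_\infty+1)^2$, which once again matches $\br_\alpha$ after simplification. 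Now $\sigma_0^2<\bsigma_\alpha^2$ translates into $u_0>u_\infty$, and $\{u_n\}$ decreases monotonically to $u_\infty$; hence $u_{n-1}\geq u_\infty$ throughout and the dual factor stays \emph{below} $\br_\alpha$. This gives $|u_n-u_\infty|\leq \br_\alpha^n|u_0-u_\infty|$, and returning to the original variable via $|\sigma_n^2-\bsigma_\alpha^2|=|u_n-u_\infty|/(u_nu_\infty)\leq \bsigma_\alpha^4|u_n-u_\infty|$ yields the stated bound with $C_v=\bsigma_\alpha^4|u_0-u_\infty|$. The asymmetry between the two branches is the main obstacle, and it is resolved cleanly by this switch to the reciprocal variable.
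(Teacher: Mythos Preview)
Your proof is correct, but the route differs from the paper's. The paper works throughout in the reciprocal variable $x_n:=1/\sigma_n^2$ and exploits the M\"obius structure of the recursion by bringing in the \emph{second} (negative) fixed point $x_-$ of the extended map $M(x)=\frac{2(\alpha+1)x+\alpha}{2x+1}$. Computing both $x_n-x_+$ and $x_n-x_-$ and taking their quotient yields the exact identity
\[
\frac{x_n-x_+}{x_n-x_-}=\br_\alpha^{\,n}\,\frac{x_0-x_+}{x_0-x_-},
\]
from which the estimate follows uniformly in the sign of $\sigma_0^2-\bsigma_\alpha^2$ after invoking the crude two-sided bound $\min\{x_*,x_0\}\le x_n\le\max\{x_*,x_0\}$.

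What you do instead is a case split: in the branch $\sigma_0^2\ge\bsigma_\alpha^2$ the one-step factor for $\sigma_n^2$ is already $\le\br_\alpha$ on the invariant half-line, while in the branch $\sigma_0^2<\bsigma_\alpha^2$ you switch to the reciprocal $u_n$ so that the dual one-step factor is $\le\br_\alpha$ there. This is more elementary and avoids introducing the auxiliary fixed point, at the cost of handling the two regimes separately. The paper's cross-ratio trick is the classical device for linear fractional recursions and treats both regimes in one stroke; your argument trades that unification for a direct monotonicity observation in each regime. Both give the sharp rate $\br_\alpha$.
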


\begin{proof}
$\bullet$ {\sc Step 1}: Monotone convergence of $\{\sigma_n^2\}_{n\in \mathbb{N}}$.\\
Consider the function $f:\mathbb{R}_+\longrightarrow \mathbb{R}$ given by
$$f(x)=\alpha+\frac{1}{1+\frac{1}{2x}},\quad x\in \mathbb{R}_+,$$
and, for simplicity of notation, define
\begin{equation}\label{E-contraction-variance-change-variables}
x_*:=(\bsigma_\alpha^2)^{-1},\quad x_n:=(\sigma_n^2)^{-1},\quad n\in \mathbb{N}.
\end{equation}
Notice that $x_*$ is the unique fixed point of $f$ in $\mathbb{R}_+$ and $\{x_n\}_{n\in \mathbb{N}}$ determines the fixed-point iteration of the map $f$ issued at $x_0=(\sigma_0^2)^{-1}$, that is, $x_n=f(x_{n-1}),\quad n\in \mathbb{N}$. Our goal is to show that $\{x_n\}_{n\in \mathbb{N}}$ converges towards $x_*$ and find convergence rates. By direct computation we obtain $f'(x)=2/(1+2x)^2$, which is above $1$ near $x=0$, and therefore $f$ is not contractive. Hence, we cannot apply the usual Banach contraction principle and a different argument is provided. First, note that  $x\in \mathbb{R}_+\mapsto f(x)$ is strictly increasing and $x\in\mathbb{R}_+^*\mapsto \frac{f(x)}{x}$ is strictly decreasing. Then, we obtain that
\begin{align*}
\begin{aligned}
&x<f(x)<x_*, & & \mbox{if } 0\leq x<x_*,\\
& f(x)=x_*, & & \mbox{if }x=x_*,\\
&x_*<f(x)<x, &  &\mbox{if }x>x_*,
\end{aligned}
\end{align*}
which implies the aforementioned monotonicity properties of $\{x_n\}_{n\in \mathbb{N}}$ and, in addition,
\begin{equation}\label{E-contraction-variance-uniform-bound}
\min\{x_*,x_0\}\leq x_n\leq \max\{x_*,x_0\},
\end{equation}
for any $n\in \mathbb{N}$. In particular, this yields the monotonic convergence of $\{x_n\}_{n\in \mathbb{N}}$ towards $x_*$, or equivalently, the claimed monotonic convergence of $\{\sigma_n^2\}_{n\in \mathbb{N}}$ towards $\bsigma_\alpha^2$ as $n\rightarrow \infty$. 

\medskip

$\bullet$ {\sc Step 2:} Convergence rates.\\
Second, to compute the convergence rates we shall use the special algebraic structure of function $f$, which is the restriction to $\mathbb{R}_+$ of the M\"{o}bius function $M:\mathbb{R}\setminus\{-\frac{1}{2}\}\longrightarrow \mathbb{R}$ given by
$$M(x):=\frac{2(\alpha+1)x+\alpha}{2x+1},\quad x\in \mathbb{R}\setminus\left\{-\frac{1}{2}\right\}.$$
Note that $M$ has two fixed points:
$$
x_{\pm}:=\frac{2\alpha+1\pm\sqrt{(2\alpha+1)^2+8\alpha}}{4},
$$
where we note that $x_+=x_*\in \mathbb{R}_+^*$ and $x_-\in \mathbb{R}^-$. Then, by definition of $\{x_n\}_{n\in \mathbb{N}}$ in \eqref{E-contraction-variance-change-variables} we obtain
\begin{equation}\label{E-contraction-variance-Mobius-1}
x_n-x_+=\frac{2(\alpha+1)x_{n-1}+\alpha}{2x_{n-1}+1}-\frac{2(\alpha+1)x_++\alpha}{2x_++1}=\frac{2}{(2x_{n-1}+1)(2 x_++1)}(x_{n-1}-x_+).
\end{equation}
Since $x_0$ (thus $x_{n-1}$ for initial time steps $n$) can be chosen arbitrary close to $0$, then, the best a priori control that we can have on the prefactor in the right hand side of \eqref{E-contraction-variance-Mobius-1} is
$$0\leq \frac{2}{(2x_{n-1}+1)(2 x_++1)}\leq \frac{2\bsigma_\alpha^2}{2+\bsigma_\alpha^2}=2\bk_\alpha,$$
which gives true contraction in \eqref{E-contraction-variance-Mobius-1} when $\alpha>0$, since $2\bk_\alpha<1$. Nevertheless, such a rate is non-optimal, and we show an alternative approach to achieve a sharper one. Specifically, note that
\begin{equation}\label{E-contraction-variance-Mobius-2}
x_n-x_-=\frac{2(\alpha+1)x_{n-1}+\alpha}{2x_{n-1}+1}-\frac{2(\alpha+1)x_-+\alpha}{2x_-+1}=\frac{2}{(2x_{n-1}+1)(2 x_-+1)}(x_{n-1}-x_-).
\end{equation}
By dividing \eqref{E-contraction-variance-Mobius-1} by \eqref{E-contraction-variance-Mobius-2} and iterating the identity we obtain
\begin{equation}\label{E-contraction-variance-Mobius-3}
\frac{x_n-x_+}{x_n-x_-}=\left(\frac{2x_-+1}{2x_++1}\right)^n\frac{x_0-x_+}{x_0-x_-},
\end{equation}
for every $n\in \mathbb{N}$. In fact, the basis can be restated as follows
$$\frac{2x_-+1}{2x_++1}=\frac{(2\alpha+3)-\sqrt{(2\alpha+1)^2+8\alpha}}{(2\alpha+3)+\sqrt{(2\alpha+1)^2+8\alpha}}=\frac{8}{\left((2\alpha+3)+\sqrt{(2\alpha+1)^2+8\alpha}\right)^2}=\br_\alpha<\frac{1}{2}.$$
Therefore, using the uniform control \eqref{E-contraction-variance-uniform-bound} and \eqref{E-contraction-variance-Mobius-3}, we conclude that
\begin{equation}\label{E-contraction-variance-1}
\vert x_n-x_*\vert=\frac{\vert x_n-x_-\vert}{\vert x_0-x_-\vert}\vert x_0-x_*\vert\br_\alpha^n\leq  \frac{\max\{x_*,x_0\}-x_-}{\min\{x_*,x_0\}-x_-}\vert x_0-x_*\vert\,\br_\alpha^n,
\end{equation}
for every $n\in \mathbb{N}$, thus leading to an improved rate since $\br_\alpha=2\bk_\alpha^2<2\bk_\alpha$. Finally, notice that
\begin{equation}\label{E-contraction-variance-2}
\vert \sigma_n^2-\bsigma_\alpha^2\vert=\sigma_n^2\bsigma_\alpha^2\left\vert x_n-x_*\right\vert\leq \max\{\bsigma_\alpha^2,\sigma_0^2\}^2\left\vert x_n-x_*\right\vert,
\end{equation}
for every $n\in \mathbb{N}$. Hence, joining \eqref{E-contraction-variance-1}, \eqref{E-contraction-variance-2} along with \eqref{E-contraction-variance-uniform-bound} ends the proof.
\end{proof}

\begin{figure}[t]
\centering
\begin{tikzpicture}
\begin{axis}[
  axis x line=middle, axis y line=middle,
  every axis x label/.style={at={(current axis.right of origin)},anchor=west},
  xmin=0, xmax=5, xtick={0,...,5}, xlabel=$\quad\alpha$,
  ymin=0, ymax=0.6, ytick={0,0.25,0.5}, ylabel=$\br_\alpha$,
]
\addplot [
    domain=0:5, 
    samples=200, 
    color=black,
    line width=0.4mm,
]
{8/(pow((2*x+3)+pow(pow(2*x+1,2)+8*x,1/2),2))};
\addplot [mark=*, mark size=1.5pt] coordinates {(0,0.5)};
\end{axis}
\end{tikzpicture}
\caption{The ratio $\br_\alpha$ against parameter $\alpha$.}
\label{fig:decay-variances}
\end{figure}
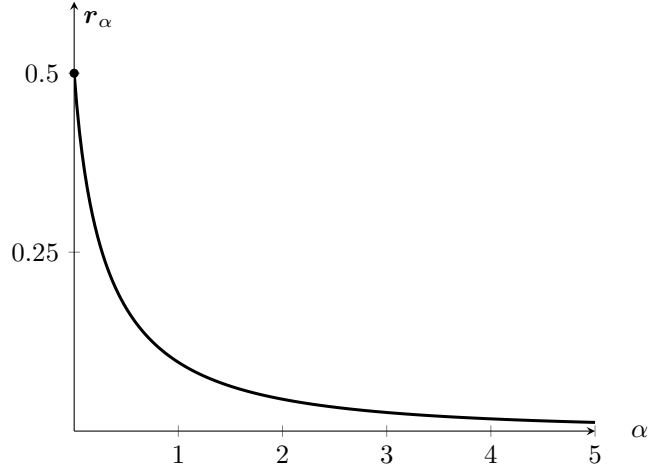

In particular, for Gaussian initial data we recover an explicit particular case of the general relaxation result in Theorem \ref{T-main}.

\begin{corollary}[Relaxation of Gaussian solutions]\label{C-sexual-reproduction-Gaussian-solution-time-discrete-relaxation}
Assume that $\alpha\in \mathbb{R}_+^*$, consider any $m_0\in \mathbb{R}_+$, $\mu_0\in \mathbb{R}^d$ and $\sigma_0^2\in \mathbb{R}_+$, and set the Gaussian initial datum $F_0=m_0\,G_{\mu_0,\sigma_0}$. Hence, the Gaussian solution $\{F_n\}_{n\in \mathbb{N}}$ to the time-discrete problem \eqref{E-sexual-reproduction-time-discrete} verifies that the growth rates $\Vert F_n\Vert_{L^1(\mathbb{R}^d)}/\Vert F_{n-1}\Vert_{L^1(\mathbb{R}^d)}$ relax towards $\blambda_\alpha$ and the normalized profiles $F_n/\Vert F_n\Vert_{L^1(\mathbb{R}^d)}$ relax towards $\bF_\alpha$, where $(\blambda_\alpha,\bF_\alpha)$ is the unique Gaussian solution \eqref{E-sexual-reproduction-Gaussian-solution} to the eigenproblem \eqref{E-eigenproblem-time-discrete} as proved in Proposition \ref{P-sexual-reproduction-Gaussian-solution}. Specifically, for any $\varepsilon\in \mathbb{R}_+^*$
\begin{align*}
\mathcal{D}_{\rm KL}\left(\left.\frac{F_n}{\Vert F_n\Vert_{L^1(\mathbb{R}^d)}}\right\Vert \bF_\alpha\right)&\leq C_{\mu,\varepsilon}(\blambda_\alpha^{4/d}+\varepsilon)^n+C_v\br_\alpha^n,\\
\left\vert\frac{\Vert F_n\Vert_{L^1(\mathbb{R}^d)}}{\Vert F_{n-1}\Vert_{L^1(\mathbb{R}^d)}}-\blambda_\alpha\right\vert&\leq C_{\mu,\varepsilon}(\blambda_\alpha^{4/d}+\varepsilon)^n+C_v\br_\alpha^n,
\end{align*}
where $\mathcal{D}_{\rm KL}$ is the Kullback-Leibler divergence (or relative entropy), $\br_\alpha$ is given by \eqref{E-variance-contraction-ralpha} in Lemma \ref{L-sexual-reproduction-Gaussian-solution-time-discrete-variance-relaxation}, $C_{\mu,\varepsilon}\in \mathbb{R}_+$ depends on $\alpha$, $\mu_0$, $\sigma_0^2$ and $\varepsilon$, and $C_v\in \mathbb{R}_+$ depends only $\alpha$ and $\sigma_0^2$. In fact, $C_{\mu,\varepsilon}=0$ if $\mu_0=0$ and $C_v=0$ if $\sigma_0^2=\bsigma_\alpha^2$.
\end{corollary}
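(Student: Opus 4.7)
The plan is to exploit the explicit Gaussian structure of the iteration given by Proposition~\ref{P-sexual-reproduction-Gaussian-solution-time-discrete}, which reduces the claim to a quantitative analysis of the scalar sequences $\{m_n\}$, $\{\mu_n\}$, $\{\sigma_n^2\}$ governed by \eqref{E-sexual-reproduction-Gaussian-solution-time-discrete-parameters}. The convergence $\sigma_n^2\to\bsigma_\alpha^2$ at the geometric rate $\br_\alpha$ is already granted by Lemma~\ref{L-sexual-reproduction-Gaussian-solution-time-discrete-variance-relaxation}, so the core work is to quantify the contraction of the mean $\mu_n$ toward the origin and then to assemble the pieces via the explicit formula for the Kullback-Leibler divergence between two isotropic Gaussians.

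Setting $a_k:=1+\alpha(1+\sigma_{k-1}^2/2)$, the recursion \eqref{E-sexual-reproduction-Gaussian-solution-time-discrete-parameters} integrates to $\mu_n=\mu_0/\prod_{k=1}^n a_k$. A direct algebraic check starting from \eqref{E-sexual-reproduction-Gaussian-solution}-\eqref{E-sexual-reproduction-Gaussian-solution-variance-equation} shows that the asymptotic value $a_\infty:=1+\alpha(1+\bsigma_\alpha^2/2)$ equals $\blambda_\alpha^{-2/d}$ and, through the identity $\blambda_\alpha^{2/d}=2\bk_\alpha$ that follows by comparing \eqref{E-sexual-reproduction-Gaussian-solution-variance-equation} with \eqref{E-sexual-reproduction-coefficients-limit}, also equals $(2\bk_\alpha)^{-1}$. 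Lemma~\ref{L-sexual-reproduction-Gaussian-solution-time-discrete-variance-relaxation} then yields $|a_k-a_\infty|\lesssim\br_\alpha^k$, so $\sum_k|\log(a_k/a_\infty)|<\infty$ and
$$
\prod_{k=1}^n a_k = a_\infty^n\,C_\infty\,(1+O(\br_\alpha^n)),
$$
for some $C_\infty=C_\infty(\alpha,\sigma_0^2)>0$. Consequently $|\mu_n|^2\leq C_\mu\,\blambda_\alpha^{4n/d}$ and, in particular, $|\mu_n|^2\leq C_{\mu,\varepsilon}(\blambda_\alpha^{4/d}+\varepsilon)^n$ for any $\varepsilon>0$, matching the rate appearing in the statement.

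For the normalized profile, the explicit formula for the Kullback-Leibler divergence between two isotropic Gaussians reads
$$
\mathcal{D}_{\text{KL}}\bigl(G_{\mu_n,\sigma_n^2}\,\Vert\, G_{0,\bsigma_\alpha^2}\bigr) = \frac{d}{2}\,\Phi\!\left(\frac{\sigma_n^2}{\bsigma_\alpha^2}\right) + \frac{|\mu_n|^2}{2\bsigma_\alpha^2},\qquad \Phi(r):=r-1-\log r.
$$
Since $\Phi$ has a non-degenerate zero of order two at $r=1$, Lemma~\ref{L-sexual-reproduction-Gaussian-solution-time-discrete-variance-relaxation} delivers $\Phi(\sigma_n^2/\bsigma_\alpha^2)\leq C_v\br_\alpha^{2n}\leq C_v\br_\alpha^n$, and combined with the mean bound this produces the announced entropy estimate. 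The growth rate is handled similarly: from the first identity in \eqref{E-sexual-reproduction-Gaussian-solution-time-discrete-parameters},
$$
\frac{\Vert F_n\Vert_{L^1(\mathbb{R}^d)}}{\Vert F_{n-1}\Vert_{L^1(\mathbb{R}^d)}} = \frac{m_n}{m_{n-1}} = \frac{\exp\bigl(-\tfrac12\,\alpha|\mu_{n-1}|^2/a_n\bigr)}{a_n^{d/2}},
$$
and a first-order expansion around $(a_\infty,0)$ gives $a_n^{-d/2}=\blambda_\alpha(1+O(\br_\alpha^n))$ together with $\exp(\cdot)=1+O(|\mu_{n-1}|^2)$, yielding the claimed bound on the ratio.

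The only delicate point is the bookkeeping behind the sharp rate $|\mu_n|^2=O(\blambda_\alpha^{4n/d})$: one must combine the geometric decay of $\sigma_n^2-\bsigma_\alpha^2$ from Lemma~\ref{L-sexual-reproduction-Gaussian-solution-time-discrete-variance-relaxation} with the summability of $\log(a_k/a_\infty)$ in order to separate the leading $a_\infty^{-n}=\blambda_\alpha^{2n/d}$ behaviour from a bounded multiplicative perturbation. The degenerate cases are transparent: if $\mu_0=0$, then $\mu_n\equiv 0$ and the mean contribution vanishes identically, so one can take $C_{\mu,\varepsilon}=0$; if $\sigma_0^2=\bsigma_\alpha^2$, the variance sequence is constant and $C_v=0$.
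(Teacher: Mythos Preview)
The proposal is correct and follows essentially the same approach as the paper: both reduce to the scalar recursions \eqref{E-sexual-reproduction-Gaussian-solution-time-discrete-parameters}, invoke Lemma~\ref{L-sexual-reproduction-Gaussian-solution-time-discrete-variance-relaxation} for the variance, derive the geometric contraction of $\mu_n$ at rate $\blambda_\alpha^{2/d}$ from the recursion for the mean, and finish with the explicit formula for the Kullback--Leibler divergence between isotropic Gaussians. Your treatment of $\mu_n$ via the product $\prod_k a_k$ and the summability of $\log(a_k/a_\infty)$ is a slightly cleaner packaging than the paper's one-step iteration $|\mu_n|\leq(\blambda_\alpha^{2/d}+C_v\br_\alpha^n)|\mu_{n-1}|$, and incidentally yields the sharper bound $|\mu_n|^2=O(\blambda_\alpha^{4n/d})$ without the $\varepsilon$-loss, but the underlying idea is identical.
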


Again, in this case the computations of the Kullback-Leibler divergence becomes explicit, and it is based on the general formula below for the divergence between two Gaussian density functions:
\begin{equation}\label{E-KL-Gaussian}
\mathcal{D}_{\rm KL}(G_{\mu_1,\Sigma_1}\Vert G_{\mu_2,\Sigma_2})=\frac{1}{2}(\mu_2-\mu_1)^\top\Sigma_2^{-1}(\mu_2-\mu_1)+\frac{1}{2}{\rm trace}(\Sigma_1\Sigma_2^{-1})-\frac{d}{2}+\frac{1}{2}\log\det (\Sigma_2\Sigma_1^{-1}),
\end{equation}
for every $\mu_1,\mu_2\in \mathbb{R}^d$ and any positive non-singular matrices $\Sigma_1,\Sigma_2\in \mathbb{R}^{d\times d}$.

\begin{proof}[Proof of Corollary \ref{C-sexual-reproduction-Gaussian-solution-time-discrete-relaxation}]
First, note that when $\alpha\in \mathbb{R}_+^*$, then the parameters $m_n$, $\mu_n$ and $\sigma_n^2$ in \eqref{E-sexual-reproduction-Gaussian-solution-time-discrete-parameters} verify
$$\lim_{n\rightarrow \infty}\frac{m_n}{m_{n-1}}=\blambda_\alpha,\quad \lim_{n\rightarrow \infty}\mu_n=0,\quad \lim_{n\rightarrow \infty}\sigma_n^2=\bsigma_\alpha^2.$$
In the sequel, we quantify the rates of convergence. Since $\sigma_n^2$ has already been studied in Lemma \ref{L-sexual-reproduction-Gaussian-solution-time-discrete-variance-relaxation} we shall focus only on $\mu_n$ and $m_n$. Let us fix any arbitrarily small $\varepsilon\in \mathbb{R}_+^*$. On the one hand note that
$$\frac{\vert \mu_n\vert }{\vert \mu_{n-1}\vert }-\blambda_\alpha^{2/d}=\frac{1}{1+\alpha\big(1+\frac{\sigma_{n-1}^2}{2}\big)}-\frac{1}{1+\alpha\big(1+\frac{\bsigma_\alpha^2}{2}\big)},$$
for every $n\in \mathbb{N}$. By the mean value theorem and Lemma \ref{L-sexual-reproduction-Gaussian-solution-time-discrete-variance-relaxation} we obtain
$$\left\vert\frac{\vert \mu_n\vert }{\vert \mu_{n-1}\vert }-\blambda_\alpha^{2/d}\right\vert\leq C_v\br_\alpha^n,$$
for an appropriate $C_v\in \mathbb{R}_+^*$ depending on $\alpha$ and $\sigma_0^2$. Since $\blambda_\alpha\in (0,1)$ then d'Alembert's ratio test readily shows that $\mu_n$ relaxes to zero as a geometric sequence. Indeed, note that
$$\vert \mu_n\vert=\left(\frac{\vert \mu_n\vert}{\vert \mu_{n-1}\vert}-\blambda_\alpha^{2/d}\right)\vert \mu_{n-1}\vert+\blambda_\alpha^{2/d}\vert \mu_{n-1}\vert\leq (\blambda_\alpha^{2/d}+C_v\br_\alpha^n)\vert \mu_{n-1}\vert ,$$
for any $n\in \mathbb{N}$. By an inductive argument, this yields
\begin{equation}\label{E-center-mass-contraction}
\vert \mu_n\vert\leq \prod_{k=1}^n(\blambda_\alpha^{2/d}+C_v\br_\alpha^n)\vert \mu_0\vert\lesssim C_\varepsilon(\blambda_\alpha^{2/d}+\varepsilon)^n,
\end{equation}
for sufficiently large $C_{\mu,\varepsilon}\in \mathbb{R}^d$, where we have used that $\br_\alpha\in (0,1)$ to absorb $C_v\br_\alpha^n$ in an $\varepsilon$-small term. On the other hand, note that
\begin{align*}
\frac{m_n}{m_{n-1}}-\blambda_\alpha&=\frac{e^{-\frac{1}{2}\frac{\alpha\vert \mu_{n-1}\vert^2}{1+\alpha\big(1+\frac{\sigma_{n-1}^2}{2}\big)}}}{(1+\alpha(1+\frac{\sigma_{n-1}^2}{2}))^{d/2}}-\frac{1}{(1+\alpha(1+\frac{\bsigma_\alpha^2}{2}))^{d/2}}\\
&=\frac{e^{-\frac{1}{2}\frac{\alpha\vert \mu_{n-1}\vert^2}{1+\alpha\big(1+\frac{\sigma_{n-1}^2}{2}\big)}}-1}{(1+\alpha(1+\frac{\sigma_{n-1}^2}{2}))^{d/2}}+\left(\frac{1}{(1+\alpha(1+\frac{\sigma_{n-1}^2}{2}))^{d/2}}-\frac{1}{(1+\alpha(1+\frac{\bsigma_\alpha^2}{2}))^{d/2}}\right).
\end{align*}
By the mean value theorem,
\begin{align*}
\left\vert\frac{m_n}{m_{n-1}}-\blambda_\alpha\right\vert\leq \frac{\alpha \vert \mu_{n-1}\vert^2}{2(1+\alpha)^{\frac{d}{2}+1}}+\frac{d\alpha}{4}\frac{\vert \sigma_{n-1}^2-\bsigma_\alpha^2\vert}{(1+\alpha)^{\frac{d}{2}+1}}.
\end{align*}
Therefore, using contraction of mean \eqref{E-center-mass-contraction} and contraction of variance \eqref{E-sexual-reproduction-Gaussian-solution-time-discrete-variance-relaxation} in Lemma \ref{L-sexual-reproduction-Gaussian-solution-time-discrete-variance-relaxation} entail the rate of convergence for the $m_n/m_{n-1}$. Finally, given that both $F_n/\Vert F_n\Vert_{L^1(\mathbb{R}^d)}$ and $\bF_\alpha$ are Gaussian functions, the relative entropy can be computed explicitly through \eqref{E-KL-Gaussian}. Specifically,
$$
\mathcal{D}_{\rm KL}\left(\left.\frac{F_n}{\Vert F_n\Vert_{L^1(\mathbb{R}^d)}}\right\Vert \bF_\alpha\right)=\frac{\vert \mu_n\vert^2}{2\bsigma_\alpha^2}+\frac{d}{2}\left(\frac{\sigma_n^2}{\bsigma_\alpha^2}-1\right)-\frac{d}{2}\log\left(\frac{\sigma_n^2}{\bsigma_\alpha^2}\right).
$$
Hence, using again \eqref{E-center-mass-contraction} and \eqref{E-sexual-reproduction-Gaussian-solution-time-discrete-variance-relaxation} concludes our result.
\end{proof}

The proof of Theorem \ref{T-main} for generic non-Gaussian initial data $F_0\in \mathcal{M}_+(\mathbb{R}^d)$ will be the core of this paper and we postpone it to Section \ref{S-ergodicity}.

\begin{remark}[Optimality of convergence rates]\label{R-rough-convergence-rates}
As illustrated in Figure \ref{fig:non-optimal-rates}, the convergence rate of the normalized profiles obtained in our main Theorem \ref{T-main} (blue line) is sharp, compared to the explicit one found in Corollary \ref{C-sexual-reproduction-Gaussian-solution-time-discrete-relaxation} (red line) for the special class of Gaussian solutions. In fact, one can easily check that the identity $\blambda_\alpha^{4/d}=(2\bk_\alpha)^2$ holds. However, there is a mismatch between the convergence rate of the rate of growth of mass in Theorem \ref{T-main} (orange line) and the sharp rate for Gaussian solutions found in Corollary \ref{C-sexual-reproduction-Gaussian-solution-time-discrete-relaxation} (again red line). On the one hand, it stands to reason that the relative entropy has a certain quadratic structure, whilst the rate of growth of mass is related to $L^1$ norms, and then it is not quadratic. We will see that more clearly in the proof of Theorem \ref{T-main} in Section \ref{S-ergodicity}, where we use explicitly the following relation
$$\left\vert \frac{\Vert F_n\Vert_{L^1(\mathbb{R}^d)}}{\Vert F_{n-1}\Vert_{L^1(\mathbb{R}^d)}}-\blambda_\alpha\right\vert\lesssim \sqrt{\mathcal{D}_{\rm KL}\left(\left.\frac{F_n}{\Vert F_n\Vert_{L^1(\mathbb{R}^d)}}\right\Vert \bF_\alpha\right)},$$
based on Pinsker's inequality. However, a certain quadratic structure is still present in the rate of convergence of $m_n/m_{n-1}$ in Corollary \ref{C-sexual-reproduction-Gaussian-solution-time-discrete-relaxation}, which may become explicit for generic solutions to \eqref{E-sexual-reproduction-time-discrete} if one finds the hidden cancellations. However, for simplicity we do not address this technical detail here.
\end{remark}

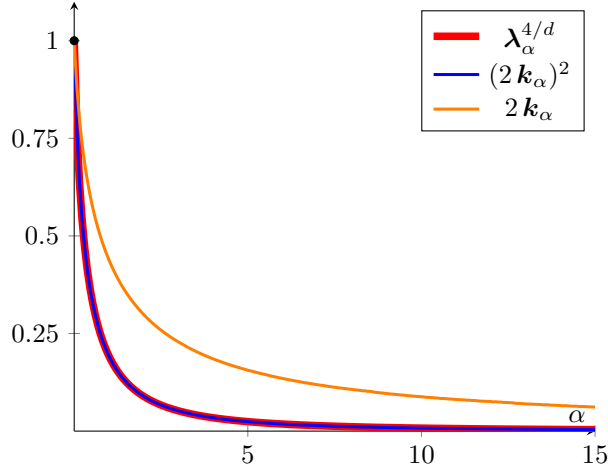
\begin{figure}[t]
\centering
\begin{tikzpicture}
\begin{axis}[
  axis x line=middle, axis y line=middle,
  xmin=0, xmax=15, xtick={0,5,10,15}, xlabel=$\alpha$,
  ymin=0, ymax=1.1, ytick={0,0.25,0.5,0.75,1},
]
\addplot [
    domain=0:15, 
    samples=200, 
    color=red,
    line width=1mm,
]
{1/pow(1+x*(1+(sqrt((2*x+1)^2+8*x)-(2*x+1))/(4*x)),4/2)};
\addlegendentry{$\blambda_\alpha^{4/d}$}
\addplot [
    domain=-0:15, 
    samples=200, 
    color=blue,
    line width=0.4mm,
]
{pow(2*(((3+2*x)-pow(pow(3+2*x,2)-8,1/2))/4),2)};
\addlegendentry{$(2\bk_\alpha)^2$}
\addplot [
    domain=0:15, 
    samples=200, 
    color=orange,
    line width=0.4mm,
]
{pow(2*(((3+2*x)-pow(pow(3+2*x,2)-8,1/2))/4),1)};
\addlegendentry{$2\bk_\alpha$}
\addplot [mark=*, mark size=1.5pt] coordinates {(0,1)};
\end{axis}
\end{tikzpicture}
\caption{Comparison of convergence rates in Theorem \ref{T-main} and Corollary \ref{C-sexual-reproduction-Gaussian-solution-time-discrete-relaxation}}
\label{fig:non-optimal-rates}
\end{figure}

\section{Some properties of the operator $\mathcal{T}$}\label{S-properties-operator-T}

First, note that for any $F\in \mathcal{M}_+(\mathbb{R}^d)$ we obtain that $\mathcal{T}[F]\in W^{k,1}(\mathbb{R}^d)\cap W^{k,\infty}(\mathbb{R}^d)$ for each $k\in \mathbb{N}$ thanks to the fact that $F$ has finite mass and the Gaussian mixing kernel $G$ in \eqref{E-mixing-Gaussian} is smooth and has bounded and integrable derivatives of any order. In particular, solutions $\{F_n\}_{n\in \mathbb{N}}$ to the time-discrete problem \eqref{E-sexual-reproduction-time-discrete} become instantaneously smooth after the first generation for any generic initial datum $F_0\in \mathcal{M}_+(\mathbb{R}^d)$. In the sequel, we will derive a quantitative control on the emergence of Gaussian tails for $\mathcal{T}[F]$. In addition, we will quantify the propagation of quadratic and exponential moments for the normalized profile $\mathcal{T}[F]/\Vert \mathcal{T}[F]\Vert_{L^1(\mathbb{R}^d)}$. Both {\em a priori} estimates will be required later in Section \ref{S-ergodicity}.

\subsection{Emergence of Gaussian tails}

\begin{lemma}[Emergence of tails]\label{L-sexual-reproduction-Gaussian-tails}
Assume that $\alpha\in \mathbb{R}_+^*$, consider $F\in \mathcal{M}_+(\mathbb{R}^d)$ and set $\overline{\sigma}^2,\underline{\sigma}^2\in \mathbb{R}_+^*$ by
$$\overline{\sigma}^2:=\frac{1}{\alpha},\quad \underline{\sigma}^2\in \left(0,\frac{1}{1+\alpha}\right).$$
Then, the following properties are fulfilled:
\begin{enumerate}[label=(\roman*)]
\item {\bf (Upper control of tails)} There exists $C=C(\alpha,F)>0$ such that
\begin{equation}\label{E-sexual-reproduction-operator-upper-bound-Gaussian}
\mathcal{T}[F](x)\leq C\,G_{0,\overline{\sigma}^2}(x),
\end{equation}
for every $x\in \mathbb{R}^d$.
\item {\bf (Lower control of tails)} There exists $c=c(\alpha,\underline{\sigma}^2,F)>0$ such that
\begin{equation}\label{E-sexual-reproduction-operator-lower-bound-Gaussian}
\mathcal{T}[F](x)\geq c\,G_{0,\underline{\sigma}^2}(x),
\end{equation}
for every $x\in \mathbb{R}^d$.
\item {\bf (Control of log-derivative)} Assume that $F$ is absolutely continuous with respect to the Lebesgue measure and it has Gaussian tail, {\em i.e.}, there exists $\sigma^2\in \mathbb{R}_+^*$ and $C'\in \mathbb{R}_+^*$ such that $F(x)\leq C' G_{0,\sigma^2}$ for every $x\in \mathbb{R}^d$. Then, there exists $C''=C''(\alpha,\underline{\sigma}^2,\sigma^2,C',F)>0$ such that
\begin{equation}\label{E-sexual-reproduction-operator-log-derivative}
\vert \nabla \log \mathcal{T}[F](x)\vert\leq C''(1+\vert x\vert),
\end{equation}
for every $x\in \mathbb{R}^d$.
\end{enumerate}
\end{lemma}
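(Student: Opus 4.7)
Parts (1) and (2) are direct pointwise estimates on $\mathcal{B}[F]$. For (1), the bound $\mathcal{B}[F](x) \leq \|G\|_{L^\infty(\mathbb{R}^d)}\,\|F\|_{\mathcal{M}(\mathbb{R}^d)}$ combined with the factor $e^{-\alpha|x|^2/2}$ immediately yields the Gaussian upper bound with $\overline{\sigma}^2 = 1/\alpha$. For (2), one picks $R_0$ such that $\int_{B_{R_0}(0)}F \geq \|F\|_{\mathcal{M}(\mathbb{R}^d)}/2$, restricts the double integral defining $\mathcal{B}[F](x)$ to $B_{R_0}(0)^2$, and uses $(|x|+R_0)^2 \leq (1+\varepsilon)|x|^2 + C_\varepsilon$ for any $\varepsilon > 0$. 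This yields $\mathcal{T}[F](x) \geq c_\varepsilon\, e^{-(1+\alpha+\varepsilon)|x|^2/2}$; choosing $\varepsilon$ small enough so that $1+\alpha+\varepsilon \leq 1/\underline{\sigma}^2$---possible because $\underline{\sigma}^2 < 1/(1+\alpha)$---concludes.

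The main step is (3). The plan is to start from
\begin{equation*}
\nabla \log \mathcal{T}[F](x) = -\alpha x + \nabla \log \mathcal{B}[F](x)
\end{equation*}
and write $\mathcal{B}[F] = G * h$, where
\begin{equation*}
h(s) := \frac{2^d}{\|F\|_{\mathcal{M}(\mathbb{R}^d)}}\int_{\mathbb{R}^d} F(s+t)\,F(s-t)\,dt
\end{equation*}
arises from the change of variables $s=(x_1+x_2)/2$, $t=(x_1-x_2)/2$. Differentiating under the integral sign (justified by the Gaussian tail of $h$) gives the key identity
\begin{equation*}
\nabla \log \mathcal{B}[F](x) = \bar{s}(x) - x, \qquad \bar{s}(x) := \int_{\mathbb{R}^d} s\, \mu_x(ds),
\end{equation*}
where $\mu_x(ds) \propto G(x-s)\,h(s)\,ds$ is a probability measure on $\mathbb{R}^d$. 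The target inequality thus reduces to $|\bar{s}(x)| \leq C(1+|x|)$, since $\nabla \log \mathcal{T}[F](x) = \bar{s}(x) - (1+\alpha)x$. An elementary Gaussian convolution promotes the hypothesis $F \leq C'\,G_{0,\sigma^2}$ to the tail bound $h(s) \leq C_h\,G_{0,\sigma^2/2}(s)$.

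To control $|\bar{s}(x)|$, I would bound the second moment $\int |s|^2\, \mu_x(ds)$---which dominates $|\bar{s}(x)|^2$ by Cauchy--Schwarz---by a truncation argument at $M := K(1+|x|)$ with $K$ large to be chosen. The bulk part $\int_{|s|\leq M}|s|^2\, \mu_x(ds) \leq M^2 = K^2(1+|x|)^2$ is trivial. For the tail $\int_{|s|>M}|s|^2\, \mu_x(ds)$, using $h \leq C_h\, G_{0,\sigma^2/2}$ in the numerator and the lower bound from part (2), $\mathcal{B}[F](x) \geq c\,e^{-\beta|x|^2/2}$ with $\beta = 1/\underline{\sigma}^2 - \alpha > 1$, in the denominator, a Gaussian completion of squares leaves a polynomial prefactor times an exponential of the form $|x|^2\bigl[\beta/2 - 1/(\sigma^2+2)\bigr] - aM^2/8$, where $a = (\sigma^2+2)/(2\sigma^2)$. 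Choosing $K$ large enough that $aK^2/8$ strictly exceeds $\beta/2 - 1/(\sigma^2+2) > 0$ makes the net $|x|^2$-coefficient in the exponent negative, so the tail is uniformly bounded in $x$. Combining bulk and tail yields $\int |s|^2 \mu_x \lesssim (1+|x|)^2$, whence $|\bar{s}(x)| \leq C(1+|x|)$ and (3) follows.

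The main obstacle is the mismatch between the Gaussian decay rates of the numerator and the denominator: a naive Cauchy--Schwarz applied without truncation would pit the numerator decay $e^{-|x|^2/(\sigma^2+2)}$ (from the Gaussian tail of $h$) against the slower denominator decay $e^{-\beta|x|^2/2}$ with $\beta > 2/(\sigma^2+2)$, producing an exponentially divergent ratio. The truncation resolves this mismatch by controlling the bulk with the trivial bound $|s| \leq M$ (without invoking the Gaussian tail bound on $h$) and by suppressing the tail with a cut-off $M$ strong enough to absorb the exponent gap.
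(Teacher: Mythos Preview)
Your proposal is correct and follows essentially the same strategy as the paper. For (1) both arguments are identical. For (2) the paper keeps the full double integral and handles the cross term $x\cdot(x_1+x_2)$ via Cauchy--Schwarz and Young, arriving at $\mathcal{T}[F](x)\gtrsim e^{-\frac{1}{2}(\alpha+1+\varepsilon)|x|^2}$; your localization to a ball of positive mass is a legitimate and slightly more elementary variant yielding the same exponent. For (3), both proofs rely on the same mechanism---truncate at a radius proportional to $|x|$, bound the bulk trivially, and control the tail by playing the Gaussian upper bound on $F$ against the lower bound from (2) so that a large enough truncation constant absorbs the exponent gap. The paper works directly in the $(x_1,x_2)$ variables and splits at $|(x_1,x_2)|=\sqrt{2}R|x|$, whereas you first pass to the autocorrelation $h$ so that $\mathcal{B}[F]=G*h$, recognize $\nabla\log\mathcal{B}[F](x)=\bar s(x)-x$ as a centered mean under the tilted measure $\mu_x$, and truncate in the single variable $s$. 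Your reduction to $\bar s(x)$ is a clean probabilistic repackaging that halves the dimension of the integral and makes the role of the truncation more transparent; the paper's version stays closer to the raw formula but avoids the change of variables. The substance of the tail estimate (completing the square and choosing the cut-off large enough) is the same in both.
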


\begin{proof} First, notice that $F\in \mathcal{M}_+(\mathbb{R}^d)$ is any generic distribution but $G\in L^\infty(\mathbb{R}^d)$. Then, $\mathcal{B}[F]\in L^\infty(\mathbb{R}^d)$ and, by definition of $\mathcal{T}$ in \eqref{E-sexual-reproduction-T-operator}, we obtain
$$
\mathcal{T}[F](x)\leq \Vert \mathcal{B}[F]\Vert_{L^\infty(\mathbb{R}^d)} e^{-\frac{\alpha}{2}\vert x\vert^2}= \Vert \mathcal{B}[F]\Vert_{L^\infty(\mathbb{R}^d)}\left(2\pi\alpha^{-1}\right)^{d/2} G_{0,\overline{\sigma}^2}(x),
$$
for every $x\in \mathbb{R}^d$. Hence, \eqref{E-sexual-reproduction-operator-upper-bound-Gaussian} holds for appropriate $C>0$. Second, note that
\begin{align*}
\mathcal{T}[F](x)&=\frac{e^{-\frac{\alpha}{2}\vert x\vert^2}}{(2\pi)^{d/2}\Vert F\Vert_{\mathcal{M}_+(\mathbb{R}^d)}}\int_{\mathbb{R}^{2d}}\exp\left(-\frac{1}{2}\left\vert x-\frac{x_1+x_2}{2}\right\vert^2 \right) F(dx_1)F(dx_2)\\
&=\frac{e^{-\frac{\alpha}{2}\vert x\vert^2}}{(2\pi)^{d/2}\Vert F\Vert_{\mathcal{M}_+(\mathbb{R}^d)}}\int_{\mathbb{R}^{2d}}\exp\left(-\frac{1}{2}\vert x\vert^2-\frac{1}{8}\vert x_1+x_2\vert^2+\frac{1}{2}x\cdot (x_1+x_2)\right)F(dx_1)F(dx_2)\\
&\geq \frac{e^{-\frac{\alpha}{2}\vert x\vert^2}}{(2\pi)^{d/2}\Vert F\Vert_{\mathcal{M}_+(\mathbb{R}^d)}}\int_{\mathbb{R}^{2d}} \exp\left(-\frac{2+\varepsilon}{4}\vert x\vert^2-\frac{2+\varepsilon}{8\varepsilon}\vert x_1+x_2\vert^2\right)F(dx_1)F(dx_2)\\
&\geq \frac{e^{-\frac{\alpha}{2}\vert x\vert^2}}{(2\pi)^{d/2}\Vert F\Vert_{\mathcal{M}_+(\mathbb{R}^d)}}\int_{\mathbb{R}^{2d}} \exp\left(-\frac{2+\varepsilon}{4}\vert x\vert^2-\frac{2+\varepsilon}{4\varepsilon}\vert x_1\vert^2-\frac{2+\varepsilon}{4\varepsilon}\vert x_2\vert^2\right)F(dx_1)F(dx_2),
\end{align*}
for every $\varepsilon>0$, where in the third and last lines we have used Cauchy--Schwarz's and Young's inequalities. Consequently, we obtain that
$$
\mathcal{T}[F](x)\geq (2\pi)^{d/2}\left(\frac{2\varepsilon}{2+\varepsilon}\right)^d\frac{\Vert G_{0,\frac{2\varepsilon}{2+\varepsilon}}F\Vert_{\mathcal{M}_+(\mathbb{R}^d)}^2}{\Vert F\Vert_{\mathcal{M}_+(\mathbb{R}^d)}} e^{-\frac{\alpha}{2}\vert x\vert^2}e^{-\frac{2+\varepsilon}{4}\vert x\vert^2},
$$
for every $x\in \mathbb{R}^d$. Taking $\varepsilon>0$ small enough and an appropriate constant $c>0$ yields \eqref{E-sexual-reproduction-operator-lower-bound-Gaussian}. Finally, taking derivatives on \eqref{E-sexual-reproduction-T-operator} we obtain
\begin{align*}\nabla\mathcal{T}[F](x)&=-\alpha\,x\,\mathcal{T}[F](x)-\frac{e^{-\frac{\alpha}{2}|x|^2}}{\Vert F\Vert_{L^1(\mathbb{R}^d)}}\int_{\mathbb{R}^{2d}}\left(x-\frac{x_1+x_2}{2}\right)G\left(x-\frac{x_1+x_2}{2}\right)\,F(dx_1)\,F(dx_2)\\
&=-(1+\alpha)\,x\,\mathcal{T}[F](x)-\frac{e^{-\frac{\alpha}{2}|x|^2}}{\Vert F\Vert_{L^1(\mathbb{R}^d)}}\int_{\mathbb{R}^{2d}}\left(\frac{x_1+x_2}{2}\right)G\left(x-\frac{x_1+x_2}{2}\right)\,F(dx_1)\,F(dx_2).
\end{align*}
Therefore, dividing by $\mathcal{T}[F](x)$ and taking norms yield the following estimate
\begin{align}\label{E-sexual-reproduction-evolution-log-derivative-pre-1}
\vert \nabla \log \mathcal{T}[F](x)\vert\leq (1+\alpha)\vert x\vert +\frac{1}{\sqrt{2}\Vert F\Vert_{L^1(\mathbb{R}^d)}}\frac{e^{-\frac{\alpha}{2}\vert x\vert^2}}{\mathcal{T}[F](x)}\int_{\mathbb{R}^{2d}}\vert (x_1,x_2)\vert\,G\left(x-\frac{x_1+x_2}{2}\right) F(dx_1)F(dx_2),
\end{align}
for every $x\in \mathbb{R}^d$. Consider any $R>0$ and split the integral in the right hand side of \eqref{E-sexual-reproduction-evolution-log-derivative-pre-1} into the subsets $\{(x_1,x_2)\in \mathbb{R}^{2d}:\,\vert (x_1,x_2)\vert\leq \sqrt{2}R\vert x\vert\}$ and $\{(x_1,x_2)\in \mathbb{R}^{2d}:\,\vert (x_1,x_2)\vert> \sqrt{2}R\vert x\vert\}$. For the first subset, we use the definition \eqref{E-sexual-reproduction-T-operator} of the operator $\mathcal{T}$. For the second subset, we use the uniform bound of $G$ along with the assumptions on $F$ and the preceding lower bound \eqref{E-sexual-reproduction-operator-lower-bound-Gaussian} of $\mathcal{T}[F]$. Then, we obtain
\begin{align*}
\vert \nabla \log \mathcal{T}[F](x)\vert&\leq (1+\alpha+R)\vert x\vert+\frac{C^2\underline{\sigma}^d e^{-\frac{\alpha}{2}\vert x\vert^2}e^{\frac{1}{2\underline{\sigma}^2}\vert x\vert^2}}{\sqrt{2}c\Vert F\Vert_{L^1(\mathbb{R}^d)}} \int_{\vert (x_1,x_2)\vert >\sqrt{2}R\vert x\vert} \vert (x_1,x_2)\vert e^{-\frac{1}{2\sigma^2}\vert (x_1,x_2)\vert^2}\,dx_1\,dx_2\\
&=(1+\alpha+R)\vert x\vert +\frac{C^2\underline{\sigma}^d e^{-\frac{\alpha}{2}\vert x\vert^2}e^{\frac{1}{2\underline{\sigma}^2}\vert x\vert^2}}{\sqrt{2}c\Vert F\Vert_{L^1(\mathbb{R}^d)}} \int_{\sqrt{2}R\vert x\vert}^{+\infty}r^{2d} e^{-\frac{1}{2\sigma^2}r^2}\,dr\\
&\leq  (1+\alpha+R) \vert x\vert +\frac{C^2\underline{\sigma}^d e^{-\frac{\alpha}{2}\vert x\vert^2}e^{\frac{1}{2\underline{\sigma}^2}\vert x\vert^2}}{\sqrt{2}c\Vert F\Vert_{L^1(\mathbb{R}^d)}} \left(\frac{8d-4}{e}\right)^{d-\frac{1}{2}}\sigma^{2d-1}\int_{\sqrt{2}R\vert x\vert}^{+\infty}re^{-\frac{1}{4\sigma^2}r^2}\,dr\\
&= (1+\alpha+R)\vert x\vert +\frac{\sqrt{2}C^2\underline{\sigma}^d \sigma^{2d+1}}{c\Vert F\Vert_{L^1(\mathbb{R}^d)}}\left(\frac{8d-4}{e}\right)^{d-\frac{1}{2}} e^{-\frac{1}{2}\left(\alpha+\frac{R^2}{\sigma^2}-\frac{1}{\underline{\sigma}^2}\right)\vert x\vert^2},
\end{align*}
for every $x\in \mathbb{R}^d$, where in the third line we have used the inequality
$$r^{2d-1}\leq \left(\frac{2d-1}{e}\right)^{d-\frac{1}{2}}\left(\frac{\sigma}{\varepsilon}\right)^{2d-1} \exp\left(\frac{\varepsilon}{2\sigma^2}r^2\right),$$
for every $r>0$ and $\varepsilon>0$ in the particular case of $\varepsilon=1/2$. Taking $R>0$ large enough ends the proof.
\end{proof}

The previous result can be iterated to obtain similar properties for solutions $\{F_n\}_{n\in \mathbb{N}}$ to the time-discrete problem \eqref{E-sexual-reproduction-time-discrete} issued at a generic initial datum $F_0\in \mathcal{M}_+(\mathbb{R}^d)$. In particular, we note that Gaussian tails emerge instantaneously after the first generation $n=1$ and linear growth of the log-derivative is guaranteed after the second generation $n=2$.

\begin{corollary}[Propagation of tails]\label{C-sexual-reproduction-Gaussian-tails}
Assume that $\alpha\in \mathbb{R}_+^*$, consider the solution $\{F_n\}_{n\in \mathbb{N}}$ of \eqref{E-sexual-reproduction-time-discrete} issued at a generic initial datum $F_0\in \mathcal{M}_+(\mathbb{R}^d)$ and set $\overline{\sigma}^2_1,\underline{\sigma}^2_1\in \mathbb{R}_+^*$ by
$$\overline{\sigma}^2_1:=\frac{1}{\alpha},\quad \underline{\sigma}^2_1\in \left(0,\frac{1}{1+\alpha}\right).$$
Define the associated sequences of variances $\{\overline{\sigma}_n^2\}_{n\in \mathbb{N}}$ and $\{\underline{\sigma}_n^2\}_{n\in \mathbb{N}}$ by the following recursive relations
\begin{equation}\label{E-variances-barriers}
\frac{1}{\overline{\sigma}_{n+1}^2}:=\alpha+\frac{1}{1+\frac{\overline{\sigma}^2_n}{2}},\quad \frac{1}{\underline{\sigma}_{n+1}^2}:=\alpha+\frac{1}{1+\frac{\underline{\sigma}^2_n}{2}},
\end{equation}
for every $n\in \mathbb{N}$. Then, the following properties are fulfilled:
\begin{enumerate}[label=(\roman*)]
\item {\bf (Upper control of tails of $F_n$)} There exist $C_n=C_n(\alpha,F_0)>0$ such that
\begin{equation}\label{E-sexual-reproduction-evolution-upper-bound-Gaussian}
F_n(x)\leq C_n\,G_{0,\overline{\sigma}_n^2}(x),
\end{equation}
for each $x\in \mathbb{R}^d$ and every $n\in \mathbb{N}$.
\item {\bf (Lower control of tails of $F_n$)} There exist $c_n=c_n(\alpha,\underline{\sigma}_1^2,F_0)>0$ such that
\begin{equation}\label{E-sexual-reproduction-evolution-lower-bound-Gaussian}
F_n(x)\geq c_n\,G_{0,\underline{\sigma}_n^2}(x),
\end{equation}
for each $x\in \mathbb{R}^d$ and every $n\in \mathbb{N}$.
\item {\bf (Control of log-derivative of $F_n$)} There exists $D_n=D_n(\alpha,\underline{\sigma}_1^2,F_0)$ such that
\begin{equation}\label{E-sexual-reproduction-evolution-log-derivative}
\vert \nabla\log F_n(x)\vert\leq D_n(1+\vert x\vert),
\end{equation}
for each $x\in \mathbb{R}^d$ and every $n\geq 2$.
\end{enumerate}
\end{corollary}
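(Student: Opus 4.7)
The plan is to establish all three claims by induction on $n$, using Lemma \ref{L-sexual-reproduction-Gaussian-tails} as the one-step engine at each stage. The key observation connecting the single-step estimates to the iterated ones is that the variance recursion \eqref{E-variances-barriers} matches exactly the recursion appearing in the evaluation of $\mathcal{T}$ on Gaussians (Lemma \ref{L-evaluation-gaussians}). Hence, whenever $F_n$ is bounded above or below by a Gaussian of variance $\sigma^2$, after passing through $\mathcal{T}$ the new bound will automatically carry variance $(\alpha+(1+\sigma^2/2)^{-1})^{-1}$, which is precisely the next entry of \eqref{E-variances-barriers}.

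For claim (1), the base case $n=1$ is exactly Lemma \ref{L-sexual-reproduction-Gaussian-tails}(1) applied to the generic measure $F_0$, yielding $F_1\le C_1 G_{0,\overline{\sigma}_1^2}$ with $\overline{\sigma}_1^2=1/\alpha$. For the inductive step, given $F_n(x) \leq C_n G_{0,\overline{\sigma}_n^2}(x)$, bounding $F_n$ pointwise inside the integrand defining $\mathcal{B}[F_n]$ and applying the Gaussian convolution identity of Lemma \ref{L-convolution-gaussians} immediately yields
$$
\mathcal{B}[F_n](x) \leq \frac{C_n^2}{\Vert F_n\Vert_{L^1(\mathbb{R}^d)}}\, G_{0,\, 1+\overline{\sigma}_n^2/2}(x).
$$
Multiplying by $e^{-\alpha|x|^2/2}$ and completing the square exactly as in the proof of Lemma \ref{L-evaluation-gaussians} then gives $F_{n+1} \leq C_{n+1}\, G_{0,\overline{\sigma}_{n+1}^2}$ with $\overline{\sigma}_{n+1}^2$ obeying \eqref{E-variances-barriers}.

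Claim (2) is handled by a completely analogous induction: the base case is Lemma \ref{L-sexual-reproduction-Gaussian-tails}(2), and a pointwise lower Gaussian bound on $F_n$ plugged into the positive integrand of $\mathcal{B}$ produces a lower Gaussian bound on $\mathcal{B}[F_n]$ with variance $1+\underline{\sigma}_n^2/2$, which after multiplication by $e^{-\alpha|x|^2/2}$ becomes the desired bound at level $n+1$. Claim (3) is cheaper still: for $n\ge 2$, I would apply Lemma \ref{L-sexual-reproduction-Gaussian-tails}(3) directly to $F_{n-1}$, which is absolutely continuous (by the smoothing effect of $\mathcal{T}$ recalled at the start of the section) and has Gaussian tail (by part (1)). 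The restriction $n\geq 2$ reflects exactly the absolute-continuity hypothesis of that lemma, which the generic initial datum $F_0$ need not satisfy.

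The expected obstacle is not conceptual but purely bookkeeping: one must track that the denominator $\Vert F_n\Vert_{L^1(\mathbb{R}^d)}$ appearing in the bounds stays bounded away from zero and from infinity along the iteration, so that the constants $C_n$, $c_n$, $D_n$ remain finite and strictly positive. The upper bound on mass is immediate since $\mathcal{T}$ is mass non-increasing; the lower bound follows from integrating the pointwise lower estimate of part (2) over any fixed compact set, ensuring a strictly positive denominator at each step.
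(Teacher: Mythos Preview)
Your proposal is correct and follows essentially the same route as the paper: Lemma \ref{L-sexual-reproduction-Gaussian-tails} provides the base case $n=1$, the inductive step for (1) and (2) replaces $F_n$ pointwise by its Gaussian barrier inside $\mathcal{B}[F_n]$ and uses Lemma \ref{L-convolution-gaussians} together with the variance recursion \eqref{E-variances-barriers}, and (3) follows by applying Lemma \ref{L-sexual-reproduction-Gaussian-tails}(3) to $F_{n-1}$ for $n\ge 2$. Your closing remark on bookkeeping is slightly more than needed: since the constants $C_n$, $c_n$, $D_n$ are allowed to depend on $n$, one only needs $\Vert F_n\Vert_{L^1(\mathbb{R}^d)}>0$ at each step (immediate from the lower bound just proved), not any uniform control.
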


\begin{proof} Note that once \eqref{E-sexual-reproduction-evolution-upper-bound-Gaussian} and \eqref{E-sexual-reproduction-evolution-lower-bound-Gaussian} are proved, we can readily apply \eqref{E-sexual-reproduction-operator-log-derivative} in Lemma \ref{L-sexual-reproduction-Gaussian-tails} to $F=F_{n-1}$ with $n\geq 2$  (which satisfies the required upper control by a Gaussian function) and we recover the estimate \eqref{E-sexual-reproduction-evolution-log-derivative} for the log-derivative of $F_n=\mathcal{T}[F_{n-1}]$. Then, we just focus on proving estimates \eqref{E-sexual-reproduction-evolution-upper-bound-Gaussian} and \eqref{E-sexual-reproduction-evolution-lower-bound-Gaussian} through an inductive argument. First, note that Lemma \ref{L-sexual-reproduction-Gaussian-tails} applied to $F=F_0$ yields  \eqref{E-sexual-reproduction-evolution-upper-bound-Gaussian} and \eqref{E-sexual-reproduction-evolution-lower-bound-Gaussian} with $n=1$. Let us assume that  \eqref{E-sexual-reproduction-evolution-upper-bound-Gaussian} and \eqref{E-sexual-reproduction-evolution-lower-bound-Gaussian} hold for some $n\in \mathbb{N}$ and let us prove it for $n+1$. Specifically, using the induction hypothesis we obtain that
\begin{align*}
F_{n+1}(x)&=e^{-\frac{\alpha}{2}\vert x\vert^2} \mathcal{B}[F_n](x)\\
&\leq \frac{C_n^2}{\Vert F_n\Vert_{L^1(\mathbb{R}^d)}}e^{-\frac{\alpha}{2}\vert x\vert^2} \left(G\left(\frac{\cdot}{2}\right)*G_{0,\overline{\sigma}_n^2}*G_{0,\overline{\sigma}_n^2}\right)(2x)\\
&= \frac{C_n^2}{\Vert F_n\Vert_{L^1(\mathbb{R}^d)}}e^{-\frac{\alpha}{2}\vert x\vert^2} G_{0,1+\frac{\overline{\sigma}_n^2}{2}}(x)\leq C_{n+1} G_{0,\overline{\sigma}_{n+1}^2}(x),
\end{align*}
for every $x\in \mathbb{R}^d$ and appropriate $C_{n+1}$, where we have used Lemma \ref{L-convolution-gaussians} for the stability of Gaussian under convolutions and the definition \eqref{E-variances-barriers} of $\overline{\sigma}_{n+1}^2$. This proves \eqref{E-sexual-reproduction-evolution-upper-bound-Gaussian} for $n+1$ and a similar argument yields the lower estimate \eqref{E-sexual-reproduction-evolution-lower-bound-Gaussian}.
\end{proof}

By Lemma \ref{L-sexual-reproduction-Gaussian-solution-time-discrete-variance-relaxation} we note that the sequence of variances $\{\overline{\sigma}_n^2\}_{n\in \mathbb{N}}$ and $\{\underline{\sigma}_n^2\}_{n\in \mathbb{N}}$ relax towards the asymptotic value $\bsigma_\alpha^2$ with a geometric convergence rate. This is consistent with our main result in Theorem \ref{T-main} and Corollary \ref{C-main}. In fact, this suggests that any solution $\{F_n\}_{n\in \mathbb{N}}$ to the time-discrete problem \eqref{E-sexual-reproduction-time-discrete} must relax towards the asymptotic profile $\bF_\alpha$. In addition, recall that for any solution $(\lambda,F)$ of the eigenproblem \eqref{E-eigenproblem-time-discrete} we recover a particular solution of the time-discrete problem \eqref{E-sexual-reproduction-time-discrete} via the ansatz \eqref{E-ansatz-discrete}, {\em i.e.}, $F_n=\lambda^n F$. Hence, we expect that $(\blambda_\alpha,\bF_\alpha)$ must indeed be the unique solution to the eigenvalue problem \eqref{E-eigenproblem-time-discrete}. However, we are still far from characterizing the full long-term dynamics for the solution $\{F_n\}_{n\in \mathbb{N}}$ in Theorem \ref{T-main} and the uniqueness result in Corollary \ref{C-main}. Namely, the above coefficients $C_n$ and $c_n$ contains crucial information about the balance of mass and their values in Corollary \ref{C-sexual-reproduction-Gaussian-tails} are not necessarily optimal. Indeed, note that they are given by explicit recursive formulas
$$C_{n+1}=(1-\alpha\overline{\sigma}_n^2)^{d/2}\frac{C_n^2}{\Vert F_n\Vert_{L^1(\mathbb{R}^d)}},\quad c_{n+1}=(1-\alpha\overline{\sigma}_n^2)^{d/2}\frac{c_n^2}{\Vert F_n\Vert_{L^1(\mathbb{R}^d)}},$$
but they require further knowledge about the behavior of $\Vert F_n\Vert_{L^1(\mathbb{R}^d)}$. In the next paragraph, we overcome this important issue of scaling factor by focusing on renormalized profiles, for which we prove uniform propagation of moments.

\subsection{Propagation of quadratic and exponential moments}\label{SS-propagation-moments}

In this section we analyze the propagation of quadratic and exponential moments along the normalized profiles $F_n/\Vert F_n\Vert_{L^1(\mathbb{R}^d)}$, where $\{F_n\}_{n\in \mathbb{N}}$ is a solution to \eqref{E-sexual-reproduction-time-discrete}. As we anticipated in Section \ref{SS-strategy}, we need to move from uniform estimates of the high-dimensional integral \eqref{eq:iterations alpha>0 log-derivative} to estimates in an average sense. This will be precisely the point in proof of the main Theorem \ref{T-main} in Section \ref{SS-proof-main-theorem} where a suitable control of moments will be crucial, as sketched in the overall map in Figure \ref{fig:overall-map-proof}. Before addressing the case of generic initial data $F_0\in \mathcal{M}_+(\mathbb{R}^d)$, we illustrate the particular explicit case of Gaussian initial data $F_0=m_0\,G_{\mu_0,\sigma_0^2}$.

\begin{remark}[Gaussian case]
Consider any $m_0\in \mathbb{R}_+$, $\mu_0\in \mathbb{R}^d$ and $\sigma_0^2\in \mathbb{R}_+$, set $\theta\in \mathbb{R}_+^*$ with $\theta<\frac{1}{2\max\{\sigma_0^2,\bsigma_\alpha^2\}}$ and consider the solution $\{F_n\}_{n\in \mathbb{N}}$ to the time-discrete problem \eqref{E-sexual-reproduction-time-discrete} with initial datum $F_0=m_0G_{\mu_0,\sigma_0^2}$. Then, by direct calculation we obtain
\begin{align*}
\int_{\mathbb{R}^d}\vert x\vert^2 \frac{F_n(x)}{\Vert F_n\Vert_{L^1(\mathbb{R}^d)}}\,dx&=\vert \mu_n\vert^2+\sigma_n^2,\\
\int_{\mathbb{R}^d}e^{\theta\vert x\vert^2}\frac{F_n(x)}{\Vert F_n\Vert_{L^1(\mathbb{R}^d)}}\,dx&=\frac{e^{\frac{\theta}{1-2\theta\sigma_n^2}\vert \mu_n\vert^2}}{(1-2\theta\sigma_n^2)^{d/2}},
\end{align*}
Here, $\{\mu_n\}_{n\in \mathbb{N}}$ and $\{\sigma_n^2\}_{n\in \mathbb{N}}$ are the mean and variance of $\{F_n\}_{n\in \mathbb{N}}$ according to formula \eqref{E-sexual-reproduction-Gaussian-solution-time-discrete-parameters} in Proposition \ref{P-sexual-reproduction-Gaussian-solution-time-discrete}. As studied in the proof of Corollary \ref{C-sexual-reproduction-Gaussian-solution-time-discrete-relaxation} we have the uniform bounds
$$\vert \mu_n\vert\leq \vert \mu_0\vert \quad\mbox{and}\quad \min\{\sigma_0^2,\bsigma_\alpha^2\}\leq \sigma_n^2\leq \max\{\sigma_0^2,\bsigma_\alpha^2\},$$
for every $n\in \mathbb{N}$. Then, we obtain that both the quadratic and exponential moments are uniformly bounded.
\end{remark}

In the sequel, we explore the case of generic initial data $F_0\in \mathcal{M}_+(\mathbb{R})$. For simplicity of notation, we define the following conservative operator associated with $\mathcal{T}$.

\begin{definition}[Normalization of $\mathcal{T}$]\label{D-normalized-operator-T}
$$
\mathcal{S}[F]:=\frac{\mathcal{T}[F]}{\Vert \mathcal{T}[F]\Vert_{L^1(\mathbb{R}^d)}},\quad F\in \mathcal{M}_+(\mathbb{R}^d)\setminus \{0\}\,.
$$
\end{definition}

Therefore, we obtain the following control of quadratic moments under $\mathcal{S}$.

\begin{lemma}[Control of quadratic moments]\label{L-quadratic-moments}
Assume that $\alpha\in \mathbb{R}_+^*$ and set any parameter $\eta\in \big(\frac{1}{2(1+\alpha)^2},1\big)$. Then, there exists a constant $M=M(\alpha,\eta)\in \mathbb{R}_+^*$ such that
$$\int_{\mathbb{R}^d}\vert x\vert^2 \mathcal{S}[F](x)\,dx\leq M+\eta \int_{\mathbb{R}^d}\vert x\vert^2\,\frac{F(dx)}{\Vert F\Vert_{\mathcal{M}(\mathbb{R}^d)}},$$
for any measure $F\in \mathcal{M}_+(\mathbb{R}^d)$.
\end{lemma}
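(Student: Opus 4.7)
The plan is to use the explicit Gaussian algebra underlying $\mathcal{T}$ in order to exhibit $\mathcal{S}[F]$ as a mixture of Gaussians, thereby reducing the bound to a one-variable tilted-moment estimate. Write $\nu := F/\Vert F\Vert_{\mathcal{M}(\mathbb{R}^d)}$, $M := (x_1+x_2)/2$ and $\beta := \alpha/(2(1+\alpha))$. The same completion of squares that proves Lemma \ref{L-evaluation-gaussians} delivers the pointwise identity
\[
e^{-\frac{\alpha}{2}\vert y\vert^2}\,G(y-M)\;=\;\frac{e^{-\beta\vert M\vert^2}}{(1+\alpha)^{d/2}}\,G_{M/(1+\alpha),\,1/(1+\alpha)}(y).
\]
Plugging this into $\mathcal{T}[F](y)=e^{-\alpha|y|^2/2}\mathcal{B}[F](y)$, integrating against $\nu(dx_1)\nu(dx_2)$ and normalising by $\Vert \mathcal{T}[F]\Vert_{L^1}$ presents $\mathcal{S}[F]$ as a mixture of Gaussians with common covariance $(1+\alpha)^{-1}I_d$ and means $M/(1+\alpha)$, weighted by the tilted probability $\tilde\nu$ on $\mathbb{R}^{2d}$ with density proportional to $e^{-\beta\vert M\vert^2}$ against $\nu^{\otimes 2}$. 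The standard second-moment formula for a Gaussian mixture then gives
\[
\int_{\mathbb{R}^d}\vert y\vert^2\,\mathcal{S}[F](y)\,dy\;=\;\frac{d}{1+\alpha}\;+\;\frac{1}{(1+\alpha)^2}\int_{\mathbb{R}^{2d}}\vert M\vert^2\,\tilde\nu(dx_1,dx_2),
\]
and the problem is reduced to controlling the tilted expectation $\mathbb{E}_{\tilde\nu}[\vert M\vert^2]$ by $\theta\,m_2(\nu)+C$ with $m_2(\nu):=\int\vert x\vert^2 d\nu$ and $\theta$ calibrated so that $\theta/(1+\alpha)^2\leq\eta$.

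The key point is that the tilting factor $e^{-\beta\vert M\vert^2}$ suppresses large $\vert M\vert$. The clean way to exploit this without any quantitative lower bound on the unknown normalisation $Z_\nu:=\int e^{-\beta\vert M\vert^2}d\nu^{\otimes 2}$ is the Chebyshev sum (negative correlation) inequality: since $\vert M\vert^2$ is non-decreasing and $e^{-\beta\vert M\vert^2}$ is non-increasing as functions of the scalar $\vert M\vert$, their covariance under $\nu^{\otimes 2}$ is non-positive, whence
\[
\int\vert M\vert^2 e^{-\beta\vert M\vert^2}\,d\nu^{\otimes 2}\;\leq\;\Bigl(\int\vert M\vert^2\,d\nu^{\otimes 2}\Bigr)\Bigl(\int e^{-\beta\vert M\vert^2}\,d\nu^{\otimes 2}\Bigr).
\]
Dividing by $Z_\nu$ miraculously cancels the normalisation and leaves
\[
\mathbb{E}_{\tilde\nu}[\vert M\vert^2]\;\leq\;\int\vert M\vert^2\,d\nu^{\otimes 2}\;=\;\tfrac{1}{2}\bigl(m_2(\nu)+\vert\mu_\nu\vert^2\bigr),\qquad \mu_\nu:=\int x\,d\nu,
\]
which plugged back in already delivers the inequality for any $\eta>1/(1+\alpha)^2$. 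To push the contraction coefficient down to the asserted $1/(2(1+\alpha)^2)$, I would further decompose $\mathbb{E}_{\tilde\nu}\vert M\vert^2=\mathrm{Var}_{\tilde\nu}(M)+\vert\mathbb{E}_{\tilde\nu}M\vert^2$: the variance term is bounded by $\mathrm{Var}_{\nu^{\otimes 2}}(M)=\frac{1}{2}(m_2(\nu)-\vert\mu_\nu\vert^2)$ by the same monotone-correlation argument applied to squared deviations, and the contribution of the tilted mean is handled by the elementary pointwise estimate $te^{-\beta t}\leq \eta'' t+C_{\eta'',\beta}$ (valid for every $\eta''\in(0,1)$, with $C_{\eta'',\beta}=-\log(\eta'')/\beta$) applied to $t=\vert M\vert^2$, which localises the $\vert M\vert^2$ contribution on a fixed bounded region.

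The main obstacle is this last quantitative contraction of the tilted centre of mass $\vert\mathbb{E}_{\tilde\nu}M\vert$, since the monotone-correlation inequality alone is only qualitative and does not furnish a contraction rate. One must combine the pointwise bound above with a cut-off of $\vert M\vert$ at a threshold tuned to the target $\eta$, much in the localisation spirit of the Gaussian lower bound in Lemma \ref{L-sexual-reproduction-Gaussian-tails}: on the near region the integrand is controlled by a constant times $Z_\nu$, while on the far region the tilt essentially kills the contribution. Adding up the variance and mean estimates, choosing $\eta''$ small enough and absorbing all $\alpha$- and $\eta$-dependent constants into a single $M=M(\alpha,\eta)\in\mathbb{R}_+^*$ then yields the claimed inequality.
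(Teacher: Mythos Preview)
Your reduction via the mixture representation reaches the same starting identity as the paper's~\eqref{E-quadratic-moment-x-integration}, and the Chebyshev correlation step is correct and clean: since $t\mapsto t$ and $t\mapsto e^{-\beta t}$ are oppositely monotone, $\mathbb{E}_{\tilde\nu}[|M|^2]\le \mathbb{E}_{\nu^{\otimes 2}}[|M|^2]\le m_2(\nu)$ in one line, hence the lemma holds for any $\eta\ge 1/(1+\alpha)^2$. That is weaker than the stated threshold $1/(2(1+\alpha)^2)$, but it is fully sufficient for the downstream applications (Corollary~\ref{C-quadratic-moments} and everything that follows need only \emph{some} $\eta<1$). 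This is a genuinely different and shorter route than the paper's proof, which invokes no correlation inequality but instead splits both numerator and denominator of~\eqref{E-quadratic-moment-x-integration} at thresholds $R_1,R_2$ chosen \emph{proportional to} $\sqrt{m_2(\nu)}$, then balances the pieces.

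Your attempt to push the constant down to $1/(2(1+\alpha)^2)$, however, does not go through. The claimed bound $\mathrm{Var}_{\tilde\nu}(M)\le \mathrm{Var}_{\nu^{\otimes 2}}(M)$ is false in general: take $d=1$, $\nu=(1-s)\delta_a+s\delta_{-a}$ with $s=0.1$ and $e^{-\beta a^2}=\tfrac12$; the law of $M$ under $\nu^{\otimes 2}$ sits on $\{-a,0,a\}$ with weights $(0.01,0.18,0.81)$, and a direct computation gives tilted variance $\approx 0.235\,a^2$ against untilted variance $0.18\,a^2$. The tilt is centred at the origin rather than at the mean of $M$, so for strongly off-centre $\nu$ it shifts mass towards a point far from the untilted mean and can \emph{increase} the spread about the new centre; no monotone-correlation argument repairs this, since $|M-c|^2$ is not monotone in $|M|$ when $c\neq 0$. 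Similarly, a cutoff ``tuned to $\eta$'' but independent of $F$ cannot control the tilted-mean contribution: for $\nu$ concentrated near $|x|\sim L$ one has $Z_\nu\sim e^{-\beta L^2}$, and any far-region estimate that does not scale with $L$ blows up upon division by $Z_\nu$. The paper's device---letting the cutoffs scale with $\sqrt{m_2(\nu)}$---is precisely what makes these exponentials cancel and yields the sharper constant.
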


Before we proceed with the proof, let us comment on some apparent difficulties. For this discussion, we assume normalized $F\in \mathcal{P}(\mathbb{R}^d)$ without loss of generality, as we are only concerned with the shape of profiles here. Note that the conservative operator $\mathcal S$ in Definition \ref{D-normalized-operator-T} associated with $\mathcal{T}$ is actually the composition of the conservative operator $\mathcal B$, and the conservative multiplicative operator $\mathcal M$ in Definition \ref{D-operator-M}, namely, $\mathcal S = \mathcal M\circ \mathcal B$. The former is invariant by translation, whereas the latter is not. However, we have the following inequality for the operator $\mathcal{M}$:
\begin{equation}\label{eq:variance M}
\int_{\mathbb{R}^d} |x|^2 \mathcal M[F](x)\,dx \leq \int_{\mathbb{R}^d}  |x|^2 F(dx)\,. 
\end{equation}
This inequality is intuitively clear: by applying the selection operator $\mathcal{M}$, the quadratic moment (that is, the Wasserstein distance to the Dirac distribution at the origin) is non-increasing. In fact, the proof is a straightforward consequence of the following identity:
$$
\int_{\mathbb{R}^d} |x|^2 \mathcal M[F](x)\,dx - \int_{\mathbb{R}^d} \vert x\vert^2 F(dx)  = \frac12  \int_{\mathbb{R}^{2d}} \left( |x|^2 - |y|^2 \right) \left(e^{-m(x)} - e^{-m(y)}\right) \dfrac{F(dx) F(dy)}{\int_{\mathbb{R}^d} e^{-m(z)}F(dz)}\,. 
$$
Note that the right-hand-side of this equality is indeed non-positive, provided that $m$ is radially non-decreasing as it is the case for our quadratic choice \eqref{E-selection-quadratic}. A similar estimate can also be tested on the operator $\mathcal B$. Specifically, by a simple change of variable the following identity holds true:
\begin{equation}\label{eq:variance B}
\int_{\mathbb{R}^d} |x|^2 \mathcal B[F](x)\,dx  = \mbox{Var}\,G +  \frac12 \int_{\mathbb{R}^d}  |x|^2\,F(dx) +\dfrac12 \left\vert\int_{\mathbb{R}^d} x\,F(dx)\right\vert^2\,. 
\end{equation}
Then, it might seem natural to combine these two relationships in order to control the quadratic moments of the composition $\mathcal{S}=\mathcal{M}\circ \mathcal{B}$. Suppose, for instance, that the center of mass in the last term of \eqref{eq:variance B} can be bounded {\em a priori} uniformly on $F$. Then, the combination of \eqref{eq:variance M} and \eqref{eq:variance B} yields the contraction property in Lemma \ref{L-quadratic-moments} due to the reduction by half in the size of the quadratic moment of $F$. Of course, without such a uniform control on the center of mass, the last two terms in \eqref{eq:variance B} would contribute together (by Jensen's inequality) with a merely non-expansive factor $\int_{\mathbb{R}^d}\vert x\vert^2\, F(dx)$ that is not enough for our purpose (we will iterate Lemma \ref{L-quadratic-moments} on the $F_n$ later). This {\em a priori} estimate of the center of mass should then be derived mainly from the selection component $\mathcal M$, as the reproduction component $\mathcal B$ is invariant by translation. However, it cannot be derived solely from $\mathcal{M}$, because the latter can have a dramatic effect on nearly symmetric distributions. This can be seen on the same configuration as in Example \ref{ex:incompatibility} above: for a sum of nearly symmetrical Dirac masses, the center of mass is close to zero before selection, and close to one of the endpoints after selection (when $h$ is large). Nevertheless, such a configuration is destroyed by $\mathcal{B}$, so that the combination of $\mathcal M$ and $\mathcal B$ is crucial to guarantee the uniform boundedness of the quadratic moments as shown in the argument below.

\begin{proof}[Proof of Lemma \ref{L-quadratic-moments}] Set any $F\in \mathcal{M}_+(\mathbb{R}^d)$. First, note that
$$\int_{\mathbb{R}^d}\vert x\vert^2\mathcal{S}[F](x)\,dx=\frac{\displaystyle\int_{\mathbb{R}^{3d}}\vert x\vert^2 e^{-m(x)}G\left(x-\frac{x_1+x_2}{2}\right)\frac{F(dx_1)}{\Vert F\Vert_{\mathcal{M}(\mathbb{R}^d)}}\frac{F(dx_2)}{\Vert F\Vert_{\mathcal{M}(\mathbb{R}^d)}}dx}{\displaystyle\int_{\mathbb{R}^{3d}} e^{-m(x)}G\left(x-\frac{x_1+x_2}{2}\right)\frac{F(dx_1)}{\Vert F\Vert_{\mathcal{M}(\mathbb{R}^d)}}\frac{F(dx_2)}{\Vert F\Vert_{\mathcal{M}(\mathbb{R}^d)}}\,dx}.$$
Since $G$ is a Gaussian function given by  \eqref{E-mixing-Gaussian} and $m$ is a quadratic function given by \eqref{E-selection-quadratic}, then we can compute explicitly the integrals with respect to $x$ in the numerator and denominator. Specifically, for fixed $x_1,x_2\in \mathbb{R}^d$ we define $\bar x:=\frac{x_1+x_2}{2}$ and we obtain
\begin{align}
\int_{\mathbb{R}^d}\vert x\vert^2 e^{-m(x)}G(x-\bar x)\,dx&=\frac{1}{(1+\alpha)^{d/2}}\exp\left(-\frac{1}{2}\frac{\alpha}{1+\alpha}\vert \bar x\vert^2\right)\left(\frac{1}{1+\alpha}+\frac{\vert \bar x\vert^2}{(1+\alpha)^2}\right),\label{E-computation-explicit-integral-1}\\
\int_{\mathbb{R}^d} e^{-m(x)}G(x-\bar x)\,dx&=\frac{1}{(1+\alpha)^{d/2}}\exp\left(-\frac{1}{2}\frac{\alpha}{1+\alpha}\vert \bar x\vert^2\right).\label{E-computation-explicit-integral-2}
\end{align}
Therefore, we can write alternatively
\begin{equation}\label{E-quadratic-moment-x-integration}
\int_{\mathbb{R}^d}\vert x\vert^2\mathcal{S}[F](x)\,dx=\frac{1}{1+\alpha}+\frac{1}{(1+\alpha)^2}\frac{\displaystyle\int_{\mathbb{R}^{2d}}\vert \bar x\vert^2 \exp\left(-\frac{1}{2}\frac{\alpha}{1+\alpha}\vert \bar x\vert^2\right)\frac{F(dx_1)}{\Vert F\Vert_{\mathcal{M}(\mathbb{R}^d)}}\frac{F(dx_2)}{\Vert F\Vert_{\mathcal{M}(\mathbb{R}^d)}}}{\displaystyle\int_{\mathbb{R}^{2d}}\exp\left(-\frac{1}{2}\frac{\alpha}{1+\alpha}\vert \bar x\vert^2\right)\frac{F(dx_1)}{\Vert F\Vert_{\mathcal{M}(\mathbb{R}^d)}}\frac{F(dx_2)}{\Vert F\Vert_{\mathcal{M}(\mathbb{R}^d)}}}.
\end{equation}
Our ultimate goal is to bound the second term in \eqref{E-quadratic-moment-x-integration}. To this end, we provide a lower bound for the denominator and an upper bound for the numerator in two separate steps.\\

$\bullet$ {\sc Step 1}: Consider an arbitrary $R_1\in \mathbb{R}_+^*$ to be determined later. Then, we obtain
\begin{align}\label{E-quadratic-moment-denominator}
\begin{aligned}
\int_{\mathbb{R}^{2d}}&\exp\left(-\frac{1}{2}\frac{\alpha}{1+\alpha}\vert \bar x\vert^2\right)\frac{F(dx_1)}{\Vert F\Vert_{\mathcal{M}(\mathbb{R}^d)}}\frac{F(dx_2)}{\Vert F\Vert_{\mathcal{M}(\mathbb{R}^d)}}\\
&\geq \int_{\vert \bar x\vert\leq R_1}\exp\left(-\frac{1}{2}\frac{\alpha}{1+\alpha}\vert \bar x\vert^2\right)\frac{F(x_1)}{\Vert F\Vert_{L^1(\mathbb{R}^d)}}\frac{F(x_2)}{\Vert F\Vert_{L^1(\mathbb{R}^d)}}\,dx_1\,dx_2\\
&\geq \exp\left(-\frac{1}{2}\frac{\alpha}{1+\alpha}R_1^2\right)\left(1-\int_{\vert \bar x\vert>R_1}\frac{F(dx_1)}{\Vert F\Vert_{\mathcal{M}(\mathbb{R}^d)}}\frac{F(dx_2)}{\Vert F\Vert_{\mathcal{M}(\mathbb{R}^d)}}\right)\\
&\geq \exp\left(-\frac{1}{2}\frac{\alpha}{1+\alpha}R_1^2\right)\left(1-\frac{1}{2R_1^2}\int_{\mathbb{R}^d}\vert x\vert^2\frac{F(dx)}{\Vert F\Vert_{\mathcal{M}(\mathbb{R}^d)}}\right),
\end{aligned}
\end{align}
where in last line we have used Young's and Chebyshev's inequalities.

\medskip

$\bullet$ {\sc Step 2}: Consider an arbitrary $R_2\in \mathbb{R}_+^*$ to be determined later. Then, we obtain
$$\int_{\mathbb{R}^{2d}}\vert \bar x\vert^2 \exp\left(-\frac{1}{2}\frac{\alpha}{1+\alpha}\vert \bar x\vert^2\right)\frac{F(dx_1)}{\Vert F\Vert_{\mathcal{M}(\mathbb{R}^d)}}\frac{F(dx_2)}{\Vert F\Vert_{\mathcal{M}(\mathbb{R}^d)}}=I_1+I_2,$$
where each term reads
\begin{align*}
I_1&:=\int_{\vert \bar x\vert\leq R_2}\vert \bar x\vert^2 \exp\left(-\frac{1}{2}\frac{\alpha}{1+\alpha}\vert \bar x\vert^2\right)\frac{F(dx_1)}{\Vert F\Vert_{\mathcal{M}(\mathbb{R}^d)}}\frac{F(dx_2)}{\Vert F\Vert_{\mathcal{M}(\mathbb{R}^d)}},\\
I_2&:=\int_{\vert \bar x\vert >R_2}\vert \bar x\vert^2 \exp\left(-\frac{1}{2}\frac{\alpha}{1+\alpha}\vert \bar x\vert^2\right)\frac{F(dx_1)}{\Vert F\Vert_{\mathcal{M}(\mathbb{R}^d)}}\frac{F(dx_2)}{\Vert F\Vert_{\mathcal{M}(\mathbb{R}^d)}}.
\end{align*}
On the one hand, the first term can be readily estimated by
\begin{equation}\label{E-quadratic-moment-numerator-1}
I_1\leq R_2^2\int_{\mathbb{R}^{2d}}\exp\left(-\frac{1}{2}\frac{\alpha}{1+\alpha}\vert \bar x\vert^2\right)\frac{F(dx_1)}{\Vert F\Vert_{\mathcal{M}(\mathbb{R}^d)}}\frac{F(dx_2)}{\Vert F\Vert_{\mathcal{M}(\mathbb{R}^d)}}.
\end{equation}
On the other hand, the second term can be estimated by
\begin{align}\label{E-quadratic-moment-numerator-2}
\begin{aligned}
I_2&\leq \frac{2}{e\gamma}\frac{1+\alpha}{\alpha}\int_{\vert \bar x\vert>R_2}\exp\left(-\frac{1-\gamma}{2}\frac{\alpha}{1+\alpha}\vert \bar x\vert^2\right)\frac{F(dx_1)}{\Vert F\Vert_{\mathcal{M}(\mathbb{R}^d)}}\frac{F(dx_2)}{\Vert F\Vert_{\mathcal{M}(\mathbb{R}^d)}}\\
&\leq \frac{2}{e\gamma}\frac{1+\alpha}{\alpha}\exp\left(-\frac{1-\gamma}{2}\frac{\alpha}{1+\alpha}R_2^2\right),
\end{aligned}
\end{align}
for any $\gamma\in (0,1)$, where in the first line we have used that $e^{\theta s}\geq \theta e s$ with $s=|\bar x|^2$ and $\theta=\frac{\gamma}{2}\frac{\alpha}{1+\alpha}$, and in the second line we have exploited that $F/\Vert F\Vert_{\mathcal{M}(\mathbb{R}^d)}$ are probability measures, thus normalized.

\medskip 

$\bullet$ {\sc Step 3}: Putting \eqref{E-quadratic-moment-denominator}, \eqref{E-quadratic-moment-numerator-1} and \eqref{E-quadratic-moment-numerator-2} into \eqref{E-quadratic-moment-x-integration} yields
$$\int_{\mathbb{R}^d}\vert x\vert^2\mathcal{S}[F](x)\,dx\leq \frac{1}{1+\alpha}+\frac{R_2^2}{(1+\alpha)^2}+\frac{2}{e\gamma}\frac{1}{\alpha(1+\alpha)}\frac{\exp\left(\frac{1}{2}\frac{\alpha}{1+\alpha}R_1^2\right)\exp\left(-\frac{1-\gamma}{2}\frac{\alpha}{1+\alpha}R_2^2\right)}{1-\frac{1}{2R_1^2}\int_{\mathbb{R}^d}\vert x\vert^2\frac{F(dx)}{\Vert F\Vert_{\mathcal{M}(\mathbb{R}^d)}}},$$
for any $R_1,R_2\in \mathbb{R}_+^*$ and $\gamma\in (0,1)$. We set the values
$$R_1^2=\frac{1}{2(1-\delta)}\int_{\mathbb{R}^d}\vert x\vert^2\frac{F(dx)}{\Vert F\Vert_{\mathcal{M}(\mathbb{R}^d)}},\qquad R_2^2=\frac{R_1^2}{1-\gamma},$$
with $\gamma,\delta\in (0,1)$. Then, we obtain
$$\int_{\mathbb{R}^d}\vert x\vert^2\mathcal{S}[F](x)\,dx\leq \frac{1}{1+\alpha}+\frac{2}{e\gamma\delta}\frac{1}{\alpha(1+\alpha)}+\frac{1}{2(1-\delta)(1-\gamma)(1+\alpha)^2}\int_{\mathbb{R}^d}\vert x\vert^2\frac{F(dx)}{\Vert F\Vert_{\mathcal{M}(\mathbb{R}^d)}},$$
thus ending the proof by the arbitrariness of $\gamma,\delta\in (0,1)$.
\end{proof}

By iteration, we then obtain the following contractivity of the variance of generic solutions of \eqref{E-sexual-reproduction-time-discrete}.

\begin{corollary}[Propagation of quadratic moments]\label{C-quadratic-moments}
Assume that $\alpha\in \mathbb{R}_+^*$, set any parameter $\eta\in \big(\frac{1}{2(1+\alpha)^2},1\big)$ and consider the solution $\{F_n\}_{n\in \mathbb{N}}$ of \eqref{E-sexual-reproduction-time-discrete} issued at a generic initial datum $F_0\in \mathcal{M}_+(\mathbb{R}^d)$ with $\int_{\mathbb{R}^d}\vert x\vert^2 F_0(dx)<\infty$. Then, there exists $M=M(\alpha,\eta)$ such that
$$\int_{\mathbb{R}^d}\vert x\vert^2\frac{F_n(x)}{\Vert F_n\Vert_{L^1(\mathbb{R}^d)}}\,dx\leq \frac{M}{1-\eta}+\eta^n\int_{\mathbb{R}^d}\vert x\vert^2 \frac{F_0(dx)}{\Vert F_0\Vert_{\mathcal{M}(\mathbb{R}^d)}},$$
for any $n\in \mathbb{N}.$
\end{corollary}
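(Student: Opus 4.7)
The plan is to iterate Lemma \ref{L-quadratic-moments} along the sequence $\{F_n\}_{n\in \mathbb{N}}$ and sum the resulting geometric series. The key observation is that the operator $\mathcal{T}$ is positively $1$-homogeneous (both $\mathcal{B}$ and the multiplicative selection $e^{-m}$ are $1$-homogeneous in $F$), and consequently $\mathcal{S}[F]=\mathcal{T}[F]/\Vert\mathcal{T}[F]\Vert_{L^1(\mathbb{R}^d)}$ is $0$-homogeneous. Thus, combining this with the recursion $F_n=\mathcal{T}[F_{n-1}]$ gives the useful identity
$$
\frac{F_n}{\Vert F_n\Vert_{L^1(\mathbb{R}^d)}}=\mathcal{S}[F_{n-1}],\quad n\in \mathbb{N},
$$
provided $\Vert F_{n-1}\Vert_{\mathcal{M}(\mathbb{R}^d)}>0$. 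Positivity of all iterates is not an issue: since $F_0\neq 0$ and the Gaussian kernel $G$ is strictly positive everywhere, $F_1=\mathcal{T}[F_0]$ is strictly positive almost everywhere, and by induction so is every $F_n$.

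Next, I would set $Q_n:=\int_{\mathbb{R}^d}|x|^2 F_n(x)/\Vert F_n\Vert_{L^1(\mathbb{R}^d)}\,dx$ for $n\geq 1$ and $Q_0:=\int_{\mathbb{R}^d}|x|^2\,F_0(dx)/\Vert F_0\Vert_{\mathcal{M}(\mathbb{R}^d)}$. Applying Lemma \ref{L-quadratic-moments} to the measure $F_{n-1}\in \mathcal{M}_+(\mathbb{R}^d)$ (and using the identity above) immediately gives the one-step contraction
$$
Q_n\leq M+\eta\,Q_{n-1},\quad n\in \mathbb{N},
$$
where $M=M(\alpha,\eta)\in \mathbb{R}_+^*$ is the constant provided by that lemma, and $\eta\in \big(\frac{1}{2(1+\alpha)^2},1\big)$ as in the hypothesis. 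Note that the quadratic moment of $F_0$ is assumed finite, so that $Q_0<\infty$, and the recursion makes sense at every step.

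Finally, a simple induction on $n$ gives
$$
Q_n\leq M\sum_{k=0}^{n-1}\eta^k+\eta^n Q_0\leq \frac{M}{1-\eta}+\eta^n Q_0,
$$
which is exactly the claimed estimate. There is no real obstacle here beyond carefully tracking the normalizations; the whole content of the result is encoded in Lemma \ref{L-quadratic-moments}, and the corollary is obtained through a direct fixed-point/geometric-series iteration exploiting the strict contraction factor $\eta<1$.
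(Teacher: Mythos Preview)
Your proof is correct and follows exactly the approach intended in the paper, which simply states that the corollary is obtained ``by iteration'' of Lemma~\ref{L-quadratic-moments}. The key identity $F_n/\Vert F_n\Vert_{L^1}=\mathcal{S}[F_{n-1}]$ together with the one-step contraction and the geometric sum is precisely what is meant.
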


Let us note that, on the one hand the factor $\eta^n$ makes the dependence on the variance of the initial datum $F_0$ negligible as the amount of generations increases. On the other hand, the constant $M$ does not depend on the chosen initial profile $F_0$. This is compatible with our ergodicity property in Theorem \ref{T-main}.

\begin{lemma}[Control of exponential moments]\label{L-exponential-moments}
Assume that $\alpha\in \mathbb{R}_+^*$, set $\theta\in \mathbb{R}_+^*$ so that $\theta<\frac{\alpha}{2}$ and any parameter $\chi>\frac{\alpha}{1+\alpha}\frac{\theta}{2(1+\alpha-2\theta)(\alpha-2\theta)}$. Then, there exists a constant $C=C(\alpha,\theta,\chi)$ such that
$$\int_{\mathbb{R}^d}e^{\theta\vert x\vert^2} \mathcal{S}[F](x)\,dx\leq C\left\{1+\exp\left(\chi \int_{\mathbb{R}^d}\vert x\vert^2\frac{F(dx)}{\Vert F\Vert_{\mathcal{M}(\mathbb{R}^d)}}\right)\right\},$$
for any measure $F\in \mathcal{M}_+(\mathbb{R}^d)$.
\end{lemma}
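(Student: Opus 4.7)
The strategy is to reduce the exponential moment to an explicit ratio of integrals over $\mathbb{R}^{2d}$ by carrying out the $x$-integration exactly (exploiting that $G$ is Gaussian and $m$ is quadratic), and then to control that ratio by combining Hölder's and Jensen's inequalities. Since $\mathcal{S}$ is scale-invariant we may normalize $\|F\|_{\mathcal{M}(\mathbb R^d)}=1$. Writing $\bar x=(x_1+x_2)/2$ and completing the square in the Gaussian (which is legitimate since $\theta<\alpha/2<(1+\alpha)/2$), one finds
\begin{align*}
\int_{\mathbb R^d}e^{\theta|x|^2}e^{-m(x)}G(x-\bar x)\,dx &= \frac{1}{(1+\alpha-2\theta)^{d/2}}e^{-a_1|\bar x|^2},\\
\int_{\mathbb R^d}e^{-m(x)}G(x-\bar x)\,dx &= \frac{1}{(1+\alpha)^{d/2}}e^{-a_2|\bar x|^2},
\end{align*}
with $a_1:=(\alpha-2\theta)/[2(1+\alpha-2\theta)]$ and $a_2:=\alpha/[2(1+\alpha)]$; note that $a_1<a_2$ because $t\mapsto t/(1+t)$ is increasing. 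Plugging these into the definitions of $\mathcal T$ and $\mathcal S$ and using Fubini yields
$$\int_{\mathbb R^d}e^{\theta|x|^2}\mathcal S[F](x)\,dx=\left(\frac{1+\alpha}{1+\alpha-2\theta}\right)^{d/2}\frac{I(a_1)}{I(a_2)},\qquad I(a):=\iint_{\mathbb R^{2d}}e^{-a|\bar x|^2}F(dx_1)F(dx_2).$$

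To control the ratio $I(a_1)/I(a_2)$, I would use that $F\otimes F$ is a probability measure on $\mathbb{R}^{2d}$. By Hölder's inequality with conjugate exponents $(a_2/a_1,\,a_2/(a_2-a_1))$, applied to the pointwise factorization $e^{-a_1|\bar x|^2}=(e^{-a_2|\bar x|^2})^{a_1/a_2}\cdot 1^{(a_2-a_1)/a_2}$, one gets $I(a_1)\leq I(a_2)^{a_1/a_2}$, hence $I(a_1)/I(a_2)\leq I(a_2)^{-(a_2-a_1)/a_2}$. Jensen's inequality for the convex function $y\mapsto e^{-y}$ gives the complementary lower bound $I(a_2)\geq\exp(-a_2\iint|\bar x|^2\,F\otimes F)=\exp(-a_2(V_F+|\mu_F|^2)/2)\geq\exp(-a_2 V_F)$, where $\mu_F:=\int x\,F(dx)$, $V_F:=\int|x|^2\,F(dx)$, and $|\mu_F|^2\leq V_F$ is Cauchy–Schwarz. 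Combining,
$$\frac{I(a_1)}{I(a_2)}\leq\exp\bigl((a_2-a_1)V_F\bigr),\qquad a_2-a_1=\frac{\theta}{(1+\alpha)(1+\alpha-2\theta)}.$$
Absorbing the polynomial prefactor $((1+\alpha)/(1+\alpha-2\theta))^{d/2}$ together with any $\varepsilon$-slack coming from $\chi>\chi_{\min}$ into a constant $C=C(\alpha,\theta,\chi)$, and using the trivial inequality $e^{\chi V_F}\leq 1+e^{\chi V_F}$, the stated bound follows.

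The main obstacle is precisely the control of $I(a_1)/I(a_2)$: both quantities are Gaussian-weighted averages of the product measure $F\otimes F$, and their quotient could a priori blow up as $V_F\to\infty$. The Hölder–Jensen pair is essentially sharp, as verified on $F=\delta_{x_0}$ (where $I(a_1)/I(a_2)=e^{(a_2-a_1)|x_0|^2}$ and $V_F=|x_0|^2$ attain equality). Recovering the precise constant $\chi_{\min}$ stated in the lemma would require a finer, more delicate splitting in the spirit of the proof of Lemma~\ref{L-quadratic-moments} (truncation of the numerator at a scale $R\sim\sqrt{V_F}$, combined with a refined Markov bound on the denominator), which exchanges the Jensen lower bound for a polynomial one; the simple Hölder–Jensen argument above already suffices qualitatively, providing a coefficient strictly less than $1$ for $\theta$ small and thus enabling the iterative propagation of exponential moments needed in Section~\ref{S-ergodicity}.
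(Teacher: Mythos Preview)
Your reduction to the ratio $I(a_1)/I(a_2)$ via the explicit $x$-integration is exactly what the paper does. From there the two arguments diverge: the paper bounds the numerator by truncation at a radius $R_2$ (splitting $\{|\bar x|\le R_2\}$ and $\{|\bar x|>R_2\}$) and bounds the denominator from below by a Markov-type estimate at radius $R_1$, then optimises $R_1,R_2$ in terms of $V_F$. Your H\"older--Jensen route is cleaner and perfectly valid; it yields the inequality with exponential constant
\[
a_2-a_1=\frac{\theta}{(1+\alpha)(1+\alpha-2\theta)}
\]
in place of the stated $\chi_{\min}=\frac{\alpha\theta}{2(1+\alpha)(1+\alpha-2\theta)(\alpha-2\theta)}$. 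Note that these two constants are not ordered: their ratio is $2(\alpha-2\theta)/\alpha$, so your bound is \emph{strictly better} than the paper's when $\theta>\alpha/4$, and strictly worse when $\theta<\alpha/4$. Either version is sufficient for Corollary~\ref{C-exponential-moments} and the application in Section~\ref{S-ergodicity}, where only a uniform bound for some fixed small $\theta$ is needed. One small quibble: the sentence about ``absorbing \dots\ any $\varepsilon$-slack coming from $\chi>\chi_{\min}$'' is misleading, since no such slack is available when $\chi<a_2-a_1$; but you then correctly acknowledge that recovering the precise $\chi_{\min}$ would require the truncation argument, so the proposal is honest about its scope.
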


\begin{proof}
Since $G$ is a Gaussian function given by  \eqref{E-mixing-Gaussian} and $m$ is a quadratic function given by \eqref{E-selection-quadratic}, then we can compute explicitly the integrals with respect to $x$ in the numerator and denominator like formulas \eqref{E-computation-explicit-integral-1}-\eqref{E-computation-explicit-integral-2} in the proof of Lemma \ref{L-quadratic-moments} and we obtain
\begin{equation}\label{E-exponential-moment-x-integration}
\int_{\mathbb{R}^d}e^{\theta \vert x\vert^2}\mathcal{S}[F](x)\,dx=\left(\frac{1+\alpha}{1+\alpha-2\theta}\right)^{d/2}\frac{\displaystyle\int_{\mathbb{R}^{2d}}\exp\left(-\frac{1}{2}\frac{\alpha-2\theta}{1+\alpha-2\theta}\vert \bar x\vert^2\right)\frac{F(dx_1)}{\Vert F\Vert_{\mathcal{M}(\mathbb{R}^d)}}\frac{F(dx_2)}{\Vert F\Vert_{\mathcal{M}(\mathbb{R}^d)}}}{\displaystyle\int_{\mathbb{R}^{2d}}\exp\left(-\frac{1}{2}\frac{\alpha}{1+\alpha}\vert \bar x\vert^2\right)\frac{F(dx_1)}{\Vert F\Vert_{\mathcal{M}(\mathbb{R}^d)}}\frac{F(dx_2)}{\Vert F\Vert_{\mathcal{M}(\mathbb{R}^d)}}},
\end{equation}
where $\bar x=\frac{x_1+x_2}{2}$. Again, we provide bounds for the numerator and denominator in \eqref{E-exponential-moment-x-integration}. Regarding the denominator, we recover estimate \eqref{E-quadratic-moment-denominator} for any $R_1\in \mathbb{R}_+^*$. Thus, we focus on the bound of the numerator
$$\int_{\mathbb{R}^{2d}}\exp\left(-\frac{1}{2}\frac{\alpha-2\theta}{1+\alpha-2\theta}\vert \bar x\vert^2\right)\frac{F(dx_1)}{\Vert F\Vert_{\mathcal{M}(\mathbb{R}^d)}}\frac{F(dx_2)}{\Vert F\Vert_{\mathcal{M}(\mathbb{R}^d)}}=I_1+I_2,$$
where each term reads
\begin{align*}
I_1&:=\int_{\vert \bar x\vert\leq R_2}\exp\left(-\frac{1}{2}\frac{\alpha-2\theta}{1+\alpha-2\theta}\vert \bar x\vert^2\right)\frac{F(dx_1)}{\Vert F\Vert_{\mathcal{M}(\mathbb{R}^d)}}\frac{F(dx_2)}{\Vert F\Vert_{\mathcal{M}(\mathbb{R}^d)}},\\
I_2&:=\int_{\vert \bar x\vert> R_2}\exp\left(-\frac{1}{2}\frac{\alpha-2\theta}{1+\alpha-2\theta}\vert \bar x\vert^2\right)\frac{F(dx_1)}{\Vert F\Vert_{\mathcal{M}(\mathbb{R}^d)}}\frac{F(dx_2)}{\Vert F\Vert_{\mathcal{M}(\mathbb{R}^d)}}.
\end{align*}
On the one hand, note that
\begin{equation}\label{E-exponential-moment-numerator-1}
I_1\leq \exp\left(\frac{\theta R_2^2}{(1+\alpha-2\theta)^2}\right)\int_{\mathbb{R}^{2d}}\exp\left(-\frac{1}{2}\frac{\alpha}{1+\alpha}\vert \bar x\vert^2\right)\frac{F(dx_1)}{\Vert F\Vert_{\mathcal{M}(\mathbb{R}^d)}}\frac{F(dx_2)}{\Vert F\Vert_{\mathcal{M}(\mathbb{R}^d)}},
\end{equation}
where we have used the mean value theorem applied to the function $r\in \mathbb{R}_+^*\mapsto \frac{r}{1+r}$ to derive the relation
$$\frac{\alpha}{1+\alpha}-\frac{\alpha-2\theta}{1+\alpha-2\theta}\leq \frac{2\theta}{(1+\alpha-2\theta)^2}.$$
On the other hand,
\begin{equation}\label{E-exponential-moment-numerator-2}
I_2\leq \exp\left(-\frac{1}{2}\frac{\alpha-2\theta}{1+\alpha-2\theta}R_2^2\right).
\end{equation}
Putting \eqref{E-quadratic-moment-denominator}, \eqref{E-exponential-moment-numerator-1} and \eqref{E-exponential-moment-numerator-2} into \eqref{E-exponential-moment-x-integration} we obtain
$$\int_{\mathbb{R}^d}e^{\theta\vert x\vert^2}\mathcal{S}[F](x)\,dx\leq \left(\frac{1+\alpha}{1+\alpha-2\theta}\right)^{d/2}\left\{\exp\left(\frac{\theta R_2^2}{(1+\alpha-2\theta)^2}\right)+\frac{\exp\left(-\frac{1}{2}\frac{\alpha-2\theta}{1+\alpha-2\theta}R_2^2\right)\exp\left(\frac{1}{2}\frac{\alpha}{1+\alpha}R_1^2\right)}{1-\frac{1}{2R_1^2}\int_{\mathbb{R}^d}\vert x\vert^2 \frac{F(dx)}{\Vert F\Vert_{\mathcal{M}(\mathbb{R}^d)}}}\right\},$$
for any $R_1,R_2\in \mathbb{R}_+^*.$ As in the proof of Lemma \ref{L-quadratic-moments} we choose them as follows
$$R_1^2=\frac{1}{2(1-\delta)}\int_{\mathbb{R}^d}\vert x\vert^2\frac{F(dx)}{\Vert F\Vert_{\mathcal{M}(\mathbb{R}^d)}},\qquad R_2^2=\frac{\alpha}{1+\alpha}\frac{1+\alpha-2\theta}{\alpha-2\theta}R_1^2,$$
where $\delta\in (0,1)$. Then, we obtain
\begin{multline*}
\int_{\mathbb{R}^d}e^{\theta\vert x\vert^2}\mathcal{S}[F](x)\,dx\\
\leq \left(\frac{1+\alpha}{1+\alpha-2\theta}\right)^{d/2}\left\{\frac{1}{\delta}+\exp\left(\frac{\alpha}{1+\alpha}\frac{\theta}{(1+\alpha-2\theta)(\alpha-2\theta)}\frac{1}{2(1-\delta)}\int_{\mathbb{R}^d}\vert x\vert^2\frac{F(dx)}{\Vert F\Vert_{\mathcal{M}(\mathbb{R}^d)}}\right)\right\},
\end{multline*}
and we end the proof by arbitrariness of $\delta\in (0,1)$.
\end{proof}

Therefore, small exponential moments can be controlled by quadratic moments. We can then couple Lemma \ref{L-exponential-moments} and Corollary \ref{C-quadratic-moments} to obtain the propagation of exponential moments in the time-discrete problem \eqref{E-sexual-reproduction-time-discrete}.

\begin{corollary}[Propagation of exponential moments]\label{C-exponential-moments}
Assume that $\alpha\in \mathbb{R}_+^*$, set $\theta\in \mathbb{R}_+^*$ with $\theta<\frac{\alpha}{2}$ and any parameter $\eta\in \big(\frac{1}{2(1+\alpha)^2},1\big)$. Consider the solution $\{F_n\}_{n\in \mathbb{N}}$ of \eqref{E-sexual-reproduction-time-discrete} issued at a generic initial datum $F_0\in \mathcal{M}_+(\mathbb{R}^d)$ with $\int_{\mathbb{R}^d}\vert x\vert^2 F_0(dx)<\infty$. Then, there exist $C=C(\alpha,\theta,\eta)$ and $C'=C'(\alpha,\theta,\eta)$ such that
$$\int_{\mathbb{R}^d}e^{\theta\vert x\vert^2}\frac{F_n(x)}{\Vert F_n\Vert_{L^1(\mathbb{R}^d)}}\,dx\leq C\left\{1+\exp\left(C'\left(1+\eta^{n-1}\int_{\mathbb{R}^d}\vert x\vert^2\frac{F_0(dx)}{\Vert F_0\Vert_{\mathcal{M}(\mathbb{R}^d)}}\right)\right)\right\},$$
for any $n\in \mathbb{N}$.
\end{corollary}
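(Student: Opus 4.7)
The plan is to chain the two previously established estimates: Lemma \ref{L-exponential-moments}, which bounds the exponential moment of $\mathcal{S}[F]$ in terms of an exponential of the quadratic moment of $F$, and Corollary \ref{C-quadratic-moments}, which gives a uniform-in-$n$ control of the quadratic moment of the renormalized profiles. The only conceptual step is to observe that since $F_n=\mathcal{T}[F_{n-1}]$ by \eqref{E-sexual-reproduction-time-discrete}, the renormalization $F_n/\Vert F_n\Vert_{L^1(\mathbb{R}^d)}$ coincides with $\mathcal{S}[F_{n-1}]$ by Definition \ref{D-normalized-operator-T}. Thus, the recursive structure of the problem reduces the desired estimate for generation $n$ to a one-step estimate on $F_{n-1}$.

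First, I would fix $\theta\in\mathbb{R}_+^*$ with $\theta<\alpha/2$ and pick any $\chi>\frac{\alpha}{1+\alpha}\frac{\theta}{2(1+\alpha-2\theta)(\alpha-2\theta)}$, so that Lemma \ref{L-exponential-moments} applies and yields a constant $C=C(\alpha,\theta,\chi)$ with
\begin{equation*}
\int_{\mathbb{R}^d}e^{\theta\vert x\vert^2}\,\frac{F_n(x)}{\Vert F_n\Vert_{L^1(\mathbb{R}^d)}}\,dx=\int_{\mathbb{R}^d}e^{\theta\vert x\vert^2}\,\mathcal{S}[F_{n-1}](x)\,dx\leq C\left\{1+\exp\left(\chi\int_{\mathbb{R}^d}\vert x\vert^2\,\frac{F_{n-1}(dx)}{\Vert F_{n-1}\Vert_{\mathcal{M}(\mathbb{R}^d)}}\right)\right\},
\end{equation*}
for every $n\geq 1$.

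Next, I would invoke Corollary \ref{C-quadratic-moments} applied at step $n-1$ with the given $\eta\in\big(\frac{1}{2(1+\alpha)^2},1\big)$, which provides a constant $M=M(\alpha,\eta)$ with
\begin{equation*}
\int_{\mathbb{R}^d}\vert x\vert^2\,\frac{F_{n-1}(dx)}{\Vert F_{n-1}\Vert_{\mathcal{M}(\mathbb{R}^d)}}\leq \frac{M}{1-\eta}+\eta^{n-1}\int_{\mathbb{R}^d}\vert x\vert^2\,\frac{F_0(dx)}{\Vert F_0\Vert_{\mathcal{M}(\mathbb{R}^d)}}.
\end{equation*}
Substituting into the previous inequality and setting $C':=\chi\max\{1,M/(1-\eta)\}$ (which depends only on $\alpha,\theta,\eta$ after the specific choice of $\chi$ above) factors out the constant contribution from the exponent and delivers the claimed bound for all $n\geq 1$; for $n=0$ the statement is trivial under the finite-quadratic-moment hypothesis if one interprets $\mathbb{N}$ starting at $1$ (or else it becomes a tautology on $F_0$).

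There is no genuine obstacle here: the argument is a direct two-line combination, and the only mild subtlety is the bookkeeping of constants—one must verify that the parameter $\chi$ demanded by Lemma \ref{L-exponential-moments} can be chosen once and for all independently of $n$, which is immediate since the admissible range for $\chi$ depends only on $\alpha$ and $\theta$. The uniform-in-$n$ nature of the final bound is inherited from the uniform-in-$n$ nature of Corollary \ref{C-quadratic-moments}, which is where all the work has already been done.
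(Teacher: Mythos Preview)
Your proposal is correct and matches the paper's intended approach: the corollary is stated without explicit proof in the paper, with only the sentence preceding it indicating that one should couple Lemma~\ref{L-exponential-moments} and Corollary~\ref{C-quadratic-moments}, which is precisely what you do. Your bookkeeping of constants via $C':=\chi\max\{1,M/(1-\eta)\}$ is clean and the observation that $\chi$ can be fixed once in terms of $\alpha,\theta$ alone is exactly the point needed to ensure $C,C'$ depend only on $(\alpha,\theta,\eta)$.
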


\section{Reformulating the recursion}\label{S-restating-iterations}
In this section we find an appropriate reformulation of the recursion for the solutions $\{F_n\}_{n\in \mathbb{N}}$ of the time-discrete problem \eqref{E-sexual-reproduction-time-discrete}, as already discussed in Section \ref{SS-strategy}. Note that by iterating $\mathcal{T}$ $n$-times we find that $F_n$ depends on the initial datum $F_0\in \mathcal{M}_+(\mathbb{R}^d)$ via a high-dimensional integral parametrized by variables indexed on the binary tree $\Tree^n_*$ (representing the traits of the ancestors in the pedigree chart). Specifically, for any initial datum $F_0\in \mathcal{M}_+(\mathbb{R}^d)$ we obtain:

\medskip

\noindent $\diamond$ {\em Iteration 1}:\\
By the explicit definition of $\mathcal{T}$ in \eqref{E-sexual-reproduction-T-operator} we readily obtain
\begin{equation}\label{E-sexual-reproduction-iterations_n=1}
F_1(x)=\frac{1}{\Vert F_0\Vert_{\mathcal{M}(\mathbb{R}^d)}}\int_{\mathbb{R}^{2d}}e^{-m(x)}G\left(x-\frac{x_1+x_2}{2}\right)F_0(x_1)F_0(x_2)\,d\mathbf{x}_1,
\end{equation}
where we denote $\bx_1=(x_1,x_2)\in \mathbb{R}^{2d}$.

\medskip

\noindent $\diamond$ {\em Iteration 2}:\\
By \eqref{E-sexual-reproduction-iterations_n=1} with $F_0$ replaced by $F_1$ and using $F_1=\mathcal{T}[F_0]$ in the right hand side we get
\begin{multline}\label{E-sexual-reproduction-iterations_n=2}
F_2(x)=\frac{1}{\Vert F_1\Vert_{L^1(\mathbb{R}^d)}\Vert F_0\Vert_{\mathcal{M}(\mathbb{R}^d)}^2}\int_{\mathbb{R}^{6d}} e^{-(m(x)+m(x_1)+m(x_2))}G\left(x-\frac{x_1+x_2}{2}\right)\\
\times G\left(x_1-\frac{x_{11}+x_{12}}{2}\right)G\left(x_2-\frac{x_{21}+x_{22}}{2}\right) F_0(x_{11})F_0(x_{12})F_0(x_{21})F_0(x_{22})\,d\mathbf{x}_2,
\end{multline}
where we denote $\bx_2=(x_1,x_2,x_{11},x_{12},x_{21},x_{22})\in \mathbb{R}^{6d}$.

\medskip

\noindent $\diamond$ {\em Iteration $n\in \mathbb{N}$}:\\
By a clear inductive process, we can iterate the operator $\mathcal{T}$ as many times $n\in \mathbb{N}$ as needed to recover an explicit dependence of $F_n$ on the initial datum $F_0$. Of course, this generates a high-dimensional integral involving  $2(2^n-1)$ variables that we can label along the vertices of the perfect binary tree $\Tree^n$ according to the \textit{universal address system} notation in Section \ref{SS-trees}. Namely, we obtain
\begin{multline}\label{E-sexual-reproduction-iterations}
F_n(x)=\frac{1}{\prod_{m=0}^{n-1}\Vert F_m\Vert_{L^1(\mathbb{R}^d)}^{2^{n-1-m}}}\\
\times \int_{\mathbb{R}^{2(2^n-1)d}} \exp\left(-\sum_{i\in \widehat{\Tree}^n}m(x_i)\right)\prod_{i\in \widehat{\Tree}^n} G\left(x_i-\frac{x_{i1}+x_{i2}}{2}\right)\prod_{i\in \Leaves^n}F_0(x_i)\,d\mathbf{x}_n,
\end{multline}
where we have used the tree-indexed notation $\bx_n=(x_i)_{i\in \Tree^n_*}\in \mathbb{R}^{2(2^n-1)d}$ in Remark \ref{R-tree-indexed-variables} and $x_\emptyset:=x$. 

\medskip

The ultimate goal of this section is to find a suitable change of variables as discussed in Section \ref{SS-strategy}. We shall see that the effect of such a change of variable on the above high-dimensional integral in \eqref{E-sexual-reproduction-iterations} will reveal that actual dependence of $\{F_n\}_{n\in \mathbb{N}}$ on the shape of $F_0$. We will see that such a dependence is indeed weak and the initial datum is rapidly ``forgotten'' across the different levels of the tree $\Tree^n$. This ergodicity property will be crucial in our analysis and will be exploited later in Section \ref{S-ergodicity} to derive our main result in Theorem \ref{T-main}.

\subsection{A change of variables across the binary tree}
The goal of this section is to appropriately reformulate the high-dimensional integral \eqref{E-sexual-reproduction-iterations} by exploiting the special form of the Gaussian mixing kernel $G$ in \eqref{E-mixing-Gaussian} and the quadratic selection function $m$ in \eqref{E-selection-quadratic}. Namely, we shall derive a suitable change of variables according to a $n$-step backwards process starting at the leaf variables $x_j$ with $j\in \Leaves^n$ and ending at the root variable $x_\emptyset=x$. More specifically, at each level $m$ in the tree we shall change the reference frame of the variables $x_i$ indexed with $i\in \Level^n_m$ so that the quadratic form involving those variables in the exponential of \eqref{E-sexual-reproduction-iterations} gets appropriately centered at its minimum. By doing so we find that the minimum is located at a contracted value of the variable $x_{\child(i)}\in L^n_{m-1}$ at the level $m-1$ below, which is indexed by its children $\child(i)$ in the binary tree (recall notation in Section \ref{SS-trees}). By appropriately propagating the backwards process across the tree, and by tracking the accumulated contraction of variables, we shall see that the eventual dependence on the root variable $x$ is weakly gradually forgotten as $n\rightarrow \infty$. This suggests an apparent form of ergodicity, which we will be crucial later in Section \ref{S-ergodicity}. 

First, for the sake of clarity, we illustrate our method in the simpler case $n=2$. Later, we address the general case with $n>2$ driven by the same strategy. As proposed in Section \ref{SS-strategy}, we define the following choice for the rescaled profiles that will be used along the change of variables.

\begin{definition}[Rescaled distributions]\label{D-rescaled-F}
Consider any $F\in \mathcal{M}_+(\mathbb{R}^d)$. Then, we define
$$
\bar{F}(dx):=e^{m(x)} \frac{F(dx)}{\bF_{\alpha=0}(x)},\quad x\in \mathbb{R}^d.
$$
\end{definition}

Here, we remind that $\bF_{\alpha=0}=G_{0,2}$ is the Gaussian eigenfunction corresponding to \eqref{E-sexual-reproduction-Gaussian-solution} with $\alpha=0$. We also recall that there is some freedom in the normalization, and in particular the term $e^m$ is not mandatory but it is convenient for an easier sorting of the various terms, as we anticipated in Section \ref{SS-strategy}. By substituting \eqref{E-mixing-Gaussian} and \eqref{E-selection-quadratic} into \eqref{E-sexual-reproduction-iterations_n=1} and writting the integral in the right hand side in terms of the rescaled function $\bar{F}_0$ in Definition \ref{D-rescaled-F}, we obtain that
\begin{align}\label{E-sexual-reproduction-iterations_n=1_Gaussian-quadratic}
\begin{aligned}
&(e^m F_2)(x)=\frac{1}{(2\pi)^{3d/2}}\frac{1}{(4\pi)^{2d}}\frac{1}{\Vert F_1\Vert_{L^1(\mathbb{R}^d)}\Vert F_0\Vert_{\mathcal{M}(\mathbb{R}^d)}^2}\\
&\times \int_{\mathbb{R}^{6d}} \exp\left(-\frac{\alpha}{2}\big(\vert x_1\vert^2+\vert x_2\vert^2\big)-\frac{1}{2}\left\vert x-\frac{x_1+x_2}{2}\right\vert^2\right)\\
&\qquad\times \exp\left(-\frac{\alpha_0}{2}\big(\vert x_{11}\vert^2+\vert x_{12}\vert^2+\vert x_{21}\vert^2+\vert x_{22}\vert^2\big)-\frac{1}{2}\left\vert x_1-\frac{x_{11}+x_{12}}{2}\right\vert^2-\frac{1}{2}\left\vert x_2-\frac{x_{21}+x_{22}}{2}\right\vert^2\right)\\
&\qquad\times\bar{F}_0(x_{11})\bar{F}_0(x_{12})\bar{F}_0(x_{21})\bar{F}_0(x_{22})\,d\mathbf{x}_2.
\end{aligned}
\end{align}
Here, the parameter $\alpha_0\in \mathbb{R}_+^*$ has been defined by
$$\alpha_0:=\frac{1}{2}+\alpha,$$
so that the quadratic terms in the fist term of the third line of \eqref{E-sexual-reproduction-iterations_n=1_Gaussian-quadratic} correct the rescaling $\bar{F}_0$ of $F_0$. The new formulation will be obtained by an appropriate change of variables, so that the quadratic forms inside the above exponential get appropriately centered. To do so, notice that this formula involves as many variables $x_i$ as indices $i$ in the perfect binary tree $\Tree^2$. Indeed, we have sorted the different terms in \eqref{E-sexual-reproduction-iterations_n=1_Gaussian-quadratic} in such a way that the second line only involves variables $x_1$ and $x_2$ in the first level $\Level_1^2$ of the tree, whilst the third line contains the terms involving the variables $x_{11}$, $x_{12}$, $x_{21}$ and $x_{22}$ in the second level of the tree $\Level_2^2$ ({\em i.e.}, the leaves $\Leaves^2$). We will divide the method into two steps. First, we will address the change of the variables indexed by the leaves $\Leaves^2$. Second, we will perform the change of variables indexed by the first level of the tree $\Level_1^2$.

\medskip

$\bullet$ {\sc Step 1:} Change of variables for $x_{11}$, $x_{12}$, $x_{21}$ and $x_{22}$.\\
Consider some coefficient $k_1>0$ to be determined later and define the change of variables $x_{11}\rightarrow y_{11}$, $x_{12}\rightarrow y_{12}$, $x_{21}\rightarrow y_{21}$ and $x_{22}\rightarrow y_{22}$ given by
\begin{align*}
x_{11}&=k_1x_1+y_{11}, & x_{12}&=k_1x_1+y_{12},\\
x_{21}&=k_1x_2+y_{21}, & x_{22}&=k_1x_2+y_{12}.
\end{align*}
On the one hand, using the change of variables in the terms inside the exponential of \eqref{E-sexual-reproduction-iterations_n=1_Gaussian-quadratic} which involve $x_{11}$ and $x_{12}$, we obtain
\begin{align*}
\frac{\alpha_0}{2}\vert x_{11}\vert^2+\frac{\alpha_0}{2}\vert x_{12}\vert^2&+\frac{1}{2}\left\vert x_1-\frac{x_{11}+x_{12}}{2}\right\vert^2\\
&=\frac{\alpha_0}{2}\vert k_1x_1+y_{11}\vert^2+\frac{\alpha_0}{2}\vert k_1 x_1+y_{12}\vert^2+\frac{1}{2}\left\vert (1-k_1)x_1-\frac{y_{11}+y_{12}}{2}\right\vert^2\\
&=\frac{1}{2}\left(2\alpha_0 k_1^2+(1-k_1)^2\right)\vert x_1\vert^2+\frac{\alpha_0}{2}\vert y_{11}\vert^2+\frac{\alpha_0}{2}\vert y_{12}\vert^2+\frac{1}{8}\vert y_{11}+y_{12}\vert^2\\
&\hspace{2cm} +\left(\alpha_0 k_1-\frac{1-k_1}{2}\right)x_1\cdot y_{11}+\left(\alpha_0 k_1-\frac{1-k_1}{2}\right)x_1\cdot y_{12}.
\end{align*}
On the other hand, using the change of variables in the terms which involve $x_{21}$ and $x_{22}$, we get
\begin{align*}
\frac{\alpha_0}{2}\vert x_{21}\vert^2+\frac{\alpha_0}{2}\vert x_{22}\vert^2&+\frac{1}{2}\left\vert x_2-\frac{x_{21}+x_{22}}{2}\right\vert^2\\
&=\frac{\alpha_0}{2}\vert k_1x_2+y_{21}\vert^2+\frac{\alpha_0}{2}\vert k_1 x_2+y_{22}\vert^2+\frac{1}{2}\left\vert (1-k_1)x_2-\frac{y_{21}+y_{22}}{2}\right\vert^2\\
&=\frac{1}{2}\left(2\alpha_0 k_1^2+(1-k_1)^2\right)\vert x_2\vert^2+\frac{\alpha_0}{2}\vert y_{21}\vert^2+\frac{\alpha_0}{2}\vert y_{22}\vert^2+\frac{1}{8}\vert y_{21}+y_{22}\vert^2\\
&\hspace{2cm} +\left(\alpha_0 k_1-\frac{1-k_1}{2}\right)x_2\cdot y_{21}+\left(\alpha_0 k_1-\frac{1-k_1}{2}\right)x_2\cdot y_{22}.
\end{align*}
Notice that one can eliminate the crossed terms in both expressions by choosing
$$k_1:=\frac{1}{1+2\alpha_0}=\frac{1}{2(1+\alpha)}.$$
In that case, adding both terms we obtain that
\begin{align*}
\frac{\alpha_0}{2}\vert x_{11}\vert^2+\frac{\alpha_0}{2}\vert x_{12}\vert^2&+\frac{1}{2}\left\vert x_1-\frac{x_{11}+x_{12}}{2}\right\vert^2+\frac{\alpha_0}{2}\vert x_{21}\vert^2+\frac{\alpha_0}{2}\vert x_{22}\vert^2+\frac{1}{2}\left\vert x_2-\frac{x_{21}+x_{22}}{2}\right\vert^2\\
&=\frac{1}{2}(1-k_1)\vert x_1\vert^2+\frac{1}{4 k_1}\vert y_{11}\vert^2+\frac{1}{4 k_1}\vert y_{12}\vert^2-\frac{1}{8}\vert y_{11}-y_{12}\vert^2\\
&+\frac{1}{2}(1-k_1)\vert x_2\vert^2+\frac{1}{4 k_1}\vert y_{21}\vert^2+\frac{1}{4 k_1}\vert y_{22}\vert^2-\frac{1}{8}\vert y_{21}-y_{22}\vert^2.
\end{align*}
Putting everything together into \eqref{E-sexual-reproduction-iterations_n=1_Gaussian-quadratic} yields
\begin{align}\label{E-sexual-reproduction-iterations_n=1_Gaussian-quadratic-change-1}
\begin{aligned}
&(e^{m}F_2)(x)=\frac{1}{(2\pi)^{3d/2}}\frac{1}{(4\pi)^{2d}}\frac{1}{\Vert F_1\Vert_{L^1(\mathbb{R}^d)}\Vert F_0\Vert_{\mathcal{M}(\mathbb{R}^d)}^2}\\
&\times \int_{\mathbb{R}^{6d}} \exp\left(-\frac{\alpha_1}{2}\big(\vert x_1\vert^2+\vert x_2\vert^2\big)-\frac{1}{2}\left\vert x-\frac{x_1+x_2}{2}\right\vert^2\right)\\
&\qquad\times \exp\left(-\frac{1}{4 k_1}\big(\vert y_{11}\vert^2+\vert y_{12}\vert^2+\vert y_{21}\vert^2+\vert y_{22}\vert^2\big)+\frac{1}{8}\big(\vert y_{11}-y_{12}\vert^2+\vert y_{21}-y_{22}\vert^2\big)\right)\\
&\qquad\times \bar{F}_0(k_1x_1+y_{11})\bar{F}_0(k_1x_1+y_{12})\bar{F}_0(k_1x_2+y_{21})\bar{F}_0(k_1x_2+y_{22})\,dx_1\,dx_2\,dy_{11}\,dy_{12}\,dy_{21}\,dy_{22},
\end{aligned}
\end{align}
where the parameter $\alpha_1\in \mathbb{R}_+^*$ has been defined by
$$\alpha_1:=1-k_1+\alpha,$$
in order to recombine the initial terms in the second line of \eqref{E-sexual-reproduction-iterations_n=1_Gaussian-quadratic} involving variables indexed by the first level $\Level_1^2$ with the new remainders that have appeared from the previous step.

\medskip

$\bullet$ {\sc Step 2:} Change of variables for $x_1$ and $x_2$.\\
Consider some coefficient $k_2>0$ to be determined later and define the change of variables $x_1\rightarrow y_1$ and $x_2\rightarrow y_2$ given by
$$
x_1=k_2x+y_1,\quad x_2=k_2x+y_2.
$$
This time, using the change of variables in the terms of the exponential in the second line of \eqref{E-sexual-reproduction-iterations_n=1_Gaussian-quadratic-change-1} yields
\begin{align*}
\frac{\alpha_1}{2}\vert x_1\vert^2+\frac{\alpha_1}{2}\vert x_2\vert^2&+\frac{1}{2}\left\vert x-\frac{x_1+x_2}{2}\right\vert^2\\
&=\frac{\alpha_1}{2}\vert k_2x+y_1\vert^2+\frac{\alpha_1}{2}\vert k_2 x+y_2\vert^2+\frac{1}{2}\left\vert (1-k_2)x-\frac{y_1+y_2}{2}\right\vert^2\\
&=\frac{1}{2}\left(2\alpha_1 k_2^2+(1-k_2)^2\right)\vert x\vert^2+\frac{\alpha_1}{2}\vert y_1\vert^2+\frac{\alpha_1}{2}\vert y_2\vert^2+\frac{1}{8}\vert y_1+y_2\vert^2\\
&\hspace{2cm} +\left(\alpha_1 k_2-\frac{1-k_2}{2}\right)x\cdot y_1+\left(\alpha_1 k_2-\frac{1-k_2}{2}\right)x\cdot y_2.
\end{align*}
Again, we can cancel the crossed term by choosing
$$k_2:=\frac{1}{1+2\alpha_1}=\frac{1}{3+2\alpha-2k_1}.$$
Namely, we obtain that
\begin{align*}
\frac{\alpha_1}{2}\vert x_1\vert^2&+\frac{\alpha_1}{2}\vert x_2\vert^2+\frac{1}{2}\left\vert x-\frac{x_1+x_2}{2}\right\vert^2\\
&=\frac{1}{2}(1-k_2)\vert x\vert^2+\frac{1}{4 k_2}\vert y_1\vert^2+\frac{1}{4 k_2}\vert y_2\vert^2-\frac{1}{8}\vert y_1-y_2\vert^2.
\end{align*}
Then, putting everything together into \eqref{E-sexual-reproduction-iterations_n=1_Gaussian-quadratic-change-1} yields
\begin{align}\label{E-sexual-reproduction-iterations_n=1_Gaussian-quadratic-change-2}
\begin{aligned}
&F_2(x)=\frac{1}{(2\pi)^{3d/2}}\frac{1}{(4\pi)^{2d}}\frac{1}{\Vert F_1\Vert_{L^1(\mathbb{R}^d)}\Vert F_0\Vert_{\mathcal{M}(\mathbb{R}^d)}^2}e^{-\frac{1+\alpha-k_2}{2}\vert x\vert^2}\\
&\times \int_{\mathbb{R}^{6d}} \exp\left(-\frac{1}{4 k_2}\big(\vert y_1\vert^2+\vert y_2\vert^2\big)+\frac{1}{8}\vert y_1-y_2\vert^2\right)\\
&\quad\times \exp\left(-\frac{1}{4 k_1}\big(\vert y_{11}\vert^2+\vert y_{12}\vert^2+\vert y_{21}\vert^2+\vert y_{22}\vert^2\big)+\frac{1}{8}\big(\vert y_{11}-y_{12}\vert^2+\vert y_{21}-y_{22}\vert^2\big)\right)\\
&\quad\times \bar{F}_0(k_1k_2x+k_1y_1+y_{11})\bar{F}_0(k_1k_2x+k_1 y_1+y_{12})\\
&\quad\times \bar{F}_0(k_1k_2x+k_1y_2+y_{21})\bar{F}_0(k_1k_2x+k_1y_2+y_{22})\,d\mathbf{y}_2,
\end{aligned}
\end{align}
where we denote again $\by_2=(y_1,y_2,y_{11},y_{12},y_{21},y_{22})\in \mathbb{R}^{6d}$. 

\medskip

Before stating the main result for general $n\in\mathbb{N}$, we collect some natural notation according to the preceding computations, that will be useful here on. First, we define the following sequences of coefficients.

\begin{definition}[Coefficients]\label{D-sexual-reproduction-coefficients}
Consider any $\alpha\in \mathbb{R}_+$.
\begin{enumerate}
\item The coefficients $\{k_n\}_{n\in \mathbb{N}}$ are defined by the recursive formula
\begin{align}\label{E-sexual-reproduction-coefficients}
\begin{aligned}
k_1&:=\frac{1}{2(1+\alpha)},\\
k_n&:=\frac{1}{3+2\alpha-2k_{n-1}},\quad \text{for }n\geq 2.
\end{aligned}
\end{align}
\item The coefficients $\{\kappa_n\}_{n\in \mathbb{N}}$ are defined by the recursive formula
\begin{align}\label{E-sexual-reproduction-coefficients-kappa}
\begin{aligned}
\kappa_0&:=1,\\
\kappa_n&:=k_1\cdots k_n,\quad \text{for }n\geq 1.
\end{aligned}
\end{align}
\end{enumerate}
\end{definition}

We note that for $n=2$ the above coefficients $k_1,k_2$ reduce to those appearing in the previous reformulation \eqref{E-sexual-reproduction-iterations_n=1_Gaussian-quadratic-change-2} of the recursion. Since it will be used later, we study the asymptotic behavior of such sequences of coefficients as $n\rightarrow \infty$.
	
\begin{lemma}[Asymptotic behavior of the coefficients]\label{L-sexual-reproduction-coefficients-asymptotics}
Consider the sequences $\{k_n\}_{n\in \mathbb{N}}$ and $\{\kappa_n\}_{n\in \mathbb{N}}$ in Definition \ref{D-sexual-reproduction-coefficients}. Then, the following properties hold true:
\begin{enumerate}[label=(\roman*)]
\item {\bf (Coefficients $k_n$)} The coefficients $\{k_n\}_{n\in \mathbb{N}}$ are positive numbers and $k_n\searrow \bk_\alpha$ as $n\rightarrow \infty$, where $\bk_\alpha\in \mathbb{R}_+^*$ is the smallest root of the equation
\begin{equation}\label{E-sexual-reproduction-coefficients-root}
\frac{1}{3+2\alpha-2\bk_\alpha}=\bk_\alpha.
\end{equation}
Specifically, $\bk_\alpha$ is given explicitly by formula \eqref{E-sexual-reproduction-coefficients-limit} and it is related to the coefficient $\br_\alpha$ in formula \eqref{E-variance-contraction-ralpha} of Lemma \ref{L-sexual-reproduction-Gaussian-solution-time-discrete-variance-relaxation} by $\br_\alpha=2\bk_\alpha^2$. In addition,
\begin{equation}\label{E-sexual-reproduction-coefficients-asymptotics}
k_n-\bk_\alpha\leq C\br_\alpha^{n-1},
\end{equation}
for every $n\in \mathbb{N}$, where $\br_\alpha$ is and $C\in \mathbb{R}_+^*$ depends only on $\alpha$. 
\item {\bf (Coefficients $\kappa_n$)} The coefficients $\{\kappa_n\}_{n\in \mathbb{N}}$ are positive numbers and decay geometrically with maximal rate $\bk_\alpha$. Specifically, for every $\varepsilon\in \mathbb{R}_+^*$ there exists $C_\varepsilon\in \mathbb{R}_+^*$ such that
\begin{equation}\label{E-sexual-reproduction-coefficients-asymptotics-2}
\kappa_n\leq C_\varepsilon (\bk_\alpha+\varepsilon)^n,
\end{equation}
for every $n\in \mathbb{N}$.
\end{enumerate}
\end{lemma}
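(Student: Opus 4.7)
The plan is to treat the recursion $k_n = g(k_{n-1})$ with $g(x) := \frac{1}{3+2\alpha-2x}$ as a linear fractional (Möbius) map, whose dynamics is entirely governed by its two real fixed points. First I would solve the fixed-point equation $g(x)=x$, equivalent to $2x^2 - (3+2\alpha)x + 1 = 0$, obtaining the two roots
\begin{equation*}
x_{\pm} = \frac{(3+2\alpha) \pm \sqrt{(1+2\alpha)^2 + 8\alpha}}{4},
\end{equation*}
and I would identify the smaller root $x_-$ with $\bk_\alpha$, matching formula \eqref{E-sexual-reproduction-coefficients-limit}. By Vieta's formulas the two roots satisfy $x_- x_+ = \tfrac12$, an identity that will later produce the sharp decay rate $\br_\alpha$.

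Second, I would show that $\{k_n\}$ is a strictly decreasing sequence contained in $(x_-, x_+)$. A direct computation yields $2k_1^2 - (3+2\alpha)k_1 + 1 = -\tfrac{\alpha}{2(1+\alpha)^2} < 0$, hence $k_1 \in (x_-, x_+)$. Since $g$ is strictly increasing on this interval and fixes both endpoints, it maps $(x_-, x_+)$ into itself. Moreover the identity $g(x)-x = \frac{2(x-x_-)(x-x_+)}{3+2\alpha-2x}$ shows that $g(x) < x$ on $(x_-, x_+)$, so the iterates decrease monotonically and converge to $\bk_\alpha$.

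For the quantitative rate, I would follow the classical Möbius-map trick of tracking the ratio against the two fixed points. Using $1 - x_\pm(3+2\alpha) = -2x_\pm^2$ (a rewriting of the fixed-point equation), a short computation gives $k_n - x_\pm = \frac{2x_\pm(k_{n-1}-x_\pm)}{3+2\alpha-2k_{n-1}}$. Dividing the two identities produces the clean multiplicative recursion
\begin{equation*}
\frac{k_n - x_-}{k_n - x_+} = \frac{x_-}{x_+} \cdot \frac{k_{n-1} - x_-}{k_{n-1} - x_+},
\end{equation*}
so iterating yields $(x_-/x_+)^{n-1}(k_1-x_-)/(k_1-x_+)$. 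The identity $x_- x_+ = \tfrac12$ then gives $x_-/x_+ = 2x_-^2 = 2\bk_\alpha^2$, and a rationalisation of the radical confirms this equals $\br_\alpha$ as defined in \eqref{E-variance-contraction-ralpha}. Solving for $k_n - \bk_\alpha$ and using the uniform bound $|k_n - x_+| \leq x_+ - x_-$ provided by the previous step delivers \eqref{E-sexual-reproduction-coefficients-asymptotics} with $C = C(\alpha)$.

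For part (2), the bound on $\kappa_n$ follows by a d'Alembert-type ratio argument. Given $\varepsilon > 0$, choose $N = N(\varepsilon,\alpha)$ so that $k_n \leq \bk_\alpha + \varepsilon$ for every $n \geq N$, which is possible by part (1). Then split
\begin{equation*}
\kappa_n = \kappa_N \prod_{i=N+1}^n k_i \leq \kappa_N (\bk_\alpha + \varepsilon)^{n-N},
\end{equation*}
which yields \eqref{E-sexual-reproduction-coefficients-asymptotics-2} with $C_\varepsilon := \kappa_N (\bk_\alpha + \varepsilon)^{-N}$. The only mildly delicate point in the whole argument is the algebraic identification $x_-/x_+ = \br_\alpha$; everything else is a routine analysis of a real Möbius map together with the elementary geometric-series estimate for the product.
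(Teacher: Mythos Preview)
Your proof is correct and follows essentially the same approach as the paper: both treat the recursion as a M\"obius map, locate $k_1$ between the two fixed points, establish monotone convergence, and obtain the rate by tracking the cross-ratio $(k_n-x_-)/(k_n-x_+)$, followed by the same d'Alembert-type splitting for $\kappa_n$. Your algebra is slightly cleaner in one place --- you use Vieta's identity $x_-x_+=\tfrac12$ to read off $x_-/x_+ = 2\bk_\alpha^2 = \br_\alpha$ directly, whereas the paper rationalises the radical expression $\frac{3+2\alpha-2x_+}{3+2\alpha-2x_-}$ --- but these are equivalent since $3+2\alpha-2x_\pm = 1/x_\pm$.
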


\begin{proof}
On the one hand, the properties of $\{\kappa_n\}_{n\in \mathbb{N}}$ readily follow from those of $\{k_n\}_{n\in \mathbb{N}}$ and the relation \eqref{E-sexual-reproduction-coefficients-kappa} in Definition \ref{D-sexual-reproduction-coefficients}. Specifically, for any given $q\in \mathbb{N}$ and any $n\geq q$ we get the decomposition
$$\kappa_n=\left(\prod_{m=0}^q k_m\right)\left(\prod_{m=q+1}^n k_m\right)\leq k_1^q k_{q+1}^{n-q}=\frac{k_1^q}{k_{q+1}^q}k_{q+1}^n\leq \frac{k_1^q}{k_{q+1}^q} (C\br_\alpha^{q+1}+\bk_\alpha)^n.$$
Therefore, taking $q$ sufficiently large so that $C\br_\alpha^{q+1}\leq \varepsilon$ we conclude \eqref{E-sexual-reproduction-coefficients-asymptotics-2}. We then focus on the properties of $\{k_n\}_{n\in \mathbb{N}}$ and we use a similar strategy like in Lemma \ref{L-sexual-reproduction-Gaussian-solution-time-discrete-variance-relaxation}.

\medskip

$\bullet$ {\sc Step 1}: Monotone convergence of $\{k_n\}_{n\in \mathbb{N}}$.\\
Define the following function
$$f(x):=\frac{1}{3+2\alpha-2x},\quad x\in \big(-\infty,\frac{3}{2}+\alpha\big).$$
Then, by \eqref{E-sexual-reproduction-coefficients} $\{k_n\}_{n\in \mathbb{N}}$ obeys the following recursive relation
$$k_n=f(k_{n-1}),\quad n>1.$$
Let us consider the fixed points $x_-<x_+$ of $f$, i.e,
$$x_{\pm}=\frac{(3+2\alpha)\pm\sqrt{(1+2\alpha)^2+8\alpha}}{4}.$$
By inspection, it is clear that $0<\bk_\alpha=x_-<x_+<\frac{3}{2}+\alpha$ and
$$
x_-<f(x)<x, \quad \text{if }x\in (x_-,x_+),
$$
where $\bk_\alpha$ is given in \eqref{E-sexual-reproduction-coefficients-limit}. Since $k_1\in (x_-,x_+)$, then we conclude that $\{k_n\}_{n\in \mathbb{N}}$ is a well defined, positive and monotonically decreasing sequence contained in the compact interval $[x_-,x_+]$. Therefore, it must converge towards some limit $\ell$, which is a solution of $f(x)=x$ {\em i.e.}, $\ell\in \{x_-,x_+\}$. Since the full sequence $\{k_n\}_{n\in \mathbb{N}}$ is below $x_+$ and decreasing, we then conclude that $\ell=x_-=\bk_\alpha$. In particular, we obtain
\begin{equation}\label{E-contraction-coefficients-uniform-bound}
\bk_\alpha<k_n\leq k_1,
\end{equation}
for any $n\in \mathbb{N}$.

\medskip

$\bullet$ {\sc Step 2}: Convergence rates.\\
As for Lemma \ref{L-sexual-reproduction-Gaussian-solution-time-discrete-variance-relaxation},  we shall use the special algebraic structure of the function $f$, which is the restriction to $\big(-\infty,\frac{3}{2}+\alpha\big)$ of the M\"{o}bius function $M:\mathbb{R}\setminus\{\frac{3}{2}+\alpha\}\longrightarrow \mathbb{R}$ given by
$$M(x):=\frac{1}{3+2\alpha-2x},\quad x\in \mathbb{R}\setminus\left\{\frac{3}{2}+\alpha\right\}.$$
By definition of $\{k_n\}_{n\in \mathbb{N}}$ in \eqref{E-sexual-reproduction-coefficients} we obtain
$$k_n-x_+=\frac{1}{3+2\alpha-2k_{n-1}}-\frac{1}{3+2\alpha-2x_+}=\frac{2}{(3+2\alpha-2k_{n-1})(3+2\alpha-2x_+)}(k_{n-1}-x_+).$$
Similarly, we obtain 
$$k_n-x_-=\frac{1}{3+2\alpha-2k_{n-1}}-\frac{1}{3+2\alpha-2x_-}=\frac{2}{(3+2\alpha-2k_{n-1})(3+2\alpha-2x_-)}(k_{n-1}-x_-).$$
By dividing both expressions and iterating such an identity we get
\begin{equation}\label{E-contraction-coefficients-Mobius}
\frac{k_n-x_-}{k_n-x_+}=\left(\frac{3+2\alpha-2x_+}{3+2\alpha-2x_-}\right)^{n-1}\frac{k_1-x_-}{k_1-x_+},
\end{equation}
for every $n\in \mathbb{N}$. In fact, the basis can be restated as follows
$$\frac{3+2\alpha-2x_+}{3+2\alpha-2x_-}=\frac{3+2\alpha-\sqrt{(1+2\alpha)^2+8\alpha}}{3+2\alpha+\sqrt{(1+2\alpha)^2+8\alpha}}=\frac{8}{\left(3+2\alpha+\sqrt{(1+2\alpha)^2+8\alpha}\right)^2}\equiv \br_\alpha<1,$$
where $\br_\alpha$ is determined by \eqref{E-variance-contraction-ralpha} in Lemma \ref{L-sexual-reproduction-Gaussian-solution-time-discrete-variance-relaxation}. Therefore, using \eqref{E-contraction-coefficients-uniform-bound} and \eqref{E-contraction-coefficients-Mobius} we conclude that
$$k_n-\bk_\alpha=\frac{k_1-x_-}{x_+-k_1}(x_+-k_n)\br_\alpha^{n-1}\leq \frac{k_1-x_-}{x_+-k_1}(x_+-x_-)\br_\alpha^{n-1},$$
for any $n\in \mathbb{N}$, thus ending the proof.
\end{proof}

\begin{definition}[Quadratic forms]\label{D-sexual-reproduction-Gaussian-quadratic-Qn-form}
Consider the sequence $\{k_n\}_{n\in \mathbb{N}}$ in Definition \ref{D-sexual-reproduction-coefficients}. Then, we define the quadratic form $Q_n=Q_n(\by_n)$ by
\begin{equation}\label{E-sexual-reproduction-Gaussian-quadratic-Qn-form}
Q_n(\by_n):=\sum_{m=0}^{n-1}\sum_{i\in \Level_m^n}\left(\frac{1}{4 k_{n-m}}(\vert y_{i1}\vert^2+\vert y_{i2}\vert^2)-\frac{1}{8}\vert y_{i1}-y_{i2}\vert^2\right),\quad \by_n\in \mathbb{R}^{2(2^n-1)d},
\end{equation}
where we are using the tree-indexed notation $\by_n=(y_i)_{i\in \Tree^n_*}\in \mathbb{R}^{2(2^n-1)d}$ in Remark \ref{R-tree-indexed-variables}.
\end{definition}

Again, note that when $n=2$ the above quadratic form $Q_2(\by_2)$ reduces to the one inside the exponential of \eqref{E-sexual-reproduction-iterations_n=1_Gaussian-quadratic-change-2}. We now show the following uniform control of the quadratic forms $Q_n$ as $n\rightarrow\infty$.

\begin{lemma}[Uniform positive definite quadratic forms]\label{L-sexual-reproduction-control-Qn}
Consider the quadratic form $Q_n=Q_n(\by_n)$ in Definition \ref{D-sexual-reproduction-Gaussian-quadratic-Qn-form} and define the couple of coefficients $\beta_{\min},\beta_{\max}\in \mathbb{R}_+^*$ by
$$\beta_{\min}:=\frac{1+2\alpha}{4},\quad \beta_{\max}:=\frac{1}{4\bk_\alpha},$$
where $\bk_\alpha$ is given by formula \eqref{E-sexual-reproduction-coefficients-limit}. Then, we obtain that
$$\beta_{\min}\Vert \by_n\Vert^2\leq Q_n(\by_n)\leq \beta_{\max}\Vert \by_n\Vert^2,$$
where we are using the tree-indexed notation $\by_n=(y_i)_{i\in \Tree^n_*}\in \mathbb{R}^{2(2^n-1)d}$ in Remark \ref{R-tree-indexed-variables}.
\end{lemma}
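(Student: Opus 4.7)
The plan is to exploit the decoupled structure of $Q_n$: although it sums over all vertices at levels $0$ through $n-1$, each $y_j$ with $j\in \Tree^n_*$ appears in the single pair $\{y_j,y_{\mate(j)}\}=\{y_{\child(j)1},y_{\child(j)2}\}$ indexed by $i=\child(j)\in\widehat{\Tree}^n$. Thus, setting
\[
q_j(u,v):=\frac{1}{4k_j}(|u|^2+|v|^2)-\frac{1}{8}|u-v|^2,\qquad u,v\in\mathbb{R}^d,
\]
the quadratic form $Q_n$ splits as a sum over disjoint variable pairs,
\[
Q_n(\by_n)=\sum_{m=0}^{n-1}\sum_{i\in \Level_m^n}q_{n-m}(y_{i1},y_{i2}),
\]
and correspondingly $\|\by_n\|^2=\sum_{m=0}^{n-1}\sum_{i\in \Level_m^n}\bigl(|y_{i1}|^2+|y_{i2}|^2\bigr)$. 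It therefore suffices to bound each elementary form $q_j$ from above by $\beta_{\max}(|u|^2+|v|^2)$ and from below by $\beta_{\min}(|u|^2+|v|^2)$, uniformly in $j\geq 1$.

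The matrix of $q_j$ in the orthonormal basis $\{(u+v)/\sqrt{2},(u-v)/\sqrt{2}\}\otimes\mathbb{R}^d$ is diagonal with two eigenvalues (each of multiplicity $d$):
\[
\lambda_+(j)=\frac{1}{4k_j},\qquad \lambda_-(j)=\frac{1}{4k_j}-\frac{1}{4}.
\]
So I need to show $\beta_{\min}\leq \lambda_-(j)\leq \lambda_+(j)\leq \beta_{\max}$ for every $j\geq 1$. By Lemma \ref{L-sexual-reproduction-coefficients-asymptotics} the sequence $\{k_j\}$ is decreasing and converges to $\bk_\alpha$, hence $\bk_\alpha<k_j\leq k_1$ for all $j\geq 1$, and the upper bound $\lambda_+(j)<\frac{1}{4\bk_\alpha}=\beta_{\max}$ is immediate.

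For the lower bound on $\lambda_-(j)$, I first treat $j=1$ directly: $\lambda_-(1)=\frac{1}{4k_1}-\frac{1}{4}=\frac{1+2\alpha}{4}=\beta_{\min}$, realising equality. For $j\geq 2$, I use the defining recursion $\frac{1}{k_j}=3+2\alpha-2k_{j-1}$ to get
\[
\lambda_-(j)=\frac{1}{4}\left(\frac{1}{k_j}-1\right)=\frac{1+\alpha-k_{j-1}}{2}.
\]
Since $k_{j-1}\leq k_1=\frac{1}{2(1+\alpha)}$, it suffices to check
\[
\frac{1+\alpha-k_1}{2}=\frac{2(1+\alpha)^2-1}{4(1+\alpha)}\geq \frac{1+2\alpha}{4},
\]
which, after clearing denominators, reduces to the trivial inequality $\alpha\geq 0$. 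Combining these bounds on $q_j$ with the decomposition above and summing yields the announced estimate $\beta_{\min}\|\by_n\|^2\leq Q_n(\by_n)\leq \beta_{\max}\|\by_n\|^2$.

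No step is genuinely hard; the only subtlety is recognising the disjoint-pair decomposition of $Q_n$ (which is obvious once the tree bookkeeping of Section \ref{SS-trees} is in place) and handling the case $j=1$ separately, since the recursion that controls $\lambda_-(j)$ only starts at $j=2$.
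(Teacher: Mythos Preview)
Your proof is correct and follows essentially the same approach as the paper: decouple $Q_n$ into blocks indexed by sibling pairs, bound each block's eigenvalues, and use the monotonicity $\bk_\alpha<k_j\leq k_1$ from Lemma~\ref{L-sexual-reproduction-coefficients-asymptotics}. The paper phrases the block bound via Young's inequality $|y_{i1}-y_{i2}|^2\leq 2(|y_{i1}|^2+|y_{i2}|^2)$ rather than explicit diagonalisation, but this yields exactly your $\lambda_-(j)=\frac{1}{4k_j}-\frac{1}{4}$; it then uses $k_j\leq k_1$ directly for \emph{all} $j\geq 1$ to obtain $\lambda_-(j)\geq \frac{1}{4k_1}-\frac{1}{4}=\beta_{\min}$, so your case split $j=1$ versus $j\geq 2$ and the detour through the recursion are unnecessary.
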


\begin{proof}
By virtue of Young's inequality, we obtain the following lower and upper bound for $Q_n$
$$\sum_{m=0}^{n-1}\sum_{i\in \Level_m^n}\frac{1}{4}\left(\frac{1}{k_{n-m}}-1\right)(\vert y_{i1}\vert^2+\vert y_{i2}\vert^2)\leq Q_n(\by_n)\leq \sum_{m=0}^{n-1}\sum_{i\in \Level_m^n}\frac{1}{4}\frac{1}{k_{n-m}}(\vert y_{i1}\vert^2+\vert y_{i2}\vert^2),$$
for every $\by_n=(y_i)_{i\in \Tree^n_*}\in \mathbb{R}^{2(2^n-1)d}$. Then, the result follows from Lemma \ref{L-sexual-reproduction-coefficients-asymptotics}, which guarantees the uniform control $\bk_\alpha< k_{m-n}\leq k_1$ for every $m=0,\ldots,n-1$.
\end{proof}

\begin{definition}[Lineage maps]\label{D-sexual-reproduction-Gaussian-quadratic-lineage-maps}
We define the \textit{lineage maps} $\Phi_n^j=\Phi_n^j(x;\by_n)$ associated to the leave $j\in \Leaves^n$ (see Figure \ref{fig:lineage-map}) as follows
\begin{equation}\label{E-sexual-reproduction-Gaussian-quadratic-lineage-maps}
\Phi_n^j(x;\by_n):=\kappa_n x+\sum_{m=0}^{n-1} \kappa_m y_{\child^m(j)},\quad \by_n\in \mathbb{R}^{2(2^n-1)d},
\end{equation}
where $x\in \mathbb{R}^d$ represents the root value, $\by_n=(y_i)_{i\in \Tree^n_*}\in \mathbb{R}^{2(2^n-1)d}$ is represented according to the tree-indexed notation in Remark \ref{R-tree-indexed-variables}, $\{\kappa_n\}_{n\in \mathbb{N}}$ is given in Definition \ref{D-sexual-reproduction-coefficients}, and $\child^m=\child\circ \cdots\circ\child$ is the $m$ times iterated map $\child:\Tree^n_*\rightarrow \widehat{\Tree}^n$ which, to any vertex $i\in \Tree^n_*$, it associates its child $\child(i)\in \widehat{\Tree}^n$ ({\it cf.} Section \ref{SS-trees}).
\end{definition}

\begin{figure}[t]
\centering
\begin{forest}
for tree={grow=north,edge={->}}
[$\emptyset$ [2 [22 [222] [221]] [21 [212] [211]]] [1 [12 [122] [121, colour my roots=red]] [11 [112] [111]]]]
\end{forest}
\caption{Unique path joining the leaf $j=121$ to the root in the perfect binary tree $\Tree^3$. The corresponding lineage map takes the form $\Phi_n^{121}(x;{\mathbf y}_3)=\kappa_3x+\kappa_2 y_{1}+\kappa_1 y_{12}+y_{121}$.}
\label{fig:lineage-map}
\end{figure}
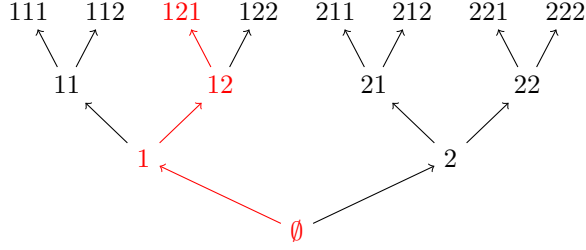

We are now ready to state the main result of this section extending formula \eqref{E-sexual-reproduction-iterations_n=1_Gaussian-quadratic-change-2} to any $n\in\mathbb{N}$. The starting point is again formula \eqref{E-sexual-reproduction-iterations}. In the particular case of Gaussian mixing kernel \eqref{E-mixing-Gaussian} and quadratic selection function \eqref{E-selection-quadratic}, it takes the explicit form
\begin{align}\label{E-sexual-reproduction-iterations-Gaussian-quadratic}
\begin{aligned}
&(e^m F_n)(x)=\frac{1}{(2\pi)^{(2^n-1)d/2}}\frac{1}{(4\pi)^{2^{n-1}d}}\frac{1}{\prod_{m=0}^{n-1}\Vert F_m\Vert_{L^1(\mathbb{R}^d)}^{2^{n-1-m}}}\\
&\quad\times \int_{\mathbb{R}^{2(2^n-1)d}}\exp\left(-\sum_{m=0}^{n-2}\sum_{i\in \Leaves^n_m}\left[\frac{\alpha}{2}(\vert x_{i1}\vert^2+\vert x_{i2}\vert^2)+\frac{1}{2}\left\vert x_i-\frac{x_{i1}+x_{i2}}{2}\right\vert^2\right]\right)\\
&\qquad\qquad\qquad\times \exp\left(-\sum_{i\in \Leaves^n_{n-1}}\left[\frac{1}{2}(\alpha+\frac{1}{2})(\vert x_{i1}\vert^2+\vert x_{i2}\vert^2)+\frac{1}{2}\left\vert x_i-\frac{x_{i1}+x_{i2}}{2}\right\vert^2\right]\right)\\
&\qquad\qquad\qquad\times \prod_{j\in \Leaves^n}\bar{F}_0(x_j)\,d\mathbf{x}_n,
\end{aligned}
\end{align}
for any $x\in \mathbb{R}^d$ and $n\in \mathbb{N}$. Above, we have used the tree-indexed notation $\bx_n=(x_i)_{i\in \Tree^n_*}\in \mathbb{R}^{2(2^n-1)d}$ in Remark \ref{R-tree-indexed-variables}, we have set $x_\emptyset:=x$ and we have considered the rescaled distribution $\bar{F}_0$ according to Definition \ref{D-rescaled-F}. Again, notice that the exponential terms in the fist term of the third line of \eqref{E-sexual-reproduction-iterations-Gaussian-quadratic} have been introduced to appropriately correct the rescaled distribution $\bar{F}_0$ of $F_0$. As a consequence, the factor $(4\pi)^{-2^{n-1}d}$ arises from the normalization by $\bF_{\alpha=0}(x_j)$ whilst the factor $(2\pi)^{-(2^n-1)d/2}$ comes from the repeated products of the Gaussian mixing kernel $G$. The main result then reads as follows

\begin{proposition}[Reformulation of the iterations I]\label{P-sexual-reproduction-iterations-Gaussian-quadratic-change-variables}
Assume that $\alpha\in \mathbb{R}_+$ and set any initial datum $F_0\in \mathcal{M}_+(\mathbb{R}^d)$. Hence, the solution $\{F_n\}_{n\in \mathbb{N}}$ to the time-discrete problem \eqref{E-sexual-reproduction-time-discrete} admits the following form
\begin{equation}\label{E-sexual-reproduction-iterations-Gaussian-quadratic-change-variables}
F_n(x)=\frac{e^{-\frac{1+\alpha-k_n}{2}\vert x\vert^2}}{(4\pi)^{2^{n-1}d}(2\pi)^{(2^n-1)d/2}}\frac{1}{\prod_{m=0}^{n-1}\Vert F_m\Vert_{L^1(\mathbb{R}^d)}^{2^{n-1-m}}}\int_{\mathbb{R}^{2(2^n-1)d}}e^{-Q_n(\by_n)}\prod_{j\in \Leaves^n}\bar{F}_0(\Phi_n^j(x;\by_n))\,d\mathbf{y}_n,
\end{equation}
for every $x\in \mathbb{R}^d$ and $n\in \mathbb{N}$, where we are using the tree-indexed notation $\by_n=(y_i)_{i\in \Tree^n_*}\in \mathbb{R}^{2(2^n-1)d}$ in Remark \ref{R-tree-indexed-variables} along with further notation from Definitions \ref{D-rescaled-F}, \ref{D-sexual-reproduction-coefficients}, \ref{D-sexual-reproduction-Gaussian-quadratic-Qn-form} and \ref{D-sexual-reproduction-Gaussian-quadratic-lineage-maps}.
\end{proposition}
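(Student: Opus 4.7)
The plan is to prove the formula by unwinding the integral \eqref{E-sexual-reproduction-iterations-Gaussian-quadratic} by changing variables level-by-level, moving upwards from the leaves to the root exactly in the spirit of Steps 1 and 2 of the worked case $n=2$. To keep the bookkeeping uniform I introduce auxiliary coefficients $\alpha_0 := \tfrac{1}{2} + \alpha$ and $\alpha_m := 1 - k_m + \alpha$ for $m \geq 1$; a direct computation from \eqref{E-sexual-reproduction-coefficients} gives the key algebraic identity $k_{m+1} = 1/(1 + 2\alpha_m)$ used throughout: it is precisely the condition that makes the cross-terms vanish when completing the square at the $(m+1)$-th step.

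The core of the proof is a single inductive step performed for $s = 1,2,\ldots, n$. Assume that the $s-1$ changes of variables at levels $n, n-1, \ldots, n-s+2$ have already been performed, so that the integral representing $(e^m F_n)(x)$ features the first $s-1$ summands of $Q_n$ (those indexed by $m = n-1, \ldots, n-s+1$) and couples each vertex $i$ at level $n-s+1$ to its child $\child(i)$ through the block $\tfrac{\alpha_{s-1}}{2}(|x_{i1}|^2+|x_{i2}|^2) + \tfrac{1}{2}|x_{\child(i)} - \tfrac{1}{2}(x_{i1}+x_{i2})|^2$ (the analogue of the second line of \eqref{E-sexual-reproduction-iterations_n=1_Gaussian-quadratic-change-1}). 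I then substitute $x_i = k_s\, x_{\child(i)} + y_i$ for every $i$ at level $n-s+1$. Expanding yields three contributions: (i) a pure quadratic $\tfrac{1}{2}(2\alpha_{s-1} k_s^2 + (1-k_s)^2)|x_{\child(i)}|^2 = \tfrac{1-k_s}{2}|x_{\child(i)}|^2$; (ii) cross-terms proportional to $(\alpha_{s-1} k_s - \tfrac{1-k_s}{2})\,x_{\child(i)} \cdot y_i$, which vanish because $k_s = 1/(1+2\alpha_{s-1})$; and (iii) a decoupled residual $\tfrac{1}{4 k_s}(|y_{i1}|^2+|y_{i2}|^2) - \tfrac{1}{8}|y_{i1}-y_{i2}|^2$ matching exactly the $m = n-s$ summand of $Q_n$ in \eqref{E-sexual-reproduction-Gaussian-quadratic-Qn-form}. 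The leftover $\tfrac{1-k_s}{2}|x_{\child(i)}|^2$ combines with the pre-existing $\tfrac{\alpha_{s-1}}{2}|x_{\child(i)}|^2$ at level $n-s$ (or, when $s = n$, stands alone) to produce $\tfrac{\alpha_s}{2}|x_{\child(i)}|^2$ by the very definition of $\alpha_s$, closing the induction.

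After step $s=n$ only the root variable $x$ remains; the accumulated pure quadratic in $x$ is $\tfrac{1-k_n}{2}|x|^2$, which, together with the factor $e^{-m(x)} = e^{-\alpha|x|^2/2}$ obtained by moving $e^{m}$ to the right-hand side, produces the prefactor $e^{-(1+\alpha-k_n)|x|^2/2}$ of \eqref{E-sexual-reproduction-iterations-Gaussian-quadratic-change-variables}. For each leaf $j\in\Leaves^n$ the original variable $x_j$ is the composition of the successive shifts along the unique path from $j$ to the root $\emptyset$, and a telescoping using \eqref{E-sexual-reproduction-coefficients-kappa} collapses this composition into $x_j = \kappa_n x + \sum_{m=0}^{n-1}\kappa_m\, y_{\child^m(j)} = \Phi_n^j(x;\by_n)$, in agreement with \eqref{E-sexual-reproduction-Gaussian-quadratic-lineage-maps}. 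The normalization constants match since no Jacobian is produced by these translations. The main difficulty is identifying the correct form of the inductive invariant (pre-existing $Q_n$-layers, the coupling block at the current level with coefficient $\alpha_{s-1}$); once this invariant is in place, the cancellation of cross-terms is forced by the recursive choice of $k_s$ and the residuals assemble automatically into $Q_n$.
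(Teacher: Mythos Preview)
Your proof is correct and follows exactly the same backward level-by-level change of variables as the paper, with the same recursive choice $k_s=1/(1+2\alpha_{s-1})$ forcing the cross-terms to vanish. One small slip: the ``pre-existing'' coefficient at level $n-s$ with which the residual $\tfrac{1-k_s}{2}|x_{\child(i)}|^2$ combines is $\tfrac{\alpha}{2}$ (the raw selection term), not $\tfrac{\alpha_{s-1}}{2}$; indeed $\alpha+(1-k_s)=\alpha_s$ by your definition, whereas $\alpha_{s-1}+(1-k_s)\neq\alpha_s$ in general.
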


As we anticipated for the particular case $n=2$ at the beginning of this section, the proof of Proposition \ref{P-sexual-reproduction-iterations-Gaussian-quadratic-change-variables} will be based on the following change of variables
$$
x_{i1}=k_{n-m}x_i+y_{i1},\quad x_{i2}=k_{n-m}x_i+y_{i2},
$$
for every $i\in \Level_m^n$, which we will apply in a backwards way starting at indices $i\in \Level_{n-1}^n$ and ending at $i=\emptyset$. Again, the objective of such a change of variables is to appropriately move the reference frame so that the above quadratic forms $\frac{1}{2}\left\vert x_i-\frac{x_{i1}+x_{i2}}{2}\right\vert^2$ inside the exponential of \eqref{E-sexual-reproduction-iterations-Gaussian-quadratic} get appropriately centered and no crossed terms remain when the selection parts $\frac{\alpha}{2}(\vert x_{i1}\vert^2+\vert x_{i2}\vert^2)$ are taken into account. For simplicity, along the proof we shall restrict to tracking how the various terms inside the exponentials in \eqref{E-sexual-reproduction-iterations-Gaussian-quadratic} get modified after the change of variables.

\begin{proof}[Proof of Proposition \ref{P-sexual-reproduction-iterations-Gaussian-quadratic-change-variables}]~\\

$\bullet$ {\sc Step 1}: Change of variables $x_{i1}\rightarrow y_{i1}$ and $x_{i2}\rightarrow y_{i2}$ for $i\in \Level_{n-1}^n$ .\\
We consider the change of variables given by
$$
x_{i1}=k_1x_i+y_{i1},\quad x_{i2}=k_1x_i+y_{i2},
$$
where $k_1$ is given by Definition \ref{D-sexual-reproduction-coefficients}. The collection of all the terms for $i\in \Level_{n-1}^n$ in \eqref{E-sexual-reproduction-iterations-Gaussian-quadratic} then read
\begin{align*}
\frac{\alpha_0}{2}\vert x_{i1}\vert^2+\frac{\alpha_0}{2}\vert x_{i2}\vert^2&+\frac{1}{2}\left\vert x_i-\frac{x_{i1}+x_{i2}}{2}\right\vert^2\\
&=\frac{\alpha_0}{2}\vert k_1x_i+y_{i1}\vert^2+\frac{\alpha_0}{2}\vert k_1 x_i+y_{i2}\vert^2+\frac{1}{2}\left\vert (1-k_1)x_i-\frac{y_{i1}+y_{i2}}{2}\right\vert^2\\
&=\frac{1}{2}\left(2\alpha_0 k_1^2+(1-k_1)^2\right)\vert x_i\vert^2+\frac{\alpha_0}{2}\vert y_{i1}\vert^2+\frac{\alpha_0}{2}\vert y_{i2}\vert^2+\frac{1}{8}\vert y_{i1}+y_{i2}\vert^2\\
&\hspace{2cm} +\left(\alpha_0 k_1-\frac{1-k_1}{2}\right)x_i\cdot y_{i1}+\left(\alpha_0 k_1-\frac{1-k_1}{2}\right)x_i\cdot y_{i2}\\
&=\frac{1}{2}\left(2\alpha_0 k_1^2+(1-k_1)^2\right)\vert x_i\vert^2+\frac{\alpha_0}{2}\vert y_{i1}\vert^2+\frac{\alpha_0}{2}\vert y_{i2}\vert^2+\frac{1}{8}\vert y_{i1}+y_{i2}\vert^2\\
&=\frac{1}{2}(1-k_1)\vert x_i\vert^2+\frac{1}{4 k_1}\vert y_{i1}\vert^2+\frac{1}{4 k_1}\vert y_{i2}\vert^2-\frac{1}{8}\vert y_{i1}-y_{i2}\vert^2,
\end{align*}
where we have defined the coefficient $\alpha_0\in \mathbb{R}_+^*$ by
$$\alpha_0:=\alpha+\frac{1}{2},$$
to recombine terms in the second and third lines of \eqref{E-sexual-reproduction-iterations-Gaussian-quadratic}, and we have used the following relation between $k_1$ and $\alpha_0$ in order to cancel the crossed terms, {\em i.e.},
$$k_1=\frac{1}{1+2\alpha_0}.$$

\medskip

$\bullet$ {\sc Step 2}: Change of variables $x_{i1}\rightarrow y_{i1}$ and $x_{i2}\rightarrow y_{i2}$ for $i\in \Level_{n-2}^n$.\\
Again, we consider the change of variables given by
$$
x_{i1}=k_2x_i+y_{i1},\quad x_{i2}=k_2x_i+y_{i2},
$$
where $k_2$ is given by Definition \ref{D-sexual-reproduction-coefficients}. Putting together the terms for $i\in \Level_{n-2}^n$ in \eqref{E-sexual-reproduction-iterations-Gaussian-quadratic} and the above $x_i$-dependent remainder in the above expression in {\sc Step 1} yields the following term under the above change of variables
\begin{align*}
\frac{\alpha_1}{2}\vert x_{i1}\vert^2+\frac{\alpha_1}{2}\vert x_{i2}\vert^2&+\frac{1}{2}\left\vert x_i-\frac{x_{i1}+x_{i2}}{2}\right\vert^2\\
&=\frac{\alpha_1}{2}\vert k_2x_i+y_{i1}\vert^2+\frac{\alpha_1}{2}\vert k_2 x_i+y_{i2}\vert^2+\frac{1}{2}\left\vert (1-k_2)x_i-\frac{y_{i1}+y_{i2}}{2}\right\vert^2\\
&=\frac{1}{2}\left(2\alpha_1 k_2^2+(1-k_2)^2\right)\vert x_i\vert^2+\frac{\alpha_1}{2}\vert y_{i1}\vert^2+\frac{\alpha_1}{2}\vert y_{i2}\vert^2+\frac{1}{8}\vert y_{i1}+y_{i2}\vert^2\\
&\hspace{2cm} +\left(\alpha_1 k_2-\frac{1-k_2}{2}\right)x_i\cdot y_{i1}+\left(\alpha_1 k_2-\frac{1-k_2}{2}\right)x_i\cdot y_{i2}\\
&=\frac{1}{2}\left(2\alpha_1 k_2^2+(1-k_2)^2\right)\vert x_i\vert^2+\frac{\alpha_1}{2}\vert y_{i1}\vert^2+\frac{\alpha_1}{2}\vert y_{i2}\vert^2+\frac{1}{8}\vert y_{i1}+y_{i2}\vert^2\\
&=\frac{1}{2}(1-k_2)\vert x_i\vert^2+\frac{1}{4 k_2}\vert y_{i1}\vert^2+\frac{1}{4 k_2}\vert y_{i2}\vert^2-\frac{1}{8}\vert y_{i1}-y_{i2}\vert^2,
\end{align*}
where we have defined again coefficient the $\alpha_1\in \mathbb{R}_+^*$ by
$$\alpha_1:=1+\alpha-k_1,$$
in order to absorb the above-mentioned $x_i$-dependent remainder. In addition, note that we have used again the following relation between $k_2$ and $\alpha_1$ to cancel the crossed term, {\em i.e.},
$$k_2=\frac{1}{1+2\alpha_1}.$$

\medskip

$\bullet$ {\sc Step 3}: Change of variables $x_{i1}\rightarrow y_{i1}$ and $x_{i2}\rightarrow y_{i2}$ for $i\in \Level_{n-3}^n$.\\
We consider the change of variables given by
\begin{align*}
x_{i1}&=k_3x_i+y_{i1},\\
x_{i2}&=k_3x_i+y_{i2},
\end{align*}
where $k_3$ is given by Definition \ref{D-sexual-reproduction-coefficients}. Putting together the terms for $i\in \Level_{n-3}^n$ in \eqref{E-sexual-reproduction-iterations-Gaussian-quadratic} and the above $x_i$-dependent remainder in the above expression in {\sc Step 2} yields the following term under the above change of variables
\begin{align*}
\frac{\alpha_2}{2}\vert x_{i1}\vert^2+\frac{\alpha_2}{2}\vert x_{i2}\vert^2&+\frac{1}{2}\left\vert x_i-\frac{x_{i1}+x_{i2}}{2}\right\vert^2\\
&=\frac{\alpha_2}{2}\vert k_3x_i+y_{i1}\vert^2+\frac{\alpha_2}{2}\vert k_3 x_i+y_{i2}\vert^2+\frac{1}{2}\left\vert (1-k_3)x_i-\frac{y_{i1}+y_{i2}}{2}\right\vert^2\\
&=\frac{1}{2}\left(2\alpha_2 k_3^2+(1-k_3)^2\right)\vert x_i\vert^2+\frac{\alpha_2}{2}\vert y_{i1}\vert^2+\frac{\alpha_2}{2}\vert y_{i2}\vert^2+\frac{1}{8}\vert y_{i1}+y_{i2}\vert^2\\
&\hspace{2cm} +\left(\alpha_2 k_3-\frac{1-k_3}{2}\right)x_i\cdot y_{i1}+\left(\alpha_2 k_3-\frac{1-k_3}{2}\right)x_i\cdot y_{i2}\\
&=\frac{1}{2}\left(2\alpha_2 k_3^2+(1-k_3)^2\right)\vert x_i\vert^2+\frac{\alpha_2}{2}\vert y_{i1}\vert^2+\frac{\alpha_2}{2}\vert y_{i2}\vert^2+\frac{1}{8}\vert y_{i1}+y_{i2}\vert^2\\
&=\frac{1}{2}(1-k_3)\vert x_i\vert^2+\frac{1}{4 k_3}\vert y_{i1}\vert^2+\frac{1}{4 k_3}\vert y_{i2}\vert^2-\frac{1}{8}\vert y_{i1}-y_{i2}\vert^2,
\end{align*}
where we have defined again coefficient the $\alpha_2\in \mathbb{R}_+^*$
$$\alpha_2:=1+\alpha-k_2,$$
in order to absorb the above-mentioned $x_i$-dependent remainder. In addition, note that we have used again the following relation between $k_3$ and $\alpha_2$ to cancel the crossed term, {\em i.e.},
$$k_3=\frac{1}{1+2\alpha_2}.$$
Following a similar recursive process, we readily identify the quadratic form $Q_n$ in Definition \ref{D-sexual-reproduction-Gaussian-quadratic-Qn-form} in the exponential of the integrand in the final expression \eqref{E-sexual-reproduction-iterations-Gaussian-quadratic-change-variables}.

\medskip

$\bullet$ {\sc Step 4}: Identifying the lineage maps $\Phi_n^j$ and the exponential factor.\\
On the one hand, the hierarchy of changes of variables immediately leads to
$$x_j=\Phi_n^j(x;\by_n),$$
for any leaf $j\in \Leaves^n$, thanks to Definition \ref{D-sexual-reproduction-coefficients} for the coefficients $\{\kappa_n\}_{n\in \mathbb{N}}$ and Definition \ref{D-sexual-reproduction-Gaussian-quadratic-lineage-maps} for the lineage maps $\Phi_n^j$. On the other hand, since the Jacobian determinant of each change of variables is $1$, no further factors appear during the iterative process except for the exponential $x$-dependent remainder $e^{-\frac{1+\alpha-k_n}{2}\vert x\vert^2}$ of the last step of the recurrence, which is not absorbed in the quadratic form $Q_n$.
\end{proof}

\subsection{Probabilistic reinterpretation}

For general purposes, in this part we reformulate the result in Proposition \ref{P-sexual-reproduction-iterations-Gaussian-quadratic-change-variables} in appropriate probabilistic terms. More precisely, this reformulation will not only provide shorter formulas, but it will also allow identifying a problematic key point when studying the asymptotics of the high-dimensional integral \eqref{E-sexual-reproduction-iterations-Gaussian-quadratic-change-variables} in Section \ref{S-ergodicity}, namely, the presence of non-negligible correlations between the various factors indexed with indexes over leaves $j\in \Leaves^n$ of the tree, which do not dissipate even for long time $n\rightarrow\infty$. We shall use the following notation.

\begin{definition}[High-dimensional normal distributions]\label{D-sexual-reproduction-multivariate-normal-Gn}
Consider any integer $n\in \mathbb{N}$ and
\begin{equation}\label{E-sexual-reproduction-multivariate-normal-Gn-covariances}
\Sigma_n^m:=\left(\begin{array}{cc}
\displaystyle \frac{(2-k_{n-m})k_{n-m}}{1-k_{n-m}} I_d & \displaystyle-\frac{k_{n-m}^2}{1-k_{n-m}} I_d\\
\displaystyle-\frac{k_{n-m}^2}{1-k_{n-m}} I_d & \displaystyle\frac{(2-k_{n-m})k_{n-m}}{1-k_{n-m}} I_d
\end{array}\right),
\end{equation}
for every $m=0,\ldots,n-1$. Then, we will define the random vector $\bY_n=(Y_i)_{i\in \Tree^n_*}$ distributed according to the following multivariate normal distribution
\begin{equation}\label{E-sexual-reproduction-multivariate-normal-Gn}
G_n(\by_n):=\prod_{m=0}^{n-1}\prod_{i\in \Level_m^n}\frac{1}{(2\pi)^d\sqrt{\det(\Sigma_n^m)}}\exp\left(-\frac{1}{2}(y_{i1}^T,y_{i2}^T)(\Sigma_n^m)^{-1}\binom{y_{i1}}{y_{i2}}\right),\quad \by_n\in\mathbb{R}^{2(2^n-1)d},
\end{equation}
where again we are using the tree-indexed notation $\by_n=(y_i)_{i\in \Tree^n_*}\in \mathbb{R}^{2(2^n-1)d}$ in Remark \ref{R-tree-indexed-variables}.
\end{definition}

Notice that for every index $i\in \Level_m^n$ with $m=0,\ldots,n-1$ the pair $(Y_{i1},Y_{i2})$ is normally distributed according to $\mathcal{N}(0,\Sigma_n^m)$ and $Y_{i1}$ and $Y_{i2}$ are certainly correlated. Nevertheless, the vector $(Y_{i1},Y_{i2})$ is independent on any other random vector $Y_j$ for indices $j\neq i1$ and $j\neq i2$. We are now ready to introduce the following probabilistic reformulation of  Proposition \ref{P-sexual-reproduction-iterations-Gaussian-quadratic-change-variables}.

\begin{proposition}[Reformulation of the iterations II]\label{P-iterative-method-sexual-reproduction}
Assume that $\alpha\in \mathbb{R}_+^*$ and set any initial datum $F_0\in \mathcal{M}_+(\mathbb{R}^d)$. Hence, the solution $\{F_n\}_{n\in \mathbb{N}}$ to the time-discrete problem \eqref{E-sexual-reproduction-time-discrete} admits the following form
\begin{equation}\label{E-sexual-Gaussian-quadratic-emF}
F_n(x)=\frac{e^{-\frac{1+\alpha-k_n}{2}\vert x\vert^2}}{(2\pi)^{d/2}2^{2^{n-1}d}}\left(\prod_{m=0}^{n-1}\frac{\left(\frac{4k_{n-m}^2}{1-k_{n-m}}\right)^{2^{m-1}d}}{\Vert F_m\Vert_{L^1(\mathbb{R}^d)}^{2^{n-1-m}}}\right)\mathbb{E}\left[\prod_{j\in \Leaves^n} \bar{F}_0(\Phi^j_n(x;\bY_n))\right],
\end{equation}
for every $x\in \mathbb{R}^d$ and $n\in \mathbb{N}$, where we are using the high-dimensional random vector $\bY_n=(Y_i)_{i\in \Tree^n_*}$ in Definition \ref{D-sexual-reproduction-multivariate-normal-Gn} along with further notation from Definitions \ref{D-rescaled-F}, \ref{D-sexual-reproduction-coefficients}, \ref{D-sexual-reproduction-Gaussian-quadratic-Qn-form} and \ref{D-sexual-reproduction-Gaussian-quadratic-lineage-maps}.
\end{proposition}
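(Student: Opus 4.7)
The plan is to start from the integral representation \eqref{E-sexual-reproduction-iterations-Gaussian-quadratic-change-variables} of Proposition \ref{P-sexual-reproduction-iterations-Gaussian-quadratic-change-variables} and rewrite the high-dimensional integral
$$
\int_{\mathbb{R}^{2(2^n-1)d}} e^{-Q_n(\by_n)}\prod_{j\in \Leaves^n}\bar{F}_0(\Phi_n^j(x;\by_n))\,d\by_n
$$
as a multiple of $\mathbb{E}\bigl[\prod_{j\in \Leaves^n}\bar{F}_0(\Phi_n^j(x;\bY_n))\bigr]$, with $\bY_n$ as in Definition \ref{D-sexual-reproduction-multivariate-normal-Gn}. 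The key point is to identify $Q_n$ with the quadratic form in the exponent of $G_n$ (up to normalization), after which only a routine simplification of prefactors is needed.

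First, I would observe that by Definition \ref{D-sexual-reproduction-Gaussian-quadratic-Qn-form} the form $Q_n$ decouples into a sum over the interior vertices $i\in \widehat{\Tree}^n$ of independent $\mathbb{R}^{2d}$-quadratic forms in $(y_{i1},y_{i2})$, mirroring the factored structure of $G_n$. Fixing $k=k_{n-m}$ and expanding $\tfrac18|y_{i1}-y_{i2}|^2$, the block at level $m$ reads
$$
\frac{1}{4k}(|y_{i1}|^2+|y_{i2}|^2)-\frac{1}{8}|y_{i1}-y_{i2}|^2=\frac{2-k}{8k}(|y_{i1}|^2+|y_{i2}|^2)+\frac{1}{4}y_{i1}\cdot y_{i2}.
$$
Next, I would compute $(\Sigma_n^m)^{-1}$ using the block structure of \eqref{E-sexual-reproduction-multivariate-normal-Gn-covariances}: since both blocks are scalar multiples of $I_d$, one may treat $\Sigma_n^m$ as a $2\times 2$ matrix tensored with $I_d$. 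A direct computation gives
$$
\det(\Sigma_n^m)=\left(\frac{4k^2}{1-k}\right)^{d},\qquad (\Sigma_n^m)^{-1}=\frac{1}{4k}\begin{pmatrix}(2-k)I_d & k\,I_d\\ k\,I_d & (2-k)I_d\end{pmatrix},
$$
so that $\tfrac12 (y_{i1}^\top,y_{i2}^\top)(\Sigma_n^m)^{-1}(y_{i1},y_{i2})^\top$ coincides precisely with the level-$m$ block of $Q_n$. Consequently $Q_n(\by_n)=-\log[G_n(\by_n)] -\log Z_n$, where
$$
Z_n^{-1}=\prod_{m=0}^{n-1}\prod_{i\in \Level_m^n}\frac{1}{(2\pi)^d\sqrt{\det(\Sigma_n^m)}}=\frac{1}{(2\pi)^{(2^n-1)d}}\prod_{m=0}^{n-1}\left(\frac{4k_{n-m}^2}{1-k_{n-m}}\right)^{-2^{m-1}d},
$$
using that $|\Level_m^n|=2^m$.

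Then I would substitute the identity
$$
\int_{\mathbb{R}^{2(2^n-1)d}} e^{-Q_n(\by_n)}\,f(\by_n)\,d\by_n = Z_n\,\mathbb{E}\bigl[f(\bY_n)\bigr]
$$
with $f(\by_n)=\prod_{j\in \Leaves^n}\bar{F}_0(\Phi_n^j(x;\by_n))$ into \eqref{E-sexual-reproduction-iterations-Gaussian-quadratic-change-variables}. It remains to simplify the prefactors: combining the $(4\pi)^{-2^{n-1}d}(2\pi)^{-(2^n-1)d/2}$ in \eqref{E-sexual-reproduction-iterations-Gaussian-quadratic-change-variables} with $Z_n=(2\pi)^{(2^n-1)d}\prod_m (4k_{n-m}^2/(1-k_{n-m}))^{2^{m-1}d}$ and using $(2\pi)^{(2^n-1)d/2}=(2\pi)^{2^{n-1}d}/(2\pi)^{d/2}$ yields exactly the prefactor $(2\pi)^{-d/2}2^{-2^{n-1}d}\prod_m (4k_{n-m}^2/(1-k_{n-m}))^{2^{m-1}d}$ in \eqref{E-sexual-Gaussian-quadratic-emF}, completing the proof.

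The only genuinely delicate step is the matching of the quadratic form $Q_n$ with the quadratic exponent of $G_n$, which forces the specific choice of covariance blocks $\Sigma_n^m$ in \eqref{E-sexual-reproduction-multivariate-normal-Gn-covariances}; everything else is constant-bookkeeping. I do not expect any obstacle beyond the careful algebra of the $2\times 2$ block inverse and the combinatorics of the tree sums $\sum_{m=0}^{n-1} 2^m = 2^n-1$.
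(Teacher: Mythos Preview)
Your proof is correct and follows essentially the same approach as the paper: starting from Proposition \ref{P-sexual-reproduction-iterations-Gaussian-quadratic-change-variables}, identifying $e^{-Q_n}$ with the density $G_n$ via the explicit block inverse and determinant of $\Sigma_n^m$, and then doing the prefactor bookkeeping (the paper phrases the last step as an application of LOTUS). Your algebra for $(\Sigma_n^m)^{-1}$, $\det(\Sigma_n^m)$, and the constant simplification all check out against the paper's computation.
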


\begin{proof}
By Proposition \ref{P-sexual-reproduction-iterations-Gaussian-quadratic-change-variables} we obtain
\begin{align*}
F_n(x)&=\frac{e^{-\frac{1+\alpha-k_n}{2}\vert x\vert^2}}{(4\pi)^{2^{n-1}d}(2\pi)^{(2^n-1)d/2}}\frac{1}{\prod_{m=0}^{n-1}\Vert F_m\Vert_{L^1(\mathbb{R}^d)}^{2^{n-1-m}}}\int_{\mathbb{R}^{2(2^n-1)d}}e^{-Q_n(\by_n)}\prod_{j\in \Leaves^n}\bar{F}_0(\Phi_n^j(x;\by_n))\,d\mathbf{y}_n\\
&=\frac{(2\pi)^{(2^n-1)d/2}}{(4\pi)^{2^{n-1}d}}e^{-\frac{1+\alpha-k_n}{2}\vert x\vert^2}\left(\prod_{m=0}^{n-1}\frac{\left(\frac{4k_{n-m}^2}{1-k_{n-m}}\right)^{2^{m-1}d}}{\Vert F_m\Vert_{L^1(\mathbb{R}^d)}^{2^{n-1-m}}}\right)\int_{\mathbb{R}^{2(2^n-1)d}}G_n(\by_n)\prod_{j\in \Leaves^n}\bar{F}_0(\Phi_n^j(x;\by_n))\,d\mathbf{y}_n,
\end{align*}
for every $x\in \mathbb{R}^d$ and each $n\in \mathbb{N}$, where in the last line we have used that the inverse and determinant of the covariance matrices $\Sigma_n^m$ in \eqref{E-sexual-reproduction-multivariate-normal-Gn-covariances} take the form
$$
(\Sigma_n^m)^{-1}:=\left(\begin{array}{cc}
\displaystyle\frac{2-k_{n-m}}{4 k_{n-m}} I_d & \displaystyle\frac{1}{4}I_d\\
\displaystyle\frac{1}{4}I_d & \displaystyle\frac{2-k_{n-m}}{4 k_{n-m}} I_d
\end{array}\right),\quad \det(\Sigma_n^m)=\left(\frac{4 k_{n-m}^2}{1-k_{n-m}}\right)^d.
$$
Then, the result follows by applying the law of the unconscious statistician (LOTUS) to relate the preceding integral to the expectation of the random variable $\prod_{j\in \Leaves^n}\bar{F}_0(\Phi_n^j(x;\bY_n))$.
\end{proof}

As illustrated in the overall map in Figure \ref{fig:overall-map-proof}, the reformulation of the iterations in Propositions \ref{P-sexual-reproduction-iterations-Gaussian-quadratic-change-variables} and \ref{P-iterative-method-sexual-reproduction} will be fundamental in order to characterize the long-time behavior of the solution $\{F_n\}_{n\in \mathbb{N}}$ to the time-discrete problem \eqref{E-sexual-reproduction-time-discrete}. More specifically, we need to unravel the asymptotic behavior of the high-dimensional integral encoded in the expectation term \eqref{E-sexual-Gaussian-quadratic-emF}. As discussed in Sections \ref{SS-log-lipschitz-contraction} and \ref{SS-strategy}, one is able to control it in the log-Lipschitz norm under stringent constraints on the initial data, which are not fully satisfactory. In view of the structure of the expectation term, alternatively one may expect to be able to interchange expectations with products and control each factor separately. However, as we show below, this naive idea is bound to fail since the involved random variables are correlated, and they do not uncorrelate even in the limit $n\rightarrow\infty$. This complicated structure imposes severe problems when handling formula \eqref{E-sexual-Gaussian-quadratic-emF}, and one needs more powerfull ideas, which we introduce later in Lemma \ref{L-sexual-reproduction-weak-dependency}.

\begin{remark}[Non-negligible correlations]\label{R-correlations}
Since $\bY_n$ in Definition \ref{D-sexual-reproduction-multivariate-normal-Gn} is normally distributed and the lineage map $\Phi^j_n(x;\by_n)$ in Definition \ref{D-sexual-reproduction-Gaussian-quadratic-lineage-maps} are affine transformations of $\by_n$, then all $(\Phi^j_n(x;\bY_n))_{j\in \Leaves^n}$ are also normally distributed. Unfortunately, the components of $(\Phi^j_n(x;\bY_n))_{j\in \Leaves^n}$ are correlated since, in particular, all the components $(i1,i2)$ are correlated in view of the structure \eqref{E-sexual-reproduction-multivariate-normal-Gn-covariances} of their covariance matrices. Specifically, a straightforward computation shows that the covariance matrix 
$$K_{ij}=\mathbb{E} \left[(\Phi_n^i(x;\bY_n)-\mathbb{E}\,\Phi_n^i(x;\bY_n))\otimes (\Phi_n^j(x;\bY_n)-\mathbb{E}\,\Phi_n^j(x;\bY_n))\right],$$ 
between any couple $(i,j)\in \Leaves^n\times \Leaves^n$ of components (see Table \ref{tab:list-notations} for the notation $\otimes$ of the Kronecker product) takes the following explicit form
$$
K_{ij}=\left\{\begin{array}{ll}
\displaystyle\left(\sum_{q=0}^{n-1}\kappa_q^2\frac{(2-k_{q+1})k_{q+1}}{1-k_{q+1}}\right)I_d, & \mbox{if}\quad \vert i\wedge j\vert=n,\\
\displaystyle\left(-\kappa_{n-\vert i\wedge j\vert-1}^2\frac{k_{n-\vert i\wedge j\vert}^2}{1-k_{n-\vert i\wedge j\vert}}+\sum_{q=n-\vert i\wedge j\vert}^{n-1} \kappa_q^2\frac{(2-k_{q+1})k_{q+1}}{1-k_{q+1}}\right) I_d, & \mbox{if}\quad 0<\vert i\wedge j\vert<n,\\
\displaystyle\left(-\kappa_{n-1}^2\frac{k_n^2}{1-k_n}\right) I_d, & \mbox{if}\quad \vert i\wedge j\vert=0.
\end{array}\right.
$$
Here, $\vert i\wedge j\vert$ represents the level of the highest common descendant $i\wedge j$ of $i$ and $j$ ({\em cf.} Section \ref{SS-trees}). Note that correlations only disappears for leaves $(i,j)$ such that $n-\vert i\wedge j\vert\rightarrow\infty$, that is, when the length of the path from $i$ (or $j$) to the highest common descendant $i\wedge j$ diverges as the height $n$ of the tree increases. Unfortunately, correlations are non-negligible in all other cases even for $n\rightarrow\infty$.
\end{remark}

\section{Uniqueness of equilibria and quantitative ergodicity property}\label{S-ergodicity}
In this part we shall prove our main results, namely, Theorem \ref{T-main} and Corollary \ref{C-main}. First, we will derive a local convergence result as $n\rightarrow\infty$ of the profiles $\{F_n\}_{n\in \mathbb{N}}$ solving \eqref{E-sexual-reproduction-time-discrete} with generic initial datum $F_0\in \mathcal{M}_+(\mathbb{R}^d)$ towards the Gaussian profile $\bF_\alpha$ in \eqref{E-sexual-reproduction-Gaussian-solution}. This weaker local analogue of Theorem \ref{T-main} will be enough to derive the uniqueness of the solution $(\blambda_\alpha,\bF_\alpha)$ of the eigenproblem \eqref{E-eigenproblem-time-discrete} in Corollary \ref{C-main}. Second, we will extend our local result into our main global result in Theorem \ref{T-main} by quantifying the relaxation of the growth rate $\Vert F_n\Vert_{L^1(\mathbb{R}^d)}/\Vert F_{n-1}\Vert_{L^1(\mathbb{R}^d)}$ and the normalized profiles $F_n/\Vert F_n\Vert_{L^1(\mathbb{R}^d)}$ towards the unique solution $(\blambda_\alpha,\bF_\alpha)$ of the eigenproblem \eqref{E-eigenproblem-time-discrete}. Our strategy will exploit the high-dimensional iterative formula \eqref{E-sexual-Gaussian-quadratic-emF} in Proposition \ref{P-iterative-method-sexual-reproduction}.

\subsection{Local convergence result}\label{SS-local-convergence-result}

In this part we shall prove our local convergence result. To this end, we will show that the dependency of the expectation term in \eqref{E-sexual-Gaussian-quadratic-emF} on the variable $x$ decays to zero as $n\rightarrow\infty$ uniformly over compact sets. To simplify notation, we shall define the following operator.

\begin{definition}[Expectation operators]\label{D-expectation-operator}
Under the notation in Definitions \ref{D-sexual-reproduction-Gaussian-quadratic-lineage-maps} and \ref{D-sexual-reproduction-multivariate-normal-Gn}, we define
\begin{equation}\label{E-expectation-operator}
\mathcal{E}_n[F](x):=\mathbb{E}\left[\prod_{j\in \Leaves^n}F(\Phi^j_n(x;\bY_n))\right],\quad x\in \mathbb{R}^d,
\end{equation}
for any $n\in \mathbb{N}$ and any non-negative measurable function $F:\mathbb{R}^d\rightarrow \mathbb{R}$.
\end{definition}

Note that $\mathcal{E}_n[F](x)$ possibly takes extended values in $(-\infty,+\infty]$ if the expectation does not exist. However, if $F$ is a bounded function (and it will be often the case), we obtain that $\mathcal{E}_n[F](x)$ always exists and is finite. The following technical lemma for the tail of the incomplete Gamma function will be required later to determine the asymptotic behavior of the above operator $\mathcal{E}_n$.

\begin{lemma}\label{L-estimate-incomplete-gamma-function}
Consider the (upper) incomplete Gamma function with parameter $a\in \mathbb{R}_+^*$, {\em i.e.},
\begin{equation}\label{E-incomplete-gamma-function}
\Gamma(a,x):=\int_x^{+\infty} s^{a-1} e^{-s}\,ds,\quad x\in \mathbb{R}_+^*.
\end{equation}
Then, the following estimate holds true
\begin{equation}\label{E-estimate-incomplete-gamma-function}
\Gamma(a+1,x)\leq 2e^{-x/2},
\end{equation}
for every $x,a\in \mathbb{R}_+^*$ verifying the constraints
\begin{equation}\label{E-estimate-incomplete-gamma-function-hypothesis}
x>e, \quad 2a<\frac{x}{\log(x)}.
\end{equation}
\end{lemma}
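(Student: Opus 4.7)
The plan is to bound the integrand $s^{a}e^{-s}$ pointwise by $e^{-s/2}$ on $[x,\infty)$ and then integrate directly. Concretely, I will show that under the hypotheses \eqref{E-estimate-incomplete-gamma-function-hypothesis}, one has $a\log s\leq s/2$ for every $s\geq x$, so that
$$s^{a}e^{-s}=e^{a\log s-s}\leq e^{-s/2},\qquad s\geq x,$$
and then
$$\Gamma(a+1,x)=\int_{x}^{+\infty}s^{a}e^{-s}\,ds\leq \int_{x}^{+\infty}e^{-s/2}\,ds=2e^{-x/2},$$
which is exactly \eqref{E-estimate-incomplete-gamma-function}.

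The pointwise bound reduces to an elementary monotonicity fact. From $2a<x/\log(x)$ we get $a\log s\leq \tfrac{x\log s}{2\log x}$, so $a\log s\leq s/2$ is equivalent to
$$\frac{\log s}{s}\leq \frac{\log x}{x}.$$
This is exactly the statement that the function $g(s):=\log(s)/s$ is non-increasing for $s\geq x$. Since $g'(s)=(1-\log s)/s^{2}$, the function $g$ is strictly decreasing on $(e,\infty)$, and the assumption $x>e$ together with $s\geq x$ gives precisely the required inequality.

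The proof is essentially a one-liner once these two observations are combined, so there is no real obstacle. The only subtlety worth highlighting in the write-up is why one needs both parts of \eqref{E-estimate-incomplete-gamma-function-hypothesis}: the condition $x>e$ is used to place $s$ in the region where $\log(\cdot)/(\cdot)$ is decreasing, while the condition $2a<x/\log(x)$ is the quantitative statement that makes the exponent $a\log s-s$ controlled by $-s/2$ at the endpoint $s=x$ (and hence, via the monotonicity, for all larger $s$). The factor $2$ appearing on the right-hand side of \eqref{E-estimate-incomplete-gamma-function} is simply the value of $\int_{0}^{\infty}e^{-s/2}ds/e^{-x/2\cdot 0}$, i.e.\ it comes from integrating $e^{-s/2}$.
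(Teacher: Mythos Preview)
Your proof is correct and is essentially identical to the paper's own argument: the paper rewrites the integrand as $\exp(-\psi_a(s)\,s)$ with $\psi_a(s)=1-a\log(s)/s$, observes that $\psi_a$ is increasing on $[e,\infty)$ and that $\psi_a(x)\geq 1/2$ by the hypothesis $2a<x/\log x$, and then integrates $e^{-s/2}$ exactly as you do. Your inequality $a\log s\leq s/2$ is precisely $\psi_a(s)\geq 1/2$, and your monotonicity argument for $\log(s)/s$ is the same as the paper's monotonicity of $\psi_a$.
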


\begin{proof}
Let us restate the incomplete Gamma function as follows
$$\Gamma(a+1,x)=\int_x^{+\infty}\exp(-\psi_a(s) s)\,ds,$$
where $\psi_a(s)$ is the function
$$\psi_a(s):=1-a\frac{\log(s)}{s},\quad s>0.$$
Notice that $\psi_a$ is monotonically increasing for $s\in [e,\infty)$. Hence, we obtain that
$$\psi_a(s)\geq \psi_a(x)\geq \frac{1}{2},$$
for every $s\geq x$, where we have used \eqref{E-estimate-incomplete-gamma-function-hypothesis} on each of the above inequalities. Consequently, we get
$$\Gamma(a+1,x)\leq \int_x^{+\infty} e^{-\frac{s}{2}}\,ds,$$
and this ends the proof.
\end{proof}

Therefore, we obtain the following weak dependency of $\mathcal{E}_n[F]$ on the variable $x$ as $n\rightarrow \infty$.

\begin{lemma}[Asymptotics of $\mathcal{E}_n$]\label{L-sexual-reproduction-weak-dependency}
Assume that $\alpha\in \mathbb{R}_+^*$ and consider any $F\in C^1(\mathbb{R}^d)$ such that
\begin{align}\label{E-sexual-reproduction-F-hypothesis}
\begin{aligned}
0<F(x)&\leq C,\\
\vert \nabla \log F(x)\vert&\leq D(1+\vert x\vert),
\end{aligned}
\end{align}
for every $x\in \mathbb{R}^d$ and appropriate $C,D\in \mathbb{R}_+^*$. Then, for every $\varepsilon\in \mathbb{R}_+^*$, there exists a large enough constant $C_\varepsilon\in \mathbb{R}_+^*$ such that the following property holds true
\begin{equation}\label{E-sexual-reproduction-weak-dependency}
\vert \nabla \log \mathcal{E}_n[F](x)\vert\leq C_\varepsilon\left((2\bk_\alpha+\varepsilon)^n+(\br_\alpha+\varepsilon)^n\vert x\vert+e^{-\frac{(2+\varepsilon)^n}{\varepsilon}}e^{C_\varepsilon(2\br_\alpha+\varepsilon)^n\vert x\vert^2}\right),
\end{equation}
for every $x\in \mathbb{R}^d$ and any $n\in \mathbb{N}$, where $\mathcal{E}_n$ is the operator in Definition \ref{D-expectation-operator}.
\end{lemma}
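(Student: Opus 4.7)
The plan is to start from the definition of $\mathcal{E}_n[F]$ and differentiate under the expectation. A routine computation using the chain rule gives
\[
\nabla_x \log \mathcal{E}_n[F](x) = \kappa_n \sum_{j \in \Leaves^n} \mathbb{E}_{\nu_n^x}\!\bigl[\nabla \log F(\Phi_n^j(x;\bY_n))\bigr],
\]
where $\nu_n^x$ is the probability measure with density $\prod_{k \in \Leaves^n} F(\Phi_n^k(x;\by_n))/\mathcal{E}_n[F](x)$ against $G_n$. Writing $\Phi_n^j(x;\by_n) = \kappa_n x + Z_j(\by_n)$ with $Z_j := \sum_{m=0}^{n-1}\kappa_m y_{\child^m(j)}$ and invoking the linear bound $|\nabla\log F(y)|\leq D(1+|y|)$ from \eqref{E-sexual-reproduction-F-hypothesis}, I then split
\[
|\nabla \log \mathcal{E}_n[F](x)| \leq D\kappa_n 2^n + D\kappa_n^2 2^n |x| + D\kappa_n \sum_{j \in \Leaves^n} \mathbb{E}_{\nu_n^x}[|Z_j|].
\]
The asymptotics $\kappa_n 2^n \leq C_\varepsilon(2\bk_\alpha+\varepsilon)^n$ and $\kappa_n^2 2^n \leq C_\varepsilon(\br_\alpha+\varepsilon)^n$ from Lemma~\ref{L-sexual-reproduction-coefficients-asymptotics} immediately produce the first two terms of the conclusion.

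The heart of the proof is controlling $\mathbb{E}_{\nu_n^x}[|Z_j|]$ by a truncation argument. For a cutoff radius $R_n$ to be chosen, I write
\[
\mathbb{E}_{\nu_n^x}[|Z_j|] \leq R_n + \mathbb{E}_{\nu_n^x}\bigl[|Z_j|\,\mathbf{1}_{|Z_j|>R_n}\bigr],
\]
and estimate the tail by bounding the density of $\nu_n^x$ against $G_n$ crudely by $C^{2^n}/\mathcal{E}_n[F](x)$ (using $F\leq C$). Since the geometric decay of $\{\kappa_m\}$ guarantees $\sum_m \kappa_m^2 <\infty$, each $Z_j$ is a centered Gaussian with variance uniformly bounded in $n$ under $G_n$, so Lemma~\ref{L-estimate-incomplete-gamma-function} applied after the usual radial-coordinate reduction delivers $\mathbb{E}_{G_n}[|Z_j|\mathbf{1}_{|Z_j|>R_n}] \leq C e^{-R_n^2/(4\sigma^2)}$ for $R_n$ beyond the logarithmic threshold of \eqref{E-estimate-incomplete-gamma-function-hypothesis}.

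The companion lower bound on $\mathcal{E}_n[F](x)$ is obtained from Jensen's inequality together with the pointwise estimate $\log F(y) \geq \log F(0) - D|y| - \tfrac{D}{2}|y|^2$ (derived by integrating the log-gradient bound along a ray from the origin). Using $\sum_j \mathbb{E}|\Phi_n^j|^2 = 2^n\kappa_n^2|x|^2 + O(2^n)$ and absorbing the linear-in-$|x|$ contribution through Young's inequality, one reaches
\[
\mathcal{E}_n[F](x) \geq \exp\!\bigl(-A_1 2^n - A_2(2\br_\alpha+\varepsilon)^n|x|^2\bigr).
\]
Choosing $R_n^2 = 4\sigma^2\bigl(A'\cdot 2^n + (2+\varepsilon)^n/\varepsilon\bigr)$ in the tail estimate then cancels the $e^{A'2^n}$ factor arising from $C^{2^n}/\mathcal{E}_n[F](x)$ and leaves exactly the super-exponentially small factor $e^{-(2+\varepsilon)^n/\varepsilon}$ multiplied by the residual $e^{A_2(2\br_\alpha+\varepsilon)^n|x|^2}$. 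Multiplying by $\kappa_n 2^n$ and absorbing this geometric factor into the super-exponential exponent produces the third term of the conclusion.

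The main technical delicacy is the balancing of the cutoff: $R_n$ must be large enough that the Gaussian tail overpowers the $e^{O(2^n)}$ blow-up of $C^{2^n}/\mathcal{E}_n[F](x)$ and thereby yields the super-exponential factor $e^{-(2+\varepsilon)^n/\varepsilon}$, yet small enough that the ``bulk'' contribution $\kappa_n 2^n R_n$ can be subsumed into the first clean term $(2\bk_\alpha+\varepsilon)^n$ at the cost of enlarging $\varepsilon$. What makes this possible is that the Jensen lower bound carries only $2^n$ in its exponent (not $(2+\varepsilon)^n$), so $R_n$ needs to grow at most like $(2+\varepsilon)^{n/2}$; combined with $\kappa_n 2^n \leq C_\varepsilon(2\bk_\alpha+\varepsilon)^n$, the bulk term is of order $((2\bk_\alpha)\sqrt{2+\varepsilon}+\varepsilon')^n$, which can be absorbed into the first term of the conclusion after a further tuning of $\varepsilon$ and $C_\varepsilon$.
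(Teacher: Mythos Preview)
Your overall architecture---differentiate, apply the linear bound on $\nabla\log F$, split into a bulk and a tail via a cutoff, and lower-bound $\mathcal{E}_n[F]$ to control the tilted measure---matches the paper. The Jensen lower bound on $\mathcal{E}_n[F]$ is a legitimate alternative to the paper's direct Gaussian integration and gives the same $e^{-O(2^n)-O((2\br_\alpha+\varepsilon)^n)|x|^2}$ shape.

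The gap is in the balancing of the bulk term. You truncate each leaf variable $|Z_j|$ at level $R_n$ and then sum over the $2^n$ leaves, so your bulk contribution is $\kappa_n 2^n R_n$. In order for the Gaussian tail $e^{-R_n^2/(4\sigma^2)}$ to beat the factor $C^{2^n}e^{A_1 2^n}$ you must take $R_n\gtrsim 2^{n/2}$, and with your explicit choice $R_n\sim(2+\varepsilon)^{n/2}$ the bulk is of order $\bigl(2\bk_\alpha\sqrt{2+\varepsilon}\bigr)^n$. Your final claim that this can be ``absorbed into $(2\bk_\alpha+\varepsilon)^n$ after a further tuning'' is false: $2\bk_\alpha\sqrt{2+\varepsilon}\geq 2\sqrt{2}\,\bk_\alpha$, which exceeds $2\bk_\alpha$ by the fixed amount $2\bk_\alpha(\sqrt{2}-1)$, independent of $\varepsilon$. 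For small $\alpha$ one has $2\bk_\alpha$ close to $1$, so $2\sqrt{2}\,\bk_\alpha>1$ and your bulk term even diverges. Hence the argument, as written, does not yield the stated estimate for any $\varepsilon$ when $\alpha$ is small, and never yields it for arbitrarily small $\varepsilon$.

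What is missing is the Cauchy--Schwarz step that exploits the tree structure. The $Z_j$ share ancestor variables, and the paper uses
\[
\sum_{j\in\Leaves^n}|Z_j|\;\leq\;\sqrt{2}\,2^{n/2}\,\|\by_n\|,
\]
obtained by rewriting $\sum_j|Z_j|\leq\sum_{m=0}^{n-1}\kappa_m 2^m\sum_{i\in\Level_{n-m}^n}|y_i|$ and applying Cauchy--Schwarz together with $\sum_m 2^{n-m}(2^m\kappa_m)^2\leq 2^{n+1}$. This replaces your factor $2^n$ by $2^{n/2}$. One then truncates on the \emph{global} norm $\|\by_n\|$ at level $q^n$ with $\sqrt{2}<q<1/(\sqrt{2}\,\bk_\alpha)$: the bulk becomes $2^{n/2}\kappa_n q^n\sim(\sqrt{2}\,q\,\bk_\alpha)^n$, which for $q$ close to $\sqrt{2}$ is $(2\bk_\alpha+\varepsilon)^n$ as required. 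The price is that the tail now lives in dimension $2(2^n-1)d$, so the radial integral is an incomplete Gamma function with parameter $\sim 2^n d$; Lemma~\ref{L-estimate-incomplete-gamma-function} is invoked precisely here, and the constraint $q>\sqrt{2}$ is exactly what makes \eqref{E-estimate-incomplete-gamma-function-hypothesis} hold.
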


\begin{proof}
Before proving the main result, note that the hypothesis \eqref{E-sexual-reproduction-F-hypothesis} guarantee that $F$ is bounded below by a Gaussian profile. Indeed, by the fundamental theorem of calculus we obtain the following relation
$$\log F(x)= \log F(0)+\int_0^1\nabla (\log F)(\theta x)\cdot x\,d\theta,$$
for any $x\in \mathbb{R}^d$. Using the linear growth assumption in \eqref{E-sexual-reproduction-F-hypothesis} for the log-derivative $\nabla \log F$ we obtain that
$$
\log F(x)\geq \log F(0)-D\vert x\vert\int_0^1(1+\theta\vert x\vert)\,d\theta \geq \log F(0)-\frac{D}{2}-D\vert x\vert^2,
$$
for any $x\in \mathbb{R}^d$, where in the last inequality we have integrated with respect to $\theta$ and we have used Young's inequality to interpolate $\vert x\vert$ by $\vert x\vert^2$. Therefore, we obtain the following Gaussian lower bound
\begin{equation}\label{E-sexual-reproduction-F-hypothesis-lower}
F(x)\geq c e^{-\beta\vert x\vert^2},
\end{equation}
for each $x\in \mathbb{R}^d$ and some $c,\beta\in \mathbb{R}_+^*$ depending only on $F(0)$ and $D$. 

For simplicity of notation we define 
$$E_n:=\mathcal{E}_n[F],$$
for any $n\in \mathbb{N}$. In the sequel, we shall study the behavior of $\nabla\log E_n$ as $n\rightarrow \infty$.

\medskip

$\bullet$ {\sc Step 1}: Control of $\sum_{j\in \Leaves^n}\vert\Phi^j_n(x;\by_n)\vert$.\\
By definition \eqref{E-sexual-reproduction-Gaussian-quadratic-lineage-maps} of the lineage map $\Phi^j_n$, we obtain that
$$\sum_{j\in \Leaves^n}\vert \Phi^j_n(x;\by_n)\vert=\sum_{j\in \Leaves^n}\left\vert \kappa_n x+\sum_{m=0}^{n-1}\kappa_m y_{\child^m(j)}\right\vert\leq 2^n\kappa_n\vert x\vert+\sum_{m=0}^{n-1}2^m\kappa_m \sum_{i\in \Level_{n-m}^n}\vert y_i\vert,$$
where we are using the tree-indexed notation $\by_n=(y_i)_{i\in \Tree^n_*}\in \mathbb{R}^{2(2^n-1)d}$ in Remark \ref{R-tree-indexed-variables} and $\child:\Tree^n_*\longrightarrow \widehat{\Tree}^n$ is defined in Section \ref{SS-trees}. Note that in the last inequality we have used that each index of the level $\Level_{n-m}^n$ in the tree contributes to the amount of $2^m$ leaves. Then, we obtain
\begin{align}\label{E-sexual-reproduction-barX-sum}
\begin{aligned}
\sum_{j\in \Leaves^n}\left\vert \Phi^j_n(x;\by_n)\right\vert & \leq 2^n\kappa_n|x|+\left(\sum_{m=0}^{n-1}\sum_{i\in \Leaves^n_{n-m}}(2^m\kappa_m)^2\right)^{1/2}\left(\sum_{m=0}^{n-1}\sum_{i\in \Leaves^n_{n-m}}|y_i|^2\right)^{1/2}\\
&= 2^n\kappa_n\vert x\vert+ \left(\sum_{m=0}^{n-1}2^{n-m}(2^m\kappa_m)^2\right)^{1/2}\Vert \by_n\Vert\\
&\leq 2^n\kappa_n\vert x\vert+\sqrt{2} \,2^{n/2} \Vert \by_n\Vert,
\end{aligned}
\end{align}
for any $\by_n\in \mathbb{R}^{2(2^n-1)d}$, where in the first line we have used the Cauchy--Schwarz inequality on the joint sum over indices $m$ and $i$, in the second line we have used the definition $\Vert \by_n\Vert=(\sum_{i\in \Tree^n_*}|y_i|^2)^{1/2}$ of the $\ell_2$ norm ({\it cf.} Table \ref{tab:list-notations}), and in the last line we have used that
$$\sum_{m=0}^{n-1} 2^{n-m}(2^m\kappa_m)^2\leq 2^n\sum_{m=0}^{n-1}2^m k_1^{2m}\leq 2^n\sum_{m=0}^\infty 2^m k_1^{2m}=\frac{2^n}{1-2k_1^2}\leq 2^{n+1}.$$
Above, we have used that $k_n\leq k_1$ for all $n\in \mathbb{N}$ by virtue of Lemma \ref{L-sexual-reproduction-coefficients-asymptotics}(i) and also that 
$$\frac{1}{1-2k_1^2}=\frac{1+4\alpha+2\alpha^2}{2(1+\alpha)^2}=\frac{2(1+\alpha^2)}{(1+\alpha)^2+2\alpha+\alpha^2}<2.$$

\medskip

$\bullet$ {\sc Step 2}: Derivative of $\log E_n$.\\
Taking derivatives on \eqref{E-expectation-operator}, using the second property in \eqref{E-sexual-reproduction-F-hypothesis} along with \eqref{E-sexual-reproduction-barX-sum} entails
\begin{align}\label{E-sexual-reproduction-log-derivative-an}
\begin{aligned}
\left\vert \nabla \log E_n(x)\right\vert& \leq \kappa_n\frac{\mathbb{E}\left[\left(\sum_{j\in \Leaves^n}\vert \nabla\log F(\Phi^j_n(x;\bY_n))\vert\right)\prod_{j\in \Leaves^n}F(\Phi^j_n(x;\bY_n))\right]}
{\mathbb{E}\left[\prod_{j\in \Leaves^n}F(\Phi^j_n(x;\bY_n))\right]}\\
&\lesssim\kappa_n\frac{\mathbb{E}\left[\left(\sum_{j\in \Leaves^n}(1+\vert \Phi^j_n(x;\bY_n)\vert)\right)\prod_{j\in \Leaves^n}F(\Phi^j_n(x;\bY_n))\right]}{\mathbb{E}\left[\prod_{j\in \Leaves^n}F(\Phi^j_n(x;\bY_n))\right]}\\ 
&\lesssim 2^n\kappa_n+2^n\kappa_n^2\vert x\vert+2^{n/2}\kappa_n R_n(x),
\end{aligned}
\end{align}
for every $x\in \mathbb{R}^d$, where the remainder $R_n$ reads
$$R_n(x):=\frac{\mathbb{E}\left[\Vert \bY_n\Vert\prod_{j\in \Leaves^n} F(\Phi^j_n(x;\bY_n))\right]}{\mathbb{E}\left[\prod_{j\in \Leaves^n} F(\Phi^j_n(x;\bY_n))\right]},\quad x\in \mathbb{R}^d.$$
This estimation provides a first insight about the ergodicity property. Indeed the correlations, as measured by $\kappa_n$, decreases faster than $2^{-n}$ ({\em cf.} Lemma \ref{L-sexual-reproduction-coefficients-asymptotics}), meaning that the contribution $2^n \kappa_n$ decays fast to zero. It is also the case of the other contributions, locally in $x$, but the last one, namely $2^{n/2}\kappa_n R_n(x)$, requires more work.
Consider any $\sqrt{2}<q<\frac{1}{\sqrt{2}\bk_\alpha}$, which exists because $2\bk_\alpha<1$ for $\alpha\in \mathbb{R}_+^*$, and split $R_n(x)=R_n^1(x)+R_n^2(x)$ in terms of the functions
\begin{align}
R_n^1(x)&:=\frac{1}{Z_n(x)}\int_{\Vert \by_n\Vert\leq q^n} \Vert \by_n\Vert\prod_{j\in \Leaves^n}F(\Phi^j_n(x;\by_n)) e^{-Q_n(\by_n)}\,d\mathbf{y}_n,\label{E-sexual-reproduction-b1}\\
R_n^2(x)&:=\frac{1}{Z_n(x)}\int_{\Vert \by_n\Vert> q^n} \Vert \by_n\Vert\prod_{j\in \Leaves^n}F(\Phi^j_n(x;\by_n)) e^{-Q_n(\by_n)}\,d\mathbf{y}_n,\label{E-sexual-reproduction-b2}
\end{align}
and the normalization factor
\begin{equation}\label{E-sexual-reproduction-ZA}
Z_n(x):=\int_{\mathbb{R}^{2(2^n-1)d}}\prod_{j\in \Leaves^n}F(\Phi^j_n(x;\by_n))e^{-Q_n(\by_n)}\,d\mathbf{y}_n.
\end{equation}

\medskip

$\bullet$ {\sc Step 3}: Lower bound for $Z_n$.\\
Using the above lower bound by a Gaussian in \eqref{E-sexual-reproduction-F-hypothesis-lower} along with the upper bound for $Q_n$ in Lemma \ref{L-sexual-reproduction-control-Qn} implies the following estimate of $Z_n$ in \eqref{E-sexual-reproduction-ZA}:
\begin{align*}
Z_n(x)&\geq c^{2^n}\int_{\mathbb{R}^{2(2^n-1)d}}\exp\left(-\beta\sum_{j\in \Leaves^n} \vert \Phi^j_n(x;\by_n)\vert^2\right)\exp(-\beta_{\max} \Vert \by_n\Vert^2)\,d\mathbf{y}_n\\
&\geq c^{2^n} e^{-2^{2n+1}\kappa_n^2\beta \Vert x\Vert^2}\int_{\mathbb{R}^{2(2^n-1)d}} \exp\left(-(2^{n+2}\beta +\beta_{\max})\Vert \by_n\Vert^2\right)d\mathbf{y}_n\\
&=c^{2^n} e^{-2^{2n+1}\kappa_n^2\beta \vert x\vert^2} \left\vert \mathbb{S}^{2(2^n-1)d-1}\right\vert\int_0^{+\infty}r^{2(2^n-1)d-1} \exp\left(-(2^{n+2}\beta +\beta_{\max}) r^2\right)\,dr\\
&=\frac{c^{2^n} e^{-2^{2n+1}\kappa_n^2\beta \vert x\vert^2} }{2(2^{n+2}\beta+\beta_{\max})^{(2^n-1)d}} \left\vert \mathbb{S}^{2(2^n-1)d-1}\right\vert\int_0^{+\infty} s^{(2^n-1)d-1}e^{-s}\,ds\\
&=\frac{c^{2^n} e^{-2^{2n+1}\kappa_n^2\beta \vert x\vert^2} }{2(2^{n+2}\beta+\beta_{\max})^{(2^n-1)d}} \left\vert \mathbb{S}^{2(2^n-1)d-1}\right\vert\Gamma((2^n-1)d),
\end{align*}
where in the last step we have used the Gamma function, that is defined by
$$\Gamma(a):=\int_0^{+\infty}s^{a-1}e^{-s}\,ds,\quad a>0.$$
Recall that area of the hypersphere is $\vert \mathbb{S}^{d-1}\vert=\frac{2\pi^{d/2}}{\Gamma\left(\frac{d}{2}\right)}$. Then, we obtain
\begin{equation}\label{E-sexual-reproduction-ZA-lower-bound}
Z_n(x)\geq \frac{c^{2^n}\pi^{(2^n-1)d}}{(2^{n+2}\beta+\beta_{\max})^{(2^n-1)d}} e^{-2^{2n+1}\kappa_n^2\beta \vert x\vert^2},
\end{equation}
for every $x\in \mathbb{R}^d$.

\medskip

$\bullet$ {\sc Step 4}: Upper bound for $R_n^1$ and $R_n^2$.\\
On the one hand, notice that 
\begin{equation}\label{E-sexual-reproduction-b1-estimate}
R_n^1(x)\leq q^n,
\end{equation}
for all $x\in \mathbb{R}^d$ by definition of $R_n^1$ in \eqref{E-sexual-reproduction-b1}. On the other hand, using the lower bound of $Q_n$ in Lemma \ref{L-sexual-reproduction-control-Qn} we can estimate $R_n^2$ in \eqref{E-sexual-reproduction-b2} by
\begin{align}
R_n^2(x)&\leq \frac{C^{2^n}}{q^n Z_n(x)}\int_{\Vert \by_n\Vert\geq q^n}\Vert\mathbf{y}_n\Vert^2e^{-\beta_{\min}\Vert \by_n\Vert^2}\,d\mathbf{y}_n\nonumber\\
&=\frac{C^{2^n}}{q^n Z_n(x)}\left\vert \mathbb{S}^{2(2^n-1)d-1}\right\vert\int_{q^n}^{+\infty} r^{2(2^n-1)d+1}e^{-\beta_{\min} r^2}\,dr\nonumber\\
&=\frac{C^{2^n}}{q^n Z_n(x)}\left\vert \mathbb{S}^{2(2^n-1)d-1}\right\vert\frac{1}{2\beta_{\min}^{(2^n-1)d+1}}\int_{\beta_{\min}q^{2n}}^{+\infty} s^{(2^n-1)d}e^{-s}\,ds\nonumber\\
&=\frac{C^{2^n}\pi^{(2^n-1)d}}{q^n \beta_{\min}^{(2^n-1)d+1} Z_n(x)}\frac{\Gamma\left((2^n-1)d+1,\beta_{\min}q^{2n}\right)}{\Gamma((2^n-1)d)},\label{E-sexual-reproduction-b2-pre-estimate}
\end{align}
for all $x\in \mathbb{R}^d$, where we have used again the  incomplete Gamma function \eqref{E-incomplete-gamma-function} in Lemma \ref{L-estimate-incomplete-gamma-function}. Putting the lower estimate \eqref{E-sexual-reproduction-ZA-lower-bound} for $Z_n$ into \eqref{E-sexual-reproduction-b2-pre-estimate} yields
\begin{equation}\label{E-sexual-reproduction-b2-pre-estimate-2}
R_n^2(x)\lesssim e^{2^{2n+1}\kappa_n^2\beta \vert x\vert^2}\frac{C^{2^n} (2^{n+2}\beta+\beta_{\max})^{(2^n-1)d}}{q^n c^{2^n}\beta_{\min}^{(2^n-1)d}}\frac{\Gamma\left((2^n-1)d+1,\beta_{\min}q^{2n}\right)}{\Gamma((2^n-1)d)},
\end{equation}
for all $x\in \mathbb{R}^d$. Now, take $a=(2^n-1)d$ and $x=\beta_{\min}q^{2n}$ and notice that the constraints \eqref{E-estimate-incomplete-gamma-function-hypothesis} in Lemma \ref{L-estimate-incomplete-gamma-function} hold true for large enough $n\in \mathbb{N}$ as long as we choose $q>\sqrt{2}$. Hence, we obtain the estimate
$$\Gamma\left((2^n-1)d+1,\beta_{\min}q^{2n}\right)\leq 2e^{-\frac{\beta_{\min}}{2}q^{2n}},$$
for all $n\geq n_*=n_*(\beta_{\min},d,q)$. Using Stirling's formula for the remaining Gamma function in \eqref{E-sexual-reproduction-b2-pre-estimate-2}, we conclude the estimate
\begin{equation}\label{E-sexual-reproduction-b2-estimate}
R_n^2(x)\lesssim e^{2^{2n+1}\kappa_n^2\beta \vert x\vert^2}\frac{C^{2^n} (2^{n+2}\beta+\beta_{\max})^{(2^n-1)d}}{q^n c^{2^n}\beta_{\min}^{(2^n-1)d}}\frac{e^{-\frac{\beta_{\min}}{2}q^{2n}}}{(2^n-1)^{1/2} \left(\frac{(2^n-1)d}{e}\right)^{(2^n-1)d}},
\end{equation}
for every $x\in \mathbb{R}^d$ and each $n\geq n_*$.

\medskip

$\bullet$ {\sc Step 5}: Final conclusion.

Let us put the preceding estimates \eqref{E-sexual-reproduction-b1-estimate} and \eqref{E-sexual-reproduction-b2-estimate} into \eqref{E-sexual-reproduction-log-derivative-an} to achieve
\begin{equation}\label{E-sexual-reproduction-weak-dependency-pre}
\vert \nabla \log E_n(x)\vert\lesssim 2^n\kappa_n+2^{n/2}q^n\kappa_n +2^n\kappa_n^2\vert x\vert+\frac{M^{2^n}}{e^{\frac{\beta_{\min}}{2} q^{2n}}}\,e^{2^{2n+1}\kappa_n^2\beta \vert x\vert^2},
\end{equation}
for every $x\in \mathbb{R}^d$ and each $n\geq n_*$, where $M=M(C,c,\beta,\beta_{\min},\beta_{\max},d,q)$ is a universal constant that we have introduced to absorb all the double exponential factors of the form $a^{2^n}$ with $a>0$ in the fourth term along with any further lower order factor. On the one hand, choosing $q>\sqrt{2}$ sufficiently close to $\sqrt{2}$ and using \eqref{E-sexual-reproduction-coefficients-asymptotics-2} in Lemma \ref{L-sexual-reproduction-coefficients-asymptotics} we obtain the following asymptotics for the factors in the first three terms of \eqref{E-sexual-reproduction-weak-dependency-pre}:
\begin{align*}
2^n\kappa_n&\leq C_\varepsilon(2\bk_\alpha+\varepsilon)^n,\\
2^{n/2} q^n\kappa_n&\leq C_\varepsilon (2\bk_\alpha+\varepsilon)^n,\\
2^n\kappa_n^2&\leq C_\varepsilon(2(\bk_\alpha+\varepsilon)^2)^n\leq C_\varepsilon(\br_\alpha+\varepsilon)^n,
\end{align*}
for arbitrarily small $\varepsilon\in \mathbb{R}_+^*$ and sufficiently large $C_\varepsilon\in \mathbb{R}_+^*$. Note that in the last identity we have used the relation $\br_\alpha=2\bk_\alpha^2$. On the other hand, since $q$ has been taken larger  than $\sqrt{2}$ and arbitrarily close to $\sqrt{2}$, then the double exponential in the denominator of the last term in \eqref{E-sexual-reproduction-weak-dependency-pre} kills the double exponential on the numerator independently on the explicit value of $M$. Therefore, we obtain 
$$\frac{M^{2^n}}{e^{\frac{\beta_{\min}}{2}q^{2n}}}\leq C_\varepsilon\,e^{-\frac{(2+\varepsilon)^n}{\varepsilon}},$$
by appropriately increasing $C_\varepsilon$ if necessary. Similarly, we infer the following control for the remaining $x$-dependent exponential factor in the fourth term of \eqref{E-sexual-reproduction-weak-dependency-pre}
$$e^{2^{2n+1}\kappa_n^2\beta \vert x\vert^2}\leq e^{C_\varepsilon (4(\bk_\alpha+\varepsilon)^2)^n\vert x\vert^2}\leq e^{C_\varepsilon(2\br_\alpha+\varepsilon)^n\vert x\vert^2},$$
which yields the announced estimate \eqref{E-sexual-reproduction-weak-dependency}.
\end{proof}

\begin{remark}\label{R-sexual-reproduction-local-convergence-expectations}
Under the assumptions in Lemma \ref{L-sexual-reproduction-weak-dependency}, notice that the above result implies in particular that for every $\varepsilon\in \mathbb{R}_+^*$ and $R\in \mathbb{R}_+^*$, there exists a large enough constant $C_{\varepsilon,R}\in \mathbb{R}_+^*$ such that
$$\sup_{\vert x\vert\leq R}\vert \nabla\log\mathcal{E}_n[F](x)\vert\leq C_{\varepsilon,R}\,(2\bk_\alpha)^n,$$
for any $n\in \mathbb{N}$. In particular, $\nabla\log \mathcal{E}_n[F]$ converge to zero as $n\rightarrow \infty$ uniformly on compact sets.
\end{remark}

The above observation is the cornerstone to prove the following local version of our main Theorem \ref{T-main}.

\begin{corollary}[Local convergence result]\label{C-sexual-reproduction-local-convergence}
Assume that $\alpha\in \mathbb{R}_+^*$, set any initial datum $F_0\in \mathcal{M}_+(\mathbb{R}^d)$ and consider the solution $\{F_n\}_{n\in \mathbb{N}}$ to the time-discrete problem \eqref{E-sexual-reproduction-time-discrete}. Then, for every $\varepsilon\in \mathbb{R}_+^*$ and $R\in \mathbb{R}_+^*$, there exists a large enough constant $C_{\varepsilon,R}\in \mathbb{R}_+^*$ such that
\begin{equation}\label{E-sexual-reproduction-local-convergence}
\sup_{\vert x\vert\leq R}\left\vert\frac{F_n(x)}{\Vert F_n\Vert_{L^1(\mathbb{R}^d)}}-\bF_\alpha(x)\right\vert\leq C_{\varepsilon,R}\,(2\bk_\alpha)^n,
\end{equation}
for any $n\in \mathbb{N}$, where $\bF_\alpha$ is the Gaussian profile \eqref{E-sexual-reproduction-Gaussian-solution}. In particular, $F_n/\Vert F_n\Vert_{L^1(\mathbb{R}^d)}$ converge to $\bF_\alpha$ as $n\rightarrow \infty$ uniformly on compact sets.
\end{corollary}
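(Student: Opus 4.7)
The plan is to combine the reformulation from Proposition \ref{P-iterative-method-sexual-reproduction} with the ergodicity estimate of Lemma \ref{L-sexual-reproduction-weak-dependency}, passing via pointwise ratios $F_n(x)/F_n(0)$ and then upgrading to the $L^1$-normalized convergence by a tail estimate from Corollary \ref{C-sexual-reproduction-Gaussian-tails}. A preliminary step is to replace $F_0$ by $F_{n_0}$ for some fixed $n_0\ge 2$: since $F_0$ might be singular, Corollary \ref{C-sexual-reproduction-Gaussian-tails} is needed to ensure that the rescaled profile $\bar F_{n_0}=e^m F_{n_0}/\bF_0$ inherits the qualitative properties required by Lemma \ref{L-sexual-reproduction-weak-dependency}. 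Writing $F_n=\mathcal T^{n-n_0}[F_{n_0}]$ and applying Proposition \ref{P-iterative-method-sexual-reproduction} to this iteration yields a factorization
\begin{equation*}
F_n(x) = A_{n-n_0}\, e^{-\frac{1+\alpha-k_{n-n_0}}{2}|x|^2}\, \mathcal{E}_{n-n_0}[\bar F_{n_0}](x),
\end{equation*}
with $A_{n-n_0}$ an $x$-independent prefactor.

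Second, the algebraic identity $1+\alpha-\bk_\alpha = 1/\bsigma_\alpha^2$ (verified directly from \eqref{E-sexual-reproduction-coefficients-limit} and \eqref{E-sexual-reproduction-Gaussian-solution-variance-equation}), together with the geometric convergence $k_n\searrow \bk_\alpha$ from Lemma \ref{L-sexual-reproduction-coefficients-asymptotics}, shows that the Gaussian factor converges uniformly on compacts to $\bF_\alpha$ (up to a multiplicative normalization) at rate $\br_\alpha^n$. Applying Lemma \ref{L-sexual-reproduction-weak-dependency} to $\bar F_{n_0}$ gives $|\nabla\log\mathcal{E}_{n-n_0}[\bar F_{n_0}](x)|\lesssim(2\bk_\alpha+\varepsilon)^n$ for $|x|\le R$, which integrated along the segment $[0,x]$ yields $\mathcal{E}_{n-n_0}[\bar F_{n_0}](x)/\mathcal{E}_{n-n_0}[\bar F_{n_0}](0) = 1+O_{\varepsilon,R}((2\bk_\alpha+\varepsilon)^n)$ uniformly for $|x|\le R$. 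Combining these two convergences produces the pointwise ratio estimate
\begin{equation*}
\frac{F_n(x)}{F_n(0)} = \frac{\bF_\alpha(x)}{\bF_\alpha(0)}\bigl(1+O_{\varepsilon,R}((2\bk_\alpha+\varepsilon)^n)\bigr),\qquad |x|\le R.
\end{equation*}

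Third, I would upgrade this pointwise ratio to the $L^1$-normalized statement by writing $\|F_n\|_{L^1}/F_n(0) = \int_{\mathbb{R}^d}(F_n(y)/F_n(0))\,dy$ and splitting the integral at a large radius $R_n$. On $|y|\le R_n$ the ratio estimate above, applied with $R=R_n$ slowly growing, gives the bulk contribution, which converges to $\int\bF_\alpha/\bF_\alpha(0) = 1/\bF_\alpha(0)$ with quantitative rate. On $|y|>R_n$ the Gaussian upper bound on $F_n$ and Gaussian lower bound on $F_n(0)$ from Corollary \ref{C-sexual-reproduction-Gaussian-tails} make the tail exponentially small in $R_n^2$; choosing $R_n$ growing slowly enough then guarantees that this tail is negligible compared to $(2\bk_\alpha)^n$. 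Hence $F_n(0)/\|F_n\|_{L^1}\to\bF_\alpha(0)$ with the same quantitative rate, and multiplying by the pointwise ratio convergence yields \eqref{E-sexual-reproduction-local-convergence}.

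The main obstacle is that the hypothesis of Lemma \ref{L-sexual-reproduction-weak-dependency} requires $F$ bounded, whereas a direct computation shows that $\bar F_{n_0}$ only satisfies $\bar F_{n_0}(x)\lesssim e^{\mu|x|^2}$ with $\mu\approx \alpha/2$, since the reference Gaussian $\bF_0 e^{-m}$ has slightly faster decay than the Gaussian upper bound of $F_{n_0}$ from Corollary \ref{C-sexual-reproduction-Gaussian-tails}. A genuine extension of Lemma \ref{L-sexual-reproduction-weak-dependency} to such sub-Gaussian $F$ is therefore required. This extension is available because of the summability $\sum_m 2^m\kappa_m^2<\infty$, which follows from the identity $\br_\alpha = 2\bk_\alpha^2 <1$ (Lemma \ref{L-sexual-reproduction-coefficients-asymptotics}): using the refined bound $\sum_{j\in\Leaves^n}|\Phi^j_n(x;\by_n)|^2\lesssim 2^n\kappa_n^2|x|^2 + \|\by_n\|^2$ obtained from this summability (in place of the cruder Cauchy--Schwarz estimate of Step 1 of the proof of Lemma \ref{L-sexual-reproduction-weak-dependency}), the excess $\prod_j e^{\mu|\Phi^j_n|^2}$ is absorbed into the quadratic weight $e^{-Q_n(\by_n)}$ for $\mu$ sufficiently small, and the rest of the proof of Lemma \ref{L-sexual-reproduction-weak-dependency} carries through unchanged.
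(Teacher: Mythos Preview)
Your overall architecture matches the paper's: reformulate via Proposition~\ref{P-iterative-method-sexual-reproduction}, control $\nabla\log\mathcal E_n[\bar F_{n_0}]$ by Lemma~\ref{L-sexual-reproduction-weak-dependency}, deduce convergence of $F_n(x)/F_n(0)$, and finally identify $F_n(0)/\|F_n\|_{L^1}$. However, two points deserve correction.

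\medskip
\textbf{The ``main obstacle'' is a non-issue.} You claim $\bar F_{n_0}$ fails the boundedness hypothesis~\eqref{E-sexual-reproduction-F-hypothesis} and propose to extend Lemma~\ref{L-sexual-reproduction-weak-dependency} to sub-Gaussian inputs. The paper avoids this entirely: by Corollary~\ref{C-sexual-reproduction-Gaussian-tails} one has $\bar F_m(x)\lesssim \exp\bigl(\tfrac12(\alpha+\tfrac12-\tfrac{1}{\overline\sigma_m^2})|x|^2\bigr)$, and since $\overline\sigma_m^2\searrow\bsigma_\alpha^2<2$ by Lemma~\ref{L-sexual-reproduction-Gaussian-solution-time-discrete-variance-relaxation} while $\tfrac{1}{\bsigma_\alpha^2}=\alpha+\tfrac{1}{1+\bsigma_\alpha^2/2}>\alpha+\tfrac12$, the exponent becomes strictly negative once $m\ge n_\alpha$ for some $n_\alpha$ depending only on~$\alpha$. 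Hence $\bar F_{n_\alpha}$ \emph{is} bounded, and Lemma~\ref{L-sexual-reproduction-weak-dependency} applies verbatim. Your proposed extension, while plausible, is unnecessary and technically delicate (the absorption constant from $\sum_m 4^m\kappa_m^2$ blows up as $\alpha\to 0$, so the condition $\mu C<\beta_{\min}$ is not automatic).

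\medskip
\textbf{The tail estimate for the normalization has a gap.} To control $\int_{|y|>R_n}F_n(y)/F_n(0)\,dy$ you invoke the Gaussian bounds of Corollary~\ref{C-sexual-reproduction-Gaussian-tails}, but the constants $C_n,c_n$ there are \emph{not} uniform in~$n$ (they involve $\|F_m\|_{L^1}$ recursively, as remarked after that corollary). The paper instead proves a separate Lemma~\ref{L-convergence-ratios-max-mass}: writing $c_n=F_n(0)/\|F_n\|_{L^1}$, it splits
\[
\frac{1}{c_n}=\int_{|x|\le R_n}e^{-\frac{1+\alpha-k_n}{2}|x|^2}\frac{\mathcal E_n[\bar F_0](x)}{\mathcal E_n[\bar F_0](0)}\,dx+\frac{1}{c_n}\int_{|x|>R_n}\frac{F_n(x)}{\|F_n\|_{L^1}}\,dx,
\]
controls the second integral by the \emph{uniform} exponential moment bound of Corollary~\ref{C-exponential-moments}, and then solves algebraically for $c_n$. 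This sidesteps the need for uniform pointwise ratios on the tail.
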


In the proof of Corollary \ref{C-sexual-reproduction-local-convergence} we will require a careful control of the asymptotic behavior of the ratios $F_n(0)/\Vert F_n\Vert_{L^1(\mathbb{R}^d)}$ as $n\rightarrow \infty$, which we provide below. First, note that if $\{F_n\}_{n\in \mathbb{N}}$ is the solution \eqref{E-sexual-reproduction-Gaussian-solution-time-discrete} of the time-discrete problem \eqref{E-sexual-reproduction-time-discrete} issued at a Gaussian initial datum $F_0$ like in Proposition \ref{P-sexual-reproduction-Gaussian-solution-time-discrete}, we have
$$\frac{F_n(0)}{\Vert F_n\Vert_{L^1(\mathbb{R}^d)}}=G_{\mu_n,\sigma_n^2}(0)\rightarrow \bF_\alpha(0)=\frac{1}{(2\pi\bsigma_\alpha^2)^{d/2}},$$
as $n\rightarrow \infty$ explicitly, where $\mu_n$ and $\sigma_n^2$ are determined by the recursive relations \eqref{E-sexual-reproduction-Gaussian-solution-time-discrete-parameters} and we have used that $\mu_n\rightarrow 0$ and $\sigma_n^2\rightarrow\bsigma_\alpha^2$ according to Lemma \ref{L-sexual-reproduction-Gaussian-solution-time-discrete-variance-relaxation}. In the following lemma, we provide a similar quantitative result with convergence rates for generic initial data $F_0\in \mathcal{M}_+(\mathbb{R}^d)$.

\begin{lemma}\label{L-convergence-ratios-max-mass}
Assume that $\alpha\in \mathbb{R}_+^*$, set any initial datum $F_0\in \mathcal{M}_+(\mathbb{R}^d)$ and consider the solution $\{F_n\}_{n\in \mathbb{N}}$ to the time-discrete problem \eqref{E-sexual-reproduction-time-discrete}. For any $\varepsilon\in \mathbb{R}_+^*$ there exists $C_\varepsilon\in \mathbb{R}_+^*$ sufficiently large with
\begin{equation}\label{E-convergence-ratios-max-mass}
\left\vert \frac{F_n(0)}{\Vert F_n\Vert_{L^1(\mathbb{R}^d)}}-\bF_\alpha(0)\right\vert\leq C_\varepsilon(2\bk_\alpha+\varepsilon)^n,
\end{equation}
for any $n\in \mathbb{N}$.
\end{lemma}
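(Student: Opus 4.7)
The plan is to apply the probabilistic representation from Proposition \ref{P-iterative-method-sexual-reproduction} at $x=0$ and compare with its integral in $x$, then invoke the ergodicity estimate of Lemma \ref{L-sexual-reproduction-weak-dependency} to evaluate the ratio up to the claimed rate. Since a generic $F_0\in\mathcal{M}_+(\mathbb{R}^d)$ need not be absolutely continuous, the first step is to shift the iteration: by Corollary \ref{C-sexual-reproduction-Gaussian-tails} combined with Lemma \ref{L-sexual-reproduction-Gaussian-solution-time-discrete-variance-relaxation}, one can pick $n_0\geq 2$ large enough so that $\overline{\sigma}_{n_0}^2<\frac{2}{2\alpha+1}$, and then the rescaled profile $\bar F_{n_0}=e^m F_{n_0}/\bF_0$ is strictly positive, uniformly bounded, and has log-derivative of at most linear growth. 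Consequently, $\bar F_{n_0}$ verifies the hypotheses of Lemma \ref{L-sexual-reproduction-weak-dependency}. Restarting Proposition \ref{P-iterative-method-sexual-reproduction} from generation $n_0$, we obtain a representation of the form
$$F_n(x)=A_n\,e^{-\frac{1+\alpha-k_{n-n_0}}{2}|x|^2}\,\mathcal{E}_{n-n_0}[\bar F_{n_0}](x),$$
with $A_n$ a positive constant independent of $x$. Taking the ratio at $x=0$ with the $L^1$ integral kills $A_n$:
$$\frac{F_n(0)}{\|F_n\|_{L^1(\mathbb{R}^d)}}=\frac{\mathcal{E}_{n-n_0}[\bar F_{n_0}](0)}{\displaystyle\int_{\mathbb{R}^d}e^{-\frac{1+\alpha-k_{n-n_0}}{2}|x|^2}\,\mathcal{E}_{n-n_0}[\bar F_{n_0}](x)\,dx}.$$

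The next step is to show that the $x$-dependence in $\mathcal{E}_{n-n_0}[\bar F_{n_0}]$ is negligible as $n\to\infty$. Integrating the bound \eqref{E-sexual-reproduction-weak-dependency} along the segment $[0,x]$ yields, for any small $\varepsilon\in\mathbb{R}_+^*$ and $n$ large enough,
$$\left|\log\mathcal{E}_{n-n_0}[\bar F_{n_0}](x)-\log\mathcal{E}_{n-n_0}[\bar F_{n_0}](0)\right|\leq C_\varepsilon(2\bk_\alpha+\varepsilon)^{n}(1+|x|^2)+C_\varepsilon\,e^{-(2+\varepsilon)^n/\varepsilon}e^{C_\varepsilon(2\br_\alpha+\varepsilon)^n|x|^2}.$$
Crucially, since $(2\br_\alpha+\varepsilon)^n\to 0$ as $n\to\infty$, for large $n$ we have $C_\varepsilon(2\br_\alpha+\varepsilon)^n<\tfrac{1}{2}(1+\alpha-k_{n-n_0})$, so the integrand in the denominator is dominated by an integrable Gaussian. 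A routine Laplace-type estimate, using the concentration scale $O(1)$ of the weight $e^{-\frac{1+\alpha-k_{n-n_0}}{2}|x|^2}$, then gives
$$\int_{\mathbb{R}^d}e^{-\frac{1+\alpha-k_{n-n_0}}{2}|x|^2}\,\frac{\mathcal{E}_{n-n_0}[\bar F_{n_0}](x)}{\mathcal{E}_{n-n_0}[\bar F_{n_0}](0)}\,dx=\left(\frac{2\pi}{1+\alpha-k_{n-n_0}}\right)^{d/2}\bigl(1+O((2\bk_\alpha+\varepsilon)^{n})\bigr).$$

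Combining the two displays, we obtain
$$\frac{F_n(0)}{\|F_n\|_{L^1(\mathbb{R}^d)}}=\left(\frac{1+\alpha-k_{n-n_0}}{2\pi}\right)^{d/2}\bigl(1+O((2\bk_\alpha+\varepsilon)^{n})\bigr).$$
To conclude, one verifies the algebraic identity $1+\alpha-\bk_\alpha=1/\bsigma_\alpha^2$ directly from the explicit formulas \eqref{E-sexual-reproduction-Gaussian-solution-variance-equation} and \eqref{E-sexual-reproduction-coefficients-limit}: rationalization gives $(\sqrt{(1+2\alpha)^2+8\alpha}+(1+2\alpha))/4$ on both sides. Coupled with the geometric rate $|k_{n-n_0}-\bk_\alpha|\leq C\br_\alpha^{n-n_0-1}$ from Lemma \ref{L-sexual-reproduction-coefficients-asymptotics} (absorbed into the $\varepsilon$-loss since $\br_\alpha=2\bk_\alpha^2<2\bk_\alpha$), this yields the claimed bound \eqref{E-convergence-ratios-max-mass}.

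The main technical obstacle is the global integration in $x$: the last term in the ergodicity estimate of Lemma \ref{L-sexual-reproduction-weak-dependency} involves a Gaussian of growth $e^{C_\varepsilon(2\br_\alpha+\varepsilon)^n|x|^2}$, which is not uniformly integrable against the outer weight for all $n$, but becomes so for $n$ sufficiently large and is then killed by the doubly-small prefactor $e^{-(2+\varepsilon)^n/\varepsilon}$. Balancing these two scales, and keeping track of the $\varepsilon$-losses when converting $(2\bk_\alpha+\varepsilon)^{n-n_0}(1+|x|^2)$ inside an exponential into a multiplicative error of order $(2\bk_\alpha+\varepsilon)^n$ uniformly in $x$, is the delicate part of the argument.
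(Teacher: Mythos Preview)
Your overall strategy coincides with the paper's: write $F_n(0)/\|F_n\|_{L^1}$ as $\mathcal{E}_n[\bar F_{n_0}](0)$ divided by $\int e^{-\frac{1+\alpha-k_{n-n_0}}{2}|x|^2}\mathcal{E}_n[\bar F_{n_0}](x)\,dx$, and compare the latter with the pure Gaussian integral via Lemma~\ref{L-sexual-reproduction-weak-dependency}. The identity $1+\alpha-\bk_\alpha=1/\bsigma_\alpha^2$ and the absorption of $|k_{n-n_0}-\bk_\alpha|$ via $\br_\alpha<2\bk_\alpha$ are also correct. However, the ``routine Laplace-type estimate'' has a genuine gap.

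The sentence ``the integrand in the denominator is dominated by an integrable Gaussian'' is the problem. Lemma~\ref{L-sexual-reproduction-weak-dependency} bounds $|\nabla\log\mathcal{E}_n|$, not $\mathcal{E}_n$. After integrating along $[0,x]$ and exponentiating, the upper bound on $\mathcal{E}_n(x)/\mathcal{E}_n(0)$ contains the factor
\[
\exp\!\Bigl(C_\varepsilon\,e^{-(2+\varepsilon)^n/\varepsilon}\,|x|\,e^{C_\varepsilon(2\br_\alpha+\varepsilon)^n|x|^2}\Bigr),
\]
which is \emph{doubly exponential} in $|x|^2$. For every fixed $n$ this eventually dominates $e^{-\frac{1+\alpha-k_n}{2}|x|^2}$ as $|x|\to\infty$, so no Gaussian majorant exists. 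Your condition $C_\varepsilon(2\br_\alpha+\varepsilon)^n<\frac{1}{2}(1+\alpha-k_{n-n_0})$ would only make $e^{C_\varepsilon(2\br_\alpha+\varepsilon)^n|x|^2}$ integrable against the outer weight, but that term sits \emph{inside} another exponential. (The lower bound on the integral does follow from your estimate, via $e^g\geq 1-|g|$; it is the upper bound that fails.)

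The paper does not attempt a global bound from the representation formula alone. It truncates at a slowly diverging radius $R_n=(2\bk_\alpha)^{-n/(2m+1)}$: on $|x|\leq R_n$ the ergodicity bound is uniformly $O((2\bk_\alpha+\varepsilon)^n)$, while the tail $\int_{|x|>R_n}F_n/\|F_n\|_{L^1}$ is controlled by the independently established propagation of exponential moments (Corollary~\ref{C-exponential-moments}). This uniform tightness of the normalized profiles is the missing ingredient in your argument.
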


\begin{proof}
Let us fix any $\varepsilon\in \mathbb{R}_+^*$ arbitrarily small so that $2\bk_\alpha+\varepsilon<1$. Again, this is possible because $2\bk_\alpha<1$ for $\alpha\in \mathbb{R}_+^*$. For simplicity of notation, we shall define the following sequence of coefficients:
$$
c_n:=\frac{F_n(0)}{\Vert F_n\Vert_{L^1(\mathbb{R}^d)}},\quad n\in \mathbb{N}.
$$
Our goal then reduces to studying the asymptotic behavior of $\{c_n\}_{n\in \mathbb{N}}$ and obtaining quantitative convergence rates. Note that by formula \eqref{E-sexual-Gaussian-quadratic-emF} in Proposition \ref{P-iterative-method-sexual-reproduction} for the recursion, we obtain
$$\frac{1}{c_n}=\frac{\int_{\mathbb{R}^d}e^{-\frac{1+\alpha-k_n}{2}\vert x\vert^2}\mathcal{E}_n[\bar F_0](x)\,dx}{\mathcal{E}_n[\bar F_0](0)}=\displaystyle\int_{\vert x\vert\leq R_n}e^{-\frac{1+\alpha-k_n}{2}\vert x\vert^2}\frac{\mathcal{E}_n[\bar F_0](x)}{\mathcal{E}_n[\bar F_0](0)}\,dx+\frac{1}{c_n}\int_{\vert x\vert>R_n}\frac{F_n(x)}{\Vert F_n\Vert_{L^1(\mathbb{R}^d)}}\,dx,$$
where we set the sequence of radii $\{R_n\}_{n\in \mathbb{N}}$ as follows
\begin{equation}\label{E-convergence-ratios-max-mass-radius}
R_n:=\frac{1}{(2\bk_\alpha)^{\frac{n}{2m+1}}},\quad n\in\mathbb{N},
\end{equation}
for a fixed value $m\in \mathbb{N}$, which we take large enough so that
\begin{equation}\label{E-convergence-ratios-max-mass-radius-choice-m}
(2\bk_\alpha)^\frac{2m}{2m+1}\leq 2\bk_\alpha+\frac{\varepsilon}{2}.
\end{equation}
Whilst our choice of $R_n$ is not justified at first glance, we claim that it has been taken as to minimize the decay rate on the error terms $\widetilde{E}_n$ and $E_n$ below. By solving the implicit equation on $c_n$ we infer
\begin{equation}\label{E-convergence-ratios-max-mass-decomposition}
c_n=\frac{1-\tilde{E}_n}{(2\pi\bsigma_\alpha^2)^{d/2}+E_n},
\end{equation}
for every $n\in \mathbb{N}$, where the error terms $\tilde{E}_n$ and $E_n$ take the form
\begin{align}\label{E-convergence-ratios-max-mass-errors}
\begin{aligned}
\tilde{E}_n&:=\int_{\vert x\vert>R_n}\frac{F_n(x)}{\Vert F_n\Vert_{L^1(\mathbb{R}^d)}}\,dx,\\
E_n&:=\int_{|x|\leq R_n}e^{-\frac{1+\alpha-k_n}{2}|x|^2}\frac{\mathcal{E}_n[\bar F_0](x)}{\mathcal{E}_n[\bar F_0](0)}\,dx-(2\pi\bsigma_\alpha^2)^{d/2},
\end{aligned}
\end{align}
Additionally, we can split the second error term as $E_n=E_{n,1}-E_{n,2}+E_{n,3}$ with
\begin{align}\label{E-convergence-ratios-max-mass-errors-decomposition}
\begin{aligned}
E_{n,1}&:=\left(\frac{2\pi}{1+\alpha-k_n}\right)^{d/2}-\left(\frac{2\pi}{1+\alpha-\bk_\alpha}\right)^{d/2},\\
E_{n,2}&:=\int_{\vert x\vert>R_n}e^{-\frac{1+\alpha-k_n}{2}\vert x\vert^2}\,dx,\\
E_{n,3}&:=\int_{\vert x\vert\leq R_n}e^{-\frac{1+\alpha-k_n}{2}\vert x\vert^2}\left(\frac{\mathcal{E}_n[\bar F_0](x)}{\mathcal{E}_n[\bar F_0](0)}-1\right)\,dx,
\end{aligned}
\end{align}
where we have used the relationship $\bsigma_\alpha^2(1+\alpha-\bk_\alpha) = 1$, which results from \eqref{E-sexual-reproduction-Gaussian-solution-variance-equation} and \eqref{E-sexual-reproduction-coefficients-limit}, and therefore the decomposition of $E_n$ becomes clear since $E_{n,1}$ can be reformulated as
$$E_{n,1}=\int_{\mathbb{R}^d}e^{-\frac{1+\alpha-k_n}{2}|x|^2}\,dx-(2\pi\bsigma_\alpha^2)^{d/2}.$$

On the one hand, given any fixed value $\theta\in \mathbb{R}^+$ with $\theta<\frac{\alpha}{2}$ (for instance $\theta=\frac{\alpha}{4}$) the propagation of exponential moments in Corollary \ref{C-exponential-moments} implies that
$$E_\theta:=\sup_{n\in \mathbb{N}}\int_{\mathbb{R}^d}e^{\theta |x|^2}\frac{F_n(x)}{\Vert F_n\Vert_{L^1(\mathbb{R}^d)}}\,dx<\infty.$$
By expanding the exponential in power series, one obtains in particular uniformly bounded moments of any order, and in particular, of order $2m$, namely
$$M_m:=\sup_{n\in \mathbb{N}}\int_{\mathbb{R}^d}\vert x\vert^{2m}\frac{F_n(x)}{\Vert F_n\Vert_{L^1(\mathbb{R}^d)}}\,dx\leq \frac{E_\theta \,m!}{\theta^m}<\infty.$$
Hence, we infer the following control on the first error term in \eqref{E-convergence-ratios-max-mass-errors}
\begin{equation}\label{E-convergence-ratios-max-mass-errors-1}
\tilde{E}_n\leq \frac{1}{R_n^{2m}}\int_{\mathbb{R}^d}\vert x\vert^{2m}\frac{F_n(x)}{\Vert F_n\Vert_{L^1(\mathbb{R}^d)}}\,dx\lesssim \frac{1}{R_n^{2m}},
\end{equation}
for any $n\in \mathbb{N}$. On the other hand, by Lemma \ref{L-sexual-reproduction-coefficients-asymptotics} we have that $E_{n,1}$ in \eqref{E-convergence-ratios-max-mass-errors-decomposition} can be bounded by
\begin{equation}\label{E-convergence-ratios-max-mass-errors-2-1}
\vert E_{n,1}\vert\lesssim k_n-\bk_\alpha\lesssim \br_\alpha^n.
\end{equation}
By direct calculation we also obtain
\begin{equation}\label{E-convergence-ratios-max-mass-errors-2-2}
E_{n,2}=\vert \mathbb{S}^{d-1}\vert\int_{R_n}^{+\infty}r^{d-1}e^{-\frac{1+\alpha-k_n}{2}r^2}\,dr\lesssim\Gamma\left(\frac{d}{2},\frac{1+\alpha-k_n}{2}R_n^2\right)\lesssim \exp\left(-\frac{1+\alpha-k_n}{4(2\bk_\alpha)^\frac{2n}{2m+1}}\right),
\end{equation}
where $\Gamma(a,x)$ is the incomplete Gamma function \eqref{E-incomplete-gamma-function} and we have used estimate \eqref{E-estimate-incomplete-gamma-function} in Lemma \ref{L-estimate-incomplete-gamma-function}. Note that the constraint \eqref{E-estimate-incomplete-gamma-function-hypothesis} is trivially satisfied since $R_n\rightarrow \infty$ by our choice \eqref{E-convergence-ratios-max-mass-radius}. Finally, we control the error term $E_{n,3}$ in \eqref{E-convergence-ratios-max-mass-errors-decomposition} under the addition assumption that $\bar F_0$ satisfies the hypothesis \eqref{E-sexual-reproduction-F-hypothesis} in Lemma \ref{L-sexual-reproduction-weak-dependency}. Whilst this is not always true, we show at the end of the proof that we can always assume so without loss of generality by replacing the argument on $\bar F_0$ by an advance enough time step $\bar F_m$ so that selection has properly shaped the Gaussian tails. Under this condition, Lemma \ref{L-sexual-reproduction-weak-dependency} implies that given any $\varepsilon'\in \mathbb{R}_+^*$ there is $C_{\varepsilon'}\in \mathbb{R}_+^*$ so that, for $|x|\leq R_n$, we have
\begin{align*}
&\left\vert\int_0^1(\nabla\log\mathcal{E}_n)[\bar F_0](\theta x)\cdot x\,d\theta\right\vert\\
&\quad \lesssim (2\bk_\alpha+\varepsilon')^n R_n+(\br_\alpha+\varepsilon')^n R_n^2+e^{-\frac{(2+\varepsilon')^n}{\varepsilon'}}e^{C_{\varepsilon'}(2\br_\alpha+\varepsilon')^nR_n^2}R_n\\
&\quad \lesssim \left(\frac{2\bk_\alpha}{(2\bk_\alpha)^{\frac{1}{2m+1}}}+\frac{\varepsilon}{2}\right)^n+\left(\frac{\br_\alpha}{(2\bk_\alpha)^{\frac{2}{2m+1}}}+\frac{\varepsilon}{2}\right)^n+e^{-\frac{(2+\varepsilon')^n}{\varepsilon'}}\exp\left[C_{\varepsilon'}\left(\frac{2\br_\alpha}{(2\bk_\alpha)^{\frac{2}{2m+1}}}+\frac{\varepsilon}{2}\right)^n\right]\frac{1}{(2\bk_\alpha)^{\frac{n}{2m+1}}}\\
&\quad \lesssim \left((2\bk_\alpha)^{\frac{2m}{2m+1}}+\frac{\varepsilon}{2}\right)^n\lesssim \left(2\bk_\alpha+\varepsilon\right)^n.
\end{align*}
In the second inequality above we have taken $\varepsilon'$ small enough compared with $\varepsilon$. In the third inequality we have used the relation $\br_\alpha=2\bk_\alpha^2$, which guarantees that all the contributions in the third line are dominated by the first term $((2\bk_\alpha)^{\frac{2m}{2m+1}}+\frac{\varepsilon}{2})^n$. Finally, in the last inequality we have used our choice of $m\in \mathbb{N}$ large enough so that \eqref{E-convergence-ratios-max-mass-radius-choice-m} holds. Therefore, the mean value theorem implies
\begin{align*}
\left\vert\frac{\mathcal{E}_n[\bar F_0](x)}{\mathcal{E}_n[\bar F_0](0)}-1\right\vert&=\left\vert\exp\left(\int_0^1(\nabla\log\mathcal{E}_n[\bar F_0])(\theta x)\cdot x\,d\theta\right)-1\right\vert\lesssim (2\bk_\alpha+\varepsilon)^n.
\end{align*}
Hence, $E_{n,3}$ in \eqref{E-convergence-ratios-max-mass-errors-decomposition} can be controlled by
\begin{equation}\label{E-convergence-ratios-max-mass-errors-2-3}
\vert E_{n,3}\vert\leq \int_{\vert x\vert\leq R_n}e^{-\frac{1+\alpha-k_n}{2}\vert x\vert^2}\left\vert\frac{\mathcal{E}_n[\bar F_0](x)}{\mathcal{E}_n[\bar F_0](0)}-1\right\vert\,dx\lesssim (2\bk_\alpha+\varepsilon)^n.
\end{equation}
Putting \eqref{E-convergence-ratios-max-mass-errors-2-1}, \eqref{E-convergence-ratios-max-mass-errors-2-2} and \eqref{E-convergence-ratios-max-mass-errors-2-3} together yields
\begin{equation}\label{E-convergence-ratios-max-mass-errors-2}
\vert E_n\vert \lesssim (2\bk_\alpha+\varepsilon)^n.
\end{equation}
Thereby, using the mean value theorem on \eqref{E-convergence-ratios-max-mass-decomposition} along with the bounds \eqref{E-convergence-ratios-max-mass-errors-1} and \eqref{E-convergence-ratios-max-mass-errors-2} concludes that
$$\left\vert c_n-\frac{1}{(2\pi\bsigma_\alpha^2)^{d/2}}\right\vert \lesssim \tilde{E}_n+\vert E_n\vert\lesssim (2\bk_\alpha+\varepsilon)^n.$$

To end the proof, we show that we can always assume that the hypothesis \eqref{E-sexual-reproduction-F-hypothesis} in Lemma \ref{L-sexual-reproduction-weak-dependency} is satisfied without loss of generality. Indeed, note that after two iterations $\bar F_2$ already satisfies $\eqref{E-sexual-reproduction-F-hypothesis}_2$ thanks to Corollary \ref{C-sexual-reproduction-Gaussian-tails}, but only two iterations are not enough to guarantee $\eqref{E-sexual-reproduction-F-hypothesis}_1$, in general. Nevertheless, applying Corollary \ref{C-sexual-reproduction-Gaussian-tails} leads to the following upper bound on the normalized profiles:
$$\bar F_{m}(x)\leq C_{m}\,e^{\frac{1}{2}\left(\alpha+\frac{1}{2}-\frac{1}{\overline{\sigma}_{m}^2}\right)\vert x\vert^2},$$
for appropriate $C_m\in \mathbb{R}_+^*$, with variances $\{\overline{\sigma}_m^2\}_{m\in \mathbb{N}}$ defined by the recurrence \eqref{E-variances-barriers}, {\it i.e.},
$$\frac{1}{\overline{\sigma}_{m+1}^2}=\alpha+\frac{1}{1+\frac{\overline{\sigma}_m^2}{2}},\quad m\in \mathbb{N},$$
and with initial datum $\overline{\sigma}_1^2=1/\alpha$. The boundedness condition $\eqref{E-sexual-reproduction-F-hypothesis}_2$ is then satisfied by $\bar F_m$ if we can show that the prefactor $\alpha+\frac{1}{2}-\frac{1}{\overline{\sigma}_m^2}$ in the above exponential bound  becomes non-positive for large enough $m$. This is where the precise choice of normalization in Definition \ref{D-rescaled-F} plays a role. Specifically, recall that so defined $\overline{\sigma}_m^2\rightarrow \bsigma_\alpha^2$ and we have precise relaxation rates by Lemma \ref{L-sexual-reproduction-Gaussian-solution-time-discrete-variance-relaxation}. Hence,
$$\alpha+\frac{1}{2}-\frac{1}{\overline{\sigma}_m^2}=\left(\alpha+\frac{1}{2}-\frac{1}{\bsigma_\alpha^2}\right)+\left(\frac{1}{\bsigma_\alpha^2}-\frac{1}{\overline{\sigma}_m^2}\right)\leq \left(\alpha+\frac{1}{2}-\frac{1}{\bsigma_\alpha^2}\right)+C_v\br_\alpha^m,$$
for any $m\in \mathbb{N}$. Since we chose a normalization $\bar F_n$ of the profiles by a Gaussian $G_{0,\sigma^2}$ with variance $\sigma^2$ strictly larger that the variance $\bsigma_\alpha^2$ of the equilibrium $\bF_\alpha$ (more specifically $\frac{1}{\sigma^2}=\frac{1}{\sigma^2}=\alpha+\frac{1}{2}$), we can guarantee that the right hand side above is non-positive if $m\geq n_\alpha$ for a sufficiently large $n_\alpha\geq 2$ (depending only on $\alpha$). This justifies that $\bar F_{n_\alpha}$ satisfies both conditions in \eqref{E-sexual-reproduction-F-hypothesis}.

\end{proof}

We are now in position to prove the above local convergence result in Corollary \ref{C-sexual-reproduction-local-convergence}

\begin{proof}[Proof of Corollary \ref{C-sexual-reproduction-local-convergence}]
By formula \eqref{E-sexual-Gaussian-quadratic-emF} in Proposition \ref{P-iterative-method-sexual-reproduction} and Definition \ref{D-expectation-operator} we obtain
$$\nabla\log F_n(x)=-(1+\alpha-k_n)x+\nabla\log \mathcal{E}_n[\bar F_0](x),$$
for any $x\in \mathbb{R}^d$ and every $n\in \mathbb{N}$. Using the mean value theorem we then achieve
\begin{align}\label{E-sexual-reproduction-local-convergence-pre}
\begin{aligned}
\log \frac{F_n(x)}{F_n(0)}-\log\frac{\bF_\alpha(x)}{\bF_\alpha(0)}&=\int_0^1(\nabla\log F_n)(\theta x)\cdot x\,d\theta+\frac{1}{2\bsigma_\alpha^2}\vert x\vert^2\\
&=-\frac{1+\alpha-k_n}{2}\vert x\vert+\int_0^1\nabla\log\mathcal{E}_n[\bar F_0](\theta x)\cdot x\,d\theta+\frac{1}{2\bsigma_\alpha^2}\vert x\vert^2\\
&=\frac{k_n-\bk_\alpha}{2}\vert x\vert^2+\int_0^1\nabla\log\mathcal{E}_n[\bar F_0](\theta x)\cdot x\,d\theta,
\end{aligned}
\end{align}
where we have used the relation $1/\bsigma_\alpha^2=1+\alpha-\bk_\alpha$. On the one hand, the first term in the right hand side of \eqref{E-sexual-reproduction-local-convergence-pre} converges to zero thanks to Lemma \ref{L-sexual-reproduction-coefficients-asymptotics}. On the other hand, for the second term we shall apply Lemma \ref{L-sexual-reproduction-weak-dependency}. To do so we need to guarantee again that $\bar F_0$ verifies the hypothesis \eqref{E-sexual-reproduction-F-hypothesis} of such a lemma. Recall that we can always assume the condition \eqref{E-sexual-reproduction-F-hypothesis} without loss of generality ({\it cf.} last step in the proof of Lemma \ref{L-convergence-ratios-max-mass}). Therefore, we can apply Remark \ref{R-sexual-reproduction-local-convergence-expectations} with $F=\bar F_0$ and obtain that $\nabla\log\mathcal{E}_n[\bar F_0]$ converges to zero uniformly over compact sets with explicit convergence rates. Putting it into \eqref{E-sexual-reproduction-local-convergence-pre} we obtain more explicitly
$$\sup_{\vert x\vert \leq R} \left\vert \log \frac{F_n(x)}{F_n(0)}-\log\frac{\bF_\alpha(x)}{\bF_\alpha(0)}\right\vert \leq C_{\varepsilon,R}\,(2\bk_\alpha)^n,$$
for any $R\in \mathbb{R}_+^*$, any $n\in \mathbb{N}$ and a large enough $C_{\varepsilon,R}\in \mathbb{R}_+^*$. This, together with the above control on the asymptotics of $\Vert F_n\Vert_{L^1(\mathbb{R}^d)}/F_n(0)$ in Lemma \ref{L-convergence-ratios-max-mass}, allow proving \eqref{E-sexual-reproduction-local-convergence}.
\end{proof}

In particular, note that the above Corollary \ref{C-sexual-reproduction-local-convergence} is enough to prove the uniqueness of solutions to the eigenproblem \eqref{E-eigenproblem-time-discrete}, as stated in Corollary \ref{C-main}.

\begin{proof}[Proof of Corollary \ref{C-main}]
Let $(\lambda,F)$ be any solution of the eigenproblem \eqref{E-eigenproblem-time-discrete}. Then, the ansatz \eqref{E-ansatz-discrete} defines a solution $F_n=\lambda^n F$ of the time-discrete problem \eqref{E-sexual-reproduction-time-discrete}. By Corollary \ref{C-sexual-reproduction-local-convergence} we obtain that
$$\lim_{n\rightarrow \infty}\sup_{\vert x\vert\leq R}\left\vert \frac{\lambda^n F(x)}{\lambda^n}-\bF_\alpha(x)\right\vert=0,$$
for any $R\in \mathbb{R}_+^*$. Hence, $F\equiv\bF_\alpha$, and thus $\lambda=\blambda_\alpha$.
\end{proof}

Whilst the above local convergence result is enough to identify asymptotically the profile $\bF_\alpha$, a global result with quantitative convergence rates is still missing. In particular, note that the constants $C_{\varepsilon,R}$ above blow up when $R\rightarrow\infty$ as we see explicitly in Lemma \ref{L-sexual-reproduction-weak-dependency}. A second drawback of this we are unable to characterize the long-time behavior of the mass $\Vert F_n\Vert_{L^1(\mathbb{R}^n)}$ in terms of the eigenvalue $\blambda_\alpha$ via this method. In the following section, we give an answer to both questions by better exploiting the previous fundamental Lemma \ref{L-sexual-reproduction-weak-dependency} and using the propagation of quadratic and exponential moments in Section \ref{SS-propagation-moments}.

\subsection{Global convergence result}\label{SS-proof-main-theorem}
We are now ready to prove our main result. Let us emphasize that our final convergence result in Theorem \ref{T-main} is presented using the Kullback-Leibler divergence, which is a very different metric from the log-Lipschitz type metrics used in the previous Section \ref{SS-local-convergence-result} for the local convergence results. As anticipated in Remark \ref{R-choice-metric}, this decision does not obey aesthetic reasons only, but we actually need to move from uniform norms (like the log-Lipschitz norm) to averaged norms (like the Kullback-Leibler divergence) in order to address the deficiency encountered in Lemma \ref{L-sexual-reproduction-weak-dependency}. Specifically, recall that for generic initial data $F_0\in \mathcal{M}_+(\mathbb{R}^d)$, the log-Lipschitz norms of the high-dimensional integral $\mathcal{E}_n[\bar F_0]$ cannot be controlled uniformly due to additional exponentially growing terms.

\begin{proof}[Proof of Theorem \ref{T-main}]

~\medskip

$\bullet$ {\sc Step 1}: Convergence of the profiles $F_n/\Vert F_n\Vert_{L^1(\mathbb{R}^d)}$.\\
Since $\bF_\alpha$ is Gaussian (thus strongly log-concave), then the logarithmic-Sobolev inequality holds true and therefore we obtain the following control of the relative entropy by the relative Fisher information
$$\mathcal{D}_{\rm KL}\left(\left.\frac{F_n}{\Vert F_n\Vert}\right\Vert \bF_\alpha\right)\leq \frac{\bsigma_\alpha^2}{2}\int_{\mathbb{R}^d}\left\vert\nabla\log\left(\frac{F_n(x)}{\bF_\alpha(x)}\right)\right\vert^2\,\frac{F_n(x)}{\Vert F_n\Vert_{L^1(\mathbb{R}^d)}}\,dx,$$
see Corollary 5.7.2 and Section 9.3.1 in \cite{BGL-14} for details. By the reformulation of $F_n$ in formula \eqref{E-sexual-Gaussian-quadratic-emF} of Proposition \ref{P-iterative-method-sexual-reproduction} and using the notation in Definition \ref{D-expectation-operator} we have that
$$\nabla\log \left(\frac{F_n(x)}{\bF_\alpha(x)}\right)=(k_n-\bk_\alpha)\,x+\nabla\log \mathcal{E}_n[\bar F_0](x).$$
Therefore, we obtain the following upper bound
\begin{align}\label{E-relative-entropy-bound-decomposition}
\mathcal{D}_{\rm KL}\left(\left.\frac{F_n}{\Vert F_n\Vert}\right\Vert \bF_\alpha\right)\lesssim D_{n,1}+D_{n,2},
\end{align}
where each factor takes the form
\begin{align*}
D_{n,1}&:=|k_n-\bk_\alpha|^2\int_{\mathbb{R}^d}|x|^2\frac{F_n(x)}{\Vert F_n\Vert_{L^1(\mathbb{R}^d)}}\,dx,\\
D_{n,2}&:=\int_{\mathbb{R}^d}\left\vert\nabla \log \mathcal{E}_n[\bar F_0](x)\right\vert^2\frac{F_n(x)}{\Vert F_n\Vert_{L^1(\mathbb{R}^d)}}\,dx.
\end{align*}

On the one hand, for $D_{n,1}$ we can use the propagation of quadratic moments in Corollary \ref{C-quadratic-moments} together with the explicit relaxation rates of $\{k_n\}_{n\in \mathbb{N}}$ to $\bk_\alpha$ in Lemma \ref{L-sexual-reproduction-coefficients-asymptotics} to show that 
\begin{equation}\label{E-Dn1-control}
D_{n,1}\lesssim \br_\alpha^{2n}.
\end{equation}
On the other hand, for $D_{n,2}$ we shall refine the argument in the proof of Corollary \ref{C-sexual-reproduction-local-convergence}. Specifically, recall again that we can assume that $\bar F_0$ satisfies the hypothesis \eqref{E-sexual-reproduction-F-hypothesis} without loss of generality ({\em cf.} proof of Corollary \ref{C-sexual-reproduction-local-convergence}). Then, applying Lemma \ref{L-sexual-reproduction-weak-dependency} in order to control $\nabla\log\mathcal{E}_n[\bar F_0]$ with fixed $\varepsilon\in \mathbb{R}_+^*$ yields
$$D_{n,2}\lesssim \left((2\bk_\alpha)^2+\varepsilon\right)^n\int_{\mathbb{R}^d}\left(1+|x|^2+e^{2\,C_\varepsilon(2\br_\alpha+\varepsilon)^n|x|^2}\right)\frac{F_n(x)}{\Vert F_n\Vert_{L^1(\mathbb{R}^d)}}\,dx,$$
for some sufficiently large $C_\varepsilon\in \mathbb{R}_+^*$. Above we have used that $(2\bk_\alpha)^n$ is the decay rate that controls all the others in formula \eqref{E-sexual-reproduction-weak-dependency}. Take $n\geq n_\varepsilon$ sufficiently large such that
$$\theta_\varepsilon:=\sup_{n\geq n_\varepsilon}2\,C_\varepsilon(2\br_\alpha+\varepsilon)^n<\frac{\alpha}{2}.$$
Then, Corollaries \ref{C-quadratic-moments} and \ref{C-exponential-moments} imply that the above quadratic and exponential moments are uniformly bounded with respect to $n$ for $n\geq n_\varepsilon$. Therefore, we have 
\begin{equation}\label{E-Dn2-control}
D_{n,2}\lesssim ((2\bk_\alpha)^2+\varepsilon)^n.
\end{equation}
Finally, putting the estimates \eqref{E-Dn1-control} and \eqref{E-Dn2-control} into the split \eqref{E-relative-entropy-bound-decomposition}, and noticing that $\br_\alpha^2<(2\bk_\alpha)^2$ thanks to the relation $\br_\alpha=2\bk_\alpha^2$, ends this part of the proof.

\medskip

$\bullet$ {\sc Step 2}: Convergence of the growth rates $\Vert F_n\Vert_{L^1(\mathbb{R}^d)}/\Vert F_{n-1}\Vert_{L^1(\mathbb{R}^d)}$.\\
For simplicity of notation we shall define the following sequence of coefficients: 
$$\lambda_n:=\frac{\Vert F_n\Vert_{L^1(\mathbb{R}^d)}}{\Vert F_{n-1}\Vert_{L^1(\mathbb{R}^d)}},\quad n\in \mathbb{N}.$$
Our goal then reduces to studying the asymptotic behavior of $\{\lambda_n\}_{n\in \mathbb{N}}$ and obtaining quantitative convergence rates. By definition of operator $\mathcal{T}$ in \eqref{E-sexual-reproduction-T-operator} we obtain that
$$\lambda_n=\int_{\mathbb{R}^d}e^{-m(x)}\int_{\mathbb{R}^{2d}}G\left(x-\frac{x_1+x_2}{2}\right)\frac{F_{n-1}(x_1)}{\Vert F_{n-1}\Vert_{L^1(\mathbb{R}^d)}}\frac{F_{n-1}(x_2)}{\Vert F_{n-1}\Vert_{L^1(\mathbb{R}^d)}}\,dx_1\,dx_2\,dx.$$
By direct computation of the integral with respect to $x$ as it was done in \eqref{E-computation-explicit-integral-1} we find that
\begin{equation}\label{E-rate-growth-restatement}
\lambda_n=\int_{\mathbb{R}^{2d}}H(x_1,x_2)\frac{F_{n-1}(x_1)}{\Vert F_{n-1}\Vert_{L^1(\mathbb{R}^d)}}\frac{F_{n-1}(x_2)}{\Vert F_{n-1}\Vert_{L^1(\mathbb{R}^d)}}\,dx_1\,dx_2,
\end{equation}
where the function $H(x_1,x_2)$ takes the form
\begin{equation}\label{E-rate-growth-function-H}
H(x_1,x_2):=\frac{1}{(1+\alpha)^{d/2}}\exp\left(-\frac{\alpha}{(1+\alpha)}\left\vert \frac{x_1+x_2}{2}\right\vert^2\right),\quad (x_1,x_2)\in \mathbb{R}^d.
\end{equation}
In addition, recalling the explicit form of the eigen-pair $(\blambda_\alpha,\bF_\alpha)$ in \eqref{E-sexual-reproduction-Gaussian-solution} implies 
\begin{equation}\label{E-rate-growth-restatement-lambda}
\blambda_\alpha=\int_{\mathbb{R}^{2d}}H(x_1,x_2)\bF_\alpha(x_1)\bF_\alpha(x_2)\,dx_1\,dx_2.
\end{equation}
Therefore, taking the difference of \eqref{E-rate-growth-restatement} and \eqref{E-rate-growth-restatement-lambda} and noticing that $H$ in \eqref{E-rate-growth-function-H} is bounded yields
\begin{equation}\label{E-convergence-ratio-mass-Pinsker-1}
\vert \lambda_n-\blambda_\alpha\vert\lesssim \left\Vert \frac{F_n}{\Vert F_n\Vert_{L^1(\mathbb{R}^d)}}\otimes \frac{F_n}{\Vert F_n\Vert_{L^1(\mathbb{R}^d)}}-\bF_\alpha\otimes \bF_\alpha\right\Vert_{L^1(\mathbb{R}^{2d})},
\end{equation}
for any $n\in \mathbb{N}$, where $P\otimes Q$ denotes the Kronecker product of two measures $P,Q\in \mathcal{M}(\mathbb{R}^d)$, namely
$$\int_{\mathbb{R}^{2d}}\varphi(x,y)\,(P\otimes Q)(dx,dy)=\int_{\mathbb{R}^d}\left(\int_{\mathbb{R}^d}\varphi(x,y)\,P(dx)\right)\,Q(dy),$$
for all $\varphi\in C_c(\mathbb{R}^d)$  ({\it cf.} Table \ref{tab:list-notations}). We do not have a direct convergence result of $F_n/\Vert F_n\Vert_{L^1(\mathbb{R}^d)}$ in $L^1$ norms, but we do have convergence of the Kullback-Leibler divergence thanks to {\sc Step 1}. By Pinsker's inequality, the latter metric controls the former, namely
\begin{align}\label{E-convergence-ratio-mass-Pinsker-2}
\begin{aligned}
&\left\Vert \frac{F_n}{\Vert F_n\Vert_{L^1(\mathbb{R}^d)}}\otimes \frac{F_n}{\Vert F_n\Vert_{L^1(\mathbb{R}^d)}}-\bF_\alpha\otimes \bF_\alpha\right\Vert_{L^1(\mathbb{R}^{2d})}\\
&\qquad \leq  \frac{1}{\sqrt{2}}\sqrt{\mathcal{D}_{\rm KL}\left(\left.\frac{F_n}{\Vert F_n\Vert_{L^1(\mathbb{R}^d)}}\otimes \frac{F_n}{\Vert F_n\Vert_{L^1(\mathbb{R}^d)}}\right\Vert \bF_\alpha\otimes \bF_\alpha\right)}\\
&\qquad =\frac{1}{\sqrt{2}}\sqrt{2\,\mathcal{D}_{\rm KL}\left(\left.\frac{F_n}{\Vert F_n\Vert_{L^1(\mathbb{R}^d)}}\otimes \right\Vert \bF_\alpha\right)},
\end{aligned}
\end{align}
where in last step we have used the tensorization property of the Kullback-Leilber divergence. The result then follows from \eqref{E-convergence-ratio-mass-Pinsker-1}-\eqref{E-convergence-ratio-mass-Pinsker-2} and the above explicit convergence rates of the normalized profiles $F_n/\Vert F_n\Vert_{L^1(\mathbb{R}^d)}$ in {\sc Step 1}.
\end{proof}

\section{Numerical experiments}\label{S-numerical-experiments}

For our numerical simulation, we have restricted to one-dimensional traits ({\em i.e.}, $d=1$) and we have considered a step function of the following form as initial datum:
\begin{equation}\label{E-simulation-initial-datum}
F_0=\frac{1}{Z}(30\,\mathds{1}_{[-7,-3]}+20\,\mathds{1}_{[7.5,12.5]}+50\,\mathds{1}_{[30,40]}+30\,\mathds{1}_{[52,5,57,5]}),
\end{equation}
where $Z\in\mathbb{R}_+^*$ is a normalizing factor so that $F_0\in \mathcal{P}(\mathbb{R})$. Let $\{F_n\}_{n\in \mathbb{N}}$ be the solution to the time-discrete problem \eqref{E-sexual-reproduction-time-discrete} starting at $F_0$. Our numerical simulation is performed with {\sc Python} on the finite computation domain $[-15,60]$ for the trait variable $x$, which contains the support of the previous initial datum $F_0$ and all the mass of each $F_n$ (except for a negligible Gaussian tail). In our simulation, we set $\Delta x=0.001$ as the step for the discretization of our computational domain. In particular, with such a step we compute numerical integrals with respect to $x$ according to the left rectangle rule.

In the following, we explore numerically two different scenarios: {\em weak selection} and {\em strong selection}. Specifically, we obtain numerical approximations for the profiles $F_n$ in both regimes and we illustrate numerically the convergence of the growth rates $\Vert F_n\Vert_{L^1(\mathbb{R}^d)}/\Vert F_{n-1}\Vert_{L^1(\mathbb{R}^d)}$ towards the eigenvalue $\blambda_\alpha$, and the relaxation of the normalized profiles $F_n/\Vert F_n\Vert_{L^1(\mathbb{R}^d)}$ towards the eigenfunction $\bF_\alpha$. As a consequence, we derive numerical approximations of the convergence rates to be compared with the theoretical results in this paper. More specifically, we note that the theoretical convergence rates in Theorem \ref{T-main} are sharp, except a mismatch for the rates of growth of mass, which was discussed previously in Remark \ref{R-rough-convergence-rates} (see also Figure \ref{fig:non-optimal-rates}). Indeed, we actually attain numerically the same convergence rates as in Corollary \ref{C-sexual-reproduction-Gaussian-solution-time-discrete-relaxation}, which were sharp for Gaussian initial data.

\subsection{Weak selection}

In this part, we consider a small value $\alpha=0.015$. This leads to the following numerical values of the features of the equilibrium:
$$\blambda_\alpha\approx 0.9857,\qquad \bsigma_\alpha^2\approx 1.8897.$$
characterizing the eigenpair $(\blambda_\alpha,\bF_\alpha)$, according to \eqref{E-sexual-reproduction-Gaussian-solution} and \eqref{E-sexual-reproduction-Gaussian-solution-variance-equation}. Note that since the selection parameter has been taken very small, then the eigenvalue and the variance of the eigenfunction are close to those at linkage equilibrium, namely, $\blambda_{\alpha=0}=1$ and $\bsigma_{\alpha=0}^2=2$ (see Remark \ref{R-sexual-reproduction-Gaussian-solution-variance} and Figure \ref{fig:behavior-eigenvalue-variance-Gaussian-solution}).

\begin{figure}
\centering
\begin{subfigure}[t]{0.49\textwidth}
\centering
\includegraphics[width=\textwidth]{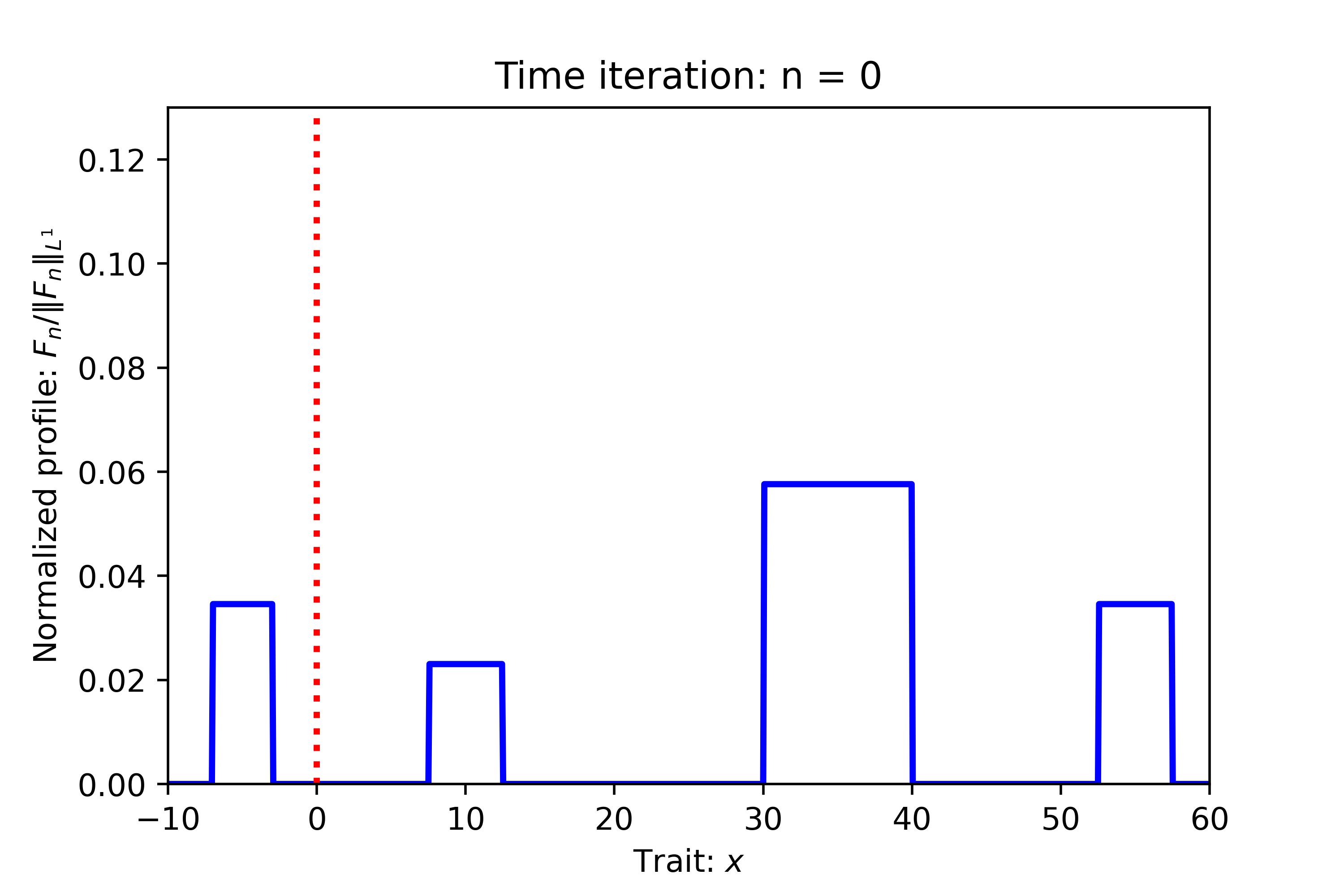}
\end{subfigure}
\begin{subfigure}[t]{0.49\textwidth}
\centering
\includegraphics[width=\textwidth]{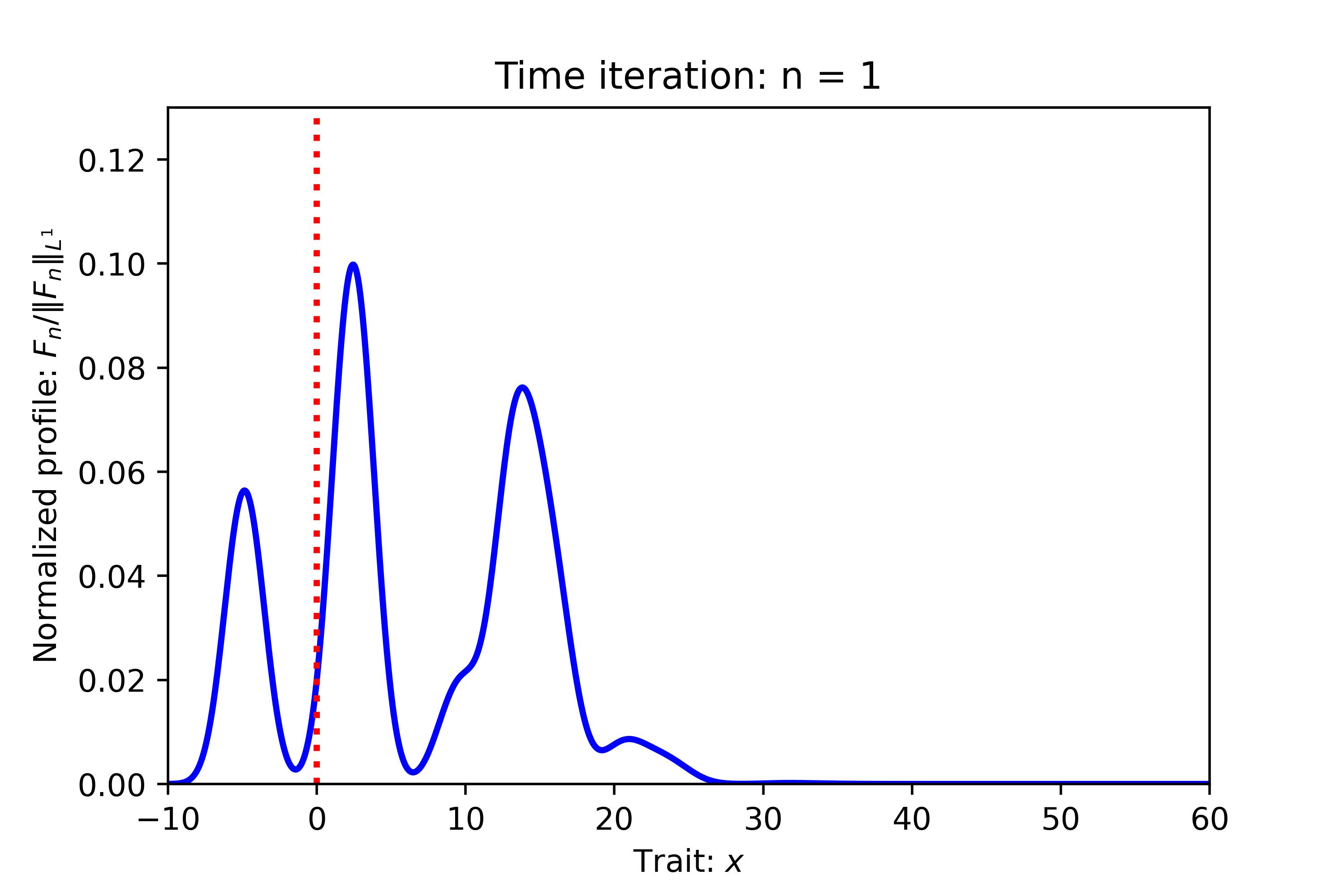}
\end{subfigure}
\begin{subfigure}[t]{0.49\textwidth}
\centering
\includegraphics[width=\textwidth]{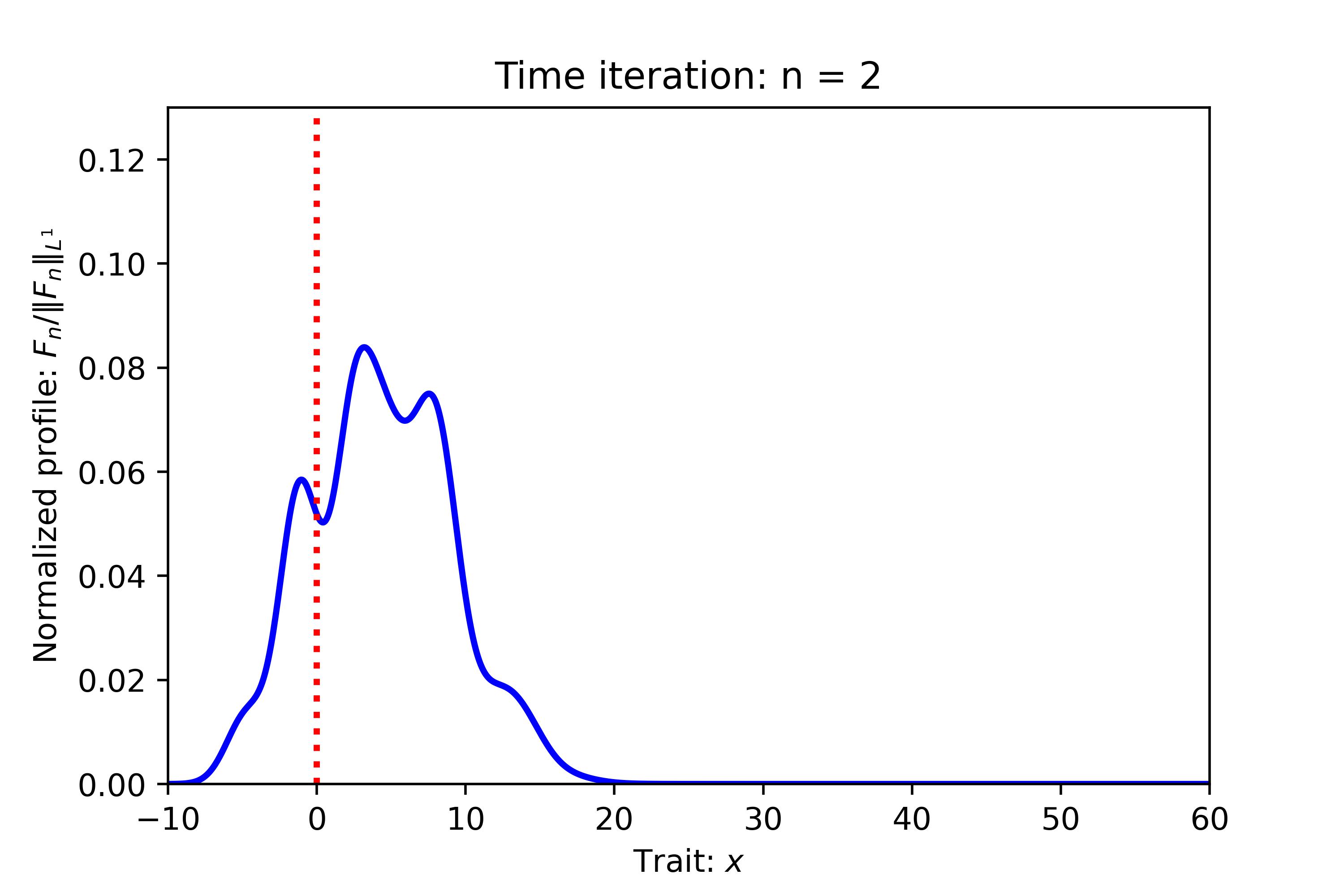}
\end{subfigure}
\begin{subfigure}[t]{0.49\textwidth}
\centering
\includegraphics[width=\textwidth]{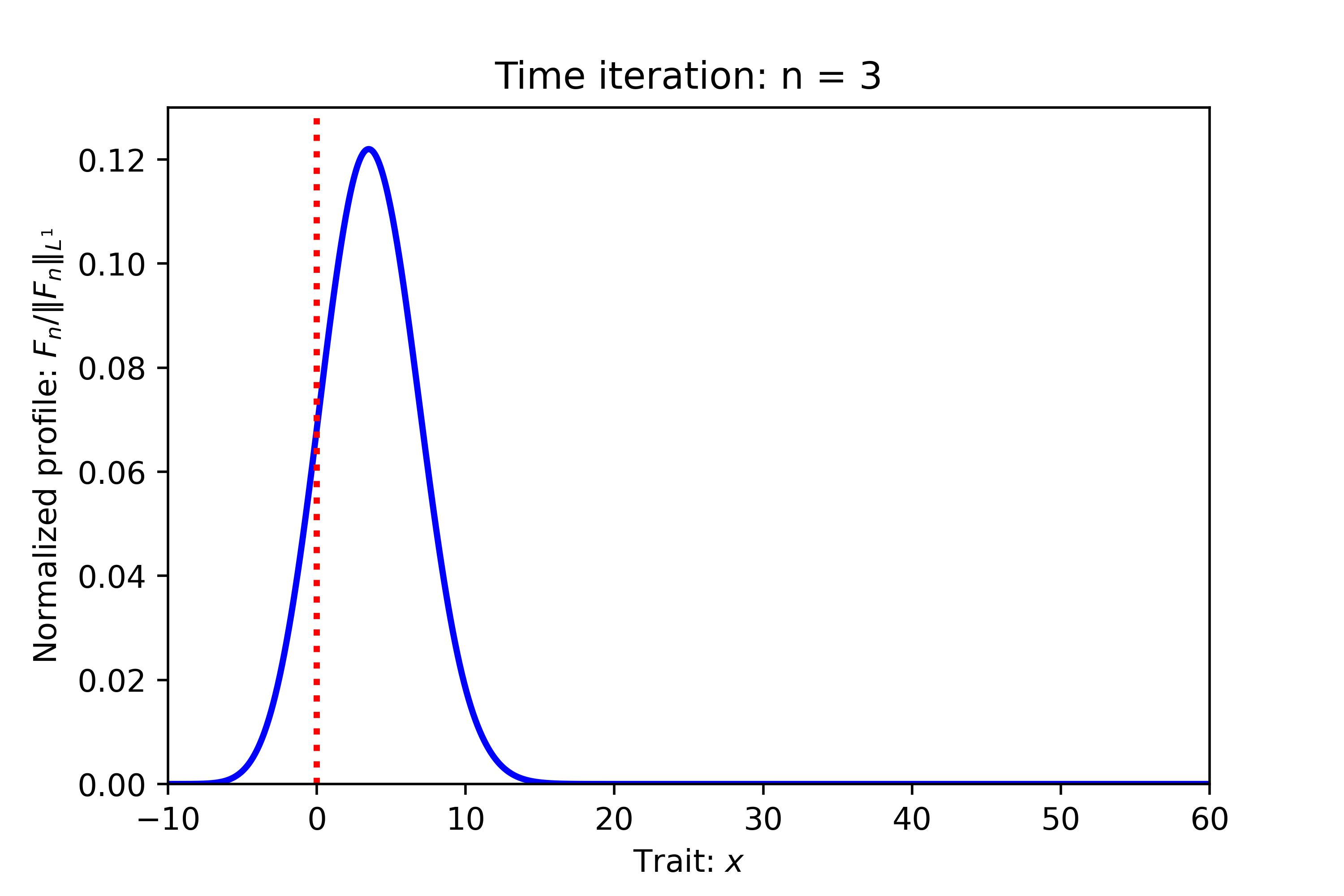}
\end{subfigure}
\begin{subfigure}[t]{0.55\textwidth}
\centering
\includegraphics[width=\textwidth]{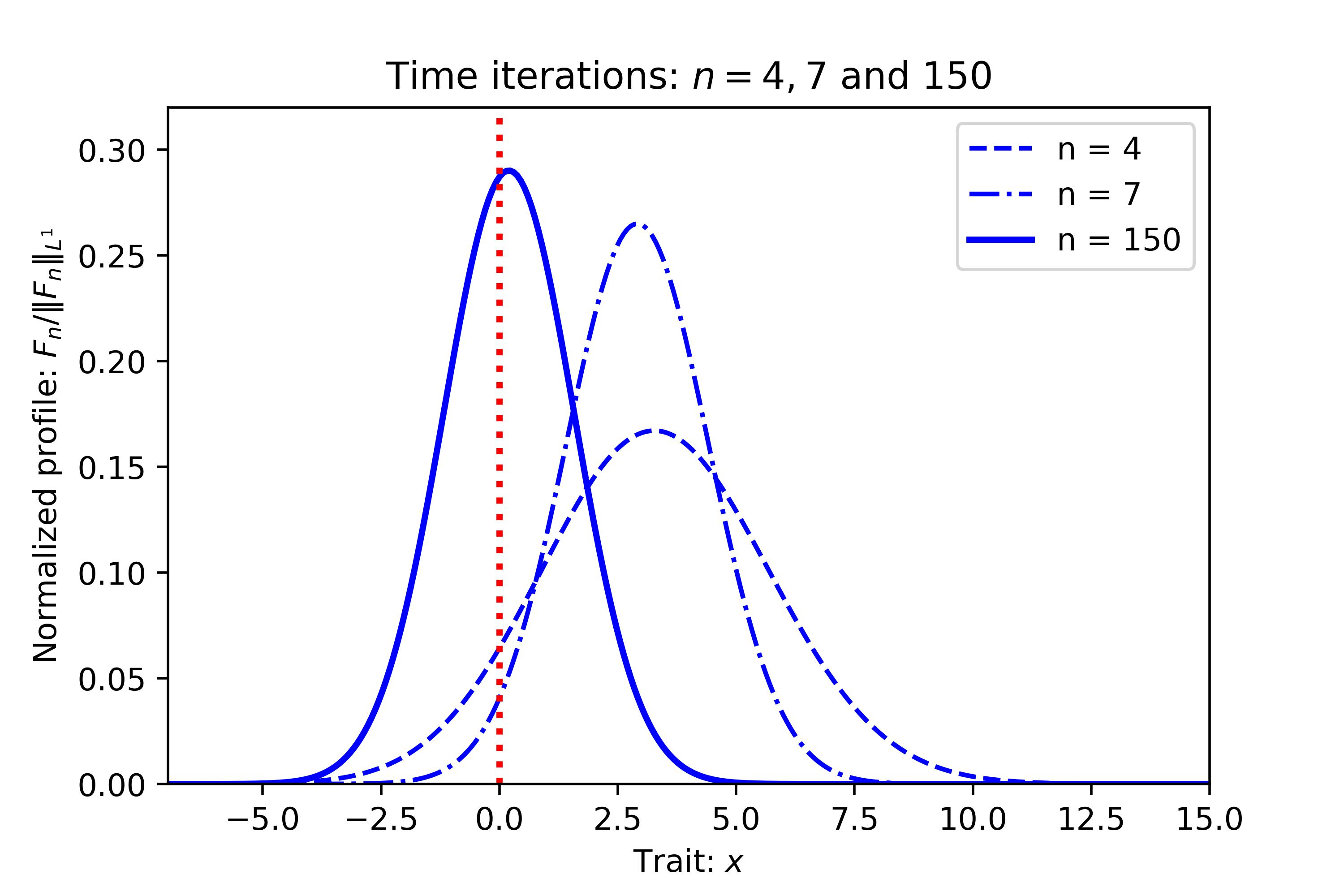}
\end{subfigure}
\caption{Relaxation of the normalized profiles $F_n/\Vert F_n\Vert_{L^1(\mathbb{R}^d)}$ towards the eigenfunction $\bF_\alpha$ along the time iterations $n=0,1,2,3,4,7$ and $150$ for a step function \eqref{E-simulation-initial-datum} as initial datum and weak selection parameter $\alpha=0.015$. The vertical dotted line represents the location of the mean of the equilibrium profile $\bF_\alpha$.}
\label{fig:simulation-1-profiles}
\end{figure}

In Figure \ref{fig:simulation-1-profiles} we observe the relaxation of the normalized profiles $F_n/\Vert F_n\Vert_{L^1(\mathbb{R}^d)}$ towards the eigenfunction $\bF_\alpha$ along the time iterations $n=0,1,2,3,4,7,150$. We remark on the strong contraction of the variance during the first few iterations leading to a well-identified Gaussian-shaped profile at time $n=3$. At time $n=7$ the variance of the profile is already close to that of the equilibrium $\bF_\alpha$, but its mean is still substantially shifted to the right. Thereafter, we observe the gradual motion of the profiles towards the left at a much lower speed. After $150$ iterations, the approximate mean and variance of $F_n/\Vert F_n\Vert_{L^1(\mathbb{R}^d)}$ are given by $\approx 0.0474$ and $\approx 1.8897$ respectively, where the latter agrees with the above exact value $\bsigma_\alpha^2$ up to $5$ digits.

\begin{figure}
\centering
\includegraphics[width=0.55\textwidth]{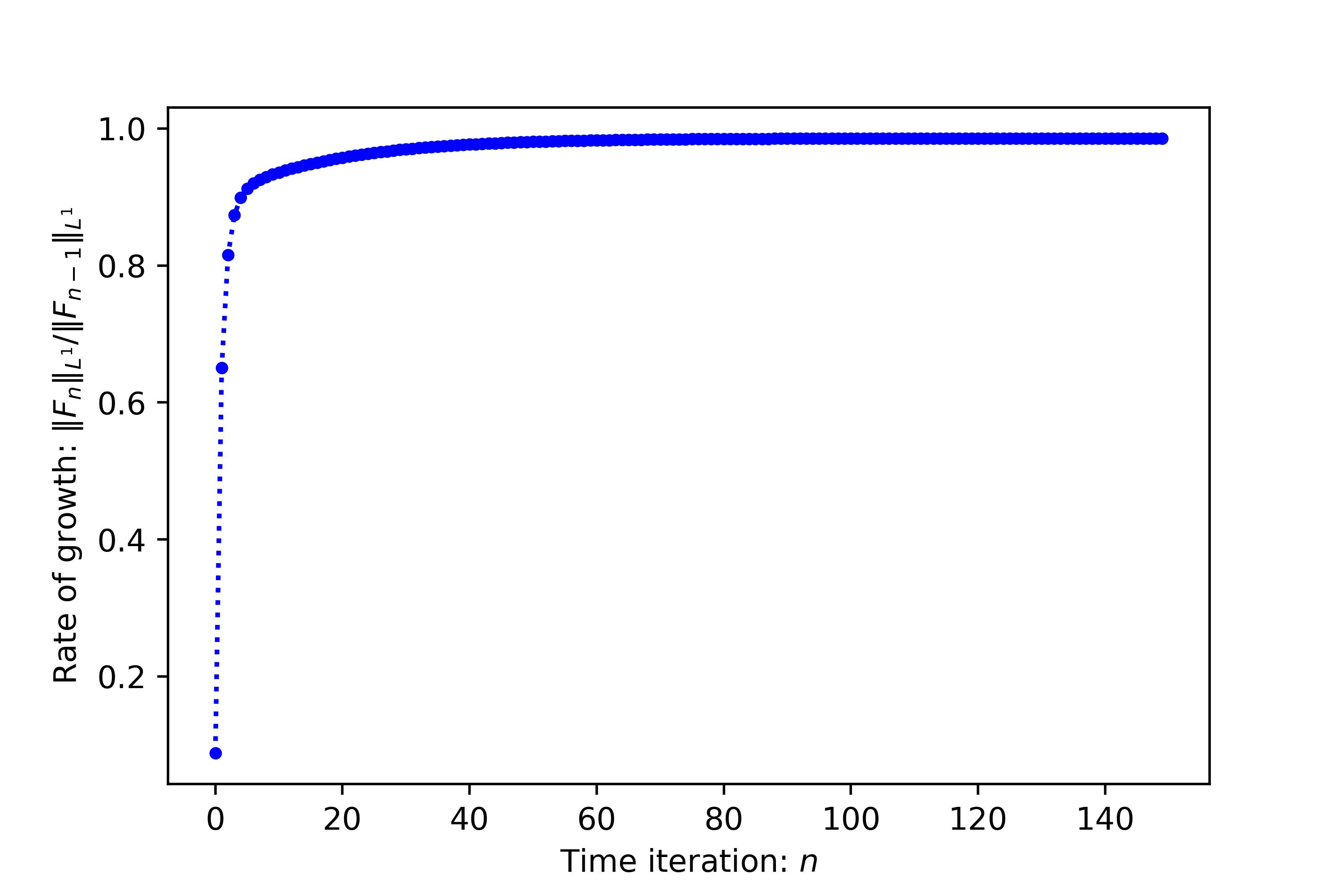}
\caption{Relaxation of the growth rates $\Vert F_n\Vert_{L^1(\mathbb{R}^d)}/\Vert F_{n-1}\Vert_{L^1(\mathbb{R}^d)}$ towards the eigenvalue $\blambda_\alpha$ along time iterations $0\leq n\leq 150$ for a step function \eqref{E-simulation-initial-datum} as initial datum and weak selection parameter $\alpha=0.015$.}
\label{fig:simulation-1-rates-growth}
\end{figure}

In Figure \ref{fig:simulation-1-rates-growth} we represent the convergence of the growth rates $\Vert F_n\Vert_{L^1(\mathbb{R}^d)}/\Vert F_{n-1}\Vert_{L^1(\mathbb{R}^d)}$ towards the eigenvalue $\blambda_\alpha$. After $150$ iterations we obtain that the growth rate takes the approximate value $\approx 0.9857$, which again agrees with the exact value $\blambda_\alpha$ above up to $5$ digits.

\begin{figure}
\centering
\begin{subfigure}[t]{0.49\textwidth}
\centering
\includegraphics[width=\textwidth]{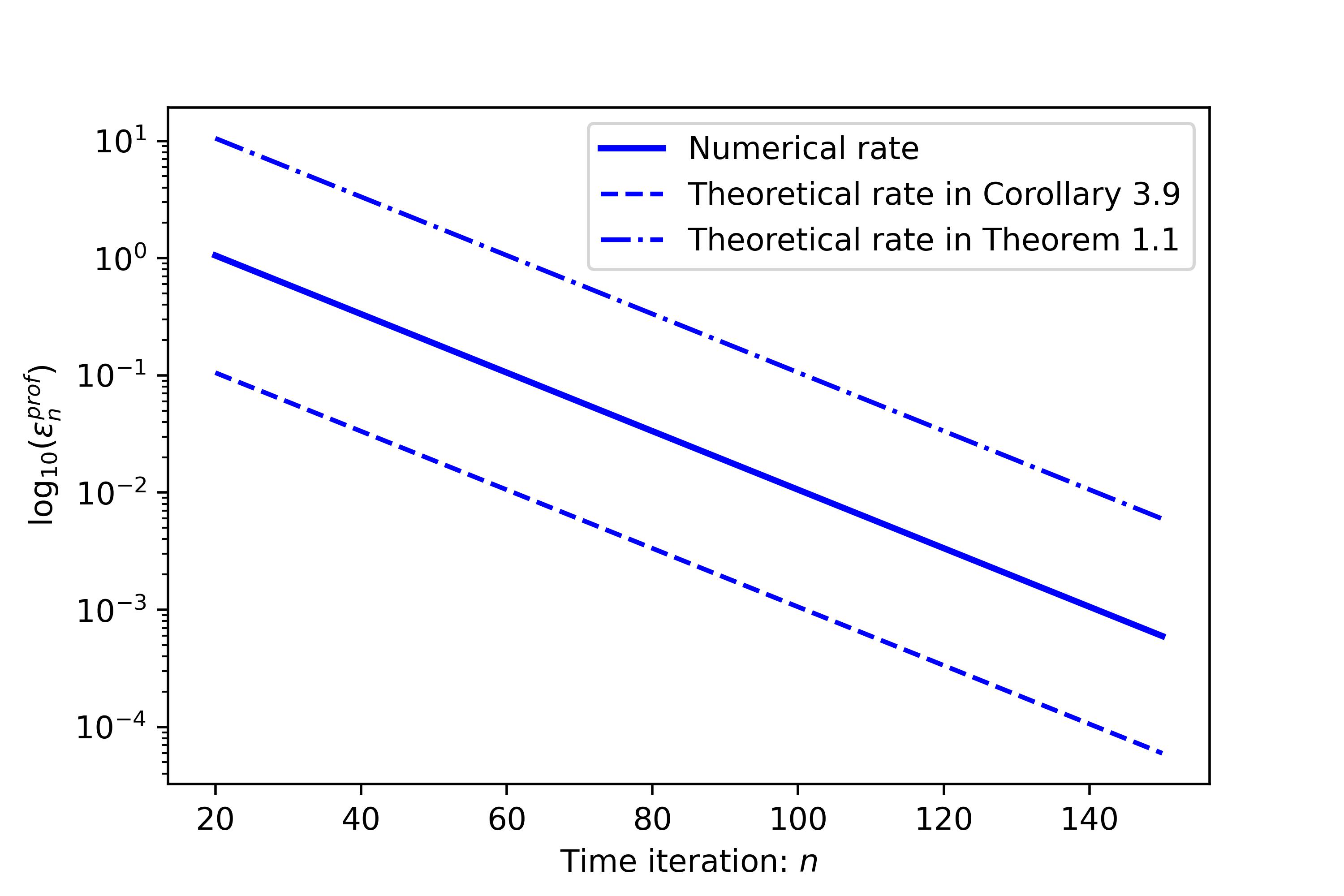}
\caption{ }
\label{fig:simulation-1-rates-profiles}
\end{subfigure}
\begin{subfigure}[t]{0.49\textwidth}
\centering
\includegraphics[width=\textwidth]{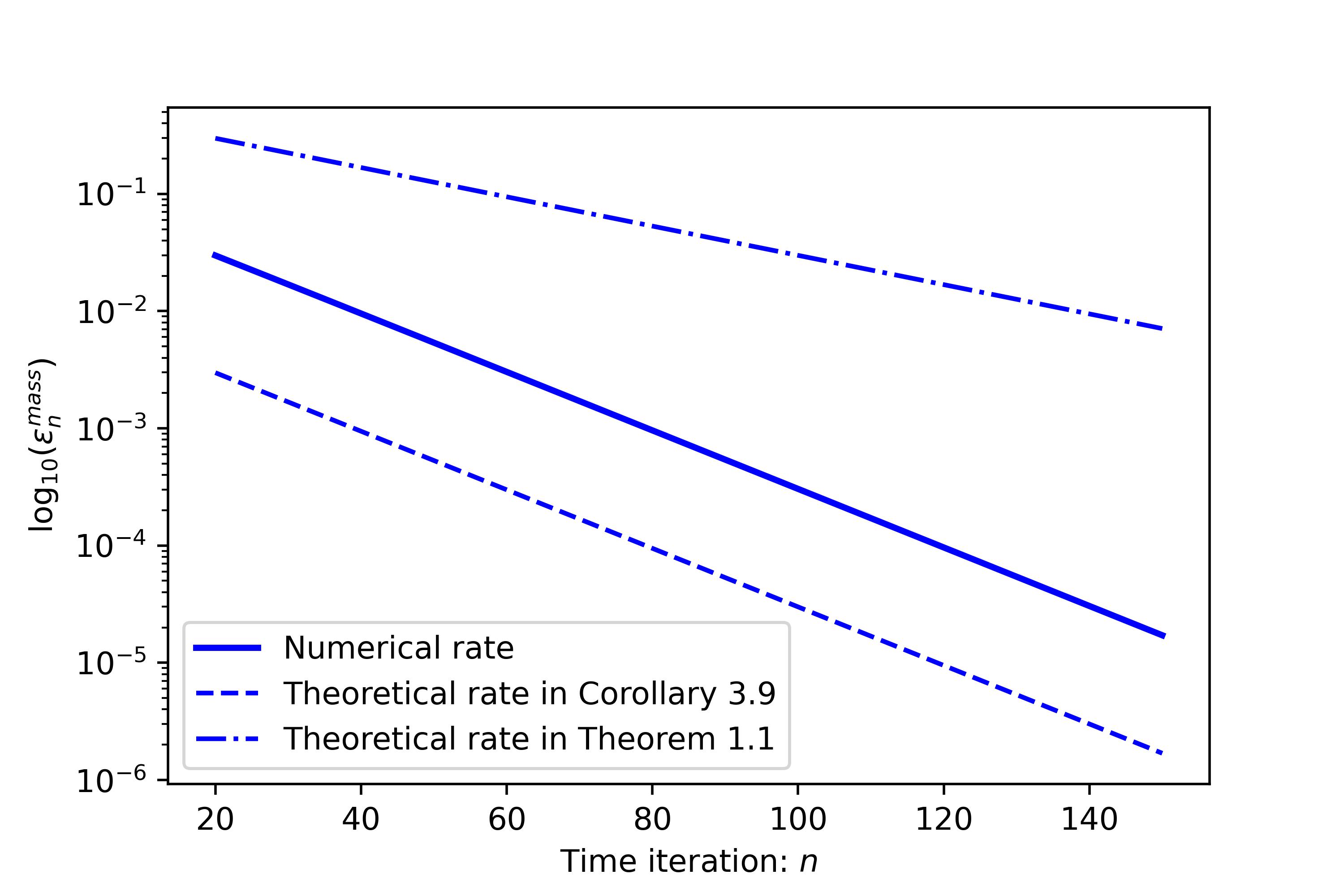}
\caption{ }
\label{fig:simulation-1-rates-mass}
\end{subfigure}
\caption{Numerical computation of the rates of convergence of  the normalized profiles and the growth rates: (a)  semi-log plot of the errors $\varepsilon_n^{\text{prof}}:=\mathcal{D}_{\rm KL}\left(\left.F_n/\Vert F_n\Vert_{L^1}\,\right\Vert \bF_\alpha\right)$, and (b) semi-log plot of the errors $\varepsilon_n^{\text{mass}}:=\left\vert \Vert F_n\Vert_{L^1}/\Vert F_{n-1}\Vert_{L^1}-\blambda_\alpha\right\vert$.}
\label{fig:simulation-1-rates}
\end{figure}

In Figure \ref{fig:simulation-1-rates} we have represented the errors
$$\varepsilon_n^{\text{prof}}:=\mathcal{D}_{\rm KL}\left(\left.\frac{F_n}{\Vert F_n\Vert_{L^1(\mathbb{R}^d)}}\,\right\Vert \bF_\alpha\right),\quad \varepsilon_n^{\text{mass}}:=\left\vert\frac{\Vert F_n\Vert_{L^1(\mathbb{R}^d)}}{\Vert F_{n-1}\Vert_{L^1(\mathbb{R}^d)}}-\blambda_\alpha\right\vert,$$
in semi-log plots so that the horizontal axis appears in the natural scale and represents each time iteration, and the vertical axis contains the logarithm of the errors. As observed, both plots reduce to essentially straight lines, suggesting exponential relaxation. Interestingly, the numerical rate of convergence in the Kullback-Leibler divergence measured in Figure \ref{fig:simulation-1-rates-profiles} is approximately $0.9441$. It coincides numerically with the rate of relaxation among the subclass of Gaussian solutions (see Corollary \ref{C-sexual-reproduction-Gaussian-solution-time-discrete-relaxation}), namely, $\blambda_\alpha^4\approx 0.9441$, which in turns is identical to the theoretical rate $(2\bk_\alpha)^2$ obtained in Theorem \ref{T-main}.
These results are in perfect agreement with the other convergence results in Figure \ref{fig:simulation-1-rates-mass}: the numerical rate of convergence in the rate of growth of mass is approximately $0.9442$, which is close to the one among the class of Gaussian solutions $\blambda_\alpha^4\approx 0.9441$, in contrast with the theoretical upper bound obtained in Theorem \ref{T-main}, namely $2\bk_\alpha\approx 0.9717$. Moreover, the numerical rate of convergence of the variance of the normalized profiles $F_n/\Vert F_n\Vert_{L^1(\mathbb{R}^d)}$ is much faster: it is approximately $\approx 0.4721<0.5$, again close to the expected value for Gaussian solutions, namely,
$\br_\alpha\approx 0.472$. These numerical results illustrate the two-step process arising in the relaxation dynamics: convergence towards a Gaussian profile occurs much faster than relaxation of the mean towards the origin due to (weak) selection. Alternatively speaking, the equilibrium variance builds up much faster than the center of the distribution gets to the origin.

\subsection{Strong selection} In this part, we consider a larger value of $\alpha$. We opted for $\alpha=0.4$. Indeed, if $\alpha$ is taken too large then there is no visual difference between the solution and the equilibrium configuration after one single iteration. 
This time, the numerical values of the features of the equilibrium are:
$$\blambda_\alpha\approx 0.7944,\quad \bsigma_\alpha^2\approx 0.9221.$$
The latter substantially differs from the value at linkage equilibrium ({\em i.e.}, $\bsigma_{\alpha=0}^2 = 2$). In Figure \ref{fig:simulation-2-profiles} we note that the solution resembles a Gaussian distribution after a couple of iterations. After $15$ iterations we obtain that the normalized profile $F_n/\Vert F_n\Vert_{L^1(\mathbb{R}^d)}$ has approximate mean and variance respectively given by $\approx 0.0017$ and $\approx 0.9221$. In Figure \ref{fig:simulation-2-mass} we observe the convergence of the growth rates $\Vert F_n\Vert_{L^1(\mathbb{R}^d)}/\Vert F_{n-1}\Vert_{L^1(\mathbb{R}^d)}$ towards the eigenvalue $\blambda_\alpha$. After $15$ iterations the growth rate becomes $\approx 0.7944$, which again agrees with the exact value $\blambda_\alpha$ above up to $5$ digits.

\begin{figure}
\centering
\begin{subfigure}[t]{0.49\textwidth}
\centering
\includegraphics[width=\textwidth]{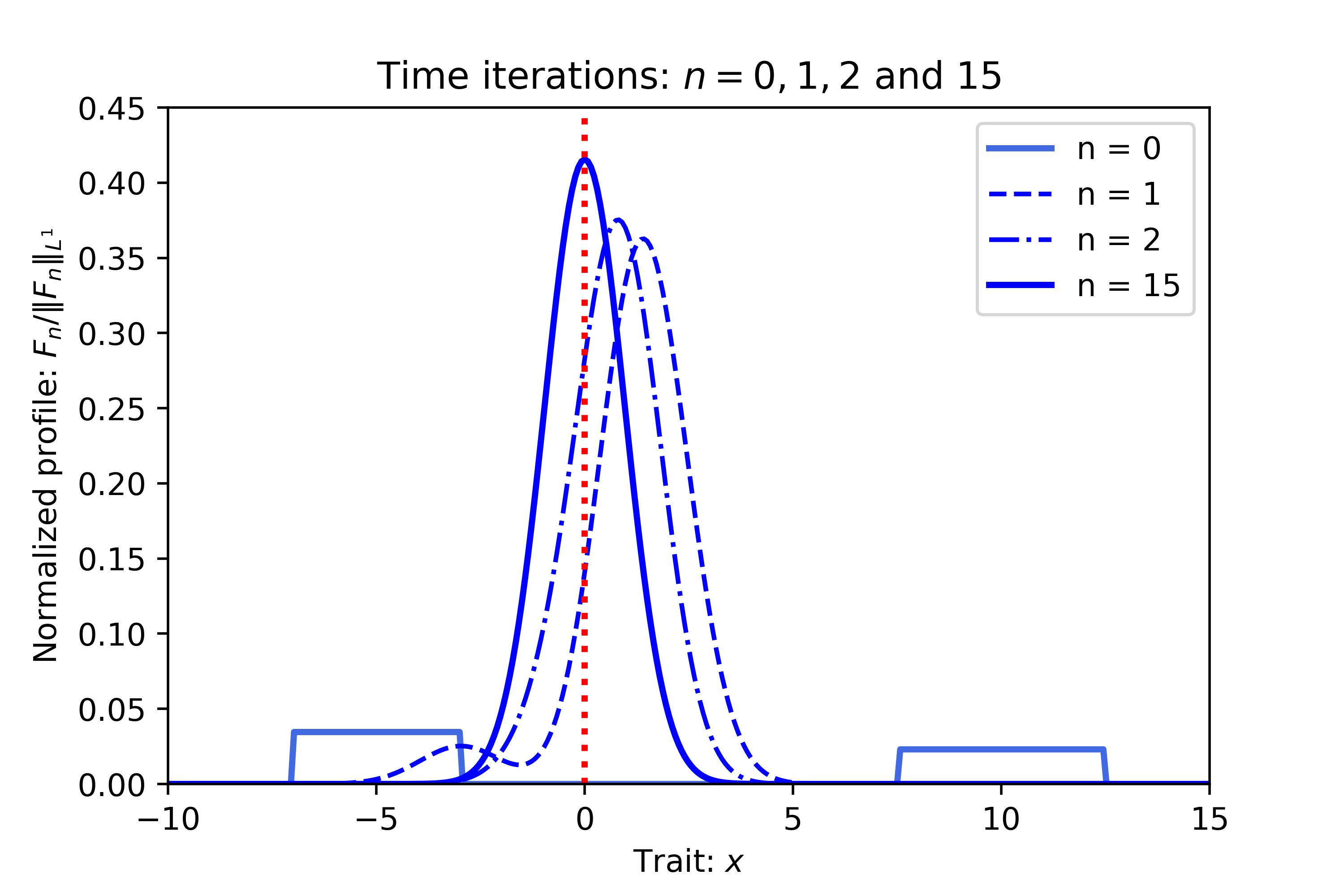}
\caption{ }
\label{fig:simulation-2-profiles}
\end{subfigure}
\begin{subfigure}[t]{0.49\textwidth}
\centering
\includegraphics[width=\textwidth]{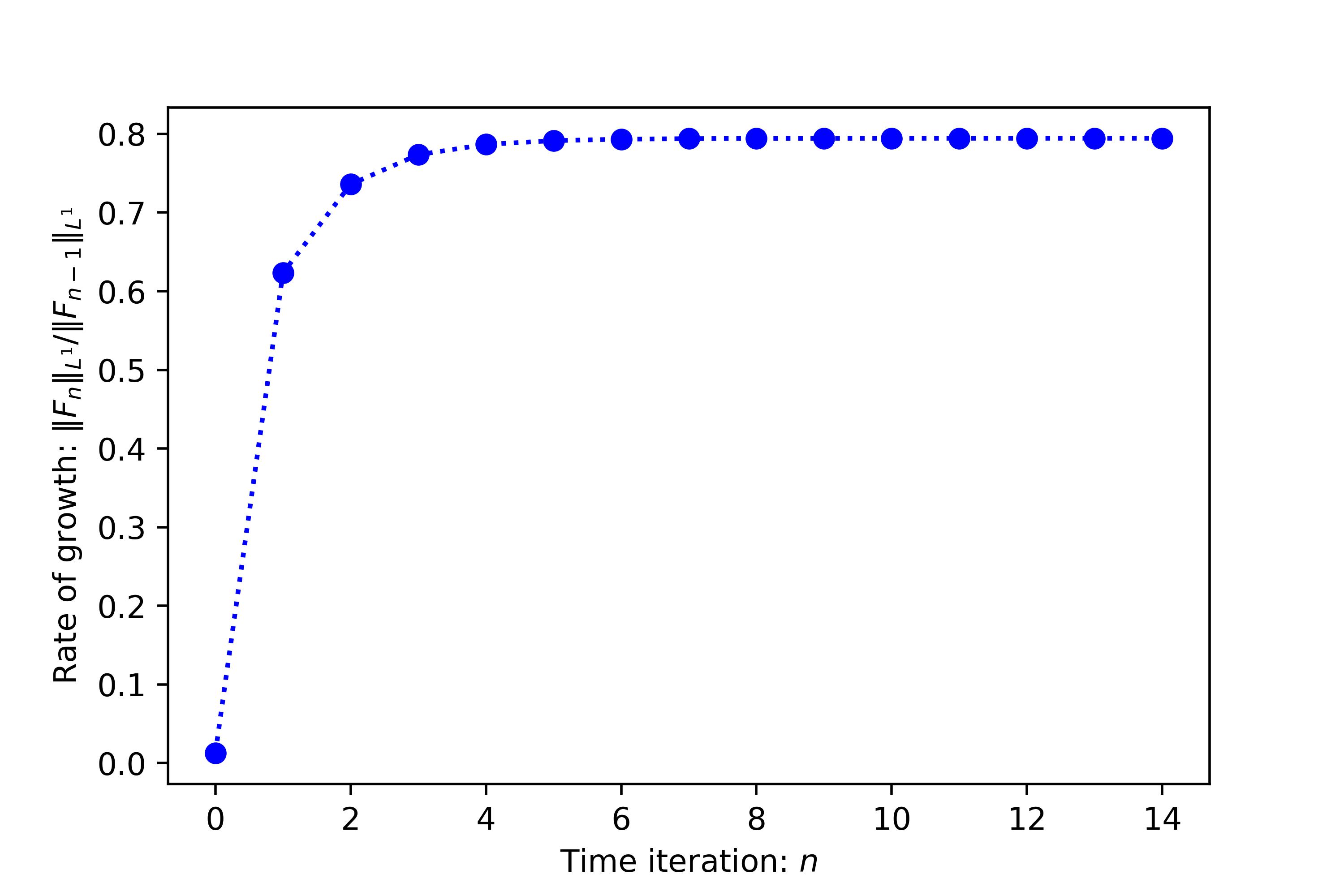}
\caption{ }
\label{fig:simulation-2-mass}
\end{subfigure}
\caption{(a) Zoom near the origin of the relaxation of the normalized profiles $F_n/\Vert F_n\Vert_{L^1(\mathbb{R}^d)}$ towards the eigenfunction $\bF_\alpha$ along the time iterations $n=0,1,2$ and $15$ for the step function \eqref{E-simulation-initial-datum} as initial datum and strong selection parameter $\alpha=0.4$. The vertical dotted line represents the location of the mean of the equilibrium profile $\bF_\alpha$. (b) Relaxation of the growth rates $\Vert F_n\Vert_{L^1(\mathbb{R}^d)}/\Vert F_{n-1}\Vert_{L^1(\mathbb{R}^d)}$ towards the eigenvalue $\blambda_\alpha$.}
\label{fig:simulation-2-profiles-mass}
\end{figure}

Similar computations as in the previous Figure \ref{fig:simulation-1-rates} allow finding numerical approximations for the rate of convergence of the normalized profiles and the growth rates. Specifically, we obtain an approximation $0.3966$ for the rate of convergence of the Kullback-Leibler divergence. Once again, such a value is close to the rate of relaxation among the subclass of Gaussian solutions, namely, $\blambda_\alpha^4\approx 0.3983$, which in turns agrees with the theoretical rate $(2\bk_\alpha)^2$ obtained in Theorem \ref{T-main}. Similarly, the numerical rate of convergence of the growth rates is approximately $0.3901$, which is close to the one among the class of Gaussian solutions $\blambda_\alpha^4\approx 0.3983$, in contrast with the upper bound $2\bk_\alpha\approx 0.6311$ in Theorem \ref{T-main}.

\section{Conclusions and perspectives}\label{S-conclusions}

In this paper, we have proven asynchronous exponential growth in a quantitative genetics model for the evolution of the distribution of traits in a population governed by sexual reproduction and multiplicative effect of selection. Our model assumes time-discrete non-overlapping generations, which rule out an eventual mixing with previous generations of ancestors. In addition, our non-linear sexual reproduction operator is set in agreement with Fisher's infinitesimal model, and we have chosen selection to act on the survival probability of individuals. Our main result provides quantitative convergence rates of the renormalized distributions towards a unique stationary profile. Indeed, rates are exponential, which can be interpreted loosely as a spectral gap in this non-linear context.

It is noticeable that the sexual reproduction operator is contractive under the Wasserstein distance (see Lemma \ref{lem:contraction W2}).
However, the generic incompatibility of multiplicative selection with transport distances  becomes an apparent obstruction to the use of a direct perturbative approach in the regime of weak selection (see Example \ref{ex:incompatibility}). This obstacle was circumvented by {\sc G. Raoul} \cite{R-21-arxiv}, assuming that selection is restricted to a compact support. We follow a different route in a purely non-perturbative setting. To this end, we restrict to the specific choice of a quadratic selection  function of the trait, for the sake of simplicity. Our alternative approach relies on an appropriate study of the propagation of information along large binary trees of ancestors, by a suitable reformulation of high-dimensional integrals, inspired by the changes of variables performed in \cite{CGP-19} in the regime of small variance. This reveals an ergodicity property where the exact shape of the initial distribution is quickly forgotten across generations. 

We remark that the above heuristic arguments indicate that the quadratic Wasserstein metric is not fully appropriate for dealing with the problem at hand. However, we could not identify yet a proper metric that extends the contraction property of the neutral case to the quadratic selection case. 

Several perspectives are envisaged. First, there is an apparent price to pay with our method, in which we fully exploit the Gaussian structure induced by quadratic selection in order to perform tractable computations within the high-dimensional integrals. We believe though that the restriction to quadratic selection might be overcome in future works to allow for more general selection functions.  Second, as studied in \cite{CGP-19}, the case of multiple minima on the selection function leads to non-uniqueness of stable equilibria. Then, uncovering the hidden metastability and quantifying the relaxation towards a specific equilibrium is of great interest for its applications in quantitative genetics. Finally, a major problem is to transcend non-overlapping generations and tackle the full time-continuous model as presented in previous literature.

%%%%%%%%%%
\appendix

\section{Nondimensionalization and derivation of the time-discrete version}\label{Appendix-nondimensionalization}

In this section, we shall nondimensionalize the time-continuous model \eqref{E-general-reproduction}-\eqref{E-sexual-reproduction-operator} with Gaussian mixing kernel $G$ and quadratic selection function $m$. By time discretization on the Duhamel formulation, we will derive the time-discrete version \eqref{E-sexual-reproduction-time-discrete} which has been central in this paper. Indeed, we shall show that we can reduce parameters into only one, namely, parameter $\alpha\in \mathbb{R}_+$ in the quadratic selection function \eqref{E-selection-quadratic}. Bearing in mind all the biological parameters and dimensions, our evolution problem reads:
\begin{equation}\label{E-sexual-reproduction-time-continuous-all-parameters}
\left\{\begin{array}{ll}
\partial_t f =-r_m\,m(x)f+r_b\,\mathcal{B}_{\sigma^2}[f], & t\geq 0,\,x\in \mathbb{R}^d\\
f(0,x)=f_0(x), & x\in \mathbb{R}^d.
\end{array}\right.
\end{equation}
Here, $r_m,r_b\in \mathbb{R}_+$ represent the mortality and birth rates and have frequency units, whilst the dimensionless corrections of $m$ and $\mathcal{B}$ take the form
$$
m(x)=\frac{\vert x\vert^2}{\sigma_m^2},\qquad
\mathcal{B}_{\sigma^2}[f](t,x)=\int_{\mathbb{R}^{2d}}G_{0,\sigma^2}\left(x-\frac{x_1+x_2}{2}\right)\frac{f(t,x_1)f(t,x_2)}{\int_{\mathbb{R}^d} f(t,x')\,dx'}\,dx_1\,dx_2,$$
for each $x\in \mathbb{R}^d$. Here, $G_{0,\sigma^2}$ represents the Gaussian centered at $0$ with variance $\sigma^2$, and  $\sigma_m$ and $\sigma$ have the same units as the quantitative trait. On the one hand, $\sigma_m$ can be regarded as a characteristic unit quantifying the effective range of selection. Namely, if $\vert x\vert \geq \sigma_m$ then $m(x)\geq 1$. On the other hand, $\sigma^2$ is the genetic variance and $2\sigma^2$ represents the variance at linkage equilibrium often denoted by $\sigma_{LE}^2:=2\sigma^2$ ({\em cf.} \cite{B-80}). We nondimensionalize our system by appropriately scaling our variables as follows 
$$\widehat{t}=\frac{t}{T},\quad \widehat{x}=\frac{x}{L},\quad \widehat{f}(\widehat{t},\widehat{x})=L^d f(t,x).$$
Here, $T$ and $L$ are characteristic units for time and trait. For simplicity, we set them as follows in terms of the parameters of the system:
$$T:=\frac{1}{r_m},\quad L:=\sigma.$$
In other words, we scale time according to the mortality rate $r_m$ and the trait variable according to the genetic variance $\sigma^2$. Dropping hats for simplicity yields the following dimensionless form of the equation
\begin{equation}\label{E-sexual-reproduction-time-continuous-reduced-parameters}
\left\{\begin{array}{ll}
\partial_t f=-\frac{\alpha}{2}\vert x\vert^2 f+\beta \mathcal{B}[f], & t\geq 0,\,x\in \mathbb{R}^d,\\
f(0,x)=f_0(x), & x\in \mathbb{R}^d,
\end{array}\right.
\end{equation}
where $\mathcal{B}=\mathcal{B}_1$ involves again unit genetic variance thanks to our scaling assumption. This reduces the amount of free parameters to only two of them $\alpha,\beta\in \mathbb{R}_+$ defined as follows:
$$\alpha:=\frac{\sigma^2_{LE}}{\sigma_m^2},\quad \beta:=\frac{r_b}{r_m}.$$

In the sequel, we justify our time-discrete version \eqref{E-eigenproblem-time-discrete}. Specifically, let us integrate equation $\eqref{E-sexual-reproduction-time-continuous-reduced-parameters}_1$ for $t\in[t_n,t_{n+1}]$ where $\{t_n\}_{n\in \mathbb{N}}\subseteq \mathbb{R}_+$ is any increasing sequence. Then, we recover Duhamel's formula
$$f(t_n,x)=e^{-\frac{\alpha}{2}\vert x\vert^2(t_n-t_{n-1})}f(t_{n-1},x)+\beta\int_{t_{n-1}}^{t_n} e^{-\frac{\alpha}{2}\vert x\vert^2(t_n-s)}\mathcal{B}[f(s,\cdot)](x)\,ds,$$
for any $n\in \mathbb{N}$ and each $x\in \mathbb{R}^d$. Using the rectangle rule for the integral in the right hand side leads to the approximations $F_n(x)\simeq f(t_n,x)$ of the the above time-continuous problem for $f=f(t,x)$:
\begin{equation}\label{E-sexual-reproduction-time-discrete-overlapping-generations}
F_n(x)=e^{-\frac{\alpha}{2}\vert x\vert^2}F_{n-1}(x)+\beta e^{-\frac{\alpha}{2}\vert x\vert^2}\mathcal{B}[F_{n-1}](x),
\end{equation}
for any $n\in \mathbb{N}$ and each $x\in \mathbb{R}^d$. Here, we have set unitary time steps $t_n-t_{n-1}=1$ for simplicity and $t_0=0$. We emphasize that the right hand side of \eqref{E-sexual-reproduction-time-discrete-overlapping-generations} consists of two different terms. On the one hand, the first term describes the amount of old individuals from the previously generation $n-1$ having resisted the effect of selection in generation $n$. On the other hand, the second term computes the amount of offspring conceived by individuals in the previous generation $n-1$, where an eventual decline due to selection has also been taken into account. Let us emphasize that in \eqref{E-sexual-reproduction-time-discrete-overlapping-generations} individuals from generation $n-1$ can merge (after breeding) with their offspring to form next generation $n$. This is typical in most biological populations, which have \textit{overlapping generations} ({\em e.g.}, some multivoltine flies like Drosophila melanogaster). However, some other systems have {\it non-overlapping generations} so that the full generation $n-1$ gets extinct after breeding and only their offspring survive at generation $n$ ({\em e.g.}, some univoltine insects like the Dawson's burrowing bee). We refer to \cite{M-61} for a discussion on the various types of voltinism and the role of diapause. In that case, by canceling the first term \eqref{E-sexual-reproduction-time-discrete-overlapping-generations} reduces to
\begin{equation}\label{E-sexual-reproduction-time-discrete-two-parameters}
F_n(x)=\beta e^{-\frac{\alpha}{2}\vert x\vert^2}\mathcal{B}[F_{n-1}](x),
\end{equation}
for any $n\in \mathbb{N}$ and each $x\in \mathbb{R}$. We remark that two different parameters  $\alpha$ and $\beta$ are still present in \eqref{E-sexual-reproduction-time-discrete-two-parameters}. However, given that $F\mapsto e^{-\frac{\alpha}{2}\vert x\vert^2}\mathcal{B}[F]$ is a 1-homogeneous operator we can kill the parameter $\beta$ by rescaling the trait distributions $F_n$. Specifically, define $\widehat{F}_n=\beta^{-n}F_n$ and note that \eqref{E-sexual-reproduction-time-discrete-two-parameters} reduces to our time-discrete problem \eqref{E-sexual-reproduction-time-discrete} with only one parameter $\alpha$.

\bibliographystyle{amsplain} %Options: amsplain, elsarticle-harv
\bibliography{biblio}

\end{document}